\setlist{itemsep=0pt, topsep=0pt}
\newtheorem{theorem}{Theorem}[section]
\newtheorem{lemma}[theorem]{Lemma}
\newtheorem{claim}[theorem]{Claim}
\newtheorem{fact}[theorem]{Fact}
\newtheorem{conjecture}[theorem]{Conjecture}
\newtheorem{corollary}[theorem]{Corollary}
\newtheorem{proposition}[theorem]{Proposition}
\newtheorem{observation}[theorem]{Observation}
\newtheorem{problem}[theorem]{Problem}
\newtheorem{example}[theorem]{Example}
\numberwithin{subcase}{case}
\numberwithin{property}{theorem}
\newcommand{\floor}[1]{\left\lfloor#1\right\rfloor}
\newcommand{\ceiling}[1]{\left\lceil#1\right\rceil}
\newcommand{\tbf}[1]{\textbf{#1}}
\newcommand{\diam}{\textrm{diam}}
\newcommand{\rad}{\textrm{rad}}
\newcommand{\cT}{{\mathcal T}}
\newcommand{\tp}{\mathrm{tp}}
\newcommand{\tc}{\mathrm{tc}}
\newcommand{\mc}{\mathrm{mc}}
\newcommand{\dc}{\mathrm{dc}}
\newcommand{\tdc}{\mathrm{tdc}}
\title{Generalizations and strengthenings of Ryser's conjecture}
\author{
Louis DeBiasio\thanks{Department of Mathematics, Miami University, \texttt{debiasld@miamioh.edu}. Research supported in part by Simons Foundation Collaboration Grant \#283194 and NSF grant DMS-1954170.}, ~
Yigal Kamel\thanks{Department of Mathematics, University of Illinois at Urbana-Champaign,  \texttt{ykamel2@illinois.edu}}, ~
Grace McCourt\thanks{Department of Mathematics, University of Illinois at Urbana-Champaign, \texttt{mccourt4@illinois.edu}}, ~
Hannah Sheats\thanks{Department of Mathematics, The Ohio State University, \texttt{sheats.6@osu.edu}}
}
\begin{document}

\maketitle

\begin{abstract}
Ryser's conjecture says that for every $r$-partite hypergraph $H$ with matching number $\nu(H)$, the vertex cover number is at most $(r-1)\nu(H)$.  This far-reaching generalization of K\"onig's theorem is only known to be true for $r\leq 3$, or when $\nu(H)=1$ and $r\leq 5$.  An equivalent formulation of Ryser's conjecture is that in every $r$-edge coloring of a graph $G$ with independence number $\alpha(G)$, there exists at most $(r-1)\alpha(G)$ monochromatic connected subgraphs which cover the vertex set of $G$.  

We make the case that this latter formulation of Ryser's conjecture naturally leads to a variety of stronger conjectures and generalizations to hypergraphs and multipartite graphs.  In regards to these generalizations and strengthenings, we survey the known results, improving upon some, and we introduce a collection of new problems and results.
\end{abstract}

\section{Introduction}

Let $H$ be a hypergraph.  We say that $H$ is \emph{$k$-uniform} if every edge of $H$ contains exactly $k$ vertices.  We say that $H$ is \emph{$r$-partite} if there exists a partition of $V(H)$ into sets $\{V_1, \dots, V_r\}$ such that for every edge $e$ of $H$, $|e\cap V_i|\leq 1$ for all $i\in [r]$; we use \emph{bipartite} to mean 2-partite.  Note that throughout the paper we won't assume that an $r$-partite hypergraph is necessarily $r$-uniform.  A \emph{matching} in H is a set of pairwise disjoint edges.  A \emph{vertex cover} of $H$ is a set of vertices $T$ such that each edge of $H$ contains a vertex from $T$.  We denote the size of a largest matching in $H$ by $\nu(H)$ and we denote the size of a minimum vertex cover of $H$ by $\tau(H)$.  Note that for every hypergraph $H$ we have $\nu(H)\leq \tau(H)$, since a minimum vertex cover must contain at least one vertex from each edge in a maximum matching.  

The following theorem of K\"onig from 1931, is one of the cornerstone results in graph theory. 

\begin{theorem}[K\"onig \cite{K}]\label{konig}
For every bipartite graph $H$, $\tau(H)\leq \nu(H)$.
\end{theorem}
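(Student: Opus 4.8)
The plan is to establish the nontrivial inequality $\tau(H) \le \nu(H)$, since the reverse bound $\nu(H)\le\tau(H)$ is already recorded in the excerpt. The natural strategy is to exhibit an explicit vertex cover whose size equals that of a maximum matching. So I would fix a maximum matching $M$ of $H$, with $|M|=\nu(H)$, together with a bipartition $V(H)=X\cup Y$, and then use the theory of $M$-alternating paths to read off a cover of size exactly $|M|$.

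Concretely, the key construction is as follows. Let $U\subseteq X$ be the set of vertices of $X$ not saturated by $M$, and let $Z$ be the set of all vertices reachable from $U$ by an $M$-alternating path (a path whose edges alternate between $E(H)\setminus M$ and $M$). Put $S=Z\cap X$ and $T=Z\cap Y$, and consider the candidate cover $K=(X\setminus S)\cup T$. The two things to verify are that $K$ covers every edge and that $|K|=|M|$.

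For coverage I would argue by contradiction: an uncovered edge $xy$ with $x\in X$ and $y\in Y$ would force $x\in S$ and $y\notin T$, but then extending an alternating path that reaches $x$ along the edge $xy$ would place $y$ in $Z$, a contradiction. Both the case $xy\in M$ and the case $xy\notin M$ are handled directly from the parity built into alternating reachability. The crucial input here is the maximality of $M$: it guarantees that there is no $M$-augmenting path, which is exactly what prevents $T$ from containing an unsaturated vertex and keeps the alternating-path bookkeeping consistent.

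The main obstacle, and the heart of the theorem, is the size computation $|K|=|M|$. For this I would show that $M$ restricts to a perfect matching between $T$ and $S\setminus U$, so that $|T|=|S|-|U|$; since every vertex of $X\setminus S$ is $M$-saturated (because $U\subseteq S$), counting $M$ by its $X$-endpoints gives $|M|=|X\setminus U|=|X\setminus S|+|S\setminus U|=|X\setminus S|+|T|=|K|$. Pinning down this bijection—checking that each vertex of $T$ is matched into $S\setminus U$ and conversely—is the delicate step, and it again relies on maximality of $M$ through the absence of augmenting paths. Combining $\tau(H)\le|K|=|M|=\nu(H)$ then yields the claim.
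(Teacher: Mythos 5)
The paper does not prove this statement at all — K\"onig's theorem is quoted as a classical result with a citation, so there is no in-paper argument to compare against. Your proposal is the standard and correct alternating-path proof: the coverage argument and the counting via the $M$-induced bijection between $T$ and $S\setminus U$ (using that no augmenting path exists, so every vertex of $T$ is saturated and matched back into $S$) are both sound, and together with the trivial inequality $\nu(H)\leq\tau(H)$ they give $\tau(H)=\nu(H)$ for finite bipartite graphs, which is all the paper needs.
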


In the 1970's, Ryser made the following conjecture which would generalize K\"onig's theorem to $r$-partite hypergraphs. 

\begin{conjecture}[Ryser (see \cite{H})]\label{con:ryser}
For every $r$-partite hypergraph $H$, $\tau(H)\leq (r-1)\nu(H)$.
\end{conjecture}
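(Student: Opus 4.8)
The statement is the central open problem of this area, so rather than a complete argument I outline the strategy I would pursue and where it breaks down. For $r=2$ the claim is exactly K\"onig's Theorem~\ref{konig}, so the natural first move is induction on either $r$ or $\nu(H)$. Inducting on $\nu(H)$ is the more promising of the two: given a maximum matching $e_1,\dots,e_{\nu}$, I would try to locate a set $S$ of at most $r-1$ vertices whose deletion drops the matching number, i.e.\ $\nu(H-S)\le \nu(H)-1$. If such an $S$ always exists, then combining $S$ with a cover of $H-S$ supplied by induction yields $\tau(H)\le (r-1)+(r-1)(\nu(H)-1)=(r-1)\nu(H)$. The existence of such an $S$ is essentially a local, intersecting-type statement, so this reduces the problem in spirit (though not as a black box) to the case $\nu(H)=1$.

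For the base case $\nu(H)=1$ the hypergraph is intersecting and we must show $\tau(H)\le r-1$. Here I would pass to the equivalent monochromatic-cover formulation highlighted in the abstract: an $r$-partite intersecting hypergraph corresponds to an $r$-edge-colouring of a complete graph $K_n$ (so $\alpha=1$), and $\tau\le r-1$ becomes the assertion that the vertices of $K_n$ can be covered by at most $r-1$ monochromatic connected subgraphs. I would attack this by a component-counting and discharging argument on the colour classes, tracking how the largest monochromatic component in each colour interacts with the others; this is exactly the route that succeeds for $r\le 5$.

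To handle general $\nu(H)$ without an honest reduction to $\nu(H)=1$, the tool I would reach for is the fractional relaxation together with topological connectivity. F\"uredi's theorem already supplies the fractional bound $\tau^*(H)\le (r-1)\nu(H)$, so the entire difficulty is \emph{integrality}: turning a fractional cover of weight $(r-1)\nu$ into an integral one of the same size. The only known mechanism achieving this for $r>2$ is the topological method of Aharoni and Haxell (independent transversals and a topological Hall theorem), which converts lower bounds on the connectivity of the independence complex of the line graph of $H$ into the existence of the required matchings, and hence, by duality, into small covers. This is precisely how Aharoni settled the case $r=3$, and I would try to push the same connectivity estimates through for larger $r$.

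The main obstacle is that both of the above steps degrade sharply as $r$ grows. The intersecting case is itself unsolved for $r\ge 6$, and the extremal examples, built from projective planes and truncations thereof, show that the bound $(r-1)\nu$ is tight and leaves no slack, so any rounding of F\"uredi's fractional cover must be essentially lossless. Meanwhile the topological connectivity estimates that make the Aharoni--Haxell machinery succeed for $r=3$ appear too weak to force integral covers of the correct size once $r\ge 4$. I therefore expect the genuine bottleneck to be the intersecting case for large $r$, compounded by the absence of a clean, cover-preserving reduction of general $\nu(H)$ to the case $\nu(H)=1$.
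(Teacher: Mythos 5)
You have not given a proof, and correctly so: the statement is Ryser's conjecture itself, which remains open, and the paper offers no proof of it either --- it is known only for $r\leq 3$ (Aharoni \cite{A}) and for $\nu(H)=1$ with $r\leq 5$ (Tuza \cite{T1}), as recorded in the introduction. So there is nothing to certify as correct or incorrect; what can be assessed is whether your proposed strategy is a faithful account of the known lines of attack, and it is.

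Two points of comparison with the paper are worth making explicit. First, your inductive step --- find a set $S$ of at most $r-1$ vertices with $\nu(H-S)\leq \nu(H)-1$ --- is precisely Lov\'asz's conjecture \cite{Lov75}, which the paper states in Section \ref{sec:further}; it is itself open in general, known only for $3$-partite hypergraphs with $\tau(H)=2\nu(H)$ \cite{HNS} and in a fractional form \cite{ABW}. So your reduction does not merely reduce ``in spirit'' to the intersecting case; it reduces to another open conjecture, and you should flag that the existence of such an $S$ is not a local consequence of the intersecting case. Second, Aharoni's proof for $r=3$ does not proceed by rounding F\"uredi's fractional cover; as the paper notes in Section \ref{sec:further}, it passes through the matching-width parameter $\mathrm{mw}(H)$ of the $(r-1)$-partite link structure, and the question of whether $\tau(H')\leq (r-1)\mathrm{mw}(H')$ for $(r-1)$-partite $H'$ --- the statement that would let that argument induct on $r$ --- has a negative answer for larger $r$, as mentioned in the paper's footnote. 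Your diagnosis of the bottlenecks (the intersecting case for $r\geq 6$, the tightness of the truncated-projective-plane examples leaving no rounding slack) matches the paper's own discussion.
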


A hypergraph $H$ is \emph{intersecting} if every pair of edges has non-empty intersection; equivalently $H$ is intersecting if $\nu(H)=1$.  The most well-studied special case of Ryser's conjecture is that for every $r$-partite intersecting hypergraph $H$, $\tau(H)\leq r-1$.

Aside from the case $r=2$ which is K\"onig's theorem, Ryser's conjecture has only been verified in the following cases: $r=3$ by Aharoni \cite{A}, $r=4$ and $\nu=1$ by Tuza \cite{T1}, $r=5$ and $\nu=1$ by Tuza \cite{T1}.

Finally, we note that in \cite{H}, Ryser's conjecture was not orginally formulated in the way we have stated above.  The original, equivalent, formulation is as follows:
Let $r\geq 2$ and let $A$ be a $r$-dimensional $0,1$-matrix.  The \emph{term rank} of $A$, denoted $\nu(A)$, is the maximum number of 1's, such that no pair is in the same $(r-1)$-dimensional hyperplane.  The \emph{covering number} of $A$, denoted $\tau(A)$, is the minimum number of $(r-1)$-dimensional hyperplanes which contain all of the 1's of $A$.  In this language, Ryser's conjecture says that if $A$ is an $r$-dimensional $0,1$-matrix, then $\tau(A)\leq (r-1)\nu(A)$.

\subsection{Fractional versions of Ryser's conjecture}

Given a hypergraph $H=(V,E)$, a \emph{fractional matching} is a function $m:E\to [0,1]$ such that for all $v\in V$, $\sum_{e\ni v}m(e)\leq 1$, and a \emph{fractional vertex cover} is a function $t:V\to [0,1]$ such that for all $e\in E$, $\sum_{v\in e}t(v)\geq 1$. 
We let 
\[
\nu^*(H)=\max\left\{\sum_{e\in E}m(e): m \text{ is a fractional matching on } H\right\} 
\]
and 
\[
\tau^*(H)=\min\left\{\sum_{v\in V}t(v): t \text{ is a fractional vertex cover on } H\right\}.  
\]

All three fractional versions of Ryser's conjecture are known to be true (that is, replacing at least one of $\tau$ or $\nu$ with $\tau^*$ or $\nu^*$ respectively):

First, it is well-known consequence of the duality theorem in linear programming that $\tau^*(H)=\nu^*(H)$ for all hypergraphs $H$.  For all $r$-partite hypergraphs $H$, Lov\'asz \cite{Lov75} proved $\tau(H)\leq \frac{r}{2}\nu^*(H)$ and F\"uredi \cite{F81} proved $\tau^*(H)\leq (r-1)\nu(H)$.


\subsection{Duality}

We say that a hypergraph $H$ is \emph{connected} if for all $u,v\in V(H)$ there exists $e_1, \dots, e_k\in E(H)$ such that $u\in e_1$, $v\in e_k$ and $e_{i}\cap e_{i+1}\neq \emptyset$ for all $i\in [k-1]$.  
A \textit{component} of a hypergraph $H$ is a maximal connected subgraph of $H$.  Given a set of vertices $A\subseteq V(H)$, let $H[A]$ denote the subhypergraph of $H$ induced by $A$, and if $H[A]$ has no edges, we say that $A$ is an \emph{independent set}.  The size of a maximum independent set of vertices in $H$ is denoted by $\alpha(H)$.  An \emph{r-coloring} of the edges of a hypergraph $H$ is a function $c:E(H)\to [r]$; equivalently, a partition of $E(H)$ into (possibly empty) sets $\{E_1, \dots, E_r\}$. We say an edge $e \in E(G)$ is \emph{color i} if $c(e)=i$; equivalently, $e \in E_i$.  In an $r$-colored hypergraph $H$, a \emph{monochromatic cover} of $H$ is a set $\cT$ of monochromatic connected subgraphs of $H$ such that $V(H)=\cup_{T\in \mathcal{T}}V(T)$.  For a positive integer $t$,  a \emph{monochromatic $t$-cover} of $H$ is a monochromatic cover of order at most $t$.  Let $\tc_r(H)$ be the minimum integer $t$ such that in every $r$-coloring of the edges of $H$, there exists a monochromatic $t$-cover of $H$.  Note that since every connected subgraph contains a spanning tree, we can think of the connected subgraphs in a monochromatic cover as trees; this explains the notation ``$\tc$'' which stands for ``tree cover.''

In this language, the well known remark of Erd\H{o}s and Rado that a graph or its complement is connected (see \cite{BDV}), can be formulated as $\tc_2(K_n)=1$.  

Gy\'arf\'as \cite{Gy} noted that Ryser's conjecture is equivalent to the following statement about edge colored graphs.

\begin{conjecture}[Ryser]\label{ryserdual}
For every graph $G$ and every integer $r\geq 2$, $\tc_r(G)\leq (r-1)\alpha(G)$.
\end{conjecture}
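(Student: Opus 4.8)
The plan is to prove that Conjecture~\ref{con:ryser} and Conjecture~\ref{ryserdual} are equivalent for each fixed $r$, by exhibiting two explicit constructions that translate $r$-partite hypergraphs into $r$-edge-colored graphs and back, in such a way that matchings correspond to independent sets and vertex covers correspond to monochromatic covers. The guiding dictionary is $\nu(H)\leftrightarrow\alpha(G)$ together with $\tau(H)\leftrightarrow$ (the minimum monochromatic cover of the associated coloring); once both correspondences are verified, the two inequalities become literally the same statement.

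For the direction ``Conjecture~\ref{ryserdual} $\Rightarrow$ Conjecture~\ref{con:ryser}'', I would start from an $r$-partite hypergraph $H$ with parts $V_1,\dots,V_r$ and build a graph $G$ on vertex set $E(H)$, joining two edges $e,f$ of $H$ exactly when $e\cap f\neq\emptyset$ and coloring the resulting graph-edge by some index $i$ with $e\cap f\cap V_i\neq\emptyset$ (breaking ties by, say, the least such $i$). First I would check that independent sets of $G$ are precisely the matchings of $H$, so $\alpha(G)=\nu(H)$. The crucial observation is that, because each edge of $H$ meets $V_i$ in at most one vertex, a connected subgraph of $G$ in color $i$ can only consist of edges of $H$ that all pass through one common vertex $v\in V_i$; hence every monochromatic connected subgraph can be charged to a single vertex of $H$. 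Consequently any monochromatic cover of $G$ yields a vertex cover of $H$ of no greater size, so $\tau(H)$ is at most the minimum monochromatic cover of this coloring, which Conjecture~\ref{ryserdual} bounds by $(r-1)\alpha(G)=(r-1)\nu(H)$.

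For the reverse direction, given a graph $G$ with an arbitrary $r$-coloring $c$, I would build an $r$-partite hypergraph $H$ whose part $V_i$ is the set of color-$i$ monochromatic components of $G$, and whose edges are $\{e_v : v\in V(G)\}$, where $e_v$ collects, for each color $i$, the unique color-$i$ component containing $v$. Here a vertex cover of $H$ is exactly a family of monochromatic components covering $V(G)$, i.e.\ a monochromatic cover of $(G,c)$, so $\tau(H)$ equals the minimum monochromatic cover of this coloring; and since pairwise disjoint edges $e_{v_1},\dots,e_{v_k}$ force $v_1,\dots,v_k$ into distinct components in every color and hence to be pairwise nonadjacent, we get $\nu(H)\leq\alpha(G)$. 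Applying Conjecture~\ref{con:ryser} to $H$ then gives, for this coloring, a monochromatic cover of size $\tau(H)\leq (r-1)\nu(H)\leq (r-1)\alpha(G)$; as $c$ was arbitrary this yields $\tc_r(G)\leq(r-1)\alpha(G)$.

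I expect the main obstacle to be purely in the careful bookkeeping rather than any deep idea: one must justify the tie-breaking in the coloring (two hypergraph edges may meet in several parts) and confirm that deleting those repeated adjacencies cannot enlarge a color-$i$ component beyond a single star, handle trivial (single-vertex) monochromatic subgraphs in a cover by charging them to an arbitrary incident vertex of $H$, and check that the inequalities face the right way in each direction ($\nu(H)\leq\alpha(G)$ suffices one way, $\alpha(G)\leq\nu(H)$ the other, both following from the matching/independent-set identification). Since both constructions preserve the parameter $r$ and make $\tau(H)$ equal to the minimum monochromatic cover of the associated coloring, the equivalence is exact and holds level by level in $r$.
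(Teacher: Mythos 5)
Your proposal is correct and follows essentially the same route as the paper: the statement is an open conjecture, and what can be "proved" is its equivalence with Conjecture \ref{con:ryser}, which the paper establishes in the Duality subsection via exactly the two constructions you describe (graph on $E(H)$ with color $i$ recording intersection in $V_i$, and hypergraph on the monochromatic components with one edge per vertex/maximal intersecting family). Your tie-breaking to get a simple coloring rather than the paper's multicoloring, and your use of the edges $e_v$ in place of maximal intersecting families of components, are only cosmetic variations, and your bookkeeping of which inequality ($\nu(H)\leq\alpha(G)$ versus $\alpha(G)\leq\nu(H)$) is needed in each direction is accurate.
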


So in particular, K\"onig's theorem can be reformulated as follows.

\begin{theorem}[K\"onig]\label{dualkonig}
For any graph $G$, $\tc_2(G)\leq \alpha(G)$.
\end{theorem}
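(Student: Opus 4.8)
The plan is to fix an arbitrary $2$-coloring $c:E(G)\to\{1,2\}$ and reduce the existence of a small monochromatic cover to König's theorem (Theorem~\ref{konig}) by introducing an auxiliary bipartite graph that records how the two color classes overlap. Since $\tc_2(G)$ is a maximum taken over all $2$-colorings, it suffices to exhibit, for each fixed $c$, a monochromatic cover of $G$ of order at most $\alpha(G)$. To that end, let $G_1$ and $G_2$ be the spanning subgraphs of $G$ on vertex set $V(G)$ whose edges are the color-$1$ and color-$2$ edges respectively, so that vertices isolated in a color form singleton components. The components of $G_i$ partition $V(G)$; write them as $\cA=\set{A_1,\dots,A_a}$ for color $1$ and $\cB=\set{B_1,\dots,B_b}$ for color $2$, so that each vertex $v$ lies in exactly one $A_{i(v)}$ and exactly one $B_{j(v)}$. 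I would then define a bipartite graph $F$ with parts $\cA$ and $\cB$, placing an edge between $A_i$ and $B_j$ precisely when $A_i\cap B_j\neq\emptyset$.

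The argument rests on two observations. First, a vertex cover $C\subseteq\cA\cup\cB$ of $F$, regarded as a family of monochromatic connected subgraphs of $G$, is a monochromatic cover of $G$: each vertex $v$ corresponds to the edge $A_{i(v)}B_{j(v)}$ of $F$, which $C$ must cover, so $v$ lies in one of the chosen components. Hence $G$ admits a monochromatic cover of order $\tau(F)$. Second, from any matching $M$ in $F$ one extracts an independent set of $G$ of size $\abs{M}$: for each edge $A_iB_j\in M$ choose a vertex in the nonempty set $A_i\cap B_j$. These chosen vertices are distinct, since the components appearing in each part of a matching are distinct, and they form an independent set, because if two of them were adjacent in $G$ the connecting edge would have color $1$ or color $2$, forcing the two vertices into a common $A_i$ or a common $B_j$ and contradicting that $M$ is a matching. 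Therefore $\nu(F)\leq\alpha(G)$.

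Combining the two observations via König's theorem finishes the proof: taking $C$ to be a minimum vertex cover of $F$ gives a monochromatic cover of order $\tau(F)$, and Theorem~\ref{konig} yields $\tau(F)=\nu(F)\leq\alpha(G)$. As $c$ was arbitrary, $\tc_2(G)\leq\alpha(G)$.

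There is no genuinely hard step here — essentially all the content is in translating correctly between monochromatic covers of $G$, covers of $F$, matchings of $F$, and independent sets of $G$ — but the point deserving the most care is the exact correspondence between monochromatic covers and vertex covers of $F$. One should note that it is never disadvantageous to use a full monochromatic component in place of a proper connected monochromatic subgraph, so restricting attention to the components in $\cA\cup\cB$ loses nothing. The crucial structural fact driving both observations is that each vertex lies in a unique component of each color, which is exactly what makes the cover correspondence and the independence argument go through.
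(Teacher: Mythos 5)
Your proof is correct and follows essentially the same route as the paper: the paper establishes Theorem~\ref{dualkonig} via the same duality, forming the bipartite intersection graph of color-$1$ and color-$2$ components (its general $r$-partite hypergraph specialized to $r=2$), noting that vertex covers of that graph yield monochromatic covers of $G$ and matchings yield independent sets, and then invoking Theorem~\ref{konig}. Your write-up just makes the two translation steps more explicit than the paper's brief equivalence discussion.
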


To see why this equivalence holds, given an $r$-colored graph $G$, we let $H$ be a hypergraph where the vertex set is the set of monochromatic components in $G$ which is naturally partitioned into $r$ parts depending on the color of the component, and a set of vertices in $H$ forms an edge if the corresponding set of components has non-empty intersection in $G$ and is maximal with respect to this property.  One can see that an independent set of order $m$ in $G$ will correspond to a matching of order $m$ in $H$, and a monochromatic cover of $G$ will correspond to a vertex cover of $H$.  

On the other hand, given an $r$-partite hypergraph $H$ with vertex set partitioned as $\{V_1, \dots, V_r\}$, we let $G$ be a graph with $V(G)=E(H)$ and we put an edge of color $i$ between $e,f\in V(G)$ if and only if $e\cap f\cap V_i\neq \emptyset$.  Since edges from $H$ can intersect in more than one set, $G$ will be an $r$-colored multigraph (or a $r$-multicolored graph) in which every monochromatic component is a clique.  Note that a matching of order $m$ in $H$ will correspond to an independent set of order $m$ in $G$, and a vertex cover of $H$ will correspond to a monochromatic cover of $G$ (in which all of the monochromatic components are cliques).  

We have now seen that there are at least three equivalent ways of stating Ryser's conjecture.  For the remainder of the paper we focus on these two.

\begin{enumerate}
\item[(R1)] For every $r$-partite hypergraph  $H$, $\tau(H)\leq (r-1)\nu(H)$.

\item[(R2)] For every graph $G$, $\tc_r(G)\leq (r-1)\alpha(G)$.
\end{enumerate}

Now suppose we have two $r$-colored graphs $G$ and $G'$ on the same vertex set $V$ such that for all $i\in [r]$, the components of color $i$ in $G$ and the components of color $i$ in $G'$ give the same partition of $V$.  In the above discussion, we see that from $G$ and $G'$, we will derive the exact same $r$-partite hypergraph $H$. On the other hand, given an $r$-partite hypergraph $H$, we will only derive a single $r$-colored graph $G$.

This brings us to one of the main themes of this paper.  It is certainly true that (R1) feels most natural in that it directly generalizes the well known K\"onig's theorem.  
However, by stating Ryser's conjecture in terms of (R2), we can access a whole host of interesting strengthenings and generalizations which have no analogue in the $r$-partite hypergraph setting.  For instance we can ask if there is a monochromatic cover $\cT$ in which every subgraph in $\cT$ has small diameter, or whether $\cT$ can be chosen so that subgraphs in $\cT$ are pairwise disjoint (i.e. $\cT$ forms a partition rather than just a cover).  Furthermore, we can generalize the problem to settings like complete multipartite graphs and hypergraphs.  In Section \ref{sec:further} we give many more such examples.

Finally, we make note of the following trivial upper bound on $\tc_r(G)$ in the (R2) language.

\begin{fact}\label{fact:trivial}
Let $r\geq 2$.  For all graphs $G$, $\tc_r(G)\leq r\alpha(G)$.
\end{fact}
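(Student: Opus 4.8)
The plan is to prove the bound by a greedy covering argument driven by a maximal independent set, using the elementary fact that every vertex lies in exactly one monochromatic component of each color. Fix an $r$-coloring of $G$ and set $\alpha = \alpha(G)$. For a vertex $v$ and color $i \in [r]$, let $C_i(v)$ denote the monochromatic component of color $i$ containing $v$, where a vertex incident to no color-$i$ edge is regarded as forming a trivial (single-vertex) connected component. The key observation is that the $r$ subgraphs $C_1(v), \dots, C_r(v)$ together cover $v$ and \emph{all} of its neighbors in $G$: if $u$ is adjacent to $v$, then the edge $uv$ has some color $i$, whence $u \in C_i(v)$.

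I would then build the cover greedily. Initialize $\cT = \emptyset$ and repeat the following while some vertex of $G$ remains uncovered by $\cT$: choose an uncovered vertex $v$ and add $C_1(v), \dots, C_r(v)$ to $\cT$. Each round adds at most $r$ subgraphs to $\cT$ and, by the key observation, covers at least $v$ together with its entire neighborhood. Since each round covers at least one new vertex, the process terminates with $V(G) = \bigcup_{T \in \cT} V(T)$, so $\cT$ is a monochromatic cover.

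The crux is bounding the number of rounds by $\alpha$. Let $v_1, v_2, \dots$ be the vertices chosen across the successive rounds. I claim this set is independent: when $v_j$ is selected it is uncovered, but by the key observation every neighbor of each previously chosen $v_i$ was already covered in round $i$, so $v_j$ is adjacent to none of $v_1, \dots, v_{j-1}$. Hence the chosen vertices form an independent set in $G$, so there are at most $\alpha$ rounds, giving $|\cT| \le r\alpha = r\alpha(G)$, as desired.

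The only (minor) obstacle is the bookkeeping around trivial components: one must confirm that a vertex incident to no color-$i$ edge still counts as lying in a monochromatic connected subgraph, so that $C_i(v)$ is always defined and isolated vertices of $G$ are handled correctly. This follows immediately once single vertices are admitted as connected subgraphs, which is consistent with the definition of a monochromatic cover given above.
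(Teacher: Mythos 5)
Your proof is correct; the paper states this fact without proof (as "trivial"), and your argument is exactly the standard one implicit there — it is the same idea the authors sketch for the bound $\tc_r(K)\leq 2r-1$ on bipartite graphs and in item (i) of Appendix A. The greedy selection is just a way of producing a maximal independent set $\{v_1,\dots,v_s\}$, $s\leq\alpha(G)$, and taking the at most $r$ monochromatic components through each $v_j$; your handling of trivial single-vertex components is the right convention and is consistent with the paper's definition of a monochromatic cover.
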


\subsection{Lower bounds}

A \emph{projective plane of order $q$} is a $(q+1)$-uniform hypergraph on $q^2+q+1$ vertices and $q^2+q+1$ edges such that each pair of vertices is contained in exactly one edge.  A \emph{truncated projective plane of order $q$} is a $(q+1)$-uniform hypergraph on $q^2+q$ vertices and $q^2$ edges obtained by deleting one vertex $v$ from a projective plane of order $q$ and removing the $q+1$ edges which contained $v$.
An \emph{affine plane of order $q$} is a $q$-uniform hypergraph on $q^2$ vertices and $q^2+q$ edges obtained by deleting one edge $e$ from a projective plane of order $q$ and removing the $q+1$ vertices which are contained in $e$.
Note that truncated projective planes and affine planes are duals of each other in the geometric sense where the roles of lines and points are switched.  

It is well known that a projective plane of order $q$ exists whenever $q$ is a prime power (and it is unknown whether there exists a projective plane of non-prime power order).  Also it is clear that a truncated projective plane of order $q$ and an affine plane of order $q$ exist if and only if a projective plane of order $q$ exists.  

A truncated projective plane $H$ of order $r-1$ is an intersecting $r$-uniform hypergraph with vertex cover number $r-1$ and if we take $\nu$ vertex disjoint copies of $H$, we have an $r$-uniform hypergraph with matching number $\nu$ and vertex cover number $(r-1)\nu$.  Thus Ryser's conjecture is tight for a given value of $r$ whenever a truncated projective plane $H$ of order $r-1$ exists.  

An affine plane $H$ of order $r-1$ is an $(r-1)$-uniform hypergraph with edge chromatic number $r$ and edge cover number $r-1$.  From $\alpha$ vertex disjoint affine planes of order $r-1$, we can create an $r$-colored graph $G$ with independence number $\alpha$ such that $\tc_r(G)=(r-1)\alpha$.

So we have the following fact.

\begin{fact}
Let $r\geq 2$ and $\alpha\geq 1$ be integers. If there exists an affine plane of order $r-1$, then for all $n\geq (r-1)^2\alpha$ there exists a graph $G$ on $n$ vertices with $\alpha(G)=\alpha$ such that $\tc_r(G)\geq (r-1)\alpha$.
\end{fact}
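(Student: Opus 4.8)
The plan is to realize the extremal example as a disjoint union of $\alpha$ cliques, each built from one affine plane of order $q:=r-1$, and then to enlarge the cliques by balanced blow-ups to reach an arbitrary $n\ge (r-1)^2\alpha$. Let $A$ be an affine plane of order $q$, so $A$ is a $q$-uniform hypergraph on a point set $P$ with $|P|=q^2$ whose lines split into $r=q+1$ parallel classes, each class being a partition of $P$ into $q$ lines of size $q$ (this is exactly the statement that the edge chromatic number is $r$). For the base case $n=(r-1)^2\alpha$ I would take $\alpha$ vertex-disjoint copies $A_1,\dots,A_\alpha$ of $A$, let $G$ have vertex set $\bigcup_j P_j$ where $P_j$ is the point set of $A_j$, make each $P_j$ a clique, and put no edges between distinct $P_j$'s. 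I color inside $P_j$ by assigning to the pair $\set{x,y}$ the color $i\in[r]$ of the (unique) parallel class of $A_j$ containing the line through $x$ and $y$; this is well defined because any two points of an affine plane lie on exactly one line. Since $G$ is a disjoint union of $\alpha$ cliques, any independent set meets each clique in at most one vertex, so $\alpha(G)=\alpha$, and $|V(G)|=\alpha q^2=(r-1)^2\alpha$.

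The key observation for the lower bound is that in this coloring the monochromatic components are precisely the lines. Indeed, the lines of a fixed class are pairwise disjoint, so within $P_j$ the color-$i$ edges form exactly $q$ vertex-disjoint cliques, one spanned by each line of class $i$; hence every monochromatic connected subgraph lies inside a single line and covers at most $q=r-1$ vertices. Because $G$ has no edges between distinct cliques, every monochromatic connected subgraph is contained in one $P_j$, so any monochromatic cover $\cT$ decomposes as a disjoint union of covers of the individual cliques. Covering the $q^2$ vertices of $P_j$ with subgraphs on at most $q$ vertices requires at least $q^2/q=q$ of them, and summing over $j$ gives $|\cT|\ge q\alpha=(r-1)\alpha$. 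As this holds for every monochromatic cover in this particular coloring, there is no cover of smaller order under it, and therefore $\tc_r(G)\ge (r-1)\alpha$.

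To handle an arbitrary $n\ge (r-1)^2\alpha$, I keep the disjoint-clique structure but replace each point $p\in P_j$ by a group $B_p$ of $w_p\ge1$ vertices (a blow-up), turning each $P_j$ into a clique on $m_j:=\sum_{p\in P_j}w_p$ vertices with $\sum_j m_j=n$ and each $m_j\ge q^2$. Edges between $B_x$ and $B_y$ receive the color of the line $xy$ as before, and edges inside a group get color $1$, say. Exactly as above, $G$ is a disjoint union of $\alpha$ cliques, so $\alpha(G)=\alpha$, and a color-$i$ component is now a blown-up line $\bigcup_{p\in\ell}B_p$, of size $\sum_{p\in\ell}w_p$. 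Thus covering clique $j$ needs at least $m_j/(\max_{\ell}\sum_{p\in\ell}w_p)$ pieces, and it suffices to choose the weights $w_p$ so that every line-sum is strictly smaller than $m_j/(q-1)$; then each clique forces at least $q$ pieces and again $\tc_r(G)\ge q\alpha=(r-1)\alpha$.

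The main obstacle is therefore the integer balancing of the blow-up: for each $m\ge q^2$ I must exhibit positive integer weights on the $q^2$ points summing to $m$ with every line-sum below $m/(q-1)$. Averaging over any parallel class (whose $q$ lines partition $P$) shows the largest line-sum is always at least $m/q$, so there is a genuine gap of $m/(q(q-1))>1$ to exploit. Writing $m=q^2 b+s$ with $0\le s<q^2$ and $b\ge1$, I would give every point the base weight $b$ and add one extra unit to each point of a chosen set $S$ with $|S|=s$; the requirement then becomes $\max_\ell|S\cap\ell|<(qb+s)/(q-1)$. When $b\ge q-1$ this holds for \emph{any} $S$, since $|S\cap\ell|\le q$, so only the range $q^2\le m<q^2(q-1)$ is delicate, and there the task reduces to picking $S$ meeting every line in close to its average $s/q$ points. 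I expect this ``line-spread'' choice of $S$ --- provable by a short averaging or greedy argument on the affine plane, and concretely via an explicit set in the coordinatization $P=\mathbb{F}_q^2$ when $q$ is a prime power --- to be the only technically fussy step; everything else is the counting above.
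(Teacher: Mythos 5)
Your base construction ($\alpha$ disjoint cliques, each an affine plane of order $q=r-1$ with edges colored by parallel class, so that monochromatic components are exactly the lines) is precisely the construction the paper sketches before stating this Fact, and your counting for $n=(r-1)^2\alpha$ is correct. The problem is the blow-up step for general $n$. You reduce it to finding, for $m=q^2b+s$, a set $S$ of $s$ points with $\max_\ell|S\cap\ell|<(qb+s)/(q-1)$, and you flag this as the fussy step you expect to work. It does not: for $b=1$ and $2\leq s\leq q-2$ (e.g.\ $r=5$, $q=4$, $\alpha=1$, $n=18$, so $s=2$) the bound $(q+s)/(q-1)\leq 2$ forces $|S\cap\ell|\leq 1$ for every line, i.e.\ no two points of $S$ collinear --- impossible in an affine plane once $|S|\geq 2$, since any two points lie on a line. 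Concretely, in that example the line through the two augmented points has weight $6=m/(q-1)$, so your vertex-counting bound only yields $\lceil 18/6\rceil=3<4$ components, and no choice of $S$ repairs it. The root cause is that you are measuring components by their number of \emph{vertices} rather than by the number of blown-up \emph{points} they meet.

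The fix is immediate and makes the balancing unnecessary: in the blow-up, every monochromatic connected subgraph is contained in a blown-up line (your internal color-$1$ edges do not merge distinct lines, since each group $B_p$ already lies in the color-$1$ component of the class-$1$ line through $p$), hence meets at most $q$ of the $q^2$ nonempty groups of its clique. Covering all $q^2$ groups therefore requires at least $q$ monochromatic subgraphs per clique, for any positive weights $w_p$ whatsoever, giving $\tc_r(G)\geq q\alpha=(r-1)\alpha$. With that substitution your argument is complete and agrees with the paper's intended proof; as written, the final step has a genuine gap.
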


Finding matching lower bounds when affine plane of order $r-1$ does not exist is an active area of research with some interesting recent results  (\cite{AP}, \cite{ABPS}, \cite{ABW}, \cite{HaxS}); however, it is still unknown whether for all $r\geq 2$ and $\alpha\geq 1$, there exists a graph $G$ with $\alpha(G)=\alpha$ such that $tc_r(G)\geq (r-1)\alpha$.  The best general result is due to Haxell and Scott \cite{HaxS} who show that for all $r\geq 5$, there exists a graph $G$ such that $\tc_r(G)\geq (r-4)\alpha(G)$.

Finally, we note that most efforts to improve the lower bound have focused on the case $\alpha(G)=1$ because if one can prove that $\tc_r(K)\geq r-1$ for a complete graph $K$, then by taking $\alpha$ disjoint copies of $K$, we obtain a graph $G$ such that $\tc_r(G)\geq (r-1)\alpha(G)$.  It was shown in \cite{HNS} that for $r=3$, this is essentially the only such example.  However, it was shown in \cite{Abu} that for $r=4$, there is an example which is different than two disjoint 4-colored complete graphs and a more general example was given in \cite{BP}.

\subsection{Large monochromatic components}

We now briefly discuss the related problem of finding large monochromatic components in $r$-colored graphs.

\begin{theorem}[F\"uredi \cite{F81} (see Gy\'arf\'as \cite{GySurv1})]\label{thm:fur}
In every $r$-coloring of the edges of a graph $G$ with $n$ vertices, there exists a monochromatic component of order at least $\frac{n}{(r-1)\alpha(G)}$.

In the dual language, in every $r$-partite hypergraph $H$ with $n$ edges, there exists a vertex of degree at least $\frac{n}{(r-1)\nu(H)}$.
\end{theorem}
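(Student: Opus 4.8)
The plan is to prove the equivalent dual statement: every $r$-partite hypergraph $H$ with $n$ edges has a vertex of degree at least $\frac{n}{(r-1)\nu(H)}$; the primal statement about monochromatic components then follows via the correspondence set up in the Duality subsection. The key external input is F\"uredi's fractional bound $\tau^*(H)\le (r-1)\nu(H)$, stated earlier in the excerpt, together with LP duality $\tau^*(H)=\nu^*(H)$. Indeed, the entire difficulty of the theorem is already packaged into F\"uredi's inequality; what remains is a short averaging argument.

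First I would fix a minimum fractional vertex cover $t\colon V(H)\to[0,1]$, so that $\sum_{v\in V(H)}t(v)=\tau^*(H)\le (r-1)\nu(H)$ and $\sum_{v\in e}t(v)\ge 1$ for every edge $e$. Then I would double count the quantity $\sum_{e\in E(H)}\sum_{v\in e}t(v)$. On one hand, since each inner sum is at least $1$ and there are $n$ edges, this quantity is at least $n$. On the other hand, exchanging the order of summation gives $\sum_{v\in V(H)}t(v)\deg(v)\le \big(\max_{v}\deg(v)\big)\sum_{v}t(v)$, where I use $t(v)\ge 0$. Writing $\Delta$ for the maximum degree, this chain reads
\[
n\le \sum_{e}\sum_{v\in e}t(v)=\sum_{v}t(v)\deg(v)\le \Delta\cdot\tau^*(H)\le \Delta\,(r-1)\nu(H),
\]
so $\Delta\ge \frac{n}{(r-1)\nu(H)}$, i.e.\ some vertex has the desired degree.

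Finally I would translate back to $G$. Given an $r$-coloring of $G$ with $|V(G)|=n$, form the $r$-partite hypergraph $H$ whose vertices are the monochromatic components (grouped by color) and whose edges are indexed by the vertices of $G$, exactly as in the Duality subsection. Here $H$ has $n$ edges, $\nu(H)=\alpha(G)$ (a matching in $H$ corresponds to an independent set in $G$), and the degree of a component-vertex in $H$ equals the order of that component in $G$. Applying the dual statement yields a monochromatic component of order at least $\frac{n}{(r-1)\alpha(G)}$. The only point requiring care is this dictionary between degrees and component orders and between $\nu(H)$ and $\alpha(G)$; the arithmetic is otherwise immediate, and no appeal to Ryser's (unproven) integral bound is needed, since F\"uredi's fractional version suffices.
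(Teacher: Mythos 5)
Your argument is correct, and it is essentially the standard derivation of F\"uredi's bound; note that the paper itself states Theorem \ref{thm:fur} only as a cited result and gives no proof, so there is nothing internal to compare against. The double count is sound: $n\le\sum_{e}\sum_{v\in e}t(v)=\sum_{v}t(v)\deg(v)\le\Delta\,\tau^*(H)\le\Delta(r-1)\nu(H)$ needs only nonnegativity of $t$ and the cover constraints, and the quoted inequality $\tau^*(H)\le(r-1)\nu(H)$ is exactly F\"uredi's fractional theorem stated in the introduction. Two small remarks. First, you invoke LP duality $\tau^*=\nu^*$ as a ``key external input'' but never use it; your computation works directly with a minimum fractional cover. (The equivalent dual-side phrasing would assign each edge weight $1/\Delta$ to get a fractional matching of value $n/\Delta\le\nu^*=\tau^*\le(r-1)\nu$.) Second, in the translation back to $G$ you assert $\nu(H)=\alpha(G)$; in general only $\nu(H)\le\alpha(G)$ holds (two nonadjacent vertices of $G$ can still lie in a common monochromatic component, so not every independent set yields a matching), but this is the direction you need, since it gives $\frac{n}{(r-1)\nu(H)}\ge\frac{n}{(r-1)\alpha(G)}$. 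With that one-word correction the proof is complete.
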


Note that if true, Ryser's conjecture implies Theorem \ref{thm:fur}.

\subsection{Notation}

Given a positive integer $k$, we let $[k]=\{1,2,\dots, k\}$.

Let $G$ be a graph. For sets $A,B\subseteq V(G)$, an $A,B$-edge is an edge with one endpoint in $A$ and the other in $B$.  If $A=\{v\}$, we write $v,B$-edge instead of $\{v\}, B$-edge.  We write $\delta(A,B)$ for $\min\{|N(v)\cap B|:v\in A\}$.  If $A$ and $B$ are disjoint, we let $[A,B]$ be the bipartite graph induced by the sets $A$ and $B$.  Given $k\geq 2$, we let $\mathcal{K}_k$ be the family of all complete $k$-partite graphs.

Given a set $S$ we say that $G$ is $S$-colored if the edges of $G$ are colored from the set $S$.  Given an integer $r$, we say that $G$ is $r$-colored if the edges of $G$ are colored with $r$ colors (unless otherwise stated, the set of colors will be $[r]$).  Given an $r$-coloring of $G$, say $c:E(G)\to [r]$, and a set $S\subseteq [r]$, we let $G_S$ be the $S$-colored graph on $V(G)$ with $E(G)=\{e\in E(G): c(e)\in S\}$.  If $S=\{i\}$ or $S=\{i,j\}$, we simply write $G_i$ or $G_{i,j}$ respectively.

\section{Overview of the paper}\label{sec:overview}

In this section we give a detailed overview of the results in the paper. In addition, we discuss a variety of other generalizations and strengthenings of Ryser's conjecture in Section \ref{sec:further} and we collect some observations about a hypothetical minimal counterexample to Ryser's conjecture in Appendix A.

\subsection{Monochromatic covers with restrictions on the colors}

We begin with a few conjectures which can be stated both in terms of intersecting $r$-partite hypergraphs (R1) and in terms of $r$-colored complete graphs (R2).

The results mentioned here are proved in Section \ref{sec:tuza} and Section \ref{sec:multi}.

\begin{conjecture}\label{con:SnotS}
For all integers $r\geq 2$, all $S\subseteq [r]$, and all $r$-colorings of a complete graph $K$, there exists a monochromatic $(r-1)$-cover consisting entirely of subgraphs having a color in $S$ or consisting entirely of subgraphs having a color in $[r]\setminus S$.
\end{conjecture}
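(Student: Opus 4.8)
The first move is a K\"onig/Erd\H{o}s--Rado reduction that collapses the $r$ colors to two. Partition the colors into the classes $S$ and $[r]\sm S$ and read off the corresponding $2$-coloring of $K$ in which an edge is \emph{light} if its color lies in $S$ and \emph{dark} otherwise; then $K_S$ and $K_{[r]\sm S}$ are the light and dark subgraphs. By Theorem~\ref{dualkonig} (equivalently the Erd\H{o}s--Rado remark $\tc_2(K)=1$), at least one of $K_S$, $K_{[r]\sm S}$ is connected and spanning, and the two branches of the desired ``or'' correspond exactly to these two alternatives. Thus it suffices to prove the following one-sided statement: \emph{if the edges colored from $S$ form a connected spanning subgraph of $K$, then $V(K)$ can be covered by at most $r-1$ monochromatic components whose colors all lie in $S$.}

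I would then pass to the hypergraph picture (R1). Under the correspondence of the Duality subsection, this one-sided statement becomes a \emph{targeted} form of Ryser's conjecture: in the intersecting $r$-partite hypergraph $H$ associated to the coloring, with parts $V_1,\dots,V_r$, one must find a vertex cover of size at most $r-1$ contained in $\bigcup_{i\in S}V_i$. The plan is to induct on $r$, peeling off one color of $S$ at a time: choose a component (a vertex of $H$ in an $S$-part) that absorbs a large piece of $V(K)$, delete it, and recurse on the remaining colors. Before doing any of this it is worth recording the boundary case $S=[r]$: there the reduction is vacuous, and the statement is \emph{precisely} Ryser's conjecture for $\nu=1$, i.e. $\tc_r(K)\le r-1$. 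Consequently Conjecture~\ref{con:SnotS} contains Ryser's conjecture, and one can hope for an \emph{unconditional} proof only in the ranges where Ryser is known, namely $r\le 5$ (Aharoni for $r=3$, Tuza for $r\in\{4,5\}$). For those $r$ I would start from the known bound $\tc_r(K)\le r-1$ and run a case analysis that slides the components of an optimal cover onto the $S$-side.

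For unconditional progress independent of $r$, I would attack the extremes $|S|=1$ and $|S|=r-1$ by contraction. Say $S=\{1\}$: if color~$1$ is connected we are done with a single component, so assume its components $C_1,\dots,C_m$ with $m\ge 2$. Every edge of $K$ between distinct $C_j$'s is colored from $[r]\sm S$, so on the parts $C_1,\dots,C_m$ we see a complete multipartite graph whose edges use only the $r-1$ colors of $[r]\sm S$. Covering $V(K)$ by few monochromatic $([r]\sm S)$-components then reduces to a Ryser-type bound for the family $\mathcal{K}_k$ of complete multipartite graphs, exactly the kind of statement developed in Section~\ref{sec:multi}, which can be invoked as a black box.

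The main obstacle is that the $S$-restriction is not inherited cleanly by subhypergraphs. If one simply deletes the vertices of $H$ outside the $S$-parts, the resulting hypergraph need not be intersecting: two edges may meet only in an $[r]\sm S$-part, so $\nu$ can jump and a direct appeal to Ryser gives nothing. Equivalently, in the (R2) picture, connectivity of $K_S$ alone is far too weak --- a connected $(r-1)$-edge-colored but non-complete graph (for instance a path with alternating colors) can require arbitrarily many monochromatic components to cover --- so the proof must genuinely exploit that $K_S$ is a \emph{complete} graph with a single structured color class removed, and the induction must carry this structure forward. Because the case $S=[r]$ is Ryser itself, a complete proof for all $r$ would resolve Ryser's conjecture; the realistic target is therefore the conditional statement that Ryser's conjecture (for the relevant number of colors) implies Conjecture~\ref{con:SnotS}, and locating an induction that survives the loss of the intersecting property is the crux.
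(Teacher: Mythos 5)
Your opening reduction fails because the ``one-sided statement'' you reduce to is false. A monochromatic cover with colors in $S$ is built from components of \emph{individual} colors, so connectivity of the union graph $K_S$ does not control it. Concretely, take $r=4$, $S=\{1,2\}$, color a Hamiltonian path of $K_n$ alternately with colors $1$ and $2$, and give all remaining edges color $3$ (empty color classes are allowed, so color $4$ may be unused). Then $K_S$ is a spanning connected path, yet every component of color $1$ or $2$ has exactly two vertices, so any cover by components with colors in $S$ needs at least $n/2$ components; Conjecture~\ref{con:SnotS} holds for this coloring only through the other branch (color $3$ alone is spanning and connected). Since both $K_S$ and $K_{[r]\setminus S}$ can be connected while only one side admits a small cover, the Erd\H{o}s--Rado alternative does not line up with the alternative in the conjecture, and the ``WLOG $K_S$ is connected'' step establishes nothing. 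The dichotomy the paper actually uses for the substantive case $|S|=2$ (Theorem~\ref{thm:r3-5}) runs through K\"onig instead: either $\alpha(G_S)\leq r-1$, in which case Theorem~\ref{dualkonig} via Observation~\ref{obs:tuza} gives $\tc_2(G_S)\leq\alpha(G_S)\leq r-1$ and hence a cover with colors in $S$; or there is an independent set $X$ of size $r$ in $G_S$, which induces an $(r-2)$-colored complete graph and forces every outside vertex to send at least $r-2$ edges with colors in $[r]\setminus S$ into $X$. The remaining work (the signature case analysis for $r=5$, and the component-counting for $r=3,4$) is exactly what your proposal never engages with.

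Two smaller points. Your observation that $S=[r]$ recovers the $\nu=1$ case of Ryser is correct, and indeed the statement is only a conjecture: the paper proves it for $r\leq 4$ (plus $|S|=2$ for $r=5$). Your $|S|=1$ contraction onto the components of color $i$ is genuinely the paper's Section~\ref{sec:multi} strategy (Conjecture~\ref{con:rpart}, Theorems~\ref{2partite} and~\ref{3partite}), but the black box you invoke, namely $\tc_{r-1}(K)\leq r-1$ for all complete multipartite $K$, is false when color $i$ has exactly $m=2$ components and $r\geq 4$: Conjecture~\ref{con:GL} is tight, so some $3$-colored complete bipartite graphs need $4$ monochromatic components. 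The fix is the fallback the paper builds in: when color $i$ has $m\leq r-1$ components, those $m$ components themselves form the desired cover with colors in $S$, and the multipartite theorems are needed only when $m$ is at least $r-1$ (or $m\geq 3$ in the cases actually proven).
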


In the dual (R1) language, Conjecture \ref{con:SnotS} says that for every $r$-partite intersecting hypergraph with vertex partition $\{V_1, \dots, V_r\}$ and every $S\subseteq [r]$, there is a vertex cover of order $r-1$ which is contained in $\{V_i:i\in S\}$ or contained in $\{V_i: i\in [r]\setminus S\}$. 

We prove Conjecture \ref{con:SnotS} for $r\leq 4$.  In the process of doing so, we formulate three other conjectures (all of which imply the $\alpha=1$ case of Ryser's conjecture). 

\begin{conjecture}\label{con:rpart}
For all integers $r\geq 2$ and all $K\in \mathcal{K}_r$, $\tc_{r-1}(K)\leq r-1$. In particular, this implies that for every $r$-coloring of a complete graph $K$ and every color $i\in [r]$, either there is a monochromatic $(r-1)$-cover consisting entirely of subgraphs of color $i$, or entirely of subgraphs which don't have color $i$. 
\end{conjecture}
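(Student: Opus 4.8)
\noindent I would organize the statement into two parts: the bound $\tc_{r-1}(K)\le r-1$ for $K\in\mathcal{K}_r$, and the ``in particular'' dichotomy for $r$-colored complete graphs. The plan is to derive the second part as a formal consequence of the first, and then to attack the first by a vertex-based covering argument, proving it for small $r$ and isolating the obstruction to the general case.

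First I would record the reduction that turns the dichotomy into a corollary of the bound. Fix an $r$-coloring of a complete graph $K$ and a color $i$, and let $C_1,\dots,C_m$ be the components of color $i$ (counting isolated vertices as singleton components), so that they partition $V(K)$. If $m\le r-1$, then $C_1,\dots,C_m$ already form a monochromatic $(r-1)$-cover all of whose members have color $i$, and we are done. If instead $m\ge r$, partition $\{C_1,\dots,C_m\}$ into $r$ nonempty groups and let $P_1,\dots,P_r$ be the resulting unions; the complete $r$-partite graph $K'$ on $V(K)$ with parts $P_1,\dots,P_r$ satisfies $K'\subseteq K$, and every edge of $K'$ joins two distinct color-$i$ components, so no edge of $K'$ has color $i$. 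Thus $K'\in\mathcal{K}_r$ carries an $(r-1)$-coloring using the colors $[r]\setminus\{i\}$, and applying the bound $\tc_{r-1}(K')\le r-1$ produces a monochromatic $(r-1)$-cover of $V(K')=V(K)$ none of whose members has color $i$. This is exactly the claimed dichotomy.

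For the bound itself I would argue as follows. Fix a part, say $V_r$, and a vertex $v\in V_r$; since $K$ is complete $r$-partite, $v$ is adjacent to all of $V(K)\setminus V_r$, and these edges use the $r-1$ colors. For each color $c$ let $M_c$ be the color-$c$ component containing $v$. The $r-1$ components $M_1,\dots,M_{r-1}$ cover $\{v\}\cup(V(K)\setminus V_r)$, so the only vertices they can miss lie in $V_r\setminus\{v\}$; moreover a vertex $w\in V_r$ is missed only if $w$ disagrees with $v$ in color on every common neighbor $u\in V(K)\setminus V_r$. Hence it suffices to choose $v$, or to exchange one of the $M_c$ for a better component, so that this ``disagreement set'' is empty. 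For $r=2$ this is immediate, and for small $r$ the disagreement condition is rigid enough to be resolved by a short case analysis: with only two colors, disagreeing with $v$ determines the color, so two missed vertices must agree with each other and hence lie in common components. Here the trivial bound of Fact~\ref{fact:trivial} and the large-component guarantee of Theorem~\ref{thm:fur} supply the extra leverage.

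The main obstacle is the general case. The reduction above shows that the bound for parameter $r$ already implies $\tc_r(K_n)\le r-1$, i.e.\ the $\alpha=1$ case of Ryser's conjecture, which is open for $r\ge 6$; so no elementary argument can settle all $r$. Concretely, the difficulty is controlling the disagreement sets when there are many colors: ruling out a configuration in which every vertex of $V_r$ disagrees with the chosen $v$ on all of its neighbors is precisely the extremal obstruction that the full conjecture must overcome, and it is exactly the point where the affine-plane constructions make the bound tight. I therefore expect a complete proof only for small $r$, matching the range in which the $\alpha=1$ case of Ryser is known, with the multipartite strengthening over $\mathcal{K}_r$ handled by the vertex-and-exchange analysis sketched above.
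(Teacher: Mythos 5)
The statement you are proving is a \emph{conjecture}: the paper does not prove it in general, and indeed cannot, since (as you correctly observe) the bound $\tc_{r-1}(K)\le r-1$ for $K\in\mathcal{K}_r$ implies the $\alpha=1$ case of Ryser's conjecture, open for $r\ge 6$. Your reduction of the ``in particular'' dichotomy to the multipartite bound is exactly the paper's intended reasoning and is correct: if color $i$ has at most $r-1$ components they form the cover; otherwise the color-$i$ components induce a spanning complete $r$-partite (indeed at-least-$r$-partite) graph carrying only the remaining $r-1$ colors, and the bound applies. So on the structural level your proposal matches the paper.

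Where you diverge is in the attack on the bound itself, and here your sketch has a real gap even in the range where the paper does prove something. The paper establishes Conjecture~\ref{con:rpart} for $r\le 4$ by proving the \emph{stronger} Conjecture~\ref{con:rpart2} (fewer parts is harder, since every complete $r$-partite graph contains a spanning complete $(r-1)$-partite subgraph): concretely, Theorem~\ref{2partite} ($\tc_2(K)\le 2$ for all complete multipartite $K$) via the trichotomy of Lemma~\ref{2colorbip_nodiam}, and Theorem~\ref{3partite} ($\tc_3(K)\le 3$ for $K\in\mathcal{K}_3$) via a component meeting all three parts followed by an analysis of a $2$-colored blow-up of $C_6$. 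Your vertex-based argument --- fix $v\in V_r$, take its $r-1$ monochromatic components, and repair the missed vertices of $V_r$ --- does not close even for $r=3$ as sketched: besides the ``disagreement set'' $W$ (vertices whose pattern to $V(K)\setminus V_r$ is the complement of $v$'s), there can simultaneously exist vertices of $V_r$ lying in exactly one of $M_1,M_2$, and no single exchange of one component for another covers $W$, these vertices, and all of $V_1\cup\dots\cup V_{r-1}$ at once; resolving this is essentially the content of cases \ref{p1'}--\ref{p3'} of Lemma~\ref{2colorbip_nodiam}. The appeal to Fact~\ref{fact:trivial} and Theorem~\ref{thm:fur} for ``extra leverage'' is not substantiated and plays no role in the paper's proofs. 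So: the formal reduction is right, the assessment of the general difficulty is right, but the claimed small-$r$ proof is a sketch with a genuine hole where the paper's bipartite structure lemma and $3$-partite case analysis do the actual work.
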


In the dual (R1) language, Conjecture \ref{con:rpart} says that for every $r$-partite intersecting hypergraph, if some part $V_i$ has at least $r$ vertices, then there is a vertex cover of order at most $r-1$ which uses no vertices from $V_i$. 

Note that Conjecture \ref{con:rpart} implies the $\alpha=1$ case of Ryser's conjecture, but we will actually prove the following stronger conjecture for $r\leq 4$.

\begin{conjecture}\label{con:rpart2}
For all integers $r\geq 3$ and all $K\in \mathcal{K}_{r-1}$, $\tc_{r-1}(K)\leq r-1$.  In particular, this implies that for every $r$-coloring of a complete graph $K$ and every color $i\in [r]$, either there is a monochromatic $(r-2)$-cover consisting entirely of subgraphs of color $i$, or a monochromatic $(r-1)$-cover consisting entirely of subgraphs which don't have color $i$.
\end{conjecture}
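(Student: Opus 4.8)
The plan is to prove the displayed inequality $\tc_{r-1}(K)\le r-1$ for every $K\in\mathcal{K}_{r-1}$ and to read off the ``in particular'' dichotomy from it. First I would record a reduction so that I only need to treat the extremal number of parts. If $K'$ is complete $m$-partite with $m\ge r-1$ parts, then by repeatedly merging parts (that is, deleting all edges between two parts) I obtain a complete $(r-1)$-partite graph $K$ on the same vertex set with $E(K)\subseteq E(K')$; restricting the coloring to $E(K)$ and applying the bound to $K$ produces an $(r-1)$-cover whose components, being connected and monochromatic in $K\subseteq K'$, also cover $K'$. For the dichotomy, given an $r$-coloring of a complete graph and a color $i$, let $D_1,\dots,D_m$ be the color-$i$ components; since they partition $V$ they form the unique minimal color-$i$ cover, so if $m\le r-2$ the first alternative holds, while if $m\ge r-1$ all edges between distinct $D_j$ avoid color $i$, so the complete $m$-partite graph with parts $D_1,\dots,D_m$ is $(r-1)$-colored and the (extended) inequality gives an $(r-1)$-cover avoiding color $i$. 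It then remains to prove $\tc_{r-1}(K)\le r-1$ for $K\in\mathcal{K}_{r-1}$, which I would carry out for $r=3$ and $r=4$.

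For $r=3$, $K$ is a $2$-colored complete bipartite graph with parts $X,Y$ and I want two monochromatic components covering $V$. If some monochromatic component contains an entire part, say a color-$1$ component $R^{*}\supseteq Y$, then every vertex of $X\setminus R^{*}$ sends only color-$2$ edges (a color-$1$ edge would reach $Y\subseteq R^{*}$), so $X\setminus R^{*}$ lies in one color-$2$ component together with any fixed $y\in Y$, and these two components cover $V$. Otherwise no monochromatic component contains a whole part, which forces every vertex to see both colors. Fix $x_{0}\in X$ with color-$1$ and color-$2$ neighborhoods $Y_{1},Y_{2}$; the components $\mathrm{comp}_{1}(x_{0})$ and $\mathrm{comp}_{2}(x_{0})$ cover $\{x_{0}\}\cup Y$, and a vertex $x\in X$ is missed only if all its edges to $Y_{1}$ have color $2$ and all its edges to $Y_{2}$ have color $1$. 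If none is missed we are done; if some $x_{1}$ is missed, then $x_{1}$ is color-$1$-adjacent to all of $Y_{2}$, and $\mathrm{comp}_{1}(x_{0})\cup\mathrm{comp}_{1}(x_{1})$ covers $V$, since it contains $\{x_{0},x_{1}\}\cup Y$ and every remaining $x$, being bichromatic, has a color-$1$ edge to $Y_{1}$ or to $Y_{2}$ and hence joins one of the two color-$1$ components.

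For $r=4$, $K$ is a $3$-colored complete tripartite graph with parts $A,B,C$. The natural start is to fix $a_{0}\in A$ and take its three color components, which already cover $\{a_{0}\}\cup B\cup C$, so only vertices of $A$ can be missed. Writing $N_{1},N_{2},N_{3}$ for the color-neighborhoods of $a_{0}$ in $B\cup C$, a vertex $a\in A$ is missed exactly when all $a$–$N_{c}$ edges avoid color $c$ for each $c$. If no vertex of $A$ is missed we are done; otherwise a missed $a_{1}$ has this rigid pattern, and I would finish either by replacing one of $a_{0}$'s components by the same-colored component of $a_{1}$, or by passing to a residual bipartite subgraph on which one of the three colors is absent and invoking the two-color cover from the $r=3$ case. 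The main obstacle is precisely this last step: unlike $r=3$, where a missed vertex is forced to be complementary to $x_{0}$ in a single color, at $r=4$ a missed vertex can distribute the two ``wrong'' colors across $N_{1},N_{2},N_{3}$ in several ways, so closing the argument requires a genuine case analysis organized by which colors do or do not contain a component meeting two full parts.

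I expect the $r=4$ analysis to be the crux, for two structural reasons. First, there is no clean induction: in Conjecture~\ref{con:rpart2} the number of colors equals the number of parts, so deleting a color does not reduce the problem to complete $(r-2)$-partite graphs and deleting a part does not reduce the number of colors. Second, by the reduction above the inequality already implies the $\alpha=1$ case of Ryser's conjecture for this $r$ through Conjecture~\ref{con:rpart}, so any proof must be at least as strong as Tuza's verification for $r\le 5,\ \nu=1$; I would therefore expect to lean on the structure of the extremal configurations, namely truncated projective and affine planes of small order, both to organize the casework and to confirm that $r-1$ is the correct target.
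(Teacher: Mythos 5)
Your reduction to exactly $r-1$ parts by merging, your derivation of the ``in particular'' dichotomy from the cover bound, and your proof of the $r=3$ case are all correct; the $r=3$ argument is an elementary equivalent of what the paper extracts from Lemma~\ref{2colorbip_nodiam} (your Case~A is \ref{p1'}/\ref{p3'}, your missed-vertex analysis recovers the \ref{p2'} structure). The genuine gap is the $r=4$ case, i.e.\ Theorem~\ref{3partite}: $\tc_3(K)\leq 3$ for complete tripartite $K$. You correctly identify this as the crux and then stop at a sketch, explicitly deferring ``a genuine case analysis'' that you do not carry out. That case analysis is the entire content of the result --- in the paper it occupies several pages --- so what you have written is a plan, not a proof.

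Moreover, your proposed entry point for $r=4$ (fix $a_0\in A$, take its three color components, and try to repair the cover by swapping in components of a missed vertex $a_1$) is not the route the paper takes, and it is not clear it closes: once you have spent all three components on $a_0$, a missed $a_1\in A$ can distribute its two ``wrong'' colors over $N_1,N_2,N_3$ in many inequivalent ways, and different missed vertices need not be repairable by the same swap. The paper instead argues globally: first it disposes of the case where some monochromatic component contains an entire part (reducing to a $2$-colored complete bipartite graph and Theorem~\ref{2partite}); then it shows (Claim~\ref{clm:C1}) that some monochromatic component meets all three parts; and finally it observes that the six sets $V_i\cap V(C)$, $V_i\setminus V(C)$ form a $2$-colored blow-up of $C_6$, which it covers with three monochromatic components by applying Lemma~\ref{2colorbip_nodiam} to two of the ``long'' bipartite pairs and running a structured case analysis. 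If you want to complete your writeup, you should either carry out your vertex-centered casework in full or adopt this reduction to the $2$-colored $C_6$ blow-up, which organizes the cases far more cleanly than tracking the color patterns of individual missed vertices.
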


A special case of Conjecture \ref{con:SnotS} obtained by setting $|S|=\ceiling{r/2}$ is the following.

\begin{conjecture}\label{con:r/2}
For all integers $r\geq 2$, in every $r$-coloring of a complete graph $K$ there exists a monochromatic $(r-1)$-cover such that the monochromatic subgraphs have at most $\ceiling{r/2}$ different colors.
\end{conjecture}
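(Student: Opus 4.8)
The plan is to deduce Conjecture \ref{con:r/2} from Conjecture \ref{con:SnotS}, exactly as the preceding remark suggests. Fix any set $S\subseteq[r]$ with $|S|=\ceiling{r/2}$, so that $|[r]\sm S| = r-\ceiling{r/2} = \floor{r/2}\leq \ceiling{r/2}$. Given an $r$-coloring of a complete graph $K$, Conjecture \ref{con:SnotS} applied to this $S$ produces a monochromatic $(r-1)$-cover whose subgraphs all have colors in $S$, or one whose subgraphs all have colors in $[r]\sm S$. In the first case the cover uses at most $|S|=\ceiling{r/2}$ colors, and in the second it uses at most $|[r]\sm S|=\floor{r/2}\leq\ceiling{r/2}$ colors; either way we obtain the desired cover. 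Since Conjecture \ref{con:SnotS} is established for $r\leq 4$, this immediately settles Conjecture \ref{con:r/2} for $r\leq 4$.

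For a direct attack valid for all $r$, I would first dispose of an easy case. If some color class $K_i$ has at most $r-1$ connected components, then those components already form a monochromatic $(r-1)$-cover of $K$ using the single color $i$, which is well within the budget $\ceiling{r/2}$; so we may assume that for every $i\in[r]$ the graph $K_i$ has at least $r$ components. In this ``spread-out'' regime one still knows from Theorem \ref{thm:fur} (with $\alpha=1$) that there is a monochromatic component on at least $n/(r-1)$ vertices, and the hope would be to select a small collection of colors and greedily assemble their large components into a cover of order at most $r-1$.

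The main obstacle is precisely this assembly step. Although Conjecture \ref{con:r/2} gives us the freedom to choose \emph{which} $\ceiling{r/2}$ colors to use -- making it a priori weaker than fixing $S$ in advance as in Conjecture \ref{con:SnotS} -- I do not see how to exploit this freedom so as to control simultaneously the two competing quantities: the number of monochromatic pieces (at most $r-1$) and the number of distinct colors they use (at most $\ceiling{r/2}$). Certifying that some well-chosen set of $\ceiling{r/2}$ colors admits an $(r-1)$-cover is, in the spread-out regime, essentially the content of a single instance of Conjecture \ref{con:SnotS}. Consequently I expect the honest outcome to be a proof of Conjecture \ref{con:r/2} for $r\leq 4$ via the reduction above, with the general case remaining tied to Conjecture \ref{con:SnotS} and hence open for $r\geq 5$.
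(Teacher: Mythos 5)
Your reduction of Conjecture \ref{con:r/2} to Conjecture \ref{con:SnotS} with $|S|=\ceiling{r/2}$ is exactly the route the paper takes, and the cases $r\leq 4$ you obtain this way are correct. But you stop one case short of what the paper actually records (Table \ref{tab:4} claims $r\leq 5$), and the reason is a point you half-notice and then discard: Conjecture \ref{con:r/2} does not need the full strength of Conjecture \ref{con:SnotS}, only the single instance $|S|=\ceiling{r/2}$, which by passing to the complement is the instance $|S|=\floor{r/2}$. For $r=5$ this is $|S|=2$, and the $|S|=2$ instance is precisely Theorem \ref{thm:r3-5}, which the paper proves for all $r\in\{3,4,5\}$ (the $r=5$ case being the signature analysis of the $[3]$-colored $K_5$ induced on an independent set of $G_{4,5}$). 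A $2$-element $S$ gives a cover using either $2\leq\ceiling{5/2}$ colors or $3=\ceiling{5/2}$ colors, so Conjecture \ref{con:r/2} holds for $r=5$. What remains open for $r=5$ is the full Conjecture \ref{con:SnotS}, because that would additionally require the $|S|=1$ case, i.e.\ Conjecture \ref{con:rpart} for $r=5$; but that case is irrelevant to Conjecture \ref{con:r/2}. So the honest outcome of the reduction is $r\leq 5$, not $r\leq 4$. Your side remark that a color class with at most $r-1$ components already yields a one-color cover is correct but, as you acknowledge, does not advance the general case, which indeed remains open for $r\geq 6$.
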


In the dual (R1) language, in every $r$-partite intersecting hypergraph, there is a vertex cover of order at most $r-1$ which is made up of vertices from at most $\ceiling{r/2}$ parts.

We give an example to show that $\ceiling{r/2}$ cannot be reduced in Conjecture \ref{con:r/2}.

\begin{example}\label{ex:r/2}
For all $r\geq 3$ and $n\geq r\binom{r}{\floor{r/2}+1}$, there exists an $r$-coloring of $K_n$, such that every monochromatic cover of $K_n$ with at most $r-1$ components consists of components of at least $\ceiling{r/2}$ different colors.
\end{example}

\begin{proof}
Let $\mathcal{A} = \binom{[r]}{\floor{r/2}+1}$ (that is the family of subsets of $[r]$ with just over half the elements).  
Now let $V$ be a set of at least $r\binom{r}{\floor{r/2}+1}$ vertices and let $\{V_X: X\in \mathcal{A}\}$ be a partition of $V$ into sets of order at least $r$ which are indexed by the elements in $\mathcal{A}$.  For all $u\in V_X$, $v\in V_Y$, let $uv$ be an edge of some arbitrarily chosen color $i\in X\cap Y$ (which is possible since $X\cap Y\neq \emptyset$ for all $X,Y\in \mathcal{A}$).  We now have an $r$-colored complete graph $K$ on vertex set $V$.

\begin{figure}[ht]
\centering
\includegraphics{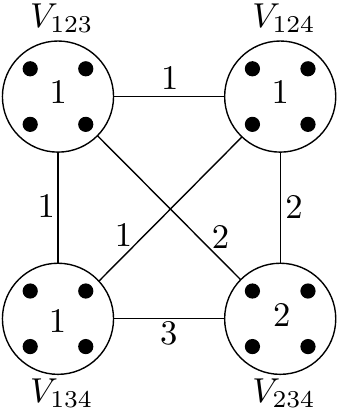}
\caption{Example \ref{ex:r/2} in the case $r=4$.}\label{fig:r/2}
\end{figure}

Suppose for contradiction that there exists $S \subseteq [r]$ with $|S|=\ceiling{r/2}-1$ and that $K$ has a monochromatic $(r-1)$-cover $\mathcal{T}$ such that all of the subgraphs in $\cT$ have a color in $S$.  Since $r-(\ceiling{r/2}-1)=\floor{r/2}+1$, there exists $X\in \mathcal{A}$ such that $X = [r] \setminus S$.  This means that there are no edges having a color from $S$ which are incident with a vertex in $V_X$.  Since there are at most $r-1$ components in $\mathcal{T}$ all having colors from $S$ and there are at least $r$ vertices in $V_X$, this contradicts the fact that $\cT$ was the desired monochromatic cover.
\end{proof}

\subsection{Monochromatic covers with subgraphs of bounded diameter}

Now we move on to some results which can only be stated in terms of $r$-colored graphs (R2).

The results mentioned here are proved in Section \ref{sec:diameter}.

Let $G$ be a graph. For vertices $u,v\in V(G)$, let $d(u,v)$ denote the length of the shortest $u,v$-path in $G$. If there is no $u,v$-path, we write $d(u,v) = \infty$.  The \emph{diameter} of $G$, denoted $\diam(G)$, is the smallest integer $d$ such that $d(u,v)\leq d$ for all $u,v\in V(G)$.  If $G$ is disconnected, we say $\diam(G)=\infty$.  The \emph{radius} of $G$, denoted $\rad(G)$, is the smallest integer $r$ such that there exists $u\in V(G)$ such that $d(u,v)\leq r$ for all $v\in V(G)$.

It is well known that a graph or its complement has diameter at most 3; in other words, in every 2-coloring of a complete graph $K$, there is a spanning monochromatic subgraph of diameter at most 3.

Mili\'cevi\'c conjectured an extension of this to $r$-colors which strengthens the $\alpha=1$ case of Ryser's conjecture. 

\begin{conjecture}[Mili\'cevi\'c \cite{M2}]\label{con:ryserdiametercomplete}
For all $r\geq 2$, there exists $d=d(r)$ such that in every $r$-coloring of a complete graph $K$, there exists a monochromatic $(r-1)$-cover consisting of subgraphs of diameter at most $d$.
\end{conjecture}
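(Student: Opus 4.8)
The plan is to build on the $r=2$ base case---that a graph or its complement has diameter at most $3$---and to attack the conjecture by \emph{upgrading} a component cover into a bounded-diameter cover. First I would record the diameter version of Fact \ref{fact:trivial}: fixing any vertex $v$ and letting $B_i$ be the set of color-$i$ neighbours of $v$, the $r$ stars $\{v\}\cup B_i$ form a monochromatic cover of diameter at most $2$ using $r$ pieces. Thus the entire difficulty is to eliminate one color while keeping the diameter bounded, which is precisely the diameter-controlled form of the $\alpha=1$ case of Ryser's conjecture (Conjecture \ref{con:ryser}).

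The main engine I would try to establish is a \emph{bounded-diameter large component} lemma: in every $r$-coloring of $K$ there is a monochromatic \emph{double star} (a subgraph of diameter at most $3$) on a constant fraction of the vertices, ideally $\frac{n}{r-1}$, mirroring F\"uredi's Theorem \ref{thm:fur} but now with diameter control. Given such a lemma, the strategy is to take a component cover $\{C_1,\dots,C_{r-1}\}$ guaranteed by the $\alpha=1$ case of Ryser's conjecture (known for $r\leq 5$) and replace each $C_j$, say of color $j$, by a double star $D_j\subseteq C_j$ obtained from a well-chosen edge of color $j$ inside $C_j$. The crux is then to prove that $\bigcup_j D_j$ still covers $V(K)$: a vertex $w$ may lie in $C_j$ yet be joined to the centers of $D_j$ only through a long color-$j$ path, so $w$ need not lie in $D_j$. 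Here I would exploit the completeness of $K$---if $w$ is missed by every $D_j$, then for each $j$ both edges from $w$ to the two centers of $D_j$ avoid color $j$, which rigidly constrains the colors at $w$ and, for a suitable choice of centers, should force a contradiction.

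I expect the main obstacle to be twofold. First, bounded-diameter subgraphs do not compose, so a clean induction on $r$ is unavailable: deleting a bounded-diameter monochromatic piece from $K$ leaves a coloring that still uses all $r$ colors, so the number of colors does not drop and the recursion fails to close. Second, and more fundamentally, even the diameter-free statement that $K$ admits a monochromatic $(r-1)$-cover is known only for $r\leq 5$, so the full conjecture is out of reach by this route; a realistic goal is therefore to prove it for small $r$ by converting the existing (case-analysis) proofs of the $\alpha=1$ case into double-star covers, and to isolate the extremal colorings in which genuinely long monochromatic components occur. The payoff of concentrating on those extremal configurations is that they are essentially governed by (truncated) projective and affine planes, whose geometry is rigid enough that the ``long path but no short connection'' obstruction can be ruled out directly.
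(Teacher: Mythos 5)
There is a genuine gap, and it is located exactly where you flag the ``crux.'' Your plan is to shrink each component $C_j$ of a monochromatic $(r-1)$-cover to a double star $D_j$ and then argue that a vertex $w$ missed by every $D_j$ yields a contradiction because, for each $j$, both edges from $w$ to the two centers of $D_j$ avoid color $j$. But this constraint is very weak: it forbids one color on each of $2(r-1)$ edges, leaving $r-1$ admissible colors per edge, so nothing forces a contradiction. Worse, the target configuration cannot exist in general: Example \ref{ex:random3} exhibits (for large $n$) a $3$-coloring of $K_n$ with no monochromatic $2$-cover by trees of diameter at most $3$, i.e.\ no cover by two double stars. So any argument that insists on double stars is refuted already at $r=3$; diameter $4$ is genuinely necessary. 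Your proposed ``engine'' is also not available as stated: a monochromatic double star on $n/(r-1)$ vertices is precisely the open Problem \ref{prob:Gydiam}; Letzter's theorem only supplies a triple star (diameter $4$). Finally, even granting a large bounded-diameter piece, ``replace $C_j$ by $D_j\subseteq C_j$'' gives no mechanism for re-covering the vertices of $C_j\setminus D_j$ with the remaining pieces, and you do not supply one.

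For comparison, the paper does not attempt to upgrade a component cover at all. For $r=3$ (Theorem \ref{thm:r3complete}) and $r=4$ (Theorem \ref{thm:r4complete}) it argues directly from a fixed vertex $x$: partition $V\setminus\{x\}$ into the color classes $A_i$ of the edges at $x$, define $B_{ij}\subseteq A_i$ as the vertices sending no color-$j$ edge to $A_j$, observe that $[B_{ij},B_{ji}]$ is complete in the remaining color(s), and run a case analysis on which of these sets are empty or nested. Each case produces explicit trees of diameter at most $4$ (for $r=3$) or subgraphs of diameter at most $6$ (for $r=4$, with the help of a structural lemma on $2$-colored complete bipartite graphs). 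The bounded diameter comes from the fact that every piece is built from $x$, the sets $A_i$, and one or two additional layers, not from shrinking large components. If you want to salvage your outline, the realistic fix is to abandon double stars in favor of diameter-$4$ trees and to replace the ``rigid constraint at $w$'' step with an explicit structural dichotomy of the kind the paper uses.
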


Mili\'cevi\'c proved that in every $3$-coloring of a complete graph $K$, there is a monochromatic $2$-cover consisting of subgraphs of diameter at most 8 \cite{M1}, and in every $4$-coloring of a complete graph $K$, there is a monochromatic $3$-cover consisting of subgraphs of diameter at most 80 \cite{M2}.

For the case $r=3$, we improve the upper bound on the diameter from 8 to 4.  In the case $r=4$, we improve the upper bound on the diameter from 80 to 6 while at the same time giving a significantly simpler proof.  

\begin{theorem}\label{thm:diam}~
\begin{enumerate}
\item In every $3$-coloring of a complete graph $K$, there is a monochromatic $2$-cover consisting of trees of diameter at most 4.

\item In every $4$-coloring of a complete graph $K$, there is a monochromatic $3$-cover consisting of subgraphs of diameter at most 6.
\end{enumerate}
\end{theorem}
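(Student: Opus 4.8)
The plan is to prove both parts by finding, in an $r$-colored complete graph $K$, a small family of vertices whose monochromatic neighborhoods (or second neighborhoods) cover $V(K)$ and which span subgraphs of controlled diameter. The key structural idea is that a star centered at a vertex $v$ in color $i$ has diameter at most $2$, and if we take the closure of such a star within its monochromatic component (by adding one more layer of edges) we get a subgraph of diameter at most $4$; this is the natural unit out of which we build covers.

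For part (1), the starting point is the Erd\H{o}s--Rado fact $\tc_2(K)=1$ (or equivalently Theorem \ref{dualkonig}). First I would fix an arbitrary vertex $v$ and consider its three monochromatic stars $S_1, S_2, S_3$, where $S_i$ consists of all vertices joined to $v$ by a color-$i$ edge; these three sets partition $V(K)\setminus\{v\}$. The goal is a monochromatic $2$-cover of diameter at most $4$. The main case analysis is driven by how large the $S_i$ are and how the colors behave between them: if two of the color classes are ``spread out'' then one can use the spanning-monochromatic-subgraph-of-diameter-$3$ result inside an induced complete subgraph on $S_i\cup S_j$ to capture most of the vertices with one component of bounded diameter, and then patch in the remaining part with a single star centered at $v$. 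Concretely, I expect the argument to show that one can choose a color $i$ and the star at $v$ of color $i$ as the first tree (diameter $\le 2$), and then cover the complement $V(K)\setminus (S_i\cup\{v\})$ with a single monochromatic connected subgraph of the remaining two colors of diameter at most $4$, obtained by applying the two-coloring structure to the induced complete graph on those vertices and then reaching $v$'s star through a common neighbor.

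For part (2), with four colors and target diameter $6$, I would again center the analysis at a vertex $v$, whose four monochromatic stars $S_1,\dots,S_4$ partition $V(K)\setminus\{v\}$. The strategy is to split $[4]$ into two pairs of colors and use part (1)-style reasoning on the induced complete subgraphs, or alternatively to invoke Conjecture \ref{con:r/2}-type reductions to cut the number of relevant colors. The extra diameter budget ($6$ instead of $4$) comes from needing up to three ``hops'': a central star, a bounded-diameter component obtained from a two-color substructure, and a connecting layer. The cover will consist of three subgraphs, matching $\tc_4 \le 3$, and the diameter bound follows from bounding the number of times one passes between distinct star-layers, each contributing a bounded additive constant to the distance.

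The main obstacle I anticipate is controlling the diameter while simultaneously keeping the number of components down to $r-1$: it is easy to cover $V(K)$ with $r-1$ monochromatic connected subgraphs of unbounded diameter (this is just the $\alpha=1$ case of Ryser, known for these small $r$), and it is easy to produce bounded-diameter monochromatic pieces, but forcing \emph{both} at once requires carefully choosing which vertex to center at and which color classes to merge. The delicate step is the case where no single color class $S_i$ is large and the cross-colors between the $S_i$ are ``balanced,'' since then neither the direct star nor a clean two-color reduction immediately yields a bounded-diameter component; resolving this will require a short argument (likely a pigeonhole on cross-edge colors between a well-chosen pair $S_i, S_j$) to locate a monochromatic subgraph through which distances stay within the stated bound of $4$ (resp.\ $6$).
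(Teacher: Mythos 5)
Your overall framing (fix a vertex $x$, partition the rest into its color neighborhoods $A_1,\dots,A_r$, and build the cover out of pieces anchored at these stars) matches the paper's starting point, but the step you rely on to finish does not go through. You propose to take the color-$i$ star at $x$ as one tree and then cover $A_j\cup A_k$ by a single monochromatic bounded-diameter subgraph obtained ``by applying the two-coloring structure to the induced complete graph on those vertices.'' The induced complete graph on $A_j\cup A_k$ is \emph{not} $2$-colored --- its edges may use all three colors, so neither the Erd\H{o}s--Rado fact nor the diameter-$3$ complementation result applies there. Worse, the strategy fails as stated: color each of the three $K_4$'s induced on $A_j\cup A_k$ (with $|A_i|=2$ for all $i$) as three perfect matchings in three distinct colors; then for every choice of $i$ there is no single monochromatic connected subgraph covering $A_j\cup A_k$, so no cover of the form ``star at $x$ in color $i$ plus one piece on the rest'' exists, even though the theorem's conclusion still holds (via components that mix vertices from different $A_i$'s). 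Your part (2) sketch inherits the same problem and adds a reliance on Conjecture~\ref{con:r/2}-type reductions that the paper does not need and that would have to be justified separately.

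The missing idea is the paper's refinement of the star partition: for each ordered pair $(i,j)$ one sets $B_{ij}=\{v\in A_i: v \text{ sends no color-}j\text{ edge to } A_j\}$, and observes that every edge of $[B_{ij},B_{ji}]$ is forced to avoid colors $i$ and $j$ --- so for $r=3$ this bipartite graph is complete in the third color, and for $r=4$ it is a genuinely $2$-colored complete bipartite graph. This is where a legitimate two-color structure appears, and it lives on these $B$-sets, not on $A_j\cup A_k$. The $r=3$ proof then closes by a short case analysis on whether the $B_{ij}$ are empty, whether $B_{ij}\setminus B_{ik}\neq\emptyset$, and whether $A_i=B_i$, each case yielding two trees of diameter at most $4$ that typically combine vertices from several $A_i$'s. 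The $r=4$ proof runs the analogous (much longer) analysis, reducing via three structural claims to a final configuration where Lemma~\ref{lem:bip2} is applied to the $2$-colored bipartite graphs $[B_{12},B_{21}]$ and $[B_{13},B_{31}]$. Without the $B_{ij}$ device (or some substitute that isolates a provably $2$-colored substructure), the balanced case you flag at the end of your proposal cannot be resolved by a pigeonhole on cross-edge colors alone.
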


We also conjecture a generalization of Ryser's conjecture for graphs with arbitrary independence number.  

\begin{conjecture}\label{con:ryserdiameter}
For all $\alpha\geq 1$, there exists $d=d(\alpha)$ such that for all $r\geq 2$, if $G$ is a graph with $\alpha(G)=\alpha$, then in every $r$-coloring of $G$, there exists a monochromatic $(r-1)\alpha$-cover consisting of subgraphs of diameter at most $d$.  
\end{conjecture}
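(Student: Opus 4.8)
Since Conjecture~\ref{con:ryserdiameter} with the diameter requirement dropped is exactly (R2), any unconditional proof would settle Ryser's conjecture itself, which is open for $r\ge 4$; so the realistic plan is to \emph{reduce} the statement to its $\alpha=1$ case, namely Mili\'cevi\'c's Conjecture~\ref{con:ryserdiametercomplete}, by induction on $\alpha$ while keeping the diameter bound independent of $r$. First I would record the easy diameter-$2$ analogue of Fact~\ref{fact:trivial}: if $I=\{v_1,\dots,v_\alpha\}$ is a maximum independent set then, by maximality, $I$ dominates $V(G)$, so the $r\alpha$ monochromatic stars centered at the vertices of $I$ form a monochromatic cover of order $r\alpha$ whose members have diameter at most $2$. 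The whole content of the conjecture is therefore to shave a full factor of $\alpha$ off this bound \emph{without letting the diameter grow with $r$.}

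The backbone of the induction would be a ``peel off a closed neighborhood'' step. Fix any vertex $v$ and set $W=V(G)\setminus N[v]$. Any independent set of $G[W]$ together with $v$ is independent in $G$, so $\alpha(G[W])\le \alpha-1$. Applying the inductive hypothesis to the $r$-colored graph $G[W]$ produces a monochromatic cover of $W$ of order at most $(r-1)(\alpha-1)$ by subgraphs of diameter at most $d(\alpha-1)$, and covering the dominated block $N[v]$ by the $r$ monochromatic stars at $v$ handles the rest. This yields a monochromatic cover of $V(G)$ of order $(r-1)(\alpha-1)+r=(r-1)\alpha+1$ and diameter at most $\max\{d(\alpha-1),2\}$; in particular it already improves the trivial bound to $(r-1)\alpha+1$ with a diameter bound independent of $r$.

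The hard part is the stubborn extra $+1$: to hit the target $(r-1)\alpha$ I must cover each dominated block $N[v]$ with only $r-1$ (rather than $r$) monochromatic subgraphs of $r$-independent diameter. When $G[N[v]]$ is complete this is \emph{precisely} the $\alpha=1$ statement, i.e.\ Conjecture~\ref{con:ryserdiametercomplete}, which is known only for $r\le 4$ by Theorem~\ref{thm:diam}. In general, though, $N(v)$ carries non-edges, so the induced colored graph on $N[v]$ is only vertex-dominated rather than complete, and the saving becomes a genuinely local Ryser-type problem: produce a single ``saved'' bounded-diameter monochromatic subgraph from the colored graph around a universal vertex. This local saving, simultaneously a strengthening of Mili\'cevi\'c's and of Ryser's conjectures, is where I expect the real obstruction to sit.

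Granting it, the induction closes cleanly with $d(\alpha)=\max\{d(\alpha-1),d(1)\}$, so the $r$-independence of the diameter is inherited from the base case; thus the proposal splits the problem into (i) Mili\'cevi\'c's Conjecture~\ref{con:ryserdiametercomplete} with a bound $d(1)$ not depending on $r$, and (ii) its extension to vertex-dominated colored graphs. Short of resolving these, I would aim for partial results by combining the peeling step above with Theorem~\ref{thm:diam} and the large-component bound of F\"uredi (Theorem~\ref{thm:fur}) to control diameters when the number of colors is small.
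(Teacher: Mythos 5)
The statement you are proving is a conjecture, and neither the paper nor your proposal establishes it: the paper proves only the case $\alpha=2=r$ (Theorem \ref{thm:diamalpha}, i.e.\ Theorem \ref{thm:r2a2}), and your argument is a reduction, not a proof. The reduction itself lands on problems at least as hard as the one you started with: your base case is the $\alpha=1$ instance of Conjecture \ref{con:ryserdiameter} with a diameter bound \emph{independent of $r$}, which is strictly stronger than Mili\'cevi\'c's Conjecture \ref{con:ryserdiametercomplete} (where $d=d(r)$) and is open for $r\geq 5$ even in the weaker form; and your inductive step additionally requires covering a vertex-dominated (non-complete) colored block $N[v]$ with $r-1$ bounded-diameter monochromatic subgraphs, a further strengthening for which there is no evidence in the paper. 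A smaller but real inaccuracy: the claim that peeling ``already improves the trivial bound to $(r-1)\alpha+1$'' is conditional on the inductive hypothesis, i.e.\ on the conjecture holding at $\alpha-1$. Unconditionally, iterating the peel $\alpha$ times just reproduces the trivial $r\alpha$ stars of Fact \ref{fact:trivial}; and even granting the known cases $r\leq 4$ of the $\alpha=1$ statement (Theorem \ref{thm:diam}), the recursion $f(\alpha)\leq f(\alpha-1)+r$ with $f(1)\leq r-1$ yields only $r\alpha-1$, not $(r-1)\alpha+1$.

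It is also worth noting that your strategy would not recover the one case the paper actually proves. For $\alpha=2$, $r=2$, peeling $N[x]$ and covering it with $2$ stars while handling the rest by the $\alpha=1$ case gives $3$ components, whereas Theorem \ref{thm:r2a2} achieves the target of $2$ components of diameter at most $6$. The paper does this by using \emph{both} vertices of the independent set $\{x,y\}$ at once: it partitions $V(G)$ into $A_x=N(x)\setminus N(y)$, $A_y=N(y)\setminus N(x)$, and $A=N(x)\cap N(y)$ (further refined by the colors toward $x$ and $y$), observes that $G[A_x\cup\{x\}]$ and $G[A_y\cup\{y\}]$ are cliques to which Proposition \ref{prop:folk} applies, and then does casework to merge pieces into just two bounded-diameter subgraphs. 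The lesson is that the factor of $\alpha$ you need to save cannot be extracted one closed neighborhood at a time; the known arguments exploit the global structure of a maximum independent set, and any honest attack on this conjecture should start there rather than from a single-vertex peel.
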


Note that in Conjecture \ref{con:ryserdiametercomplete}, it is conjectured that $d$ depends on $r$.  We speculate that it is even possible to choose a $d$ which is independent of both $r$ and $\alpha$, but we have no concrete evidence to support this.  

We prove Conjecture \ref{con:ryserdiameter} for $\alpha=2=r$.

\begin{theorem}\label{thm:diamalpha}
Let $G$ be a graph with $\alpha(G)=2$.  In every 2-coloring of $G$ there is a monochromatic $2$-cover consisting of subgraphs of diameter at most 6.
\end{theorem}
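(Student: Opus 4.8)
The plan is to produce the two required subgraphs as balls of radius at most $3$ in the two color classes, since a ball of radius $3$ (together with a BFS tree) is a connected subgraph of diameter at most $6$. Write the two colors as red and blue, let $R$ and $B$ denote the red and blue subgraphs of $G$, let $d_R$ and $d_B$ denote distances in $R$ and $B$, and record the basic structural fact that $\alpha(G)=2$ is equivalent to $\overline{G}$ being triangle-free. Thus any three vertices span an edge of $G$, and in particular the set of non-neighbors of any vertex is a clique of $G$.

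First I would fix a vertex $v$ and partition $V(G)\setminus\{v\}$ into $A$ (red neighbors of $v$), $B$ (blue neighbors of $v$), and $C$ (non-neighbors of $v$). Since $\{v,c,c'\}$ cannot be independent, $C$ is a clique. The vertices of $A$ lie at red-distance $1$ from $v$ and those of $B$ at blue-distance $1$. A vertex $c\in C$ is easy to place whenever it has a red edge into $A$ (so $d_R(v,c)\le 2$) or a blue edge into $B$ (so $d_B(v,c)\le 2$); call $c$ \emph{bad} otherwise. The whole problem then reduces to covering the bad vertices of $C$ while using only the red ball of radius $3$ about $v$ and a blue ball of radius $3$ about $v$ (or about a second, carefully chosen center).

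The heart of the argument is therefore a case analysis on a bad vertex $w$, for which every $w$--$A$ edge is blue or absent and every $w$--$B$ edge is red or absent. Here I would hunt for a monochromatic path of length $3$ from $w$ to $v$: a blue path $w\,a\,b\,v$ with $a\in A$, $b\in B$ (using that $wa$ and $vb$ are blue), or a red path $w\,b\,a\,v$ with $b\in B$, $a\in A$. Using $\alpha(G)=2$ I would argue that if both kinds of path fail, then the set of blue-neighbors of $w$ in $A$ and the set of red-neighbors of $w$ in $B$ are completely non-adjacent, and triangle-freeness of $\overline{G}$ forces each of these two sets to be a clique. This rigid structure should then let me cover the remaining bad vertices either through the clique $C$ or by recentering one of the two balls (for instance at $w$ itself, or at another vertex of $C$).

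The main obstacle is precisely this last point: unlike the generic vertices of $C$, bad vertices need not be reachable from $v$ within distance $3$ in either color, and a $C_4$-type configuration (with $\overline{G}$ a matching on four of the vertices) already shows that two balls centered at the same vertex can fail. So the delicate part is choosing the two centers and verifying, via the triangle-free structure of $\overline{G}$, that the red radius-$3$ ball and the blue radius-$3$ ball jointly catch every bad vertex. I expect the bookkeeping of colors and non-edges around the bad vertices, rather than any single clever idea, to be where the real work lies.
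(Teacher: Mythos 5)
Your proposal sets up a plausible framework but stops exactly where the proof has to begin. You fix one vertex $v$, split the rest into red neighbors, blue neighbors, and the clique $C$ of non-neighbors, and observe that the only problematic vertices are the ``bad'' ones in $C$ reachable from $v$ in neither color within distance $2$. You then describe what you would \emph{hunt for} (a monochromatic path of length $3$ back to $v$) and assert that the resulting rigid structure ``should'' let you finish by recentering one of the balls --- and you explicitly flag that verifying this is ``where the real work lies.'' That verification is the entire content of the theorem: without it you have not shown that two monochromatic subgraphs of diameter at most $6$ suffice, only that the question reduces to a configuration you have not analyzed. Note also that insisting the two subgraphs be balls of radius $3$ around one or two centers is a genuinely stronger demand than diameter at most $6$, and nothing in your sketch shows such centers exist.

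For comparison, the paper's proof starts not from an arbitrary vertex but from an independent pair $\{x,y\}$, which exists since $\alpha(G)=2$. This immediately yields two cliques $A_x\cup\{x\}$ and $A_y\cup\{y\}$ (where $A_x=N(x)\setminus N(y)$, $A_y=N(y)\setminus N(x)$), to each of which the folklore fact about $2$-colored complete graphs (one color class has diameter at most $3$) can be applied; the common neighborhood $N(x)\cap N(y)$ is then split into four parts $A_{ij}$ according to the colors of the edges to $x$ and to $y$, and the case analysis glues these pieces into two connected monochromatic subgraphs whose diameters are bounded by explicitly chaining through $x$, $y$, and a few auxiliary vertices (including, in the hardest subcase, a second application of the folklore fact to a clique $Z$ of vertices non-adjacent to a chosen $w\in A_{22}$). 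Your single-center setup forgoes both of these levers --- the two cliques coming from the independent pair and the freedom to use diameter-$6$ subgraphs that are not radius-$3$ balls --- so even granting the reduction to bad vertices, the remaining argument would have to be rebuilt essentially from scratch.
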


Gy\'arf\'as raised the following problem which would strengthen Theorem \ref{thm:fur} in the case $\alpha=1$.

\begin{problem}[Gy\'arf\'as \cite{GySurv1}]\label{prob:Gydiam}
In every $r$-coloring of the edges of $K_n$, there exists a monochromatic subgraph of diameter at most 3 on at least $\frac{n}{r-1}$ vertices.  Perhaps the subgraph can even be chosen to be a tree of diameter at most 3 (which is necessarily a double star).
\end{problem}

Improving on earlier results of Mubayi \cite{Mub} and Ruszink\'o \cite{Rus}, Letzter almost solved Problem \ref{prob:Gydiam}.

\begin{theorem}[Letzter \cite{L}]
In every $r$-coloring of the edges of $K_n$, there exists a monochromatic tree of diameter at most 4 (in fact, the tree can be chosen to be a triple star) on at least $\frac{n}{r-1}$ vertices.
\end{theorem}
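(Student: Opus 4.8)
The plan is to reduce the statement to finding, in some color $i$, a vertex $c$ whose color-$i$ ball of radius $2$ is large, and then to locate such a ball by a win--win argument driven by Füredi's theorem (Theorem~\ref{thm:fur}). A breadth-first-search tree truncated at depth $2$ is a tree of radius $2$, hence of diameter at most $4$, so it suffices to produce a color $i$ and a center $c$ with $|\{v : d_i(c,v)\le 2\}|\ge n/(r-1)$, where $d_i$ denotes distance in the color-$i$ graph $G_i$. I would prove this $\diam\le 4$ version first and treat the refinement to an honest triple star (three internal vertices on a path) as a separate pruning step at the end, since a radius-$2$ tree may have many internal vertices.

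First I would invoke Theorem~\ref{thm:fur} with $\alpha(K_n)=1$ to obtain a color, say color $1$, and a color-$1$ component $C$ with $|C|\ge n/(r-1)$. Fix $c\in C$ and layer $C$ by BFS in $G_1$: $L_0=\{c\}, L_1, L_2,\dots$, where every color-$1$ edge joins equal or consecutive layers. If $L_0\cup L_1\cup L_2$ already has at least $n/(r-1)$ vertices we are done, since the depth-$2$ BFS tree is a color-$1$ tree of diameter at most $4$ covering these vertices. So we may assume the component is ``long,'' with most of its mass beyond the first two layers.

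In the long case the point is that non-consecutive layers carry no color-$1$ edge between them: for any index $a$, writing $L_{\ge a+2}=\bigcup_{b\ge a+2}L_b$, the complete bipartite graph $[L_a, L_{\ge a+2}]$ is colored using only $\{2,\dots,r\}$. This is the lever for a recursion on the number of colors: applying an $(r-1)$-color bipartite analogue of the theorem to $[L_a, L_{\ge a+2}]$ yields a monochromatic diameter-$\le 4$ tree in one of the colors $2,\dots,r$. To make the induction self-contained I would set up and carry along the natural complete-bipartite version of the statement in parallel, since the recursion feeds bipartite instances back into itself.

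The main obstacle is quantitative rather than structural. A single block $L_a\cup L_{\ge a+2}$ is typically too small: to beat $n/(r-1)$ through an $(r-1)$-color bound of the form $m/(r-2)$ one needs the recursed set to contain a $\tfrac{r-2}{r-1}$-fraction of the vertices, which no one block supplies. The real work is therefore a global averaging over the layering that either exposes one huge short ball (a star or double star already meeting the bound) or aggregates enough color-$1$-free mass to push a lower-color instance over the threshold; tuning this so the constant is exactly $1/(r-1)$ is the delicate heart of the argument. It is also where diameter $4$ is forced: diameter $3$ (double stars) cannot reach $n/(r-1)$, as the self-complementary $2$-coloring of $K_5$ by two copies of $C_5$ shows, whereas each $C_5$ does contain a spanning triple star. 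Finally, upgrading the radius-$2$ tree to a genuine triple star requires an additional pruning step concentrating the second-layer mass onto at most two first-layer vertices, which I would carry out once the counting is settled.
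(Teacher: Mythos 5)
This statement is quoted from Letzter's paper \cite{L}; the survey you are reading contains no proof of it (its only in-house connection is the remark that Theorem~\ref{thm:diam}(i) recovers the case $r=3$), so your proposal has to stand on its own --- and as written it is an outline whose decisive step is absent. The win--win recursion you set up cannot close quantitatively, for two separate reasons that you partly notice but do not repair. First, all of your blocks $[L_a, L_{\ge a+2}]$ live inside the color-$1$ component $C$, and Theorem~\ref{thm:fur} only guarantees $|C|\ge n/(r-1)$; when $|C|$ is close to that size, every block has at most $n/(r-1)$ vertices and any bipartite bound applied to it returns trees of size only about $n/((r-1)(r-2))$. Second, the bipartite inductive hypothesis you propose to ``carry along'' --- a monochromatic diameter-$4$ tree on $(m_1+m_2)/(r-2)$ vertices in every $(r-1)$-coloring of a complete bipartite graph --- is false even for monochromatic \emph{components}: split each side into $r-1$ equal parts and color $[X_i,Y_j]$ with $i-j \pmod{r-1}$; then every monochromatic component has exactly $(m_1+m_2)/(r-1)$ vertices. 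With the true constant $1/(r-1)$, even the most favorable block (ball small and $C$ spanning, so the block has about $n(r-2)/(r-1)$ vertices) yields only $n(r-2)/(r-1)^2 < n/(r-1)$. You concede that a ``global averaging'' must bridge this gap, but you supply none; that averaging is not a tuning step, it is the theorem.

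The planned upgrade from a radius-$2$ tree to a triple star also fails as described. A triple star has exactly three internal vertices, so from a depth-$2$ BFS tree one can retain the second-layer neighbors of at most two first-layer vertices; if the color-$1$ graph is a spider whose second-layer vertices hang from distinct first-layer vertices (and the adversary can arrange exactly this), every monochromatic triple star in that color captures barely half of the ball. Since the bound $n/(r-1)$ is tight for affine-plane colorings, no constant-factor loss is affordable: the triple star must be produced directly by the counting, as in \cite{L}, not extracted from a large ball afterwards. So your opening reduction proves at best the ``diameter at most $4$ subgraph'' version, and the parenthetical strengthening in the statement needs its own argument. (A minor side point: your $C_5$-in-$K_5$ example rules out double-star \emph{trees} of diameter $3$, not diameter-$3$ subgraphs --- each color class $C_5$ itself has diameter $2$ --- which is why Problem~\ref{prob:Gydiam} remains open rather than refuted.)
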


Note that Theorem \ref{thm:diam}(i) implies Letzter's result in the case $r=3$ (except we can't guarantee that both of the trees are triple stars).

\subsection{Monochromatic covers of complete multipartite graphs}

The results mentioned here are proved in Section \ref{sec:diameter} and  Section \ref{sec:multi}.

Gy\'arf\'as and Lehel made the following conjecture which would be tight if true.

\begin{conjecture}[Gy\'arf\'as, Lehel \cite{Gy}]\label{con:GL}
For all $K\in \mathcal{K}_2$, $\tc_r(K)\leq 2r-2$.  
\end{conjecture}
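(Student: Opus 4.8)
The plan is to prove Conjecture~\ref{con:GL}, namely that $\tc_r(K) \leq 2r-2$ for every complete bipartite graph $K \in \mathcal{K}_2$, by reducing the problem to the complete-graph ($\alpha = 1$) setting where Ryser-type bounds are available. Let $K$ have bipartition $(A, B)$ and fix an $r$-coloring of its edges. Since $K$ is bipartite it contains no edges inside $A$ or inside $B$, so the natural strategy is to contract each side to a single ``super-vertex'' and study the auxiliary structure that records which colors connect $A$ to $B$. The intended bound $2r-2 = (r-1) + (r-1)$ strongly suggests that the cover should be assembled from two pieces of size at most $r-1$ each: roughly, one family of monochromatic subgraphs that handles covering $A$ and another that handles covering $B$, each piece behaving like a monochromatic cover of a complete graph with independence number $1$.

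First I would set up the reduction to $\alpha = 1$. The key observation is that for a bipartite host, every vertex $v \in A$ sees all of $B$ and vice versa, so a single vertex together with one color already spans a large connected star. Concretely, I would pick a vertex $a_0 \in A$; its incident edges are colored with at most $r$ colors, and by an averaging/pigeonhole argument one color class among the edges at $a_0$ reaches a substantial fraction of $B$. The real work is to show that one can cover $A$ using at most $r-1$ monochromatic connected subgraphs and, symmetrically, cover $B$ with at most $r-1$ such subgraphs, so that the union is a monochromatic cover of order at most $2r-2$. I would formalize ``covering $A$'' by building, for each uncovered vertex of $A$, a monochromatic component through $B$, and then argue that $r-1$ colors suffice by an extremal/pigeonhole argument on the colors appearing at the densest vertices — this is where the analogue of the Erd\H{o}s–Rado / K\"onig fact ($\tc_2(K_n) = 1$, Theorem~\ref{dualkonig}, and Fact~\ref{fact:trivial}) gets used to control the number of colors needed per side.

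The main obstacle I anticipate is handling the interaction between the two sides: a naive ``cover $A$ with $r-1$ stars, cover $B$ with $r-1$ stars'' bound risks double-counting or failing when the color classes interleave badly, and controlling the count so that it stays at exactly $2r-2$ (rather than $2r-1$ or $2r$) is the delicate point. The tightness of the conjecture means there is no slack, so I expect the proof to require a careful case analysis based on the number of distinct colors that appear ``densely'' across the bipartition. A promising route is induction on $r$: after locating a single color $i$ whose components cover all but a bounded structure, delete color $i$ and apply the inductive hypothesis to the remaining $(r-1)$-colored bipartite graph, hoping the one or two components in color $i$ plus the inductive cover of size $2(r-1)-2 = 2r-4$ yields at most $2r-2$ total. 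Making the deletion step clean — ensuring the leftover graph is still complete bipartite (or reduces to one) and that the color-$i$ components genuinely absorb the ``missing'' part of both sides — is the crux, and I would expect to spend most of the effort verifying that the two extra components suffice to bridge the gap introduced by removing a color.
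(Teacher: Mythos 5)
This statement is a conjecture of Gy\'arf\'as and Lehel, not a theorem of the paper: the paper offers no proof, only the remark that the trivial bound is $2r-1$ (take $u\in A$, $v\in B$ and all monochromatic components through $u$ or $v$; the component of the edge $uv$ is counted once) and the citation that Chen, Fujita, Gy\'arf\'as, Lehel, and T\'oth verified the cases $r\leq 5$. So there is no paper proof to compare against, and any correct argument here would be a genuinely new result.

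Your proposal does not close the gap, and its central intermediate claim is false. You propose to cover $A$ with at most $r-1$ monochromatic connected subgraphs and $B$ with another $r-1$, giving $2r-2$ in total. But already for $r=2$ one cannot in general cover one side with $r-1=1$ monochromatic component: color $K_{2,2}$ with $A=\{a_1,a_2\}$, $B=\{b_1,b_2\}$ so that $a_1b_1,a_2b_2$ are red and $a_1b_2,a_2b_1$ are blue (this is exactly configuration \ref{p2'} of Lemma \ref{2colorbip_nodiam}). Every monochromatic component has exactly one vertex in $A$, so covering $A$ alone needs $2=r$ components, and the same holds for blow-ups and for larger $r$. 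The only reason $\tc_2(K)\leq 2$ holds in that example is that the two components covering $A$ simultaneously cover $B$ — precisely the cross-side interaction your accounting ignores. Saving a single component below the trivial $2r-1$ is the entire content of the conjecture, and neither the ``$(r-1)+(r-1)$'' split nor the induction sketch delivers it: after deleting a color class the host graph is no longer complete bipartite, so the inductive hypothesis does not apply, and you do not supply the argument that the color-$i$ components absorb what is lost. As written, the proposal establishes nothing beyond the known trivial bound.
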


Chen, Fujita, Gy\'arf\'as, Lehel, and T\'oth \cite{CFGLT} proved this for $r\leq 5$.  Also note that for all $K\in \mathcal{K}_2$, a trivial upper bound is $\tc_r(K)\leq 2r-1$ (by considering a pair of vertices $u,v$ on opposite sides of the bipartition and the union of the monochromatic components containing $u$ and $v$).

We now mention the following generalization of Conjecture \ref{con:GL} for which we don't even have a conjecture.  The first interesting test case (outside the scope of Conjectures \ref{con:rpart} and \ref{con:rpart2}) is $k=3$ and $r=4$.

\begin{problem}\label{prob:kpart}
Let $k$ and $r$ be integers with $k,r\geq 2$.  Determine an upper bound on $\tc_r(K)$ which holds for all $K\in \mathcal{K}_k$.
\end{problem}

We also make the following strengthening of Conjecture \ref{con:GL} and prove it for $r=2$ and $r=3$ (the $r=2$ case is an improvement of a result of Mili\'cevi\'c \cite{M1}). 

\begin{conjecture}\label{con:bipartitediameter}
There exists $d$ such that for all $r\geq 2$, if $K\in \mathcal{K}_2$, then in every $r$-coloring of $K$, there exists a monochromatic $(2r-2)$-cover consisting of subgraphs of diameter at most $d$.
\end{conjecture}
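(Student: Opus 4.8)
The plan is to establish the conjecture for the two cases $r=2$ and $r=3$ (the general statement, with $d$ independent of $r$, appears out of reach and is discussed at the end as the main obstacle). Throughout, write $K$ with parts $A$ and $B$, and recall that in a complete bipartite graph a monochromatic \emph{double star} centered at an edge $uv$---the union of all monochromatic edges at $u$ together with all monochromatic edges at $v$---is a connected subgraph of diameter at most $3$ that absorbs the entire monochromatic neighborhood of both $u$ and $v$. These double stars, together with bounded-depth extensions of them, will be the only subgraphs we use, which is what keeps $d$ an absolute constant.

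For $r=2$, fix $a_0\in A$ and split $B$ according to the color of its edges to $a_0$, say $B=B_1\cup B_2$ with $B_1$ red and $B_2$ blue. Choosing a red neighbor $b_1\in B_1$ and a blue neighbor $b_2\in B_2$ (the degenerate cases where $B_1$ or $B_2$ is empty are immediate, since then one color already spans a subgraph of diameter $2$), I would take the red double star at $a_0b_1$ and the blue double star at $a_0b_2$. Their union already covers all of $B$, the vertex $a_0$, and every $a\in A$ whose edge to $b_1$ is red or whose edge to $b_2$ is blue. The only residual vertices are those $a\in A$ with $ab_1$ blue and $ab_2$ red; the key step is to absorb this residual set $X$ by attaching it to one of the two stars through a path of bounded length, which is where the choice of $b_1,b_2$ (for example, maximizing the relevant monochromatic degree) becomes important. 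A careful analysis should show that $X$ can be merged into a single monochromatic subgraph of diameter at most $6$, giving a monochromatic $2$-cover as required.

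For $r=3$ I would bootstrap from the $r=2$ argument. Fixing a vertex $v$ partitions the opposite side into three color classes; after discarding a color class that is trivially coverable, one is left with a two-colored complete bipartite subgraph on a large portion of $K$ to which the $r=2$ analysis (or a two-color variant of it) applies, producing two bounded-diameter subgraphs, while the remaining two subgraphs of the $4$-cover are used to mop up the vertices seeing $v$ in the discarded color together with any residual set produced in the reduction. The bookkeeping here is the delicate part: one must verify that all four subgraphs can simultaneously be realized as double stars or their bounded-depth extensions, and that the residual vertices of the reduction do not interact badly with those coming from the fixed vertex $v$; a short case analysis on which colors appear on the edges incident to $v$ and on a second well-chosen vertex should resolve this.

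I expect the main obstacle to be the general case. Even with the double-star machinery, pushing $r$ upward forces the residual sets to be handled recursively, and the naive recursion lets the diameter grow with $r$; obtaining a bound $d$ that is genuinely independent of $r$ (as the conjecture demands) seems to require a global argument rather than an inductive peeling of colors, and we have no such argument beyond $r=3$. For fixed small $r$, the technical crux is instead the residual set $X$ above: the case in which every stray vertex sees the two chosen centers in opposite colors is exactly the configuration that the optimal choice of centers, or a bounded extension of one star, must be tailored to defeat.
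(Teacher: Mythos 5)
Your plan for $r=2$ has a genuine gap that is not just a matter of "careful analysis": the absorption step fails outright on a simple coloring. Take partitions $\{A_1,A_2\}$ of $A$ and $\{B_1,B_2\}$ of $B$ (all four parts nonempty) and color $[A_1,B_1]\cup[A_2,B_2]$ red and $[A_1,B_2]\cup[A_2,B_1]$ blue; this is exactly case \ref{prop2} of the paper's Lemma \ref{lem:bip2}. For any choice of $a_0\in A_1$ and any centers $b_1,b_2$, your residual set is exactly $A_2$, the red double star is contained in the red component $[A_1,B_1]$, the blue double star is contained in the blue component $[A_1,B_2]$, while every vertex of $A_2$ has red component $[A_2,B_2]$ and blue component $[A_2,B_1]$ --- both vertex-disjoint from the corresponding star. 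Since any attaching path must be monochromatic in the color of the star it attaches to, no choice of centers and no path of any length absorbs $A_2$; the correct move is to abandon both stars and cover with the two components of a single color. This structural dichotomy --- either one side has two vertices each monochromatic in a different color, or the coloring splits into the $2\times 2$ block pattern above, or one color class is connected and a BFS-layer analysis applies --- is precisely the content of Lemma \ref{lem:bip2}, and it is the idea your proposal is missing. (A smaller point: when $B_2=\emptyset$ the red star at $a_0$ has diameter $2$ but does not cover $A\setminus\{a_0\}$, so that case is not "immediate" as stated, though it is easy.)

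For $r=3$ the paper (Theorem \ref{thm:r3bipartite}) does not bootstrap from a fixed vertex; it asks whether some monochromatic component contains an entire side of the bipartition, and in the hard case runs a BFS layering of a component of diameter at least $6$ and applies Lemma \ref{lem:bip2} to several induced $2$-colored complete bipartite subgraphs. Your $r=3$ paragraph is too schematic to evaluate on its own, and since it rests on the unproved $r=2$ step it inherits the same gap. Your closing assessment --- that a bound $d$ independent of $r$ seems to need a non-inductive argument and that only small $r$ are within reach --- is reasonable and consistent with the paper, which establishes the conjecture only for $r\in\{2,3\}$.
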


\begin{theorem}Let $K\in \mathcal{K}_2$.
\begin{enumerate}
\item In every $2$-coloring of $K$, there is a monochromatic $2$-cover consisting of trees of diameter at most 4.

\item In every $3$-coloring of $K$, there is a monochromatic $4$-cover consisting of subgraphs of diameter at most 6.
\end{enumerate}
\end{theorem}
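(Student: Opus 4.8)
The plan is to build the covering trees explicitly as \emph{brooms}: given a vertex $v$ and a color $c$, the broom at $v$ in color $c$ consists of all $c$-colored edges at $v$ together with one $c$-colored edge from each vertex reachable from $v$ by a $c$-path of length two. Such a broom has radius at most $2$, hence diameter at most $4$, and it covers $v$, the $c$-neighborhood $N_c(v)$, and every vertex joined to $N_c(v)$ by a $c$-edge. The whole argument then reduces to choosing a bounded number of brooms (plus the occasional monochromatic star, which has diameter $2$) whose vertex sets cover $V(K)$. The crucial point that forces the use of brooms rather than ``two stars joined by a bridging edge'' is that a path through two distinct star centers has length $5$; keeping every tree a single radius-$2$ broom is exactly what yields diameter $4$ in (i) and, after one extra level, diameter $6$ in (ii).

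For part (i), write $K = K[X,Y]$ and fix $x_0 \in X$, letting $Y_{\mathrm{red}}$ and $Y_{\mathrm{blue}}$ be the red and blue neighborhoods of $x_0$ in $Y$, so $Y = Y_{\mathrm{red}} \cup Y_{\mathrm{blue}}$. First I would treat the favorable case: if for some color $c$ the set $N_c(x_0)$ is \emph{$c$-dominating in $X$} (every vertex of $X$ has a $c$-edge into $N_c(x_0)$), then the $c$-broom at $x_0$ covers all of $X$ together with $N_c(x_0)$, while the opposite-color star at $x_0$ covers the rest of $Y$; these two trees have diameter at most $4$ and $2$, and we are done. Otherwise both color neighborhoods fail to dominate, which pins down a rigid structure on the ``swapped'' vertices $x$ that are red to all of $Y_{\mathrm{blue}}$ and blue to all of $Y_{\mathrm{red}}$. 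Here I would exploit that every vertex of $X$ either has a blue edge to $Y_{\mathrm{blue}}$ (and is caught by the blue broom at $x_0$) or is red to \emph{all} of $Y_{\mathrm{blue}}$ (and, being red-complete to $Y_{\mathrm{blue}}$, is caught by a red broom centered at a vertex of $Y_{\mathrm{blue}}$ that also reaches back across $Y_{\mathrm{blue}}$). Balancing these two brooms so that their union also exhausts $Y$ is the delicate part of the case analysis.

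For part (ii), the plan is to bootstrap from part (i). Fixing $x_0 \in X$ with color classes $Y_1, Y_2, Y_3$ of $Y$, I would peel off one color, say color $3$, using the color-$3$ broom at $x_0$ (diameter at most $4$) to absorb $Y_3$ and every vertex of $X$ that is $3$-adjacent to $Y_3$. On the vertices surviving this peeling the relevant edges behave essentially as a two-coloring, so a version of the argument for (i) produces a few further brooms; reattaching them to the global structure through the center $x_0$ costs at most two extra levels of distance, raising the diameter bound from $4$ to $6$. The accounting must be arranged so that the total number of trees is exactly the conjectured $2r-2 = 4$, which is where the known bound $\tc_3(K) \le 4$ for $K \in \mathcal{K}_2$ serves as a template that we re-prove with diameter control.

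The main obstacle in both parts is the same: covering \emph{both} sides of the bipartition using only radius-$2$ (resp. radius-$3$) brooms. A single broom centered in $X$ covers an arbitrary subset of $X$ but only a star's worth of $Y$ (its distance-one neighbors), and symmetrically for $Y$, so the chosen brooms must be positioned so that their distance-one footprints on the opposite side together exhaust that side. The ``swapped'' vertices in (i) and the residual third-color vertices in (ii) are precisely the configurations that threaten to escape every broom, and handling them without introducing a diameter-$5$ (resp. diameter-$7$) bridged path is the heart of the proof.
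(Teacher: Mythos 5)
There is a genuine gap in both parts. For part (i), the unfavorable case is exactly where your scheme breaks, and not merely in a way that needs ``balancing.'' Consider the swapped configuration (the paper's case \ref{p2'}): partition $X=X_1\cup X_2$, $Y=Y_1\cup Y_2$ with $[X_1,Y_1]$ and $[X_2,Y_2]$ entirely red and $[X_1,Y_2]$, $[X_2,Y_1]$ entirely blue, and take $x_0\in X_1$, so $Y_{\mathrm{red}}=Y_1$, $Y_{\mathrm{blue}}=Y_2$. Neither color neighborhood of $x_0$ dominates $X$, so you are in your second case. The blue broom at $x_0$ covers $X_1\cup Y_2$, and \emph{any} red broom centered at a vertex of $Y_{\mathrm{blue}}=Y_2$ covers only $X_2\cup Y_2$ (its red neighbors are $X_2$, whose red edges lead back into $Y_2$). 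So $Y_1=Y_{\mathrm{red}}$ is covered by neither tree, and no repositioning of those two brooms fixes this; the correct cover here is a different pair of trees altogether, namely two monochromatic double stars of the same color, one spanning $[X_1,Y_1]$ and one spanning $[X_2,Y_2]$. A second unaddressed regime is when one color class, say $G_1$, is connected with diameter $4$ or $5$: there the paper runs a breadth-first layering from a vertex witnessing the diameter and builds the cover from double stars whose central edges sit between specific layers, which is not a broom centered at your fixed $x_0$. Your outline names the swapped vertices but does not supply the case split or the alternative trees that actually resolve them.

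For part (ii), the peeling step is unsound. After absorbing $Y_3=N_3(x_0)$ and the vertices of $X$ having a color-$3$ edge to $Y_3$, a surviving vertex $x\in X$ can still send color-$3$ edges to $Y_1\cup Y_2$ (it merely has no color-$3$ edge \emph{to $Y_3$}), so the residual bipartite graph is not $2$-colored and part (i) cannot be invoked on it. The paper avoids this by a different decomposition: either some monochromatic component $C$ of color $\chi$ misses vertices on both sides, in which case $[V_1\setminus C, V_2\cap C]$ and $[V_2\setminus C, V_1\cap C]$ genuinely carry only the other two colors (any $\chi$-edge there would lie in $C$) and the $2$-color lemma applies to each; or every component contains a whole side, and one then runs a breadth-first layering inside a spanning component of diameter at least $6$ and analyzes the resulting $2$-colored layer-pairs. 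The ``reattach through $x_0$ at the cost of two levels'' step is also not available as stated, since the connecting edges need not have the color of the tree being reattached.
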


\subsection{Partitioning into monochromatic connected subgraphs}

The results mentioned here are proved in Section \ref{sec:partition}.

For positive integers $t$ and $r$, a \emph{monochromatic $t$-partition} of an $r$-colored hypergraph $H$ is a monochromatic $t$-cover $\cT$ of $H$ such that $V(T)\cap V(T')=\emptyset$ for all $T,T'\in \cT$.  Let $\tp_r(H)$ be the minimum integer $t$ such that in every $r$-coloring of the edges of $H$, there exists a monochromatic $t$-partition of $H$.  

Erd\H{o}s, Gy\'arf\'as, and Pyber made the following conjecture and proved it for $r=3$.

\begin{conjecture}[Erd\H{o}s, Gy\'arf\'as, Pyber \cite{EGP}]\label{con:EGP}
For all $r\geq 2$ and all finite complete graphs $K$, $\tp_r(K)\leq r-1$.
\end{conjecture}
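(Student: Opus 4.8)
The plan begins by locating this conjecture relative to the others in the excerpt. Every monochromatic $t$-partition is in particular a monochromatic $t$-cover, so $\tc_r(K)\le\tp_r(K)$ for every complete graph $K$; thus Conjecture~\ref{con:EGP} strengthens the $\alpha=1$ instance of (R2), namely $\tc_r(K)\le r-1$. Consequently the fully general statement implies the $\alpha=1$ case of Ryser's conjecture, which is open for $r\ge 6$, and the value $r-1$ cannot be lowered whenever an affine plane of order $r-1$ exists: the construction from the Lower bounds subsection forces $\tc_r(K)=r-1$ there, and since $\tp_r(K)\ge\tc_r(K)$ this also gives $\tp_r(K)\ge r-1$. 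I would therefore not attempt the general case with current techniques, and instead aim at small $r$, exploiting that the cover bound $\tc_r(K)\le r-1$ is available as a black box for $r\le 5$ (Aharoni for $r=3$, Tuza for $r\in\{4,5\}$).

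For $r=3$ the approach is to upgrade a cover to a partition. First invoke $\tc_3(K)\le 2$ to obtain two monochromatic connected subgraphs $F_1,F_2$ with $V(F_1)\cup V(F_2)=V(K)$, and let $W=V(F_1)\cap V(F_2)$ be their overlap. The task reduces to assigning each $w\in W$ to exactly one of $F_1,F_2$ so that both pieces remain connected in their respective colors, and here the completeness of $K$ provides ample room to reroute. I would split on whether $F_1$ and $F_2$ share a color. If they do not---say $F_1$ is color $1$ and $F_2$ is color $2$---I would process the vertices of $W$ one at a time, at each step removing a vertex from whichever side it can leave without disconnecting, and argue via an extremal choice of $F_1,F_2$ (e.g.\ minimizing $|W|$) that this succeeds. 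If $F_1,F_2$ share a color, I would instead start from a maximum monochromatic component $C$, observe that all edges between $C$ and $V(K)\setminus C$ avoid that color, and apply the Erd\H{o}s--Rado fact ($\tc_2(K)\le 1$, i.e.\ Theorem~\ref{dualkonig} with $\alpha=1$) to the $2$-colored graph on $V(K)\setminus C$ to obtain a single connected piece disjoint from $C$.

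For $r=4$ I would run the same cover-then-repair scheme seeded by $\tc_4(K)\le 3$, but with a substantially heavier case analysis organized by the colors of the three covering subgraphs. For arbitrary $r$ the natural strategy is induction on $r$: find a monochromatic connected set $M$ whose deletion leaves a graph spanned by only $r-1$ colors, partition the remainder into $r-2$ pieces by induction, and append $M$. The main obstacle is precisely this inductive step: deleting the vertex set of one color-$r$ component does not remove color $r$ (color-$r$ edges occur inside \emph{every} color-$r$ component), so there is no canonical way to shed a color. Moreover, even when the cover bound is known, the genuinely difficult part is the repair phase---splitting the shared vertices $W$ without disconnecting either side and with no spare color to absorb the conflict. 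This cover-to-partition conversion is free for $r=2$ (one part) but already delicate at $r=3$, and it is where I expect essentially all the difficulty to lie.
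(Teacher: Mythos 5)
You have correctly diagnosed the status of this statement: it is an open conjecture, not a theorem of the paper, and the paper supplies no proof of it. The paper records only that Erd\H{o}s, Gy\'arf\'as, and Pyber settled the case $r=3$ (Theorem \ref{thm:EGP}, whose proof is not reproduced here), that $r=2$ is the Erd\H{o}s--Rado remark, and that the best general bound is the weaker $\tp_r(K_n)\le r$ for sufficiently large $n$ (Theorem \ref{thm:HK}); Section \ref{sec:partition} is largely devoted to explaining why that approach cannot be pushed below $r$. Your framing --- the lower bound $\tp_r(K)\ge\tc_r(K)\ge r-1$ from an affine plane of order $r-1$, the implication of the $\alpha=1$ case of Ryser's conjecture, and the conclusion that the general case is out of reach --- is accurate and consistent with the paper.

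That said, the sketches you offer for small $r$ have concrete gaps. First, the cover-then-repair scheme leans implicitly on Observation \ref{obs:closure}, but the closure reduction is unavailable for partitions: a part of a monochromatic partition must be connected inside its own vertex set, so you cannot assume monochromatic components are cliques, and the two components returned by the black box $\tc_3(K)\le 2$ may interlock so that no greedy assignment of $W$ succeeds (a vertex of $W$ can be a cut vertex of both sides, and minimizing $|W|$ does not obviously exclude this). Second, in your shared-color case the graph $K[V(K)\setminus V(C)]$ is still $3$-colored --- deleting one component of color $1$ does not delete color $1$, since other color-$1$ components may lie entirely outside $C$ --- so Theorem \ref{dualkonig} does not apply to it; this is the same obstruction you yourself identify when explaining why induction on $r$ fails. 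The known proof of $\tp_3(K)=2$ in \cite{EGP} instead proceeds by a direct structural analysis of $3$-colorings (in the spirit of Lemma \ref{lem:3col}), and even that argument is not known to extend to infinite complete graphs (Problem \ref{prob:infinite}) or to $r=4$. Finally, Theorem \ref{thm:partmulti} is a warning that partition problems can behave qualitatively differently from their cover analogues, so ``upgrade the cover'' should not be expected to be routine.
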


Later Fujita, Furuya, Gy\'arf\'as, and T\'oth made the following conjecture and proved it for $r=2$.  Note that this is a significant strenghtening of Ryser's conjecture.

\begin{conjecture}[Fujita, Furuya, Gy\'arf\'as, T\'oth \cite{FFGT1}]\label{con:FFGT}
For all $r\geq 2$ and all finite graphs $G$, $\tp_r(G)\leq (r-1)\alpha(G)$.
\end{conjecture}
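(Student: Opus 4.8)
The plan is to treat Conjecture~\ref{con:FFGT} honestly: a complete proof is out of reach, so I would isolate and fully prove the decisive case $r=2$, and then explain why $r\ge 3$ runs into exactly the same wall as Ryser's conjecture. The starting observation is that every monochromatic $t$-partition is in particular a monochromatic $t$-cover, so $\tc_r(G)\le \tp_r(G)$ for every graph $G$; hence Conjecture~\ref{con:FFGT} implies the $(R2)$ form of Ryser's conjecture (Conjecture~\ref{ryserdual}), and a proof in full generality would settle Ryser. I would therefore aim only at the cases in which the underlying Ryser instance is already a theorem, namely $r=2$ (König, Theorem~\ref{dualkonig}) and, more ambitiously, $r=3$ (Aharoni~\cite{A}), presenting a complete argument for $r=2$.

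For $r=2$ I would induct on $\alpha(G)$, exploiting the duality from the introduction. Given a $2$-coloring of $G$, let $H$ be the associated hypergraph whose vertices are the monochromatic components of $G$, partitioned into red components and blue components, and whose edges record, for each $v\in V(G)$, the pair of components containing $v$. Since there are only two colors, $H$ is a \emph{bipartite graph} with $\nu(H)=\alpha(G)$ and $\tau(H)=\tc_2(G)$. The base case $\alpha(G)=1$ follows from König's theorem in the form $\tc_2(G)\le\alpha(G)$ (Theorem~\ref{dualkonig}): the single covering subgraph is a monochromatic connected spanning subgraph, hence a monochromatic $1$-partition. For the inductive step it suffices to find a single monochromatic \emph{component} $C$ with $\alpha(G-V(C))\le \alpha(G)-1$; then deleting $V(C)$, applying induction to get a monochromatic $(\alpha(G)-1)$-partition of $G-V(C)$, and adjoining $C$ yields a monochromatic $\alpha(G)$-partition of $G$.

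The heart of the $r=2$ argument is thus this reduction, which I would establish through matchings in $H$. Observe that $\alpha(G-V(C))\le \alpha(G)-1$ holds precisely when $V(C)$ meets every maximum independent set of $G$; translating through the bijection ``maximum independent set of $G$ $\leftrightarrow$ maximum matching of $H$,'' this says exactly that the vertex $C$ of $H$ is covered by every maximum matching of $H$. Such a $C$ always exists: let $T$ be a minimum vertex cover of $H$, so $|T|=\tau(H)=\nu(H)$ by König's theorem. For any maximum matching $M$, each of its $|M|=|T|$ edges has an endpoint in $T$, and distinct matching edges must use distinct vertices of $T$ (a matching edge meets $T$ in at most one endpoint, and matching edges are disjoint); hence $M$ saturates all of $T$. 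As $M$ was arbitrary, every vertex of $T$ is covered by every maximum matching, and choosing $C\in T$ completes the induction, proving $\tp_2(G)\le\alpha(G)$.

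The main obstacle is that this approach is genuinely confined to two colors. For $r\ge 3$ the auxiliary hypergraph $H$ is $r$-partite rather than bipartite, the identity $\tau(H)=\nu(H)$ is replaced by the merely conjectural $\tau(H)\le (r-1)\nu(H)$, and the clean phenomenon ``every minimum cover is saturated by every maximum matching'' fails, so there need be no single component whose deletion drops $\alpha$ by one. Even the first open case $r=3$ would require upgrading Aharoni's theorem from a \emph{cover} to a \emph{partition}: the $2\alpha(G)$ monochromatic pieces produced for a $3$-coloring would have to be made pairwise disjoint, paralleling the complete-graph result of Erd\H{o}s, Gy\'arf\'as, and Pyber (Conjecture~\ref{con:EGP} for $r=3$, \cite{EGP}). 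I expect this partition-versus-cover gap in the $r$-partite setting to be the decisive difficulty, and a full resolution of Conjecture~\ref{con:FFGT} to be no easier than Ryser's conjecture itself.
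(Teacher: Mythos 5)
Your framing is sound---the statement is a conjecture that implies Ryser's conjecture (via $\tc_r(G)\le\tp_r(G)$), and the paper, like you, proves only the $r=2$ case, in Section~\ref{sec:partition}---but your proof of that case has a genuine gap at its central reduction. The flaw is the claimed bijection ``maximum independent set of $G$ $\leftrightarrow$ maximum matching of $H$.'' Nonadjacent vertices of $G$ can lie in a common monochromatic component, so an independent set of $G$ need not correspond to a matching in $H$; only $\nu(H)\le\alpha(G)$ holds, and it can be strict. For instance, let $G$ be the red path $u$--$w$--$v$ with no blue edges: then $\alpha(G)=2$, while $H$ is a star centered at the red component $\{u,w,v\}$, so $\nu(H)=1$. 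Your K\H{o}nig argument does correctly show that every vertex $C$ of a minimum vertex cover of the bipartite graph $H$ is saturated by every maximum matching of $H$ (though your parenthetical ``a matching edge meets $T$ in at most one endpoint'' is a \emph{consequence} of the counting, not an a priori fact), but a matching in $H$ corresponds to a set of vertices of $G$ that pairwise lie in \emph{distinct components of each color}---a strictly stronger condition than independence in $G$. So saturation of $C$ by all maximum matchings does not yield that $V(C)$ meets every maximum independent set, and the inference $\alpha(G-V(C))\le\alpha(G)-1$ is unjustified. A second, related defect: deleting $V(C)$ from $G$ can split the surviving monochromatic components, so $H-C$ is not the dual of $G-V(C)$, and even a drop in $\nu(H)$ would not translate into a drop in $\alpha(G)$. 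Note that the lemma you need is precisely the $r=2$ case of Lov\'asz's conjecture (\cite{Lov75}, discussed in Section~\ref{sec:further}): it is stated there in two ``dual'' languages, and the bipartite-hypergraph version is exactly what K\H{o}nig gives you, but the translation to the component version in an edge-colored graph is the step your argument silently assumes and which the imperfect duality does not supply.

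The paper's proof (reproduced from \cite{FFGT1}) avoids this issue entirely and inducts on $\alpha(G)$ differently: take a monochromatic cover $R_1,\dots,R_p$ (red), $B_1,\dots,B_q$ (blue) with $p+q\le\alpha(G)$ from Theorem~\ref{dualkonig}, and set $R'=R\setminus B$, $B'=B\setminus R$. Since cover elements are full components, there are no edges between $R'$ and $B'$, whence $\alpha(G[R'])+\alpha(G[B'])\le\alpha(G)$ and both terms are strictly smaller than $\alpha(G)$; induction gives monochromatic partitions of $G[R']$ and $G[B']$ into $p'$ and $q'$ parts, and the averaging $(p'+q)+(p+q')\le 2\alpha(G)$ shows one of the two mixed families---the partition of $G[R']$ together with $B_1,\dots,B_q$, or the partition of $G[B']$ together with $R_1,\dots,R_p$---is a monochromatic partition of $G$ with at most $\alpha(G)$ parts. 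If you wish to salvage your component-deletion route, you would have to prove directly in $G$ that some monochromatic component meets every maximum independent set; as it stands, that claim is exactly what is missing, and the FFGT splitting argument sidesteps the need for it.
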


Haxell and Kohayakawa \cite{HK} proved $\tp_r(K_n)\leq r$ for sufficiently large $n$ (in fact, they proved that there is a monochromatic $r$-partition consisting of trees of radius at most 2).  The bound on $n$ was improved in \cite{BD}.  In Section \ref{sec:partition}, we discuss why the bound on $n$ essentially cannot be improved any further using this approach, and in the process find an interesting connection to a different problem.  

We also raised the question of determining an upper bound on $\tp_r(K)$ for $K\in \mathcal{K}_k$.  Surprisingly we found that in contrast to the cover version of the problem, no such upper bound (which depends only on $k$) is possible.

\begin{theorem}\label{thm:partmulti}
For all $k\geq 2$ and all functions $f:\mathbb{Z}^+\to \mathbb{R}$ there exists $K\in \mathcal{K}_k$ such that $\tp_2(K)>f(k)$.
\end{theorem}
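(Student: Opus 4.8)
The plan is to prove the statement by exhibiting, for each fixed $k$ and each target value $m$, an explicit complete $k$-partite graph $K$ together with a single $2$-coloring of its edges under which every monochromatic partition uses strictly more than $m$ pieces; since $\tp_2(K)$ is a worst case over colorings, this witnesses $\tp_2(K)>f(k)$ as soon as $m\geq f(k)$. The conceptual guide is the contrast with the cover version emphasized in the introduction: in any $2$-colored complete multipartite graph a large monochromatic connected subgraph is unavoidable, since every vertex sees a majority color on its incident edges, and this is exactly why the tree-cover number stays bounded. A lower bound for the \emph{partition} number therefore cannot come from the absence of large monochromatic pieces; it must come from the interplay between connectivity and the disjointness requirement. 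The key phenomenon to exploit is that a large monochromatic piece ``spends'' precisely the vertices that other pieces would otherwise use as connectors, so if the coloring is arranged so that this spending fragments the remainder, the partition is forced to grow.

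First I would isolate a base gadget together with a potential (charging) function. The goal is a colored complete $k$-partite graph containing a distinguished set $S$ and a small reservoir $Z$ of ``connector'' vertices, engineered so that no monochromatic connected subgraph can cover two vertices of $S$ without occupying a vertex of $Z$. Any monochromatic partition then faces a dichotomy: either it dedicates a separate piece to (almost) every vertex of $S$, or it exhausts $Z$; keeping $|Z|$ far below $|S|$ makes both branches expensive. I would then amplify, most naturally by a recursive substitution: replace each vertex of a hard instance $K^{(i)}$ by a fresh smaller hard instance and color the edges between copies according to the color received by the corresponding edge of $K^{(i)}$. The intended effect is that every monochromatic connected subgraph of the blow-up projects to a monochromatic connected subgraph of $K^{(i)}$, so that partitions of the blow-up refine partitions of $K^{(i)}$ and the potential multiplies; a charging argument assigning each piece a bounded amount of ``resolving power'' should then force the number of pieces past any prescribed $m$.

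The hard part will be neutralizing the monochromatic components that run \emph{across} the copies. Because a large monochromatic connected subgraph always exists, a single piece assembled from the between-copy edges can in principle merge many copies and collapse the entire bound; this is the same absorption mechanism that keeps the cover number small, and it is the essential obstacle separating the partition problem from the cover problem. Ruling it out means choosing the between-copy coloring and the reservoir sizes so that every monochromatic connected subgraph is confined, while still meeting the complete multipartite requirement that all cross-part edges be present and colored. I expect this to force the part sizes, and in particular the ratio $|Z|/|S|$, to grow with $m$, which would simultaneously complete the construction and explain structurally why no upper bound on $\tp_2(K)$ depending only on $k$ can exist.
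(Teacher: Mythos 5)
Your proposal identifies the right tension (disjointness versus the unavoidable large monochromatic component) but it never produces the object the theorem requires, and the amplification scheme it leans on has concrete problems. First, the base gadget is only described by its desired properties; no coloring is exhibited. Second, the recursive substitution step does not preserve membership in $\mathcal{K}_k$: if two vertices $v,v'$ lie in the same part of $K^{(i)}$ and each is replaced by a gadget with internal edges, then a vertex of the gadget at $v$ and a vertex of the gadget at $v'$ are non-adjacent and hence must lie in a common part of the new graph, which forces both gadgets to be edgeless, so the recursion collapses. Third, the claim that ``partitions of the blow-up refine partitions of $K^{(i)}$'' is false: two disjoint monochromatic pieces of the blow-up can both meet the same copy, so their projections need not be disjoint and the potential need not multiply. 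Finally, the obstacle you flag yourself --- a single cross-copy monochromatic component absorbing many copies --- is real (recall $\tc_2(K)\le 2$ for every complete multipartite $K$), and ``I expect this to force the part sizes to grow'' is a hope, not an argument.

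The paper avoids all of this with a one-level construction plus a counting argument. Take $K\in\mathcal{K}_k$ with one part $Z=V_i$ of size at least $(t+1)\cdot 2^{|Y|}$, where $Y=V(K)\setminus V_i$. Partition $Z$ into nonempty classes $Z_{\mathbf{b}}$ indexed by all functions $\mathbf{b}:Y\to\{0,1\}$, each class of size at least $\floor{|Z|/2^{|Y|}}$, and color the edge $zy$ with color $\mathbf{b}(y)$ for $z\in Z_{\mathbf{b}}$ (edges inside $Y$ are colored arbitrarily). Any monochromatic partition splits $Y$ into the vertices lying in red pieces and those lying in blue pieces; taking $\mathbf{b}$ to be the complementary string, every $z\in Z_{\mathbf{b}}$ sends only blue edges to the red-piece vertices of $Y$ and only red edges to the blue-piece vertices, so $z$ can join no piece that meets $Y$, and since $Z$ is an independent set $z$ must form a singleton piece. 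Hence $\tp_2(K)\ge \floor{|Z|/2^{|Y|}}>t$. The lesson is that the lower bound does not come from confining components (which is impossible here) but from making many vertices of one independent part simultaneously incompatible, in both colors, with every piece that touches the rest of the graph; that is the missing idea in your sketch.
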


\subsection{Monochromatic covers of hypergraphs}

The results mentioned here are proved in Section \ref{sec:hypergraph}.

Denote the complete $r$-uniform hypergraph on $n$ vertices by $K_n^r$.  Again, the well known remark of Erd\H{o}s and Rado, which says $\tc_2(K_n^2)=1$, was generalized by Gy\'arf\'as \cite{Gy} who proved that for all $r\geq 2$, $\tc_r(K_n^r)=1$.

Kir\'aly \cite{K} proved that for all $k\geq 3$, $\tc_r(K_n^k)= \ceiling{r/k}$.  In the dual (R1) language, this means that for $k\geq 3$ if we have an $r$-partite hypergraph $H$ in which every set of $k$ edges has a common non-empty intersection, then $\tau(H)\leq \ceiling{r/k}$.

We begin the study of a much more general setting in which we allow for different notions of connectivity in hypergraphs.  Given an $k$-uniform hypergraph $H$, say that $H$ is \emph{tightly connected} if for every pair of vertices $u,v\in V(H)$, there exists edges $e_1, \dots, e_p\in E(H)$ such that $u\in e_1$, $v\in e_p$, and $|e_i\cap e_{i+1}|=k-1$ for all $i\in [p-1]$.  We prove a generalization of Kir\'aly's theorem, but we delay the statement until Section \ref{sec:hypergraph}.

One of our main (and easiest to state) conjectures in this setting is the following strengthening of Gy\'arf\'as' result, which we prove for $r=3$.

\begin{conjecture}\label{con:tight}
For all $r\geq 3$, in every $r$-coloring of $K_n^r$, there exists a monochromatic tightly connected subgraph which covers $V(K)$.
\end{conjecture}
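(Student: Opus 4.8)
The plan is to prove the stronger fact that Gy\'arf\'as' theorem $\tc_r(K_n^r)=1$ already ``secretly'' produces a tightly connected cover, by finding the right reformulation of tight connectivity. For a fixed color $c$, define an auxiliary graph $\Gamma_c$ whose vertices are the $(r-1)$-subsets of $[n]$, with $A$ and $B$ adjacent exactly when $|A\cap B|=r-2$ and the $r$-set $A\cup B$ has color $c$. A tight path of color-$c$ edges $e_1,\dots,e_p$ corresponds precisely to a walk in $\Gamma_c$ through the $(r-1)$-sets $e_i\cap e_{i+1}$, and each color-$c$ edge $e$ contributes a clique on its $r$ co-singleton subsets $e\setminus\{x\}$. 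Thus color $c$ admits a tightly connected spanning subhypergraph if and only if $\Gamma_c$ has a single connected component whose constituent $(r-1)$-sets cover all of $[n]$. Since the edge sets of $\Gamma_1,\dots,\Gamma_r$ partition the Johnson graph $J(n,r-1)$, the conjecture becomes: in the induced $r$-coloring of $J(n,r-1)$ (where the color of an edge depends only on the union of its two endpoints), some color class has a component covering $[n]$.

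With this reformulation in hand, I would attack the existence of a spanning component by a greedy-extension argument combined with extremal counting. Start from any color-$c$ edge and grow its tight component $T$, covering a vertex set $S$. If $S\neq[n]$, pick $v\notin S$; the component extends through $v$ precisely when some $(r-1)$-set $A$ lying in an edge of $T$ satisfies that $A\cup\{v\}$ again has color $c$. If every color fails to extend past its maximal component simultaneously, then for each color the boundary $(r-1)$-sets together with the uncovered vertices force all the corresponding $r$-sets into the other $r-1$ colors, and I would aim to derive a contradiction by double counting these forbidden edges. A cleaner instance of this mechanism, and the one that drives the $r=3$ case, is the \emph{link reduction}: fixing an $(r-2)$-set $W$ and recording the color of $W\cup\{x,y\}$ for each pair $\{x,y\}\subseteq [n]\setminus W$ yields an $r$-coloring of the complete graph on $[n]\setminus W$, in which two color-$c$ pairs sharing a vertex correspond to two color-$c$ $r$-edges sharing the $(r-1)$-set $W\cup\{\text{common vertex}\}$, hence tightly adjacent. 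For $r=3$ the link is a $3$-colored complete graph, and Aharoni's theorem (the $\alpha=1$ case of Ryser for $r=3$) controls its monochromatic components tightly enough to assemble a spanning tight component.

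The hard part will be passing from $r=3$ to general $r$. The obstruction is structural: unlike loose connectivity, tight connectivity does not induce a partition of the vertex set, since a single vertex $v$ can belong to color-$c$ edges lying in several distinct tight components of the same color $c$. Consequently the component-counting that underlies Gy\'arf\'as' loose result has no direct tight analogue, and the link reduction produces an $r$-colored complete graph whose Ryser-type covering number is $r-1$ rather than $1$, so a single link no longer yields one spanning monochromatic tight component. Making the greedy-extension/counting argument succeed would require simultaneously controlling the interaction of all $r$ tight-component structures and choosing the right color, and it is precisely this coordination across colors that remains elusive for $r\geq 4$; this is why we establish the result only for $r=3$.
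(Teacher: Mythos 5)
Since the general statement is only a conjecture, the relevant comparison is with the paper's proof of the one settled case, $r=3$ (Theorem \ref{thm:kr3tight}). Your setup agrees with the paper's up to its first line: your ``link reduction'' for $r=3$ is exactly the paper's passage to the link graph $K(u)$ of a vertex $u$, and your observation that two color-$c$ pairs of the link sharing a vertex lift to tightly adjacent color-$c$ triples through $u$ is Observation \ref{obs:basic}(iv). The Johnson-graph reformulation via $\Gamma_c$ is correct but is only a restatement; it does no work. The problem is the step where you say Aharoni's theorem ``controls its monochromatic components tightly enough to assemble a spanning tight component.'' That is an assertion, not an argument, and as stated it cannot succeed: Aharoni's theorem gives a cover of the $3$-colored link $K_{n-1}$ by \emph{two} monochromatic components, possibly of different colors, and lifting each through $u$ yields two monochromatic tight components covering $V$. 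That recovers only $\tc_3^{1,2}(K_n^3)\leq \tc_3(K_{n-1})=2$, which the paper already notes is trivial via Observation \ref{obs:c<l}; the whole content of Theorem \ref{thm:kr3tight} is getting from $2$ down to $1$. Moreover, Aharoni's theorem is a statement about the link coloring alone, whereas any assembly of a single spanning tight component must use the colors of the triples inside $V\setminus\{u\}$, about which it says nothing.

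Concretely, the paper needs two ingredients you do not have. First, in place of Aharoni it uses the structural trichotomy of Lemma \ref{lem:3col}: either some color class of the link is spanning connected (done), or the link coloring has one of two rigid structures (types (ii) and (iii)). Second, and this is the key mechanism, Claim \ref{clm:tight} shows how edges of $K_n^3$ \emph{avoiding} $u$ splice distinct monochromatic link components into one tight component: if $ab$ lies in one color-$i$ link component and $cd$ in another, and $abc,bcd$ are color-$i$ triples, the tight walk passes from one component to the other, with $u$ re-entering via the shared pairs. The proof then runs a genuinely nontrivial case analysis (especially in the type (ii) case) showing that if no splicing of this kind is available, the forced colors of the triples $w'x'y$, $x'y'z'$, etc.\ lead to a contradiction. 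Your proposal contains no analogue of this splicing step or of the structural lemma that makes the case analysis finite, so the $r=3$ argument has a genuine gap precisely at its crux. (Your candid assessment of the obstructions for $r\geq 4$ --- non-transitivity of tight connectivity on vertices and the failure of component counting --- is accurate and consistent with the paper leaving the general case open.)
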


\begin{problem}\label{prob:tight}
Let $r,k\geq 2$ be integers.  Given an arbitary $r$-coloring of $K_n^k$, determine an upper bound on the number of monochromatic tightly connected subgraphs needed to cover $V(K_n^k)$.
\end{problem}

The following tables given in Table \ref{tab:tables} recap what is known about the various generalizations and strengthenings of Ryser's conjecture discussed so far, using green to indicate previously known results and yellow to indicate new or improved results that we will show in the following sections.

\begin{table}[ht!]
\begin{subtable}{.5\linewidth}
\centering
\begin{tabular}{|l|l|l|l|l|l|}
\firsthline
\backslashbox{\textbf{$\alpha$}}{$r$} & \textbf{2} & \textbf{3} & \textbf{4} & \textbf{5} & \textbf{6} \\ \hline
\textbf{1} & \cellcolor{green}1 & \cellcolor{green}2 & \cellcolor{green}3 & \cellcolor{green}4 & 5 \\ \hline
\textbf{2} & \cellcolor{green}2 & \cellcolor{green}4 & 6 & 8 & 10 \\ \hline
\textbf{3} & \cellcolor{green}3 & \cellcolor{green}6 & 9 & 12 & 15 \\ \hline
\textbf{4} & \cellcolor{green}4 & \cellcolor{green}8 & 12 & 16 & 20 \\ \hline
\textbf{5} & \cellcolor{green}$\downarrow$ & \cellcolor{green}$\downarrow$ & 15 & 20 & 25 \\ \hline
\end{tabular}
\captionof{table}{Conjecture \ref{ryserdual}}\label{tab:1} 
\end{subtable}
~
\begin{subtable}{.5\linewidth}
\centering
\begin{tabular}{|l|l|l|l|l|l|}
\firsthline
$r$ & \cellcolor{yellow}\textbf{2} & \cellcolor{yellow}\textbf{3} & \cellcolor{yellow}\textbf{4} & \textbf{5} & \textbf{6} \\ \hline
\end{tabular}
\captionof{table}{Conjecture \ref{con:SnotS}}\label{tab:2} 
~
\begin{tabular}{|l|l|l|l|l|l|}
\hline
$r$ & \cellcolor{yellow}\textbf{2} & \cellcolor{yellow}\textbf{3} & \cellcolor{yellow}\textbf{4} & \textbf{5} & \textbf{6} \\ \hline
\end{tabular}
\captionof{table}{Conjecture \ref{con:rpart2}}\label{tab:3} 
~
\begin{tabular}{|l|l|l|l|l|l|}
\hline
$r$ & \cellcolor{yellow}\textbf{2} & \cellcolor{yellow}\textbf{3} & \cellcolor{yellow}\textbf{4} & \cellcolor{yellow}\textbf{5} & \textbf{6} \\ \hline
\end{tabular}
\captionof{table}{Conjecture \ref{con:r/2}}\label{tab:4} 
\end{subtable}
~
\begin{subtable}[t]{.5\linewidth}
\centering
\begin{tabular}{|l|l|l|l|l|l|}
\firsthline
\backslashbox{\textbf{$\alpha$}}{$r$} & \textbf{2} & \textbf{3} & \textbf{4} & \textbf{5} & \textbf{6} \\ \hline
\textbf{1} & \cellcolor{green}1 & \cellcolor{green}2 & 3 & 4 & 5 \\ \hline
\textbf{2} & \cellcolor{green}2 & 4 & 6 & 8 & 10 \\ \hline
\textbf{3} & \cellcolor{green}3 & 6 & 9 & 12 & 15 \\ \hline
\textbf{4} & \cellcolor{green}4 & 8 & 12 & 16 & 20 \\ \hline
\textbf{5} & \cellcolor{green}$\downarrow$ & 10 & 15 & 20 & 25 \\ \hline
\end{tabular}
\captionof{table}{Conjecture \ref{con:FFGT}}\label{tab:5} 
\end{subtable}
~
\begin{subtable}[t]{.5\linewidth}
\centering
\begin{tabular}{|l|l|l|l|l|l|}
\firsthline
\backslashbox{\textbf{$\alpha$}}{$r$} & \textbf{2} & \textbf{3} & \textbf{4} & \textbf{5} & \textbf{6} \\ \hline
\textbf{1} & \cellcolor{green}1 & \cellcolor{yellow}2 & \cellcolor{yellow}3 & 4 & 5 \\ \hline
\textbf{2} & \cellcolor{yellow}2 & 4 & 6 & 8 & 10 \\ \hline
\textbf{3} & 3 & 6 & 9 & 12 & 15 \\ \hline
\textbf{4} & 4 & 8 & 12 & 16 & 20 \\ \hline
\textbf{5} & $\downarrow$ & 10 & 15 & 20 & 25 \\ \hline
\end{tabular}
\captionof{table}{Conjecture \ref{con:ryserdiameter}}\label{tab:6} 
\end{subtable}
\begin{subtable}[t]{.5\linewidth}
\centering
\begin{tabular}{|l|l|l|l|l|l|}
\firsthline
\backslashbox{\textbf{$k$}}{$r$} & \textbf{2} & \textbf{3} & \textbf{4} & \textbf{5} & \textbf{6} \\ \hline
\textbf{2} & \cellcolor{green}2 & \cellcolor{green}4 & \cellcolor{green}6 & \cellcolor{green}8 & 10 \\ \hline
\textbf{3} & \cellcolor{green}2 & \cellcolor{yellow}3 &  &  &  \\ \hline
\textbf{4} & \cellcolor{green}2 & \cellcolor{yellow}3 &  &  &  \\ \hline
\textbf{5} & \cellcolor{green}$\downarrow$ & \cellcolor{yellow}$\downarrow$ &  &  &  \\ \hline
\end{tabular}
\captionof{table}{Conjecture \ref{con:GL}, Problem \ref{prob:kpart}}\label{tab:7} 
\end{subtable}
~
\begin{subtable}[t]{.5\linewidth}
\centering
\begin{tabular}{|l|l|l|l|l|l|}
\firsthline
\backslashbox{\textbf{$k$}}{$r$} & \textbf{2} & \textbf{3} & \textbf{4} & \textbf{5} & \textbf{6} \\ \hline
\textbf{2} & \cellcolor{green}2 & \cellcolor{yellow}4 & 6 & 8 & 10 \\ \hline
\textbf{3} & 2 & 3 &  &  &  \\ \hline
\textbf{4} & 2 & 3 &  &  &  \\ \hline
\textbf{5} & $\downarrow$ & $\downarrow$ &  &  &  \\ \hline
\end{tabular}
\captionof{table}{Conjecture \ref{con:bipartitediameter}}\label{tab:8} 
\end{subtable}
~
\begin{subtable}[t]{.5\linewidth}
\centering
\begin{tabular}{|l|l|l|l|l|l|}
\firsthline
\backslashbox{\textbf{$k$}}{$r$} & \textbf{2} & \textbf{3} & \textbf{4} & \textbf{5} & \textbf{6} \\ \hline
\textbf{2} & \cellcolor{green}1 & \cellcolor{green}2 & \cellcolor{green}3 & \cellcolor{green}4 & 5 \\ \hline
\textbf{3} & \cellcolor{green}1 & \cellcolor{yellow}1 &  &  &  \\ \hline
\textbf{4} & \cellcolor{green}1 & \cellcolor{yellow}1 & 1 &  &  \\ \hline
\textbf{5} & \cellcolor{green}$\downarrow$ & \cellcolor{yellow}$\downarrow$ &  & 1 &  \\ \hline
\end{tabular}
\captionof{table}{Conjecture \ref{con:tight}, Problem \ref{prob:tight}}\label{tab:9} 
\end{subtable}
\caption{A recap of the results discussed above.  Results known before this paper are highlighted in green.  New or improved results from this paper are highlighted in yellow.}\label{tab:tables}
\end{table}

\section{Tuza's proofs/Monochromatic covers with restrictions on the colors}\label{sec:tuza}

Let $G$ be a graph with $\alpha=\alpha(G)$.  Tuza proved that $\tc_r(G)\leq (r-1)\alpha$ in the cases when $$(r, \alpha)\in \{(3,1), (3,2), (3,3), (3,4), (4,1), (5,1)\}.$$  
The proof for  $(r, \alpha)\in \{(3,1), (3,2), (3,3), (3,4)\}$ can be found in \cite{T1} and \cite{T2}. The proof for $(r, \alpha)=(4,1)$ can be found in \cite{T1} and \cite{T3}. The proof for $(r, \alpha)=(5,1)$ can be found in \cite{T1}. Finally, the case $(r,\alpha)=(4,2)$ is claimed in \cite{T1} and \cite{T2}, but no proof is given.  Note that the cases $(r, \alpha)\in \{(3,1), (3,2), (3,3), (3,4)\}$ are superseded by Aharoni's theorem \cite{A}, but Tuza's proof may still be of some interest because of its elementary nature. 

In all cases, Tuza's proofs are given in the dual (R1) language of vertex covers of $r$-partite hypergraphs.  The objective of this section is to both reprove all of these results in the language of monochromatic covers of edge colored graphs and do so in such a way that we can use these results to prove Conjecture \ref{con:r/2} for $r\leq 5$ which in turn, together with the results in Section \ref{sec:multi}, allow us to prove Conjecture \ref{con:SnotS} for $r\leq 4$.  Also, since the $r=5$ case is unpublished, we feel that this may be of some benefit to others who would like to understand Tuza's proof of this case.  One of the original goals of this project was to explore the possibility of extending Tuza's methods to prove the case $(r, \alpha)=(6,1)$.  While we were unsuccessful in this goal, we were able to classify the (many) special cases which would need to be dealt with in order to prove such a result.  More specifically, when $(r, \alpha)=(5,1)$, Tuza's proof goes by making some general observations which, out of 37 possible cases, leaves two special cases each of which can be dealt with in an ad-hoc manner.  In trying to extend this to the case $(r, \alpha)=(6,1)$, we make analogous observations which, out of 560 possible cases, leaves 173 special cases (most of which do not seem to have an analogously easy ad-hoc proof).

We will prove the following.

\begin{theorem} \label{thm:r3-5}
Let $r\in \{3,4,5\}$ and $S\subseteq [r]$ with $S=|2|$.  In every $r$-coloring of a complete graph $K$, there exists a monochromatic $(r-1)$-cover $\mathcal{H}$ in which every subgraph in $\mathcal{H}$ has a color in $S$ or every subgraph in $\mathcal{H}$ has a color in $[r]\setminus S$.  In particular, $\tc_{r}(K) \leq r-1$. 
\end{theorem}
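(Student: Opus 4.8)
The plan is to argue in the (R2) language, fixing $S=\{a,b\}$ and writing $T=[r]\setminus S$, and to show that one always finds a monochromatic $(r-1)$-cover living entirely inside the $2$-colored graph $G_S$ or entirely inside the $(r-2)$-colored graph $G_T$. Two cheap reductions come first. If some vertex $v$ is incident only to edges with colors in $T$, then the at most $r-2$ monochromatic stars centered at $v$ (one per color of $T$) form a $T$-cover of size $r-2\le r-1$; if instead some vertex is incident only to $S$-colored edges, the two stars at that vertex form an $S$-cover of size $2\le r-1$ (here $r\ge 3$). So we may assume every vertex sees both an $S$-edge and a $T$-edge.

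The engine is König's theorem in its dual form, Theorem \ref{dualkonig}, applied to the $2$-coloring $G_S$: it yields an $S$-cover of $V(K)$ of size at most $\alpha(G_S)$. Since an independent set of $G_S$ is exactly a set of vertices whose internal edges all lie in $T$, the quantity $\alpha(G_S)$ equals the largest number of vertices inducing a complete $T$-colored subgraph. Hence if $\alpha(G_S)\le r-1$ we are immediately done with an $S$-cover, and the entire difficulty is concentrated in the extremal situation where $K$ contains a clique $W$, $|W|\ge r$, all of whose edges use colors of $T$.

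In that case I would pivot to constructing a $T$-cover. The clique $W$ spans a complete graph in only $|T|=r-2\le 3$ colors, so the already-available small cases of Ryser's conjecture in the $\alpha=1$ form (König's theorem for $|T|\le 2$, Aharoni's theorem \cite{A} for $|T|=3$) cover $W$ by few $T$-components; the real work is to absorb the remaining vertices of $V(K)\setminus W$ into these (or a bounded number of additional) $T$-colored components while keeping the total at most $r-1$. For $r=3$ this closes quickly: $T$ is a single color, $W$ is a monochromatic clique, and the assumption that every vertex meets a $T$-edge together with completeness forces the few color-$T$ components through $W$ to sweep up all remaining vertices, giving a $T$-cover of size at most $2$. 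For $r=4$ and $r=5$ the same skeleton is used, but the absorption step is organized by a case analysis on which colors appear between $W$ and the outside vertices and on how the colors of $S$ interlock with those of $T$.

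The main obstacle is precisely this absorption/case analysis, which is intrinsic to Tuza's argument and not an artifact of the translation into monochromatic covers. Once the König step and the two star reductions strip away the generic situations, one is left with a finite but rapidly expanding list of extremal color patterns to verify by hand; as the authors note, the $r=5$ instance already reduces to two genuinely ad hoc configurations out of many, and it is this explosion (growing to hundreds of residual cases at $r=6$) that prevents the method from extending further. The delicate bookkeeping—certifying that every chosen component's color lies in $S$ or in $T$ and that no more than $r-1$ components are ever used—is where essentially all of the care in the proof must go.
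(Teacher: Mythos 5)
Your skeleton matches the paper's: apply K\"onig (Observation \ref{obs:tuza}) to $G_S$, and either win with an $S$-cover of size $\alpha(G_S)\le r-1$ or extract an independent set $X$ of $G_S$ with $|X|\ge r$, i.e.\ a clique whose edges all carry colors from $T=[r]\setminus S$, and then build a $T$-cover anchored at $X$. But there is a genuine gap in how you propose to close the second case, and it already shows at $r=3$. The paper's engine there is Observation \ref{obs:closure}: one may assume every monochromatic component is a clique, so a vertex $v\notin X$ cannot send two edges of the same $S$-color into $X$ (that would force an $S$-colored edge inside $X$), hence $v$ sends at most one edge of each of the two $S$-colors and therefore at least $|X|-2=r-2$ edges with colors in $T$ into $X$. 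This pigeonhole is what pins every outside vertex to one of a small number of $T$-components \emph{meeting $X$}. Your substitute hypothesis --- ``every vertex meets a $T$-edge somewhere'' --- does not do this: without the closure reduction, a vertex could send only $S$-colored edges into $X$ (e.g.\ two edges of color $a$ to distinct vertices of $X$ is perfectly consistent with $X$ being independent in $G_S$), and such vertices could lie in arbitrarily many distinct $T$-components, so even your $r=3$ conclusion ``a $T$-cover of size at most $2$'' is not justified as written. (The paper in fact gets a single color-$T$ component for $r=3$.)

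Beyond that, the absorption step, which is all of the content for $r=4$ and $r=5$, is organized differently from what you sketch and is left unproven in your proposal. You suggest first covering $W$ by few $T$-components via K\"onig/Aharoni and then absorbing $V(K)\setminus W$; the paper instead never covers $X$ ``first'' --- it bounds the number of $T$-components \emph{intersecting} $X$ by analyzing the component structure (the $[3]$-signature, for $r=5$) of each $T$-color restricted to $X$, and uses the $\ge r-2$ $T$-edges from each outside vertex into $X$ to show a suitable subfamily of at most $r-1$ of these components already covers everything (Lemma \ref{lem:r5}, plus two ad hoc signatures). Covering $X$ by Aharoni's theorem gives you components of the right colors but no control relating them to the edges that outside vertices send into $X$, so the absorption does not follow from it. To repair the proposal you need (a) the closure reduction and the resulting pigeonhole, and (b) an actual execution of the signature case analysis for $r=5$; as it stands the proof is a correct outline of the paper's strategy with its two essential ingredients missing.
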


%
%

We begin with some general observations. The \emph{closure} of a graph $G$ with respect to a given coloring is a multigraph $\hat{G}$ on $V(G)$ with edge set defined as follows: there is an edge of color $i$ between $u$ and $v$ in $\hat{G}$ if and only if there is a path of color $i$ between $u$ and $v$ in $G$.

Let the edges of a graph $G$ be $r$-colored. Take the closure of $G$ with respect to this coloring. Note that $\tc_r(G) = \tc_r(\hat{G})$, since given a monochromatic cover of $\hat{G}$, the corresponding monochromatic components of $G$ form a monochromatic cover.

\begin{observation}\label{obs:closure}
In proving an upper bound on $\tc_r(G)$ we will instead prove an upper bound on $\tc_r(\hat{G})$; that is, we will assume that every monochromatic component in the $r$-edge (multi)coloring of $G$ is a clique.
\end{observation}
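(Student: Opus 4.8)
The plan is to verify two claims, from which the observation follows immediately: first, that every monochromatic component of $\hat{G}$ is a clique; and second, that for each fixed $r$-coloring the minimum order of a monochromatic cover of $G$ equals that of $\hat{G}$, so that $\tc_r(G)=\tc_r(\hat{G})$ as already asserted in the text.

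For the first claim I would argue straight from the definition of the closure. Fix a color $i$ and two vertices $u,v$. By definition $uv$ is a color-$i$ edge of $\hat{G}$ precisely when there is a color-$i$ path from $u$ to $v$ in $G$, i.e.\ precisely when $u$ and $v$ lie in a common color-$i$ component of $G$. Consequently the color-$i$ edges of $\hat{G}$ turn each color-$i$ component of $G$ into a complete graph and add no edges between distinct color-$i$ components. This shows at once that (a) the vertex sets of the color-$i$ components of $\hat{G}$ are exactly the vertex sets of the color-$i$ components of $G$, and (b) each such component of $\hat{G}$ is a clique. (That $\hat{G}$ is formally a multigraph is harmless, since parallel edges of different colors do not affect the notion of a monochromatic component.)

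For the second claim the key point is that the minimum order of a monochromatic cover is determined solely by the family of vertex sets of the monochromatic components. Indeed, any monochromatic connected subgraph $T$ is connected and uses a single color, so $T$ is contained in a unique monochromatic component; hence replacing each member of a cover $\cT$ by the monochromatic component containing it produces another monochromatic cover of no larger order, and a minimum cover may always be taken to consist of (connected spanning subgraphs of) monochromatic components. Its order is then the minimum number of monochromatic-component vertex sets whose union is $V(G)$. By the first claim this family of vertex sets is identical for $G$ and $\hat{G}$, so the two minimum cover orders agree for every fixed coloring; since every $r$-coloring of $G$ gives rise to such a closure, maximizing over colorings yields $\tc_r(G)=\tc_r(\hat{G})$. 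The observation follows, because in $\hat{G}$ every monochromatic component is a clique, so to bound $\tc_r(G)$ from above it suffices to bound $\tc_r(\hat{G})$ under the additional assumption that monochromatic components are cliques. I do not expect any genuine obstacle here; the only care needed is the routine bookkeeping of definitions, in particular the remark that a monochromatic connected subgraph lies inside a single monochromatic component, which is what makes the two minimum cover orders depend only on the common component structure.
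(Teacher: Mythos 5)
Your proposal is correct and follows essentially the same route as the paper, which disposes of the observation in one sentence by noting that the monochromatic components of $G$ and $\hat{G}$ correspond, so a monochromatic cover of one yields a cover of the other of the same order. Your write-up merely fills in the routine details of that correspondence (identical component vertex sets, and the fact that a minimum cover may be taken to consist of full components), with no substantive difference in approach.
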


Let $G_{i,j}$ be the subgraph of $G$ induced by the edges of colors $i$ and $j$. By Theorem \ref{dualkonig},
\begin{equation}
\tc_r(G) \leq \tc_2(G_{i,j}) \leq \alpha(G_{i,j}). 
\end{equation}

Thus we have the following useful observation.

\begin{observation}\label{obs:tuza}
If there exist distinct colors $i,j \in [r]$ such that $\alpha(G_{i,j}) \leq (r - 1)\alpha(G)$, then $\tc_r(G)\leq (r-1)\alpha(G)$.
\end{observation}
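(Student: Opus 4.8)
The plan is to observe that this statement is a one-line consequence of the two-color reduction recorded in the displayed inequality immediately preceding it, so the only real work is to spell out why that chain holds. Fix an arbitrary $r$-coloring of $G$, and fix the pair of distinct colors $i,j$ furnished by the hypothesis. The object to examine is the spanning subgraph $G_{i,j}$: it carries the induced $2$-coloring and, crucially, has the same vertex set as $G$.

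The key step is a cover-lifting observation. Any monochromatic connected subgraph of $G_{i,j}$ is also a monochromatic connected subgraph of $G$ (of color $i$ or $j$), and since $V(G_{i,j})=V(G)$, any monochromatic cover of $G_{i,j}$ is automatically a monochromatic cover of $G$ of the same order. Consequently the smallest monochromatic cover of $G$ in this coloring has order at most that of $G_{i,j}$, hence at most $\tc_2(G_{i,j})$. Applying the dual form of König's theorem (Theorem \ref{dualkonig}) to the $2$-colored graph $G_{i,j}$ gives $\tc_2(G_{i,j})\leq\alpha(G_{i,j})$. Chaining these bounds produces $\tc_r(G)\leq\alpha(G_{i,j})$, which is precisely the displayed inequality above the statement, and the hypothesis $\alpha(G_{i,j})\leq(r-1)\alpha(G)$ then closes the argument.

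There is essentially no obstacle here: the result is a direct corollary of König's theorem together with the trivial fact that restricting attention to two of the $r$ colors can only make a monochromatic cover harder (not easier) to find, so it bounds the cover number from above. The single point deserving care is the direction of the comparison with $\tc_2(G_{i,j})$. Because we have fixed one coloring of $G$ rather than ranging over all colorings, we are really bounding the cover number of this specific coloring; the worst-case quantity $\tc_2(G_{i,j})$ is an upper bound for it, and the dual König bound in turn controls $\tc_2(G_{i,j})$ by $\alpha(G_{i,j})$. Once this direction is pinned down, the conclusion is immediate.
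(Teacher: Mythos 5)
Your argument is correct and is exactly the paper's proof: the displayed chain $\tc_r(G)\leq \tc_2(G_{i,j})\leq \alpha(G_{i,j})$ via the dual form of K\"onig's theorem, combined with the hypothesis $\alpha(G_{i,j})\leq (r-1)\alpha(G)$. Your extra care about the direction of the comparison with $\tc_2(G_{i,j})$ is a correct elaboration of what the paper leaves implicit, but it does not change the argument.
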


\subsection{\texorpdfstring{$r=3$}{r=3}, \texorpdfstring{$\alpha=1$}{a=1}}

\begin{proof}[Proof of Theorem \ref{thm:r3-5} when $r=3$]
Let $S\subseteq [3]$ with $|S|=2$ and without loss of generality, suppose $S=\{2,3\}$.  If $\alpha(G_{2,3})\leq 2$, then we are done by Observation \ref{obs:tuza}; so suppose $\alpha(G_{2,3})\geq 3$ and let $X=\{x_1, x_2, x_3\}$ be an independent set in $G_{2,3}$.  This means every edge in $X$ has color 1.  Also since $X$ is independent in $G_{2,3}$, then by Observation \ref{obs:closure}, every vertex sends at most one edge of color 2 and at most one edge of color 3 to $X$.  Thus every vertex sends an edge of color 1 to $X$ and thus there is monochromatic cover consisting of a single component of color 1.
\end{proof}

\subsection{\texorpdfstring{$r=4$}{r=4}, \texorpdfstring{$\alpha=1$}{a=1}}

\begin{proof}[Proof of Theorem \ref{thm:r3-5} when $r=4$]
Let $S\subseteq [4]$ with $|S|=2$ and without loss of generality, suppose $S=\{3,4\}$.  If $\alpha(G_{3,4}) \leq 3$, then we are done by Observation \ref{obs:tuza}; so suppose $\alpha(G_{3,4})\geq 4$ and let $X=\{x_1, x_2, x_3, x_4\}$ be an independent set in $G_{3,4}$.  Note that $X$ induces a $[2]$-colored $K_4$.  Therefore (by Theorem \ref{dualkonig} for instance) there exists a monochromatic component $A_1$, say of color 1, which covers $X$.  Let $B_1$ and $B_2$ be components of color 2 which have the largest intersection with $X$ and without loss of generality, suppose $|B_1\cap X|\geq |B_2\cap X|$ and note that $B_2 \cap X$ is empty if $|B_1\cap X|=4$.

We now claim that $\{A_1, B_1, B_2\}$ is the desired monochromatic 3-cover.  If $v\not\in A_1$, then $v$ sends no edges of color 1 to $X$, at most one edge of color 3, at most one edge of color 4, and consequently at least two edges of color 2.  Thus $v$ must be in either $B_1$ or $B_2$.
\end{proof} 

\subsection{\texorpdfstring{$r=5$}{r=5}, \texorpdfstring{$\alpha=1$}{a=1}}

Let $G$ be an $r$-colored graph and let $X\subseteq V(G)$.  For all $i\in [r]$, the \emph{$i$-signature of $X$}, denoted $\sigma^i[X]$, is the integer partition $(n^i_1, \dots, n^i_{t_i})$ of $|X|$ such that the graph $G_i[X]$ induced by edges of color $i$ in the set $X$ has components of order $n^i_1\geq  n^i_2\geq  \dots\geq  n^i_t$ (where $|X|=\sum_{j=1}^tn_j^i$).  For all $S\subseteq [r]$, the \emph{$S$-signature of $X$}, denoted $\sigma_S(X)$, is the set $\{\sigma^i[X]: i\in [S]\}$.

Now let $n$ and $p$ be positive integers and let $\sigma=\{\sigma^1, \dots, \sigma^p\}$ be a set of integer partitions of $n$.  We say that $\sigma$ is a \emph{valid signature}, if there exists a $p$-coloring of a graph $F$ on $n$ vertices such that the $[p]$-signature of $V(F)$ is $\sigma$ (note that a valid signature may be realized by non-isomorphic colored graphs).  For example, $\{(4,1), (3,1,1), (2,2,1)\}$ is not a valid signature since there is no way to 3-color a $K_5$ so that there are components of order 4 and 1 in color 1, components of order 3, 1, and 1 in color 2, and components of order 2, 2, and 1 in color 3.  

While we don't have a characterization of all valid signatures, the following is a useful necessary condition (and the above example shows that it is not sufficient), which follows simply by counting the number of possible edges.

\begin{observation}\label{obs:valid}
Let $n$ and $p$ be positive integers and let $\sigma=\{\sigma^1, \dots, \sigma^p\}$ be a set of integer partitions of $n$.  If $\sum_{i=1}^p\sum_{j\in [t_i]}\binom{n_j^i}{2}<\binom{n}{2}$, then $\sigma$ is not a valid signature.
\end{observation}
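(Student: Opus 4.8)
The plan is to prove the contrapositive by a one-line double count of the edges, once the realizing object is correctly identified. A signature $\sigma$ being \emph{valid} must mean it is realized by a $p$-coloring of the \emph{complete} graph $K_n$: as the example with $K_5$ indicates, and as is forced by the fact that each $\sigma^i$ is a partition of the full vertex set $n$, the relevant graph $F$ is $K_n$, so its edges are partitioned into the $p$ color classes. (If arbitrary graphs $F$ were allowed, the all-singletons signature would be realized by the empty graph yet would violate the claimed bound, so completeness is essential.) Accordingly, I would fix a putative realizing coloring and count $|E(K_n)|$ in two ways to produce the reverse of the hypothesized inequality.

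First I would observe that, since every edge of $K_n$ receives exactly one of the $p$ colors, the color classes $G_1,\dots,G_p$ partition $E(K_n)$, so $\sum_{i=1}^p |E(G_i)| = \binom{n}{2}$. Next, for each fixed color $i$, the $i$-signature records that $G_i$ has components of orders $n_1^i,\dots,n_{t_i}^i$; every color-$i$ edge lies inside one such component, and a component on $m$ vertices spans at most $\binom{m}{2}$ edges (with equality exactly for cliques). Hence $|E(G_i)| \le \sum_{j\in[t_i]} \binom{n_j^i}{2}$ for every $i$.

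Combining these two facts yields
\[
\binom{n}{2} \;=\; \sum_{i=1}^p |E(G_i)| \;\le\; \sum_{i=1}^p \sum_{j\in[t_i]} \binom{n_j^i}{2},
\]
which directly contradicts the hypothesis $\sum_{i=1}^p\sum_{j\in [t_i]}\binom{n_j^i}{2}<\binom{n}{2}$. Thus no signature satisfying the strict inequality can be valid, which is the statement.

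There is no genuine obstacle here; the argument is a clean edge count. The only points warranting explicit verification are that the color classes partition \emph{all} $\binom{n}{2}$ edges (immediate once $F=K_n$ and each edge has a unique color) and that a monochromatic component of order $m$ contains at most $\binom{m}{2}$ edges (the clique being the extremal case, consistent with the closure reduction in Observation \ref{obs:closure}). The single subtlety worth flagging in the write-up is making the convention $F=K_n$ explicit, since otherwise the observation is false.
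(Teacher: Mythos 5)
Your proof is correct and is exactly the edge-counting argument the paper alludes to (the paper does not even write it out, stating only that the observation ``follows simply by counting the number of possible edges''). Your explicit remark that the realizing graph $F$ must be taken to be $K_n$ for the statement to hold is a fair and useful clarification of the paper's definition, but the substance of the argument is the same.
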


\begin{proof}[Proof of Theorem \ref{thm:r3-5} when $r=5$]
Let $S\subseteq [5]$ with $|S|=2$ and without loss of generality, suppose $S=\{4,5\}$.  If $\alpha(G_{4,5}) \leq 4$, then we are done by Observation \ref{obs:tuza}; so suppose $\alpha(G_{4,5})\geq 5$ and let $X=\{x_1, x_2, x_3, x_4, x_5\}$ be an independent set in $G_{4,5}$.  Note that $X$ induces a $[3]$-colored $K_5$. 

We now split into cases depending on the $[3]$-signature of $X$.  There are 84 possible signatures and 37 of them are valid.  The following lemma deals with 35 of the 37 cases.

\begin{lemma}
\label{lem:r5}
Let $\{\sigma^1, \sigma^2, \sigma^3\}=\{(n^1_1,\dots, n^1_{t_1}), (n^2_1,\dots, n^2_{t_2}), (n^3_1,\dots, n^3_{t_3})\}$ be the $[3]$-signature of $X$. If $i,j,k\in [3]$ are distinct and 
\begin{enumerate}
\item\label{prop:51} $t_i+t_j+|\{n^k_\ell: n^k_\ell\geq 3\}|\leq 4, \text{ or}$
\item\label{prop:52} $t_i+|\{n^j_\ell: n^j_\ell\geq 2\}|+|\{n^k_\ell: n^k_\ell\geq 2\}|\leq 4,$
\end{enumerate}
then there exists a monochromatic $4$-cover in which all of the subgraphs have colors from $[3]$.
\end{lemma}

Since the conditions in Lemma \ref{lem:r5} are a bit hard to parse at first sight, note that (i) says that the number of components of color $i$ or $j$ plus the number of components of order at least 3 of color $k$ in the graph induced by $X$ is at most 4, and (ii) says that the number of components of color $i$ plus the number of components of order at least 2 of color $j$ or $k$ in the graph induced by $X$ is at most 4.  For example, $\{(5), (3,2), (3,2)\}$ and $\{(4,1), (3,1,1), (3,1,1)\}$ are valid signatures to which Lemma \ref{lem:r5}(i) and Lemma \ref{lem:r5}(ii), respectively, apply.  

\begin{proof}
Let $\cT$ denote the set of at most four monochromatic components which intersect $X$ as described in one of the two cases.  Suppose for contradiction that $\cT$ is not a monochromatic 4-cover and let $v$ be an uncovered vertex.  In either case, this implies  $v\not\in X$.  First note that since $G_{4,5}[X]$ is an independent set, $v$ sends at most one edge of color 4 and at most one edge of color 5 to $X$.  Thus $v$ sends at least three edges of color 1, 2, or 3 to $X$ ($\star$).

\begin{enumerate}
    \item Without loss of generality we can assume $\cT$ contains all components of colors 2 and 3 which intersect $X$ and all components of of color 1 which intersect $X$ in at least 3 vertices.  Then $v$ sends no edges of color 2 or 3 to $X$, and at most 2 edges of color 1 to $X$; a contradiction to ($\star$). 
   
    \item Without loss of generality we can assume $\cT$ contains all components of color 3 which intersect $X$ and all components of colors 1 or 2 which intersect $X$ in at least 2 vertices. Then $v$ sends no edges of color 3 to $X$, at most one edge of color 1, and at most one edge of color 2; a contradiction to ($\star$).\qedhere
\end{enumerate}
\end{proof}

By direct inspection, one can see that there are only two valid signatures which do not meet the conditions of Lemma \ref{lem:r5}: $\{(3,2), (3,2), (3,2)\}$ and $\{(4,1), (3,2), (3,2)\}$.  In both cases there are two components of each color which intersect $X$.  Let $A_1, A_2$ be the components of color $1$ which intersect $X$, let $B_1, B_2$ be the components of color $2$ which intersect $X$, and let $C_1, C_2$ be the components of color $3$ which intersect $X$.  Suppose that $|A_1\cap X|\geq |A_2\cap X|$, $|B_1\cap X|\geq |B_2\cap X|$, and $|C_1\cap X|\geq |C_2\cap X|$.

We now deal with these two cases separately.  

\noindent
\textbf{Case 1.} $\{(3,2),(3,2),(3,2)\}$.

Without loss of generality, we must have the following situation:
\begin{align*}
V(A_1)\cap X &= \{x_1, x_2, x_3\}, ~V(A_2)\cap X = \{x_4, x_5\},\\
V(B_1)\cap X &= \{x_1, x_2, x_4\}, ~V(B_2)\cap X  = \{x_3, x_5\},\\
V(C_1)\cap X &= \{x_1, x_2, x_5\}, ~V(C_2)\cap X  = \{x_3, x_4\}. 
\end{align*}

\begin{figure}[ht]
\begin{subfigure}[t]{.5\textwidth}
  \centering
 \includegraphics{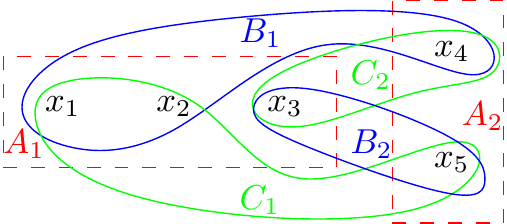} 
\caption{Case 1}\label{fig:51}
\end{subfigure}
\begin{subfigure}[t]{.5\textwidth}
   \centering
 \includegraphics{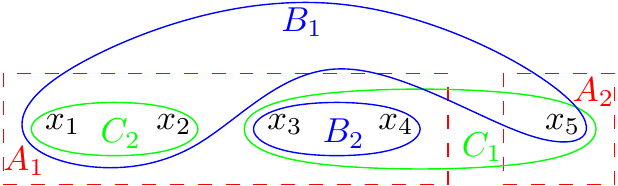} 
\caption{Case 2}\label{fig:52}
\end{subfigure}
\caption{$r=5$}\label{fig:5}
\end{figure}

Suppose that neither  $\{ B_1, B_2, C_1, C_2 \}$  nor $\{ A_1, A_2, B_1, C_1 \}$ are monochromatic 4-covers of $K$.  Note that if $u\notin B_1\cup  B_2\cup  C_1\cup  C_2$, then $u$ must send one edge of color 4 and one edge of color 5 to $\{x_4,x_5\}$ and three edges of color 1 to $\{x_1, x_2, x_3\}$.   If $v\notin A_1\cup  A_2\cup  B_1\cup  C_1$, then $v$ must send one edge of color 4 and one edge of color 5 to $\{x_1,x_2\}$, at least one edge of color 3 to $\{x_3,x_4\}$ and at least one edge of color $2$ to $\{x_3,x_5\}$.  But then no matter what the color of the edge between $u$ and $v$ we have a contradiction.  

\noindent
\textbf{Case 2.} $\{(4,1),(3,2),(3,2)\}$.

Without loss of generality, we must have the following situation:
\begin{align*}
V(A_1)\cap X &= \{x_1, x_2, x_3, x_4\}, ~V(A_2)\cap X = \{x_5\},\\
V(B_1)\cap X &= \{x_1, x_2, x_5\}, ~V(B_2)\cap X = \{x_3, x_4\},\\
V(C_1)\cap X &= \{x_3, x_4, x_5\}, ~V(C_2)\cap X = \{x_1, x_2\}. 
\end{align*}

Suppose that neither $\{A_1, A_2, B_1, B_2\}$ nor $\{A_1, A_2, C_1, C_2\}$ are monochromatic 4-covers of $K$. Note that any vertex $u$ which is not in $A_1\cup A_2\cup B_1\cup B_2$ must send one edge of color 4 and one edge of color 5 to $\{x_1, x_2\}$ and three edges of color 3 to $\{x_3, x_4, x_5\}$ (so $u\in C_1$).  Likewise any vertex $v$ which is not in $A_1\cup A_2\cup C_1\cup C_2$ must send one edge of color 4 and one edge of color 5 to $\{x_3, x_4\}$ and must send three edges of color 2 to $\{x_1, x_2, x_5\}$ (so $v\in B_1$).  

The only possible color for the edge $uv$ is color 1.  Let $A_3$ be the component of color 1 which contains $u$ and $v$.   We now claim that $\{A_1, A_2, A_3, B_1\}$ is a monochromatic cover.  We establish this claim by showing that if $w\not\in A_1\cup A_2\cup B_1$, then $w$ must send an edge of color 1 to either $u$ or $v$.  So let $w$ be such that $w\not\in A_1\cup A_2\cup B_1$ and suppose for contradiction that $w$ does not send an edge of color 1 to $\{u,v\}$.  If $w$ sends an edge of color 3 to $\{x_1, x_2\}$, then $w$ must send an edge of color 2 to $\{x_3,x_4\}$ which further implies that $w$ must send an edge of color 4 or 5, say color 5, to $\{x_5\}$.  Now $w$ can only send edges of color $4$ to $\{u,v\}$, but then this causes $u$ and $v$ to be in the same component of color 4, a contradiction.  So suppose $w$ does not send an edge of color 3 to $\{x_1, x_2\}$, which means $w$ must send an edge of color 4 to $\{x_1, x_2\}$ and an edge of color 5 to $\{x_1, x_2\}$, consequently $w$ must send an edge of color 3 to $\{x_3, x_4, x_5\}$ (so $w\in C_1$).  Now $w$ is forced to send an edge of color 1 to $v$.  This completes the case.  
\end{proof}

\subsection{What we know for \texorpdfstring{$r=6$}{r=6}, \texorpdfstring{$\alpha=1$}{a=1}}

Let $S\subseteq [6]$ with $|S|=2$ and without loss of generality, suppose $S=\{5,6\}$.  If $\alpha(G_{5,6}) \leq 5$, then we are done by Observation \ref{obs:tuza}; so suppose $\alpha(G_{5,6})\geq 6$ and let $X=\{x_1, x_2, x_3, x_4, x_5,x_6\}$ be an independent set in $G_{5,6}$.  Note that $X$ induces a $[4]$-colored $K_6$. 

We now split into cases depending on the $[4]$-signature of $X$.  There are 1001 possible signatures, 560 of which are valid.  The following two lemmas deal with 387 of the 560 cases\footnote{We wrote a computer program which first determined all valid $[4]$-signatures of $X$ (the program goes beyond Observation \ref{obs:valid} by doing a search to see whether each potential signature can be realized by a $4$-coloring of $K_6$), then eliminates all valid signatures for which Lemma \ref{lem:r6} or Lemma \ref{lem:r6ii} applies.}.

\begin{lemma}
\label{lem:r6}
Let $\{\sigma^1, \sigma^2, \sigma^3,\sigma^4\}=\{(n^1_1,\dots, n^1_{t_1}), (n^2_1,\dots, n^2_{t_2}), (n^3_1,\dots, n^3_{t_3}), (n^4_1,\dots, n^4_{t_4})\}$ be the $[4]$-signature of $X$. If there exists distinct $i,j,k,\ell\in [4]$ such that
\begin{enumerate}[label={(\roman*)}]
\item \label{prop:61} $t_i+|\{n^j_m: n^j_m \geq 2|+|\{n^k_m: n^k_m \geq 2|+|\{n^\ell_m: n^\ell_m \geq 2|\leq 5, \text{ or}$

\item\label{prop:62} $t_i+t_j+|\{n^k_m: n^k_m\geq 2\}|+|\{n^\ell_m: n^\ell_m\geq 3\}|\leq 5, \text{ or}$

\item\label{prop:63} $t_i+t_j+t_k+|\{n^\ell_m: n^\ell_m\geq 4\}|\leq 5,$
\end{enumerate}
then there exists a monochromatic $5$-cover in which all of the subgraphs have colors from $[4]$.
\end{lemma}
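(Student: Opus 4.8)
The plan is to run the same counting scheme that drives Lemma \ref{lem:r5}, now powered by the observation that an outside vertex must send many edges of colors in $[4]$ into $X$. First I would record the analogue of the $(\star)$ inequality from the $r=5$ proof: since $X$ is independent in $G_{5,6}$ and, by Observation \ref{obs:closure}, every monochromatic component is a clique, no vertex $v\notin X$ can send two edges of color $5$ (nor two of color $6$) into $X$ — otherwise two vertices of $X$ would lie together in a color-$5$ (resp.\ color-$6$) clique, contradicting independence. Hence $v$ sends at most one edge of color $5$ and at most one of color $6$ into $X$, and since $\abs{X}=6$ it sends at least $6-2=4$ edges of colors in $[4]$ into $X$. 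Call this $(\star)$.

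The engine of the argument is a threshold principle I would isolate as a sub-observation: fix a color $c$ and an integer $\theta\geq 1$, and suppose $\cT$ contains every color-$c$ component that meets $X$ in at least $\theta$ vertices. Then any $v\notin X$ not covered by $\cT$ sends at most $\theta-1$ edges of color $c$ into $X$; for if it sent $\theta$ of them, those $\theta$ vertices of $X$ together with $v$ would lie in a single color-$c$ component meeting $X$ in at least $\theta$ vertices, which is in $\cT$ and would cover $v$. The number of color-$c$ components meeting $X$ in at least $\theta$ vertices is exactly $\abs{\set{n^c_m : n^c_m\geq \theta}}$ (and equals $t_c$ when $\theta=1$), so including all of them costs precisely that many members of $\cT$.

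With these two facts the three cases are mechanical, since each hypothesis is exactly a budget that keeps $\abs{\cT}\leq 5$ while forcing an uncovered vertex to contradict $(\star)$. In case \ref{prop:61} I would take $\cT$ to consist of all color-$i$ components meeting $X$ together with all color-$j$, color-$k$, and color-$\ell$ components meeting $X$ in at least $2$ vertices; then $\abs{\cT}\leq 5$ by hypothesis, and the threshold principle (with thresholds $1,2,2,2$) bounds the color-$[4]$ edges from any uncovered $v$ into $X$ by $0+1+1+1=3<4$, contradicting $(\star)$. Cases \ref{prop:62} and \ref{prop:63} are identical with threshold vectors $(1,1,2,3)$ and $(1,1,1,4)$ respectively, each again giving a per-color slack summing to $3$.

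The honest point is that there is no genuine obstacle inside this lemma — it is a single double-counting argument packaged so that the three conditions are exactly the ways to distribute ``thresholds'' over four colors with per-color slack summing to $3$ and total cover size at most $5$. The real difficulty, as the surrounding discussion signals, lies entirely in the signatures that evade all three conditions (the $173$ leftover cases), where ad hoc work analogous to the two hand-checked cases for $r=5$ must be done; the lemma itself I would expect to cause no trouble beyond getting the closure argument for $(\star)$ and the threshold bound exactly right.
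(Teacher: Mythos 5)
Your proposal is correct and follows essentially the same argument as the paper: the same $(\star)$ bound forcing at least four edges of colors in $[4]$ from an uncovered vertex into $X$, and the same threshold-based choice of $\cT$ in each of the three cases, with your ``threshold principle'' just making explicit the counting the paper does inline. The only cosmetic difference is that the paper also records the (even easier) case where the uncovered vertex lies in $X$ itself, which your argument handles implicitly since in each case $\cT$ contains every component of some fixed color meeting $X$ and hence already covers all of $X$.
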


For example, $\{(5,1), (3,1,1,1), (3,1,1,1), (2,1,1,1,1)\}$, $\{(5,1), (3,3), (3,1,1,1), (2,2,2)\}$, and $\{(6), (4,2), (4,2), (3,3)\}$ are valid signatures to which Lemma \ref{lem:r6}(i), Lemma \ref{lem:r6}(ii), Lemma \ref{lem:r6}(iii), respectively, apply.

\begin{proof}
Let $\cT$ denote the set of at most five monochromatic components which intersect $X$ as described in the three cases.  Suppose for contradiction that $\cT$ is not a monochromatic 5-cover and let $v$ be an uncovered vertex.  First note that since $G_{5,6}[X]$ is an independent set, $v$ sends at most one edge of color 5 and at most one edge of color 6 to $X$, unless $v\in X$ in which case $v$ sends no edges of color 5 and no edges of color 6 to $X$.  Thus in any case $v$ sends at least four edges of color 1, 2, 3, or 4 to $X$ ($\star$).

\begin{enumerate}[label={(\roman*)}]
    \item  Without loss of generality we can assume $\cT$ contains all components of color 4 which intersect $X$ and all components of  color 1, 2, or 3 which intersect $X$ in at least 2 vertices.  Thus $v$ sends no edges of color 4 to $X$, and at most one edge of colors 1,2, or 3; a contradiction to ($\star$).
    
    \item Without loss of generality we can assume $\cT$ contains all components of color 3 and 4 which intersect $X$, all components of color 2 which intersect $X$ in at least 3 vertices, and all components of color $1$ which intersect $X$ in at least 2 vertices.  Thus $v$ sends no edges of color 3 or 4 to $X$, at most two edges of color 2 to $X$, and at most one edge of color 1 to $X$; a contradiction to ($\star$). 
    
    \item Without loss of generality, we can assume $\cT$ contains all components of colors 2, 3, and 4 which intersect $X$, and all components of color 1 which intersect $X$ in at least 4 vertices. Thus $v$ sends no edges of color 2, 3 or 4 to $X$ and at most three edges of color 1 to $X$; a contradiction to ($\star$).\qedhere
\end{enumerate}
\end{proof}

\begin{lemma}
\label{lem:r6ii}
Let $W\subseteq X$ and let $$\{\sigma^1, \sigma^2, \sigma^3,\sigma^4\}=\{(n^1_1,\dots, n^1_{t_1}), (n^2_1,\dots, n^2_{t_2}), (n^3_1,\dots, n^3_{t_3}), (n^4_1,\dots, n^4_{t_4})\}$$ be the $[4]$-signature of $W$. If there exists distinct $i,j,k,\ell\in [4]$ such that
\begin{enumerate}[label={(\roman*)}]
\item\label{prop:64} $|W|=3$ and $t_1+t_2+t_3+t_4\leq 5$, or

\item\label{prop:65} $|W|=4$ and $t_i+t_j+t_k+|\{n^\ell_m: n^\ell_m\geq 2\}|\leq 5, \text{ or}$

\item\label{prop:66} $|W|=5$ and $t_i+t_j+|\{n^k_m: n^k_m\geq 2\}|+|\{n^\ell_m: n^\ell_m\geq 2\}|\leq 5, \text{ or}$

\item\label{prop:67} $|W|=5$ and $t_i+t_j+t_k+|\{n^\ell_m: n^\ell_m\geq 3|\leq 5,$
\end{enumerate}
then there exists a monochromatic $5$-cover in which all of the subgraphs have colors from $[4]$.
\end{lemma}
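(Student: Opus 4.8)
The plan is to run the same counting argument that proves Lemma~\ref{lem:r6}, but now relative to the subset $W$ rather than to all of $X$, organizing all four items into one uniform scheme. For each item I would build the candidate cover $\cT$ by choosing, for every color $c\in[4]$, a threshold $s_c\in\{1,2,3\}$ and placing into $\cT$ every monochromatic component of color $c$ that meets $W$ in at least $s_c$ vertices. With this convention the number of components in $\cT$ is exactly $\sum_{c\in[4]}|\{m:n^c_m\ge s_c\}|$, and the hypothesis of each item asserts precisely that this quantity is at most $5$, so $|\cT|\le 5$ in every case. Concretely I would take $(s_i,s_j,s_k,s_\ell)$ equal to $(1,1,1,1)$ in \ref{prop:64}, $(1,1,1,2)$ in \ref{prop:65}, $(1,1,2,2)$ in \ref{prop:66}, and $(1,1,1,3)$ in \ref{prop:67}.

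Next I would verify that $\cT$ covers $V(K)$. By Observation~\ref{obs:closure} I may assume every monochromatic component is a clique. Every $w\in W$ is covered because at least one color has threshold $1$, and the component of that color containing $w$ meets $W$ at $w$, hence lies in $\cT$. For the remaining vertices I reuse the $(\star)$-style count from the proof of Lemma~\ref{lem:r6}: since $W\subseteq X$ is independent in $G_{5,6}$ and components are cliques, any $v\notin W$ sends at most one edge of color $5$ and at most one edge of color $6$ to $W$ (a second such edge would force a color-$5$ or color-$6$ edge inside $W$), so $v$ sends at least $|W|-2$ edges of colors in $[4]$ to $W$, with this count being $|W|$ when $v\in X\setminus W$. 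On the other hand, if $v$ is \emph{not} covered by $\cT$, then for each color $c$ the color-$c$ component of $v$ meets $W$ in fewer than $s_c$ vertices (otherwise it would belong to $\cT$), and since that component is a clique the color-$c$ edges from $v$ into $W$ land in $W\cap(\text{color-}c\text{ component of }v)$; hence $v$ sends at most $s_c-1$ edges of color $c$ to $W$. Summing over $c$, an uncovered $v$ sends at most $\sum_{c\in[4]}(s_c-1)$ edges of colors in $[4]$ to $W$.

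The proof then reduces to the strict inequality $\sum_{c\in[4]}(s_c-1)<|W|-2$, which contradicts the lower bound above and forces $\cT$ to be the desired monochromatic $5$-cover. Checking this for the four threshold choices gives $0<1$, $1<2$, $2<3$, and $2<3$ respectively, all of which hold, so each item follows. I do not expect a genuine obstacle here: the whole content is the uniform clique/independence count, and the only places demanding care are confirming that the lower bound $|W|-2$ on colored edges into $W$ holds for every position of $v$ (inside or outside $X$) and that each inequality is \emph{strict} rather than merely non-strict, since a non-strict inequality would fail to produce a contradiction. The reason the lemma is phrased for a subset $W$ rather than for $X$ itself is that, for several of the $173$ signatures of $X$ left open after Lemma~\ref{lem:r6}, no threshold choice works for $X$ but a suitable deletion of one or more vertices yields a $W$ whose signature satisfies one of \ref{prop:64}--\ref{prop:67}; that selection of $W$ is made when the lemma is applied and is not part of the proof of the lemma.
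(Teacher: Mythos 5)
Your proposal is correct and follows essentially the same argument as the paper: the paper's proof also builds $\cT$ from the components meeting $W$ above the color-wise thresholds dictated by each item and derives a contradiction by counting, for an uncovered vertex $v$, at most one edge each of colors $5$ and $6$ into $W\subseteq X$ and at most $s_c-1$ edges of each color $c\in[4]$. The only difference is presentational — you unify the four cases into one threshold scheme with the single inequality $\sum_{c}(s_c-1)<|W|-2$, whereas the paper verifies each case separately — and your checks of that inequality and of the bound $|\cT|\le 5$ are all accurate.
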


\begin{proof}
Let $\cT$ denote the set of at most five monochromatic components which intersect $W$ as described in the three cases.  Suppose for contradiction that $\cT$ is not a monochromatic 5-cover and let $v$ be an uncovered vertex.  First note that since $G_{5,6}[X]$ is an independent set, $v$ sends at most one edge of color 5 and at most one edge of color 6 to $X$, unless $v\in X$ in which case $v$ sends no edges of color 5 and no edges of color 6 to $X$ ($\star$).  

\begin{enumerate}[label={(\roman*)}]
    \item  Note that $\cT$ contains all components of colors 1, 2, 3, and 4 which intersect $W$, so $v$ sends no edges of color 1, 2, 3, or 4 to $W$ which together with ($\star$) and the fact that $|W|=3$ is a contradiction.
    
    \item Without loss of generality we can assume $\cT$ contains all components of color 2, 3,  and 4 which intersect $W$, and all components of color 1 which intersect $W$ in at least 2 vertices. Thus $v$ sends at most one edge of color 1 to $W$ which together with ($\star$) and the fact that $|W|=4$ is a contradiction. 
    
    \item Without loss of generality, we can assume $\cT$ contains all components of colors 3 and 4 which intersect $W$, and all components of color 1 or 2 which intersect $W$ in at least 2 vertices. Thus $v$ sends no edges of color 3 or 4 to $W$, at most one edge of color 1, and at most one edge of color 2, which together with ($\star$) and the fact that $|W|=5$ is a contradiction.
    
  \item Without loss of generality, we can assume $\cT$ contains all components of colors 2, 3, and 4 which intersect $W$, and all components of color 1 which intersect $W$ in at least 3 vertices. Thus $v$ sends no edges of color 2, 3, or 4 to $W$, at most two edges of color 1, which together with ($\star$) and the fact that $|W|=5$ is a contradiction.\qedhere
\end{enumerate}
\end{proof}

We are left with are 173 valid signatures for which an ad-hoc proof is needed (see Table \ref{tab:r6}).  


\begin{table}[ht]
\tiny
\begin{minipage}[t]{0.33\textwidth}
\begin{tabular}[t]{l}
$\{(6), (4, 2), (4, 2), (4, 2)\}$ \\
$\{(6), (4, 2), (3, 2, 1), (3, 2, 1)\}$ \\
$\{(6), (4, 2), (2, 2, 2), (2, 2, 2)\}$ \\
$\{(6), (4, 1, 1), (3, 3), (3, 3)\}$ \\
$\{(6), (4, 1, 1), (3, 2, 1), (3, 2, 1)\}$ \\
$\{(6), (4, 1, 1), (2, 2, 1, 1), (2, 2, 1, 1)\}$ \\
$\{(6), (3, 3), (3, 2, 1), (3, 2, 1)\}$ \\
$\{(6), (3, 3), (2, 2, 2), (2, 2, 2)\}$ \\
$\{(6), (3, 2, 1), (3, 2, 1), (3, 2, 1)\}$ \\
$\{(6), (3, 2, 1), (3, 2, 1), (3, 1, 1, 1)\}$ \\
$\{(6), (3, 2, 1), (3, 2, 1), (2, 2, 2)\}$ \\
$\{(6), (3, 2, 1), (3, 2, 1), (2, 2, 1, 1)\}$ \\
$\{(6), (3, 2, 1), (3, 2, 1), (2, 1, 1, 1, 1)\}$ \\
$\{(6), (3, 2, 1), (2, 2, 2), (2, 2, 2)\}$ \\
$\{(6), (3, 2, 1), (2, 2, 2), (2, 2, 1, 1)\}$ \\
$\{(6), (3, 2, 1), (2, 2, 1, 1), (2, 2, 1, 1)\}$ \\
$\{(6), (3, 1, 1, 1), (3, 1, 1, 1), (2, 2, 2)\}$ \\
$\{(6), (3, 1, 1, 1), (2, 2, 1, 1), (2, 2, 1, 1)\}$ \\
$\{(6), (2, 2, 2), (2, 2, 2), (2, 2, 2)\}$ \\
$\{(6), (2, 2, 2), (2, 2, 2), (2, 2, 1, 1)\}$ \\
$\{(6), (2, 2, 2), (2, 2, 1, 1), (2, 2, 1, 1)\}$ \\
$\{(6), (2, 2, 1, 1), (2, 2, 1, 1), (2, 2, 1, 1)\}$ \\
$\{(6), (2, 2, 1, 1), (2, 2, 1, 1), (2, 1, 1, 1, 1)\}$ \\
$\{(5, 1), (5, 1), (4, 2), (4, 2)\}$ \\
$\{(5, 1), (5, 1), (4, 2), (3, 3)\}$ \\
$\{(5, 1), (5, 1), (4, 1, 1), (3, 2, 1)\}$ \\
$\{(5, 1), (5, 1), (3, 2, 1), (3, 2, 1)\}$ \\
$\{(5, 1), (5, 1), (3, 2, 1), (3, 1, 1, 1)\}$ \\
$\{(5, 1), (5, 1), (3, 2, 1), (2, 2, 2)\}$ \\
$\{(5, 1), (5, 1), (3, 2, 1), (2, 2, 1, 1)\}$ \\
$\{(5, 1), (5, 1), (2, 2, 2), (2, 2, 2)\}$ \\
$\{(5, 1), (5, 1), (2, 2, 2), (2, 2, 1, 1)\}$ \\
$\{(5, 1), (4, 2), (4, 2), (4, 2)\}$ \\
$\{(5, 1), (4, 2), (4, 2), (4, 1, 1)\}$ \\
$\{(5, 1), (4, 2), (4, 2), (3, 3)\}$ \\
$\{(5, 1), (4, 2), (4, 2), (3, 2, 1)\}$ \\
$\{(5, 1), (4, 2), (4, 2), (3, 1, 1, 1)\}$ \\
$\{(5, 1), (4, 2), (4, 1, 1), (3, 3)\}$ \\
$\{(5, 1), (4, 2), (4, 1, 1), (3, 2, 1)\}$ \\
$\{(5, 1), (4, 2), (3, 3), (3, 3)\}$ \\
$\{(5, 1), (4, 2), (3, 3), (3, 2, 1)\}$ \\
$\{(5, 1), (4, 2), (3, 2, 1), (3, 2, 1)\}$ \\
$\{(5, 1), (4, 2), (3, 2, 1), (3, 1, 1, 1)\}$ \\
$\{(5, 1), (4, 2), (3, 2, 1), (2, 2, 2)\}$ \\
$\{(5, 1), (4, 2), (3, 2, 1), (2, 2, 1, 1)\}$ \\
$\{(5, 1), (4, 2), (3, 2, 1), (2, 1, 1, 1, 1)\}$ \\
$\{(5, 1), (4, 2), (2, 2, 2), (2, 2, 2)\}$ \\
$\{(5, 1), (4, 2), (2, 2, 2), (2, 2, 1, 1)\}$ \\
$\{(5, 1), (4, 1, 1), (4, 1, 1), (3, 2, 1)\}$ \\
$\{(5, 1), (4, 1, 1), (4, 1, 1), (2, 2, 2)\}$ \\
$\{(5, 1), (4, 1, 1), (4, 1, 1), (2, 2, 1, 1)\}$ \\
$\{(5, 1), (4, 1, 1), (3, 3), (3, 3)\}$ \\
$\{(5, 1), (4, 1, 1), (3, 3), (3, 2, 1)\}$ \\
$\{(5, 1), (4, 1, 1), (3, 2, 1), (3, 2, 1)\}$ \\
$\{(5, 1), (4, 1, 1), (3, 2, 1), (3, 1, 1, 1)\}$ \\
$\{(5, 1), (4, 1, 1), (3, 2, 1), (2, 2, 2)\}$ \\
$\{(5, 1), (4, 1, 1), (3, 2, 1), (2, 2, 1, 1)\}$ \\
$\{(5, 1), (4, 1, 1), (3, 2, 1), (2, 1, 1, 1, 1)\}$ \\

\end{tabular}
\end{minipage}%
\hfill
\begin{minipage}[t]{0.33\textwidth}
\begin{tabular}[t]{l}

$\{(5, 1), (4, 1, 1), (3, 1, 1, 1), (2, 2, 2)\}$ \\
$\{(5, 1), (4, 1, 1), (3, 1, 1, 1), (2, 2, 1, 1)\}$ \\
$\{(5, 1), (4, 1, 1), (2, 2, 2), (2, 2, 2)\}$ \\
$\{(5, 1), (4, 1, 1), (2, 2, 2), (2, 2, 1, 1)\}$ \\
$\{(5, 1), (4, 1, 1), (2, 2, 1, 1), (2, 2, 1, 1)\}$ \\
$\{(5, 1), (3, 3), (3, 3), (3, 3)\}$ \\
$\{(5, 1), (3, 3), (3, 3), (3, 2, 1)\}$ \\
$\{(5, 1), (3, 3), (3, 2, 1), (3, 2, 1)\}$ \\
$\{(5, 1), (3, 3), (3, 2, 1), (3, 1, 1, 1)\}$ \\
$\{(5, 1), (3, 3), (3, 2, 1), (2, 2, 2)\}$ \\
$\{(5, 1), (3, 3), (3, 2, 1), (2, 2, 1, 1)\}$ \\
$\{(5, 1), (3, 2, 1), (3, 2, 1), (3, 2, 1)\}$ \\
$\{(5, 1), (3, 2, 1), (3, 2, 1), (3, 1, 1, 1)\}$ \\
$\{(5, 1), (3, 2, 1), (3, 2, 1), (2, 2, 2)\}$ \\
$\{(5, 1), (3, 2, 1), (3, 2, 1), (2, 2, 1, 1)\}$ \\
$\{(5, 1), (3, 2, 1), (3, 2, 1), (2, 1, 1, 1, 1)\}$ \\
$\{(5, 1), (3, 2, 1), (3, 1, 1, 1), (3, 1, 1, 1)\}$ \\
$\{(5, 1), (3, 2, 1), (3, 1, 1, 1), (2, 2, 2)\}$ \\
$\{(5, 1), (3, 2, 1), (3, 1, 1, 1), (2, 2, 1, 1)\}$ \\
$\{(5, 1), (3, 1, 1, 1), (3, 1, 1, 1), (2, 2, 2)\}$ \\
$\{(5, 1), (3, 1, 1, 1), (3, 1, 1, 1), (2, 2, 1, 1)\}$ \\
$\{(4, 2), (4, 2), (4, 2), (4, 2)\}$ \\
$\{(4, 2), (4, 2), (4, 2), (4, 1, 1)\}$ \\
$\{(4, 2), (4, 2), (4, 2), (3, 3)\}$ \\
$\{(4, 2), (4, 2), (4, 2), (3, 2, 1)\}$ \\
$\{(4, 2), (4, 2), (4, 2), (3, 1, 1, 1)\}$ \\
$\{(4, 2), (4, 2), (4, 2), (2, 2, 2)\}$ \\
$\{(4, 2), (4, 2), (4, 2), (2, 2, 1, 1)\}$ \\
$\{(4, 2), (4, 2), (4, 1, 1), (4, 1, 1)\}$ \\
$\{(4, 2), (4, 2), (4, 1, 1), (3, 3)\}$ \\
$\{(4, 2), (4, 2), (4, 1, 1), (3, 2, 1)\}$ \\
$\{(4, 2), (4, 2), (4, 1, 1), (3, 1, 1, 1)\}$ \\
$\{(4, 2), (4, 2), (3, 3), (3, 3)\}$ \\
$\{(4, 2), (4, 2), (3, 3), (3, 2, 1)\}$ \\
$\{(4, 2), (4, 2), (3, 3), (3, 1, 1, 1)\}$ \\
$\{(4, 2), (4, 2), (3, 3), (2, 2, 2)\}$ \\
$\{(4, 2), (4, 2), (3, 3), (2, 2, 1, 1)\}$ \\
$\{(4, 2), (4, 2), (3, 2, 1), (3, 2, 1)\}$ \\
$\{(4, 2), (4, 2), (3, 2, 1), (3, 1, 1, 1)\}$ \\
$\{(4, 2), (4, 2), (3, 2, 1), (2, 2, 2)\}$ \\
$\{(4, 2), (4, 2), (3, 2, 1), (2, 2, 1, 1)\}$ \\
$\{(4, 2), (4, 2), (3, 2, 1), (2, 1, 1, 1, 1)\}$ \\
$\{(4, 2), (4, 2), (3, 1, 1, 1), (3, 1, 1, 1)\}$ \\
$\{(4, 2), (4, 2), (2, 2, 2), (2, 2, 2)\}$ \\
$\{(4, 2), (4, 2), (2, 2, 2), (2, 2, 1, 1)\}$ \\
$\{(4, 2), (4, 2), (2, 2, 1, 1), (2, 2, 1, 1)\}$ \\
$\{(4, 2), (4, 1, 1), (4, 1, 1), (3, 3)\}$ \\
$\{(4, 2), (4, 1, 1), (4, 1, 1), (3, 2, 1)\}$ \\
$\{(4, 2), (4, 1, 1), (4, 1, 1), (2, 2, 2)\}$ \\
$\{(4, 2), (4, 1, 1), (4, 1, 1), (2, 2, 1, 1)\}$ \\
$\{(4, 2), (4, 1, 1), (3, 3), (3, 3)\}$ \\
$\{(4, 2), (4, 1, 1), (3, 3), (3, 2, 1)\}$ \\
$\{(4, 2), (4, 1, 1), (3, 3), (3, 1, 1, 1)\}$ \\
$\{(4, 2), (4, 1, 1), (3, 2, 1), (3, 2, 1)\}$ \\
$\{(4, 2), (4, 1, 1), (3, 2, 1), (3, 1, 1, 1)\}$ \\
$\{(4, 2), (4, 1, 1), (3, 2, 1), (2, 2, 2)\}$ \\
$\{(4, 2), (4, 1, 1), (3, 2, 1), (2, 2, 1, 1)\}$ \\
$\{(4, 2), (4, 1, 1), (2, 2, 2), (2, 2, 2)\}$ \\

\end{tabular}
\end{minipage}%
\hfill
\begin{minipage}[t]{0.33\textwidth}
\begin{tabular}[t]{l}

$\{(4, 2), (4, 1, 1), (2, 2, 2), (2, 2, 1, 1)\}$ \\
$\{(4, 2), (4, 1, 1), (2, 2, 1, 1), (2, 2, 1, 1)\}$ \\
$\{(4, 2), (3, 3), (3, 3), (3, 3)\}$ \\
$\{(4, 2), (3, 3), (3, 3), (3, 2, 1)\}$ \\
$\{(4, 2), (3, 3), (3, 3), (3, 1, 1, 1)\}$ \\
$\{(4, 2), (3, 3), (3, 3), (2, 2, 2)\}$ \\
$\{(4, 2), (3, 3), (3, 3), (2, 2, 1, 1)\}$ \\
$\{(4, 2), (3, 3), (3, 3), (2, 1, 1, 1, 1)\}$ \\
$\{(4, 2), (3, 3), (3, 2, 1), (3, 2, 1)\}$ \\
$\{(4, 2), (3, 3), (3, 2, 1), (3, 1, 1, 1)\}$ \\
$\{(4, 2), (3, 3), (3, 2, 1), (2, 2, 2)\}$ \\
$\{(4, 2), (3, 3), (3, 2, 1), (2, 2, 1, 1)\}$ \\
$\{(4, 2), (3, 3), (3, 1, 1, 1), (3, 1, 1, 1)\}$ \\
$\{(4, 2), (3, 3), (2, 2, 2), (2, 2, 2)\}$ \\
$\{(4, 2), (3, 3), (2, 2, 2), (2, 2, 1, 1)\}$ \\
$\{(4, 2), (3, 3), (2, 2, 1, 1), (2, 2, 1, 1)\}$ \\
$\{(4, 2), (3, 2, 1), (3, 2, 1), (3, 2, 1)\}$ \\
$\{(4, 2), (3, 2, 1), (3, 2, 1), (3, 1, 1, 1)\}$ \\
$\{(4, 2), (3, 2, 1), (3, 2, 1), (2, 2, 2)\}$ \\
$\{(4, 2), (3, 2, 1), (3, 2, 1), (2, 2, 1, 1)\}$ \\
$\{(4, 1, 1), (4, 1, 1), (4, 1, 1), (4, 1, 1)\}$ \\
$\{(4, 1, 1), (4, 1, 1), (4, 1, 1), (3, 2, 1)\}$ \\
$\{(4, 1, 1), (4, 1, 1), (4, 1, 1), (3, 1, 1, 1)\}$ \\
$\{(4, 1, 1), (4, 1, 1), (4, 1, 1), (2, 2, 2)\}$ \\
$\{(4, 1, 1), (4, 1, 1), (4, 1, 1), (2, 2, 1, 1)\}$ \\
$\{(4, 1, 1), (4, 1, 1), (4, 1, 1), (2, 1, 1, 1, 1)\}$ \\
$\{(4, 1, 1), (4, 1, 1), (3, 3), (3, 3)\}$ \\
$\{(4, 1, 1), (4, 1, 1), (3, 3), (3, 2, 1)\}$ \\
$\{(4, 1, 1), (4, 1, 1), (3, 3), (2, 2, 2)\}$ \\
$\{(4, 1, 1), (4, 1, 1), (3, 3), (2, 2, 1, 1)\}$ \\
$\{(4, 1, 1), (4, 1, 1), (3, 2, 1), (3, 2, 1)\}$ \\
$\{(4, 1, 1), (4, 1, 1), (3, 2, 1), (3, 1, 1, 1)\}$ \\
$\{(4, 1, 1), (4, 1, 1), (3, 2, 1), (2, 2, 2)\}$ \\
$\{(4, 1, 1), (4, 1, 1), (3, 2, 1), (2, 2, 1, 1)\}$ \\
$\{(4, 1, 1), (4, 1, 1), (3, 1, 1, 1), (3, 1, 1, 1)\}$ \\
$\{(4, 1, 1), (4, 1, 1), (2, 2, 2), (2, 2, 2)\}$ \\
$\{(4, 1, 1), (4, 1, 1), (2, 2, 2), (2, 2, 1, 1)\}$ \\
$\{(4, 1, 1), (4, 1, 1), (2, 2, 1, 1), (2, 2, 1, 1)\}$ \\
$\{(4, 1, 1), (3, 3), (3, 3), (3, 3)\}$ \\
$\{(4, 1, 1), (3, 3), (3, 3), (3, 2, 1)\}$ \\
$\{(4, 1, 1), (3, 3), (3, 2, 1), (3, 2, 1)\}$ \\
$\{(4, 1, 1), (3, 3), (3, 2, 1), (3, 1, 1, 1)\}$ \\
$\{(4, 1, 1), (3, 2, 1), (3, 2, 1), (3, 2, 1)\}$ \\
$\{(4, 1, 1), (3, 2, 1), (3, 2, 1), (3, 1, 1, 1)\}$ \\
$\{(3, 3), (3, 3), (3, 3), (3, 3)\}$ \\
$\{(3, 3), (3, 3), (3, 3), (3, 2, 1)\}$ \\
$\{(3, 3), (3, 3), (3, 3), (3, 1, 1, 1)\}$ \\
$\{(3, 3), (3, 3), (3, 3), (2, 2, 2)\}$ \\
$\{(3, 3), (3, 3), (3, 2, 1), (3, 2, 1)\}$ \\
$\{(3, 3), (3, 3), (3, 2, 1), (3, 1, 1, 1)\}$ \\
$\{(3, 3), (3, 3), (3, 2, 1), (2, 2, 2)\}$ \\
$\{(3, 3), (3, 3), (2, 2, 2), (2, 2, 2)\}$ \\
$\{(3, 3), (3, 3), (2, 2, 2), (2, 2, 1, 1)\}$ \\
$\{(3, 3), (3, 2, 1), (3, 2, 1), (3, 2, 1)\}$ \\
$\{(3, 3), (3, 2, 1), (3, 2, 1), (2, 2, 2)\}$ \\
$\{(3, 3), (2, 2, 2), (2, 2, 2), (2, 2, 2)\}$ \\
$\{(3, 2, 1), (3, 2, 1), (3, 2, 1), (2, 2, 2)\}$ \\
\end{tabular}
\end{minipage}
\caption{The 173 valid signatures for the case $r=6$ which are not ruled out by Lemma \ref{lem:r6} or Lemma \ref{lem:r6ii}.}\label{tab:r6}
\end{table}

\section{Covering with monochromatic subgraphs of bounded diameter}\label{sec:diameter}

The following is a well-known fact (see \cite[Theorem 2.1.11]{W}).

\begin{proposition}[Folklore]\label{prop:folk}
Let $G$ be a 2-colored complete graph. If $\diam(G_1)\geq 3$, then $\diam(G_2)\leq 3$.
\end{proposition}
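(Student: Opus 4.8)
The plan is to exploit the fact that in a complete graph $G$ the color-2 graph $G_2$ is precisely the complement of $G_1$, so the statement is the classical assertion that a graph of diameter at least $3$ has complement of diameter at most $3$. First I would use the hypothesis $\diam(G_1)\ge 3$ to fix two vertices $u,v$ with $d_{G_1}(u,v)\ge 3$; in particular $uv\notin E(G_1)$, so $uv\in E(G_2)$, and $u,v$ have no common neighbor in $G_1$.

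The key observation is that $\{u,v\}$ is a dominating set of $G_2$ which additionally spans an edge of $G_2$. Indeed, for any vertex $w\notin\{u,v\}$, if $w$ were adjacent in $G_1$ to both $u$ and $v$, then $u,w,v$ would be a path of length $2$ in $G_1$, contradicting $d_{G_1}(u,v)\ge 3$. Hence $w$ is non-adjacent in $G_1$ to at least one of $u,v$, which is to say $w$ is adjacent in $G_2$ to at least one of $u,v$.

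To conclude, given arbitrary $x,y\in V(G)$, I would choose $a\in\{u,v\}$ with $d_{G_2}(x,a)\le 1$ and $b\in\{u,v\}$ with $d_{G_2}(y,b)\le 1$; this is possible by the domination property, taking $a=x$ when $x\in\{u,v\}$ and similarly for $b$. Since either $a=b$ or $ab=uv\in E(G_2)$, we have $d_{G_2}(a,b)\le 1$, so the triangle inequality gives $d_{G_2}(x,y)\le 1+1+1=3$. As $x,y$ were arbitrary, $\diam(G_2)\le 3$.

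The argument is short and presents no genuine obstacle; the only point requiring care is the domination claim, whose validity rests on the fact that $d_{G_1}(u,v)\ge 3$ forbids not merely the edge $uv$ but also every length-$2$ connection through a common neighbor. I would also remark that the bound $3$ is best possible, since $P_4$ is self-complementary of diameter $3$, so one cannot in general strengthen the conclusion to $\diam(G_2)\le 2$.
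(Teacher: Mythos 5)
Your proof is correct: the domination argument via a pair $u,v$ at $G_1$-distance at least $3$ is exactly the standard textbook proof of this folklore fact, and every step checks out (including the case handling when $x$ or $y$ lies in $\{u,v\}$). The paper does not supply its own proof, citing the result to West, so there is nothing to compare against beyond noting that your tightness remark matches the paper's example, which is a blow-up of $P_4$.
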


Also note that Proposition \ref{prop:folk} is best possible.  To see this, partition $V$ as $\{V_1, V_2, V_3, V_4\}$ and color all edges from $V_i$ to $V_{i+1}$ with color 1 for all $i\in [3]$ and color all other edges with color 2.  Both $G_1$ and $G_2$ have diameter 3.

Let $\dc_r^\delta(G)$ be the smallest integer $t$ such that in every $r$-coloring of the edges of $G$, there exists 
a monochromatic $t$-cover $\cT$ such that for all $T\in \cT$, $\diam(T)\leq \delta$.  For $r\geq 1$ and a graph $G$, let $D_r(G)$ be the smallest $\delta$ such that $\dc_r^\delta(G)\leq \tc_r(G)$.  For instance, Proposition \ref{prop:folk} implies $\dc_2^3(K)=1$ for all complete graphs $K$ (i.e.\ $D_2(K)=3$). Erd\H{o}s and Fowler \cite{EF} proved that there exists a 2-coloring of $K_n$ such that every monochromatic subgraph of diameter at most 2 has order at most $(3/4+o(1))n$ and thus $\dc_2^2(K_n)\geq 2$.  Also by considering the edges incident with any vertex, we clearly have $\dc_2^2(K)=2$ for all complete graphs $K$.

In this language, Mili\'cevi\'c conjectured the following strengthening of Ryser's conjecture.  

\begin{conjecture}[Mili\'cevi\'c \cite{M2}]\label{con:mil}
For all $r\geq 2$, there exists $\delta=\delta(r)$ such that $\dc_r^\delta(K_n)\leq r-1$.
\end{conjecture}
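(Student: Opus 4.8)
The plan is to isolate exactly the content of Conjecture~\ref{con:mil} that goes beyond the $\alpha=1$ case of Ryser's conjecture, and then to attack that content directly. Note first that the conjecture is at least as strong as $\tc_r(K)\le r-1$, since letting $\delta\to\infty$ recovers precisely the existence of a monochromatic $(r-1)$-cover; as that bound is open for $r\ge 6$, no unconditional proof for all $r$ can avoid resolving it. The natural starting point is the trivial cover: fixing a vertex $v$ and taking, for each color $i$, the star of color-$i$ edges at $v$ gives a monochromatic $r$-cover with every subgraph of diameter at most $2$, so $\dc_r^2(K_n)\le r$. Thus the whole difficulty is the single saved component, and the role of $\delta$ is only to keep that saving compatible with bounded diameter. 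I would split according to Observation~\ref{obs:tuza}: fix a pair of colors $i,j$ and treat the \emph{witness regime} $\alpha(G_{i,j})\ge r$ and the \emph{K\"onig regime} $\alpha(G_{i,j})\le r-1$ separately.

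For the witness regime the engine is a \emph{transfer lemma}, which I would extract by inspecting how covers are built in Section~\ref{sec:tuza}. Suppose the $r$-coloring admits a monochromatic $(r-1)$-cover $\cT$ in which each $T\in\cT$ is the color-$c(T)$ component through a set $X_T$ lying inside a fixed witness clique $X$ with $|X|\le r$, and in which every covered vertex is joined to $X_T$ by a \emph{single} edge of color $c(T)$. Then for $u,w\in T$ there is a color-$c(T)$ path $u - x_u - (\text{path inside }X) - x_w - w$, giving $\diam(T)\le 2+\diam_{c(T)}(X)\le 2+(|X|-1)=O(r)$. The point is that the signature arguments of Lemma~\ref{lem:r5} (and the ad-hoc $r=5$ cases, including the double star through two external vertices $u,v$) produce exactly such covers; once combined with the transfer lemma they yield a bounded-diameter cover wherever the signature analysis is complete. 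Since Section~\ref{sec:tuza} settles all valid signatures for $r\le 5$ but leaves the $173$ of Table~\ref{tab:r6} open for $r=6$, this branch is finished precisely as far as the $\tc$ bound is, and each newly resolved signature -- being a broom or double star -- feeds the transfer lemma automatically.

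For the K\"onig regime, Observation~\ref{obs:tuza} obtains the $r-1$ bound from K\"onig's theorem applied to $G_{i,j}$, and K\"onig gives no diameter control. To repair this I would replace the plain K\"onig step by a \emph{bounded-diameter K\"onig cover}: in the subcase $\alpha(G_{i,j})=1$ this is exactly Proposition~\ref{prop:folk} (diameter $3$), and for larger $\alpha(G_{i,j})$ it is the $r=2$ instance of the companion Conjecture~\ref{con:ryserdiameter}, which Theorem~\ref{thm:diamalpha} establishes only up to $\alpha(G_{i,j})=2$. I expect this to be the main obstacle: a general two-color bounded-diameter cover of size $\alpha$ is unknown for $\alpha\ge 3$, and the witness-clique method cannot supply it, since there the witness set has size $r$ rather than $O(\alpha)$. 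This also explains why the excerpt proves the diameter version of Conjecture~\ref{con:mil} only up to $r=4$ by a \emph{direct} argument in Section~\ref{sec:diameter}: for $r=4$ the K\"onig regime already reaches $\alpha(G_{3,4})=3$, which the naive reduction cannot handle, so the direct double-star argument must circumvent the missing general K\"onig.

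Concretely, the route is (1) prove the transfer lemma; (2) in the witness regime, attempt $r=6$ by sorting the $173$ signatures of Table~\ref{tab:r6} into a bounded number of families, each covered by a short broom so that the transfer lemma applies with a uniform diameter; and (3) in the K\"onig regime, push Theorem~\ref{thm:diamalpha} toward arbitrary $\alpha$ for two colors. The honest assessment is that steps (2) and (3) are both facets of the still-open $\alpha=1$ Ryser problem, so a full proof of Conjecture~\ref{con:mil} for all $r$ would resolve it. The value of the plan is conditional and structural: if $\tc_r(K)\le r-1$ is ever proved via a witness-set argument of the Section~\ref{sec:tuza} type, the transfer lemma shows the diameter bound comes essentially for free, with $\delta(r)=O(r)$ -- and, optimistically, consistent with the absolute constant speculated after Conjecture~\ref{con:ryserdiameter}.
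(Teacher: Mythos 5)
You are proposing a strategy for a statement that the paper itself does not prove: Conjecture \ref{con:mil} is open for $r\geq 5$, and the paper establishes only the cases $r=3$ and $r=4$ (Theorems \ref{thm:r3complete} and \ref{thm:r4complete}, with diameter bounds $4$ and $6$). You correctly note that any full proof would settle the $\alpha=1$ case of Ryser's conjecture, so your proposal is necessarily conditional; judged as a strategy, however, its central device fails. Your transfer lemma assumes that the covers produced in Section \ref{sec:tuza} consist of components $T$ in which every covered vertex reaches the witness trace $X_T$ by a single edge of color $c(T)$, and in which $X_T$ is spanned by a short color-$c(T)$ path \emph{inside} $X$, giving $\diam(T)\leq 2+\diam_{c(T)}(X)\leq |X|+1$. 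But all of Section \ref{sec:tuza} operates in the closure $\hat{G}$ of Observation \ref{obs:closure}, where an ``edge'' of color $i$ records only the existence of a color-$i$ path in $G$, of unbounded length. The independence of $X$ in $G_{4,5}$, the counting of edges a vertex sends to $X$, and the signatures themselves are all closure statements: the trace $X_T$ of a global color-$c(T)$ component on $X$ is a clique in $\hat{G}$ but need not be connected by color-$c(T)$ edges inside $X$ at all. For instance, in the $r=5$ Case 1 of the paper, $x_1,x_2\in V(B_1)\cap X$ may be joined in color $2$ only through a long path leaving $X$, while the actual edge $x_1x_2$ carries color $1$. So the diameter estimate has no basis, and the witness-regime branch yields no bound on $\delta$ even for $r\leq 5$, where the signature analysis is complete; the closure operation preserves $\tc_r$ exactly because it forgets distances, which is precisely what the conjecture is about.

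This is also why the paper's partial results do not transfer anything from Section \ref{sec:tuza} and do not follow your two-regime split via Observation \ref{obs:tuza}: the proofs of Theorems \ref{thm:r3complete} and \ref{thm:r4complete} discard the closure and argue directly in $K$, fixing a vertex $x$, decomposing $V(K)$ into the color neighborhoods $A_i$, introducing the sets $B_{ij}$ of vertices of $A_i$ sending no color-$j$ edge to $A_j$, and exhibiting explicit trees or subgraphs of diameter at most $4$ (for $r=3$) and $6$ (for $r=4$), with Lemma \ref{lem:bip2} supplying the bipartite control in the $r=4$ endgame. Thus your ``K\"onig regime'' obstacle, while a fair description of why a naive reduction through K\"onig's theorem stalls, is not what the paper circumvents --- it bypasses the split entirely. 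To salvage the transfer idea you would need witness sets whose traces $X_T$ come with bounded color-$c(T)$ distances in the original coloring, and arranging that is essentially the whole difficulty of Conjecture \ref{con:mil} restated, not a lemma extractable from the existing covers.
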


We make the following more general conjecture which is also stronger in the sense that $\delta$ doesn't depend on $r$ (we note that perhaps $\delta$ doesn't even depend on $\alpha$).

\begin{conjecture}
For all $\alpha\geq 1$, there exists $\delta=\delta(\alpha)$ such that for all $r\geq 2$, if $G$ is a graph with $\alpha(G)=\alpha$, then $\dc_r^{\delta}(G)\leq (r-1)\alpha$.
\end{conjecture}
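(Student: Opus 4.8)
The plan is to recognize first that this conjecture is the $\dc$-reformulation of Conjecture \ref{con:ryserdiameter}, and that it is a strict strengthening of Ryser's conjecture in the form (R2): dropping the diameter bound, the assertion $\dc_r^\delta(G)\le (r-1)\alpha$ for some finite $\delta$ already forces $\tc_r(G)\le (r-1)\alpha$, so a proof for all $r$ and $\alpha$ would settle Ryser. A realistic plan therefore splits into two tasks: (a) for those $(r,\alpha)$ where the underlying bound $\tc_r(G)\le (r-1)\alpha$ is already known (e.g.\ $r\le 3$ via Aharoni, and $\alpha=1,\ r\le 5$ via Tuza/Theorem \ref{thm:r3-5}), try to upgrade the cover to one of bounded diameter; and (b) establish genuinely new small cases directly, such as the proven case $\alpha=2=r$ (Theorem \ref{thm:diamalpha}). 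The central structural obstacle, and the reason this is not automatic, is that the workhorse of the cover arguments, passing to the closure $\wh{G}$ so that every monochromatic component is a clique (Observation \ref{obs:closure}), is unavailable here: a single edge of $\wh{G}$ can encode an arbitrarily long monochromatic path of $G$, so closure destroys exactly the information we need.

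Hence the engine must produce short monochromatic structures directly. The first tool I would use is the folklore dichotomy (Proposition \ref{prop:folk}): in any $2$-coloring of a complete graph one color spans a subgraph of diameter at most $3$. Coupled with the existence of a large monochromatic component (Theorem \ref{thm:fur}), this suggests the template ``find a vertex $v$ and a color $i$ so that the $i$-ball of radius $2$ or $3$ about $v$ covers a constant fraction of what remains, add it to $\cT$, delete it, and iterate.'' Each such ball has diameter at most $6$, so if the iteration terminates after $(r-1)\alpha$ steps we obtain the desired cover with $\delta\le 6$. The analytic content is to show that a good ball always exists; this is where the arguments behind Letzter's triple-star theorem (which already attains diameter $4$ with the optimal fraction $\tfrac{n}{r-1}$ when $\alpha=1$) are the right model to imitate and to iterate.

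For the base cases I would induct on $\alpha$, starting from the $\alpha=1$ diameter results (Theorem \ref{thm:diam} and Letzter's theorem), with the heart being the interaction of a maximum independent set $I$ with the rest of $G$. For $\alpha=2=r$ (Theorem \ref{thm:diamalpha}) the key local fact is that $\alpha(G)=2$ forces the non-neighborhood of every vertex to be a clique: fixing $v$, the set $V(G)\setminus(N(v)\cup\{v\})$ is complete and hence governed by the $2$-colored complete-graph results (via Proposition \ref{prop:folk}), while $N(v)\cup\{v\}$ is controlled by low-radius stars centered at $v$. Stitching these two bounded-diameter pieces together, with the diameter-$3$ dichotomy handling the interface, should yield two subgraphs of diameter at most $6$.

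The hard part, and the reason I expect only partial progress, is making $\delta$ uniform in $r$. For fixed small $r$ one can afford ad hoc case analysis (as in Theorem \ref{thm:diam}), but an honest proof of the full conjecture must extract $(r-1)\alpha$ subgraphs whose diameters do not grow with the number of colors, even though each peeling step could a priori see a fresh color and a longer monochromatic path. Controlling this accumulation---ideally by always peeling off a radius-$2$ object, so that every piece has diameter at most $4$, matching the paper's speculation that $\delta$ is independent of both $r$ and $\alpha$---is the crux, and it is entangled with the still-open cover bound $\tc_r(G)\le (r-1)\alpha$ itself. Consequently the strongest fully unconditional outcome I would anticipate from this plan is a resolution for the regimes where Ryser is already known, together with the new case $\alpha=2=r$.
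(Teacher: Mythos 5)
The statement you are addressing is a conjecture: the paper offers no proof of it, and your proposal, candidly, does not either --- it is a research plan whose unconditional content is limited to special cases. That self-assessment is accurate, so the two things to check are whether your proposed engine could work and whether your sketches of the special cases match what the paper actually does. The quantitative problem is with your peeling template: if each step finds a monochromatic ball of radius at most $3$ covering a constant fraction (even the optimal fraction $\frac{1}{(r-1)\alpha}$) \emph{of what remains}, then after $(r-1)\alpha$ steps you have covered only about a $1-\left(1-\frac{1}{(r-1)\alpha}\right)^{(r-1)\alpha}\approx 1-e^{-1}$ fraction of $V(G)$, and finishing the job costs $\Theta(\log n)$ pieces, not $(r-1)\alpha$. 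Theorem \ref{thm:fur} and Letzter's theorem guarantee one large piece but give no control over the residual graph (its independence number need not drop), so the iteration does not close. This is precisely why the paper's proofs of the known cases are not iterative: Theorems \ref{thm:r3complete}, \ref{thm:r4complete}, and \ref{thm:r2a2} each fix a single vertex $x$ (or a maximum independent set), partition the whole graph at once by local color data ($A_i$ the color-$i$ neighbors of $x$, $B_{ij}$ the vertices of $A_i$ sending no color-$j$ edge to $A_j$), and extract all of the bounded-diameter pieces simultaneously from a case analysis of that partition.

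Your sketch of the $\alpha=r=2$ case also diverges from the paper's proof of Theorem \ref{thm:r2a2} in a way that matters. The paper does not argue via ``the non-neighborhood of $v$ is a clique, plus stars at $v$''; it anchors on a two-element independent set $\{x,y\}$, splits $V(G)\setminus\{x,y\}$ into $A_x=N(x)\setminus N(y)$, $A_y=N(y)\setminus N(x)$ and $A=N(x)\cap N(y)$, refines $A$ into the sets $A_{ij}$ according to the colors of the edges to $x$ and to $y$, and uses that $A_x\cup\{x\}$ and $A_y\cup\{y\}$ are cliques (so Proposition \ref{prop:folk} applies inside them) to run a three-case analysis; the diameter-$6$ bound comes from concatenating at most three radius-$\leq 2$ pieces through $x$ and $y$, not from an interface argument between a star and a clique. (Note also that two stars centered at a single vertex $v$ cannot cover $G$ when $r=2$, since $v$ has non-neighbors.) So while your high-level framing is correct --- the conjecture strengthens Ryser, the closure trick of Observation \ref{obs:closure} is unavailable, and Proposition \ref{prop:folk} is the basic tool --- neither the general engine nor the special-case sketch reproduces an argument that establishes even the cases the paper proves.
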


Sometimes we will make the distinction between whether the subgraphs in our monochromatic cover are trees or not.  
Let $\tdc_r^\delta(G)$ be the smallest integer $t$ such that in every $r$-coloring of the edges of $G$, there exists 
a monochromatic $t$-cover $\cT$ such that for all $T\in \cT$, $T$ is a tree and $\diam(T)\leq \delta$.  For $r\geq 1$ and a graph $G$, let $TD_r(G)$ be the smallest $\delta$ such that $\tdc_r^\delta(G)\leq \tc_r(G)$.  The following fact implies that $\tdc_r^{2\delta}(G)\leq \dc_r^{\delta}(G)$ (i.e.\ $TD_r(G)\leq 2D_r(G)$).

\begin{fact}~
\begin{enumerate}
\item Let $G$ be a connected graph.  If $\diam(G)=d$, then $G$ has a spanning tree $T$ with $d\leq \diam(T)\leq 2d$.  

\item Let $T$ be a tree.  If $\rad(T)=d$, then $d\leq \diam(T)\leq 2d$.  
\end{enumerate}
\end{fact}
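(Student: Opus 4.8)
The plan is to prove both parts using only the elementary relationship between radius and diameter together with the distance-preserving property of breadth-first search trees. In both parts the lower bound is immediate, so the real content lies in the two upper bounds, each of which follows from a single application of the triangle inequality for graph distance routed through a well-chosen vertex. Throughout I write $d_G(\cdot,\cdot)$ and $d_T(\cdot,\cdot)$ for distances measured in $G$ and in a spanning tree $T$, respectively.

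For part (i), I would first dispose of the lower bound $d\leq \diam(T)$, which holds for \emph{any} spanning tree $T$: since $T$ is a subgraph of $G$, every $u,v$-path in $T$ is a $u,v$-path in $G$, so $d_T(u,v)\geq d_G(u,v)$ for all $u,v$, and hence $\diam(T)\geq \diam(G)=d$. For the upper bound I would construct $T$ explicitly as a breadth-first search tree rooted at an arbitrary vertex $v$. The property to invoke is that such a tree preserves distances from its root, i.e.\ $d_T(v,u)=d_G(v,u)$ for every vertex $u$. Given this, for any two vertices $u,w$ the triangle inequality in $T$ gives
\[
d_T(u,w)\leq d_T(u,v)+d_T(v,w)=d_G(u,v)+d_G(v,w)\leq d+d=2d,
\]
where each term is at most $d$ because $d_G(v,\cdot)\leq \diam(G)=d$. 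Maximizing over $u,w$ yields $\diam(T)\leq 2d$, which finishes the part.

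For part (ii), I would observe that the claim is the standard inequality $\rad\leq \diam\leq 2\rad$ specialized to trees (and in fact it holds verbatim for any connected graph). Let $c$ be a center of $T$, that is, a vertex minimizing $\max_u d_T(c,u)$, so that by definition $\max_u d_T(c,u)=\rad(T)=d$. The lower bound $d\leq \diam(T)$ follows since some vertex lies at distance exactly $d$ from $c$, forcing $\diam(T)\geq d$. For the upper bound I would route through $c$: for any vertices $u,w$,
\[
d_T(u,w)\leq d_T(u,c)+d_T(c,w)\leq d+d=2d,
\]
and taking the maximum over $u,w$ gives $\diam(T)\leq 2d$.

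I do not anticipate a genuine obstacle, as the argument is elementary; the only point requiring care is the distance-preserving property of BFS trees used in part (i), which should be stated or cited rather than assumed silently. An alternative that avoids BFS altogether is to root the spanning tree at a center of $G$ and argue exactly as in part (ii), yielding the sharper bound $\diam(T)\leq 2\rad(G)$; since $\rad(G)\leq \diam(G)=d$, this already implies the weaker bound $2d$ claimed in the statement.
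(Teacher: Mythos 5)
Your proof is correct. The paper states this as an unproven folklore fact (no proof is given in the source), so there is nothing to compare against; your argument — lower bounds from the subgraph/center observations, upper bounds via the triangle inequality through a BFS root in (i) and through a center in (ii) — is the standard one and is complete. One small caveat about your closing ``alternative'': rooting a spanning tree at a center of $G$ only yields $\diam(T)\leq 2\rad(G)$ if that tree is again a shortest-path (BFS) tree from the center, since for an arbitrary spanning tree $d_T(u,c)$ may exceed $d_G(u,c)=\rad(G)$; so that variant does not actually avoid the BFS distance-preservation property, it just relocates the root. This does not affect the validity of your main argument.
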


Note that by considering a random 2-coloring of $K_n$, there is no monochromatic spanning tree of diameter $3$, so $TD_2(K)\geq 4$.  It is well-known (see \cite[Exercise 2.1.49]{W} and \cite[Theorem 2.1]{BDV}) that $\tdc_2^4(K)=1$ for all complete graphs $K$ and thus $TD_2(K)=4$.

The following theorem summarizes the relevant results from \cite{M1} and \cite{M2}.

\begin{theorem}[Mili\'cevi\'c \cite{M1}, \cite{M2}]\label{thm:mil}~
\begin{enumerate}
\item For all complete graphs $K$, $\dc_3^8(K)\leq 2$ (i.e. $D_3(K)\leq 8$).
\item For all $G\in \mathcal{K}_2$, $\dc_2^9(G)\leq 2$  (i.e. $D_2(G)\leq 9$).
\item For all complete graphs $K$, $\dc_4^{80}(K)\leq 3$  (i.e. $D_4(K)\leq 80$).
\end{enumerate}
\end{theorem}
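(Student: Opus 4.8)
The plan is to start from the plain cover results that already fix the \emph{number} of monochromatic subgraphs, and then do all the real work in upgrading each subgraph to one of bounded diameter. For (i) the relevant count is $\tc_3(K)\le 2$ (Theorem~\ref{thm:r3-5}), for (ii) it is $\tc_2(K)\le 2$ for $K\in\mathcal K_2$, and for (iii) it is $\tc_4(K)\le 3$ (Theorem~\ref{thm:r3-5}). The key local tool is Proposition~\ref{prop:folk}: on any vertex subset that is complete and $2$-colored, one of the two colors spans it with diameter at most $3$; I would prove and then use the analogous statement for $2$-colored complete bipartite graphs, where the bound becomes diameter $4$. These supply the bounded-diameter ``atoms'' out of which the covers are assembled, and I would always work in the closure (Observation~\ref{obs:closure}), so that every monochromatic component is a clique.

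For (i), I would root the argument at a single vertex $v$ and split $V\setminus\{v\}$ into the color classes $N_1,N_2,N_3$ of the edges at $v$, trying to realize each covering subgraph as a ball of small radius around $v$ or around one auxiliary center. The useful dichotomy is: either some color-$i$ ball of radius $2$ around $v$ together with one further bounded-diameter subgraph on the leftover already covers $V$; or else the edges between two classes $N_i,N_j$ are forced (by the absence of colors $i$ and $j$ between them) to be essentially monochromatic, at which point Proposition~\ref{prop:folk} applied to an appropriate complete $2$-colored slice produces a diameter-$3$ spanning subgraph there. Chaining a radius-$2$ ball to such a diameter-$3$ piece through the common vertex $v$ keeps the diameter bounded, and a careful accounting of the longest such chain is where the constant $8$ (equivalently, radius $4$) comes from.

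For (ii) the same scheme runs inside a complete bipartite host: $\tc_2\le 2$ supplies two monochromatic components, and the bipartite analogue of Proposition~\ref{prop:folk} upgrades each to bounded diameter, with the slightly larger constant $9$ reflecting that bipartite shortest paths must alternate sides, costing one extra step per hop. For (iii) I would reduce $r=4$ to the $r=3$ and $2$-color cases recursively: remove a bounded-diameter monochromatic ball around a well-chosen vertex, observe that on the uncovered remainder one color has effectively been eliminated, and apply the lower-$r$ bounded-diameter results to the residual $3$-colored set. Each level of this recursion replaces a vertex by a center reachable only within a bounded number of steps, so the diameters \emph{multiply} rather than add; three such compositions, together with the enlarged case analysis forced by four colors, is what inflates the constant to $80$.

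The main obstacle is precisely this diameter control, not the cover count. A monochromatic component in a complete graph can have unbounded diameter (a monochromatic Hamilton path is a single component of diameter $n-1$), so one \emph{cannot} merely quote $\tc_r(K)\le r-1$ and read off a diameter bound for the chosen components; the covering subgraphs must instead be \emph{built} as bounded-radius balls and then proved to cover by exploiting completeness and the scarcity of colors on edges between the balls. Making the leftover set after removing a ball small and structured enough that Proposition~\ref{prop:folk} (or its bipartite form) applies, while keeping the number of balls at $r-1$, is the delicate point, and it is the compounding of radii across the recursion that keeps the constants large, explaining the gap between these bounds and the much smaller diameters obtained later in Theorem~\ref{thm:diam}.
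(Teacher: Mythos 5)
First, a point of orientation: the paper does not prove Theorem \ref{thm:mil} at all --- it is quoted from Mili\'cevi\'c's papers \cite{M1}, \cite{M2} --- and instead proves the strictly stronger Theorems \ref{thm:r3complete}, \ref{thm:r2bipartite}, and \ref{thm:r4complete} (diameter bounds $4$, $4$, $6$ in place of $8$, $9$, $80$). So your proposal has to be judged against those proofs, and against that standard it is a plan rather than a proof, with at least one load-bearing step that fails as stated.

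The step that fails is the engine of your part (iii): you remove a bounded-diameter monochromatic ball around a well-chosen vertex and assert that on the uncovered remainder ``one color has effectively been eliminated,'' so that the lower-$r$ results can be applied recursively. That is not true. If you delete the vertices of a color-$4$ ball (or even an entire color-$4$ component) containing $v$, the surviving vertices send no color-$4$ edges to the deleted set, but they may still induce arbitrarily many color-$4$ edges and components among themselves; the remainder is still a genuinely $4$-colored complete graph, and the recursion never gets started. The correct statements of this flavor are local, not global: in the paper's proof of Theorem \ref{thm:r4complete}, after fixing $x$ and the color classes $A_1,\dots,A_4$ of its incident edges, one only knows that the set $B_{ij}$ of vertices of $A_i$ sending no color-$j$ edge to $A_j$ forces the bipartite graph $[B_{ij},B_{ji}]$ to avoid colors $i$ and $j$ --- and the entire difficulty is the case analysis on how the twelve sets $B_{ij}$ overlap, finished off by the bipartite structure lemma (Lemma \ref{lem:bip2}). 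Nothing in your sketch substitutes for that analysis. The same objection applies to your dichotomy in (i): it is not the case that either a radius-$2$ ball plus one further piece covers or the whole of $[N_i,N_j]$ is essentially monochromatic; the forbidden colors are only forced between the sub-blocks $B_{ij}$ and $B_{ji}$, not between $N_i$ and $N_j$. Finally, the constants $8$, $9$, $80$ are never derived: ``careful accounting,'' ``one extra step per hop,'' and ``diameters multiply'' describe where a constant might come from but are not computations, and the multiplication heuristic in (iii) has no supporting mechanism once the false elimination step is removed. To make this rigorous you would need either Mili\'cevi\'c's actual arguments or the neighborhood-decomposition proofs of Theorems \ref{thm:r3complete}--\ref{thm:r4complete}, which in any case give better constants.
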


We improve the bounds in each item of Theorem \ref{thm:mil} and give a simpler proof for (iii).

\begin{theorem}\label{thm:r3complete}
For all complete graphs $K$, $\dc_3^4(K)\leq \tdc_3^4(K)\leq 2$ and $3\leq D_3(K)\leq TD_3(K)\leq 4$.
\end{theorem}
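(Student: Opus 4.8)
The plan is to prove the single substantive statement $\tdc_3^4(K)\leq 2$ and then deduce the rest formally. A cover by monochromatic trees of diameter at most $4$ is in particular a cover by monochromatic subgraphs of diameter at most $4$, so $\dc_3^\delta\leq\tdc_3^\delta$ for every $\delta$; this gives both $\dc_3^4(K)\leq\tdc_3^4(K)$ and (comparing the smallest admissible $\delta$) $D_3(K)\leq TD_3(K)$. For the upper bound $TD_3(K)\leq 4$, note that if $|V(K)|\geq 3$ then $\tc_3(K)=2$ (a rainbow triangle needs two trees, and Aharoni's theorem \cite{A} gives the matching upper bound), so $\tdc_3^4(K)\leq 2=\tc_3(K)$ yields $TD_3(K)\leq 4$; the cases $|V(K)|\leq 2$ are immediate. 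Thus everything reduces to producing, in each $3$-coloring of $K$, two monochromatic trees of radius at most $2$ (equivalently diameter at most $4$) that cover $V(K)$.

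The one tool I would use is the folklore fact underlying $\tdc_2^4(K)=1$: every $2$-coloring of a complete graph has a spanning monochromatic tree of radius at most $2$. Write $N_i(x)$ for the color-$i$ neighborhood of $x$ and $N_i^{\leq 2}(x)$ for the set of vertices at color-$i$ distance at most $2$ from $x$ (this is exactly the vertex set of a color-$i$ tree of radius $2$ centered at $x$). Fix a vertex $v$ and set $A=N_1(v)$, $B=N_2(v)$, $C=N_3(v)$. Take the first tree to span $N_1^{\leq 2}(v)$; the uncovered set $L=V\setminus N_1^{\leq 2}(v)$ consists of vertices with no color-$1$ edge to $A\cup\{v\}$, and splits as $L_B=L\cap B$ and $L_C=L\cap C$. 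Since $L_B\subseteq N_2^{\leq 2}(v)$ and $L_C\subseteq N_3^{\leq 2}(v)$, if either part is empty we finish using a second ball at $v$. Using $N_2^{\leq 2}(v)$ (resp.\ $N_3^{\leq 2}(v)$) as the second tree fails only on those $x\in L_C$ with no color-$2$ edge to $B$ (resp.\ those $y\in L_B$ with no color-$3$ edge to $C$); call these sets $X$ and $Y$. A short edge-color check shows that for $x\in X\subseteq C$ and $y\in Y\subseteq B$ the edge $xy$ is forced to have color $1$: the edges from $x$ to $B$ avoid color $2$ and the edges from $y$ to $C$ avoid color $3$, so colors $2$ and $3$ are excluded from $xy$. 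Hence the only surviving case is $X,Y\neq\emptyset$ with every $X,Y$-edge of color $1$ and with $X\cup Y$ sending no color-$1$ edge to $A$.

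The main obstacle is exactly this last configuration, and here I would choose $v$ and the label ``color $1$'' at the outset so that $|N_1(v)|$ is as large as the maximum monochromatic neighborhood size in $K$. The complete color-$1$ bipartite graph between $X$ and $Y$, together with this maximality and the absence of color-$1$ edges from $X\cup Y$ into $A$, tightly constrains how the remaining vertices (and the vertices of $A$) attach in each color. I expect that re-centering the second tree at a vertex of $X$ (or $Y$) in color $1$, after partitioning $A$ according to the colors of its edges to $X$ and to $Y$, yields the required second radius-$2$ tree, the bulk of the work being a finite case analysis on these attachment colors. This is the step that must be pushed through carefully; it is the analogue of the phenomenon in $K_4$ realized as the affine plane of order $2$, where a single center never suffices but two color-$1$ stars (the two color-$1$ edges) do cover.

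Finally, for the lower bound $D_3(K)\geq 3$ I would argue it for $K=K_n$ with $n$ large (for small $K$, such as $K_4$, a single matching already gives a diameter-$1$ cover). The goal is a $3$-coloring in which no two monochromatic subgraphs of diameter at most $2$ cover $V$, i.e.\ $\dc_3^2(K_n)\geq 3$, which forces $D_3(K_n)>2$. I would construct this by generalizing the extremal $2$-coloring behind $D_2(K)=3$ and the Erd\H{o}s--Fowler bound $\dc_2^2(K_n)\geq 2$ \cite{EF}: arrange each color class as a disjoint union of blown-up path-like pieces (rather than complete bipartite pieces, which would have diameter $2$) so that every monochromatic subgraph of diameter at most $2$ meets fewer than half of the vertices, whence two such subgraphs cannot cover $K_n$.
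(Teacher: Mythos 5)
Your reductions at the top are all correct, and your opening moves coincide with the paper's: fix a vertex $v$, split its neighborhood into $A=N_1(v)$, $B=N_2(v)$, $C=N_3(v)$, dispose of the case where some class is empty, dispose of the case where a whole class attaches to another in the right color, and isolate the configuration in which nonempty sets $X\subseteq C$ and $Y\subseteq B$ survive with $[X,Y]$ complete in color $1$ (your derivation that every $X,Y$-edge is forced to color $1$ is right). The problem is that this surviving configuration is exactly where the real work lies, and you do not do it: ``I expect that re-centering the second tree at a vertex of $X$\dots yields the required second radius-$2$ tree'' is a plan, not a proof. Moreover, the plan as stated is doubtful. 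Your first tree is pinned to the color-$1$ ball at $v$, so the second tree must by itself cover \emph{all} of $L_B\cup L_C$, including the vertices of $L_B\setminus Y$ (which do send color-$3$ edges to $C$) and $L_C\setminus X$ (which do send color-$2$ edges to $B$); a color-$1$ radius-$2$ tree centered in $X\cup Y$ has no a priori reason to reach these. The paper resolves this by a further case split on the sets $B_{ij}=\{u\in A_i:\ u \text{ sends no color-}j\text{ edge to }A_j\}$: when some $B_{12}\setminus B_{13}\neq\emptyset$ the second tree is a \emph{color-$3$} tree hung off a vertex $z\in A$ (namely $G_3[\{v\}\cup C\cup\{z\}\cup L_B]$), i.e., a different color from the first tree and centered outside $X\cup Y$ entirely; only in the remaining case does one get the clean color-$1$ double star on $L_B\cup L_C$. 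Your maximality assumption on $|N_1(v)|$ might conceivably substitute for this, but you give no argument, and nothing in your sketch produces the color-switching construction that the paper needs. So the central claim $\tdc_3^4(K)\leq 2$ is not established.

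A secondary, smaller gap: the lower bound $D_3(K_n)\geq 3$ (for large $n$) is also only gestured at. The paper's construction is concrete --- decompose $K_7$ into three Hamiltonian $7$-cycles, one per color, and take a closed blow-up; any monochromatic subgraph of diameter at most $2$ then meets at most three consecutive blown-up classes of its color's $7$-cycle, hence at most $3n/7<n/2$ vertices, so two such subgraphs cannot cover. Your ``blown-up path-like pieces'' idea is in the right spirit but omits the point that all three color classes must simultaneously have this structure, which is what the Hamiltonian decomposition of $K_7$ provides.
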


\begin{theorem}\label{thm:r2bipartite}
For all $G\in \mathcal{K}_2$, $\dc_2^4(G)\leq \tdc_2^4(G)\leq 2$ and $3\leq D_2(G)\leq TD_2(G)\leq 4$.
\end{theorem}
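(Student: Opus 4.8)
The two inequalities $\dc_2^4(G)\le \tdc_2^4(G)$ and the lower bound on $D_2$ are routine, so the heart of the matter is to prove $\tdc_2^4(G)\le 2$: in every $2$-coloring of a complete bipartite graph $G=K_{A,B}$ (with colors $1,2$) I will exhibit two monochromatic trees, each of diameter at most $4$, whose union covers $A\cup B$. The plan is to fix an arbitrary $a_0\in A$ and $b_0\in B$ and, after possibly swapping the two colors, assume the edge $a_0b_0$ has color $1$. Writing $A_i=N_i(b_0)$ and $B_i=N_i(a_0)$ for the color-$i$ neighborhoods ($i\in\{1,2\}$), we have $A=A_1\sqcup A_2$ and $B=B_1\sqcup B_2$ with $a_0\in A_1$ and $b_0\in B_1$. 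The first tree is the color-$1$ double star $T_1$ on the edge $a_0b_0$: since $a_0$ is color-$1$ adjacent to all of $B_1$ and $b_0$ to all of $A_1$, this is a tree of diameter at most $3$ covering $A_1\cup B_1$. It then remains to cover the leftover set $A_2\cup B_2$ with a single monochromatic tree of diameter at most $4$.

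For the second tree I would split into cases according to the ``doubly bad'' vertices. Set $A_2^\circ=\{a\in A_2:\ a \text{ sends only color-}1\text{ edges to } B_2\}$ and, symmetrically, $B_2^\circ=\{b\in B_2:\ b\text{ sends only color-}1\text{ edges to } A_2\}$. If $A_2^\circ=\emptyset$, then every $a\in A_2$ has a color-$2$ edge into $B_2$; rooting a color-$2$ tree at $a_0$ (which is color-$2$ adjacent to all of $B_2$), taking $B_2$ as its children and hanging each $a\in A_2$ off one of its color-$2$ neighbors in $B_2$, yields a tree of radius $2$, hence of diameter at most $4$, covering $A_2\cup B_2$. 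If instead $B_2^\circ=\emptyset$, the symmetric construction rooted at $b_0$ works. The remaining case is $A_2^\circ\ne\emptyset\ne B_2^\circ$, and this is where the key observation lies: choosing $a^*\in A_2^\circ$ and $b^*\in B_2^\circ$, the vertex $a^*$ is color-$1$ adjacent to all of $B_2$, the vertex $b^*$ is color-$1$ adjacent to all of $A_2$, and the edge $a^*b^*$ is color $1$; thus the color-$1$ double star on $a^*b^*$ is a tree of diameter at most $3$ covering $A_2\cup B_2$. In every case $T_1$ together with the second tree gives the desired cover by two monochromatic trees of diameter at most $4$.

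The degenerate possibilities are absorbed automatically: if $B_2=\emptyset$ then $B_2^\circ=\emptyset$ and the $b_0$-construction is just a color-$2$ star, and symmetrically for $A_2=\emptyset$. I expect the only genuinely delicate point---and the reason the diameter cannot in general be pushed below $4$---to be the realization that the generic situation forces a radius-$2$ (rather than radius-$1$) construction: two color-$2$ stars, one covering $A_2$ and one covering $B_2$, can only be joined through a path of length three, which would create diameter $5$. The resolution is that exactly when no single radius-$2$ color-$2$ tree suffices (i.e.\ both $A_2^\circ$ and $B_2^\circ$ are nonempty), the doubly bad vertices conveniently reorganize $A_2\cup B_2$ into a short color-$1$ double star instead.

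Finally, for the lower bound $D_2(G)\ge 3$ I would use that a monochromatic subgraph of a bipartite graph has diameter at most $2$ if and only if it is a monochromatic complete bipartite subgraph (a monochromatic biclique): cross pairs must be adjacent and same-side pairs need a common neighbor. Hence $\dc_2^2(G)\le t$ would mean $A\cup B$ can be covered by $t$ monochromatic bicliques. Taking a $2$-coloring of a large $K_{n,n}$ whose two color classes contain no large biclique (for instance a uniformly random coloring) forces any two monochromatic bicliques to miss most of the vertex set, so $\dc_2^2(G)>\tc_2(G)$ and therefore $D_2(G)\ge 3$; the upper bound $TD_2(G)\le 4$ just established then confines both $D_2(G)$ and $TD_2(G)$ to the stated range.
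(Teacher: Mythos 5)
Your upper-bound argument is correct, but it takes a genuinely different route from the paper. The paper derives this theorem as an immediate corollary of a structural trichotomy (Lemma \ref{lem:bip2}): either some vertex on one side sees only one color, or the coloring splits $G$ into two ``crossing'' monochromatic bicliques, or some $G_i$ is connected with small diameter; the last case is handled by a BFS-layer analysis from a vertex witnessing $\diam(G_1)$. You instead fix an edge $a_0b_0$ of color $1$, cover $N_1(b_0)\cup N_1(a_0)$ by the color-$1$ double star on $a_0b_0$, and then cover the leftover $A_2\cup B_2$ by either a radius-$2$ color-$2$ tree rooted at $a_0$ or $b_0$ (when one of the ``doubly bad'' sets $A_2^\circ,B_2^\circ$ is empty) or a color-$1$ double star on $a^*b^*$ with $a^*\in A_2^\circ$, $b^*\in B_2^\circ$ (when both are nonempty, forcing $a^*b^*$ to have color $1$ and $a^*,b^*$ to dominate $B_2,A_2$ in color $1$). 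This is closer in spirit to the paper's proof of Theorem \ref{thm:r3complete} (fix a vertex, partition its neighborhood by color, and track which vertices send no edge of a given color across). Your argument is shorter and more self-contained; what the paper's route buys is the reusable case analysis of Lemma \ref{lem:bip2} (and its diameter-free variant, Lemma \ref{2colorbip_nodiam}), which is invoked again in the proofs of Theorems \ref{thm:r4complete}, \ref{thm:r3bipartite}, and \ref{3partite}.

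One small inaccuracy in your lower-bound sketch: a uniformly random $2$-coloring of $K_{n,n}$ \emph{does} contain large monochromatic bicliques of diameter $2$, namely stars $K_{1,\approx n/2}$, so the premise ``no large biclique'' fails as stated. The conclusion still holds, but you need the extra observation that a monochromatic diameter-$\le 2$ subgraph meeting both sides in at least two vertices is a biclique $K_{s,t}$ with $s,t\ge 2$ (hence of logarithmic size in a random coloring), while a star covers only one vertex of its center's side; a short case check then shows two such subgraphs cannot cover $K_{n,n}$. The paper avoids this entirely with an explicit construction: a blow-up of the self-complementary $2$-coloring of $K_{m,n}$ by a red $P_7$, in which any $2$-cover must contain a subgraph spanning four consecutive blobs and hence of diameter at least $3$.
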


\begin{theorem}\label{thm:r4complete}
For all complete graphs $K$, $\dc_4^6(K)\leq 2$ and $2\leq D_4(K)\leq 6$.
\end{theorem}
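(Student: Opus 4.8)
As stated the count must read $\dc_4^6(K)\le 3$ rather than $\le 2$: every diameter-bounded monochromatic cover is in particular a monochromatic cover, so $\dc_4^6(K)\ge \tc_4(K)$ for every complete graph $K$, and a $4$-colouring arising from an affine plane of order $3$ gives a complete graph with $\tc_4(K)=3$. Hence $\le 2$ is impossible and $3$ is best possible; I will prove $\dc_4^6(K)\le 3$, which improves Theorem~\ref{thm:mil}(iii) from diameter $80$ to $6$ with the same three pieces, and which yields $D_4(K)\le 6$ since it forces $\dc_4^6(K)=\tc_4(K)$. The plan begins with a two-colour reduction. Group the colours as $\{1,2\}$ and $\{3,4\}$, so $G_{3,4}=\overline{G_{1,2}}$, and apply Proposition~\ref{prop:folk} to the $2$-colouring $(G_{1,2},G_{3,4})$ of $K$. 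At least one of the two spans $V(K)$ with diameter at most $3$; relabel so that $\diam(G_{3,4})\le 3$, and fix a centre $u$ of eccentricity at most $3$ in $G_{3,4}$. This gives two primary colours $3,4$ reaching every vertex within distance $3$ of $u$, and keeps colours $1,2$ in reserve for a third piece.

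Next I would extract three monochromatic pieces by monochromatic breadth-first search from $u$. Let $P_3$ be the colour-$3$ BFS tree rooted at $u$ of depth at most $3$, and $P_4$ the analogous colour-$4$ tree; each is a monochromatic tree of diameter at most $6$. Set $U=V(K)\setminus(V(P_3)\cup V(P_4))$. Every $w\in U$ lies within distance $3$ of $u$ in $G_{3,4}$ yet is joined to $u$ by no monochromatic path of length at most $3$, so each short path from $u$ to $w$ in $G_{3,4}$ must genuinely alternate colours $3$ and $4$. The remaining task is to cover $U$ by a single monochromatic subgraph of colour $1$ or $2$ and diameter at most $6$, using the reserve colours together with the structure forced on $U$ by this alternation condition.

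The main obstacle is precisely this last step: the leftover set $U$ need not be connected---let alone of bounded diameter---in either single reserve colour, so the plain BFS decomposition does not close on its own. I expect to resolve it by choosing the centre and the primary/reserve split adaptively: for instance, ranging over all vertices $u$ and all ways of choosing the primary colour pair so that its graph has diameter at most $3$, pick the option maximising $|V(P_3)\cup V(P_4)|$, or run the argument from an optimally chosen edge in place of a vertex, so that the residual $U$ is dominated by one reserve colour; a secondary application of Proposition~\ref{prop:folk} to the complete graph induced on $U$ should then supply the third diameter-$\le 6$ piece. Showing that some such choice always collapses $U$ into a single low-diameter monochromatic subgraph is where the real work lies, and is what replaces Mili\'cevi\'c's much longer diameter-$80$ argument.

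Finally, for the bounds on $D_4(K)$: the upper bound $D_4(K)\le 6$ is immediate once $\dc_4^6(K)\le 3$ is established, since then $\dc_4^6(K)=\tc_4(K)$ (on few vertices every connected monochromatic subgraph already has diameter at most $6$, and large complete graphs are handled by the construction above). For the lower bound $D_4(K)\ge 2$, a diameter-$\le 1$ monochromatic subgraph is exactly a monochromatic clique; taking a $4$-colouring of $K_n$ in which every monochromatic clique has order $O(\log n)$---for instance a typical random colouring---one needs far more than $\tc_4(K)=3$ such cliques to cover $V(K)$, so $\dc_4^1(K)>\tc_4(K)$ and hence $D_4(K)\ge 2$.
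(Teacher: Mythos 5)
You are right that the stated bound ``$\dc_4^6(K)\leq 2$'' is a misprint: the paper's own proof constructs a cover by \emph{three} monochromatic subgraphs of diameter at most $6$, in agreement with Theorem \ref{thm:diam}(ii), and your affine-plane observation correctly shows $\leq 2$ is impossible. Your lower bound $D_4(K)\geq 2$ for large complete graphs (diameter-$1$ pieces are monochromatic cliques, which a random $4$-coloring forces to have order $O(\log n)$, so $\dc_4^1(K_n)>3\geq\tc_4(K_n)$) is also fine. The problem is the upper bound $\dc_4^6(K)\leq 3$, which is the entire content of the theorem: what you have written is a plan, not a proof, and you say so yourself (``where the real work lies''). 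The central difficulty is exactly the one your reduction creates and then leaves open. Knowing $\diam(G_{3,4})\leq 3$ gives short paths that may \emph{alternate} between colors $3$ and $4$, so the colour-$3$ and colour-$4$ BFS trees from $u$ control nothing about the residual set $U$, and there is no reason $U$ should be swallowed by a single monochromatic piece in a reserve colour -- nor does your coverage pattern (two pieces from one colour pair rooted at a common vertex, one piece from the other pair) match any structure that is forced to exist. Your fallback also fails as stated: Proposition \ref{prop:folk} applies to $2$-colorings, while $K[U]$ is $4$-colored, so grouping colours there yields a spanning subgraph of $G_{1,2}[U]$ or $G_{3,4}[U]$, which is not monochromatic.

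For comparison, the paper's proof avoids metric reasoning on colour-unions entirely. It fixes a vertex $x$, splits its neighborhood into the colour classes $A_1,\dots,A_4$, and defines $B_{ij}\subseteq A_i$ as the vertices sending no colour-$j$ edge to $A_j$; the key point is that $[B_{ij},B_{ji}]$ is complete in the remaining two colours. Two structural claims then eliminate all configurations except one, essentially forcing $B_{12}=B_{13}$, $B_{23}\cup B_{24}\subseteq B_{21}$, $B_{32}\cup B_{34}\subseteq B_{31}$; in that last case a colour-$4$ piece of diameter at most $4$ covers everything outside $B_{12}\cup B_{21}\cup B_{31}$, and the two $2$-colored complete bipartite graphs $[B_{12},B_{21}]$ and $[B_{13},B_{31}]$ are handled by a case analysis through Lemma \ref{lem:bip2}. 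So the diameter bound $6$ comes out of explicit low-radius trees built around $x$ and the bipartite lemma, not from a global diameter-$3$ statement about a pair of colours. If you want to salvage your outline, the missing ingredient is some analogue of the $B_{ij}$ analysis that converts the alternation constraint on $U$ into usable complete bipartite structure in the reserve colours; without it the argument does not close.
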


We also prove analogous results in some new cases.

\begin{theorem}\label{thm:r2a2}
For all graphs $G$ with $\alpha(G)=2$, $\dc_2^6(G)\leq 2$ and $3\leq D_2(G)\leq 6$.
\end{theorem}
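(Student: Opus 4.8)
Since $\alpha(G)=2$, the non-edges of $G$ form a triangle-free graph; in particular no three vertices are pairwise non-adjacent, and the non-neighbours of any fixed vertex induce a clique. By Theorem~\ref{dualkonig} we already know $\tc_2(G)\le 2$, so the whole content is to produce the two covering subgraphs with diameter at most $6$. Because a connected graph of radius at most $3$ has diameter at most $6$, I would aim to exhibit two monochromatic subgraphs, each of radius at most $3$ about a chosen centre, whose union is $V(G)$. If some vertex $v$ is adjacent to every other vertex, the red star and the blue star at $v$ (that is, $v$ together with its red, resp.\ blue, neighbours) already cover $V(G)$ with radius $1$. So I may fix a non-edge $\{u,v\}$, which exists because $\alpha(G)=2$.

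\textbf{Classification and targets.} Write $R_v,B_v$ for the red and blue neighbourhoods of $v$ and $R_u,B_u$ for those of $u$. As $\alpha(G)=2$, no vertex can be non-adjacent to both $u$ and $v$, so every $w\in V\setminus\{u,v\}$ falls into one of eight classes according to its relation to $v$ and to $u$, each relation being a red edge (\emph{r}), a blue edge (\emph{b}), or a non-edge (\emph{n}); I record the class as an ordered pair, e.g.\ $\mathrm{rb}$ for red-to-$v$ and blue-to-$u$. The plan is to take as the two covering subgraphs the \emph{red target} $\mathcal{R}:=\{u,v\}\cup R_v\cup R_u$ and the \emph{blue target} $\mathcal{B}:=\{u,v\}\cup B_v\cup B_u$. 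A check over the eight classes gives $\mathcal{R}\cup\mathcal{B}=V$: every vertex has a coloured edge to $u$ or $v$, hence lies in whichever target matches that colour. Each target is the union of two monochromatic stars, centred at $v$ and at $u$, so the only remaining task is to link the two red stars by a short red path and the two blue stars by a short blue path; a link of length at most $3$ keeps the radius (about an interior vertex of the link) at most $3$, giving diameter at most $6$.

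\textbf{Linking and the main obstacle.} The key observation I would use is that a single edge between the two cross-classes forces a link: if $p$ lies in class $\mathrm{rb}$ and $q$ in class $\mathrm{br}$, then $vp,qu$ are red and $vq,pu$ are blue, so the edge $pq$ yields either the red path $v\,p\,q\,u$ (if $pq$ is red) or the blue path $v\,q\,p\,u$ (if $pq$ is blue), each of length $3$. Likewise an $\mathrm{rr}$-vertex lies in $R_v\cap R_u$ and links the red stars outright (radius $2$, diameter $\le 4$), and a $\mathrm{bb}$-vertex links the blue stars. The hard part will be to guarantee \emph{both} links simultaneously. If I can find both a red and a blue link I am done, so I would case on the cross-classes; the genuinely delicate case is the degenerate one in which \emph{both} links fail. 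Tracing this back through the observation above forces $R_v\cap R_u=B_v\cap B_u=\varnothing$, no red (resp.\ blue) edge between the two red (resp.\ blue) stars, and the classes $\mathrm{rb}$ and $\mathrm{br}$ mutually non-adjacent in $G$.

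\textbf{Resolving the degenerate case and the lower bound.} In this degenerate situation I would re-route through the ``non-edge'' classes $\mathrm{rn},\mathrm{bn},\mathrm{nr},\mathrm{nb}$, exploiting $\alpha(G)=2$ once more: the triangle-freeness of the non-edges forces these classes to be adjacent to large parts of the other classes, which should supply the missing monochromatic link of bounded length (or, when that is impossible, let me recombine the covered pieces into two subgraphs of a single colour, using that $\tc_2(G)\le2$ guarantees a cover exists to anchor the recombination). Subgraphs here cover vertices through edges \emph{internal} to the classes, not only through $u,v$, which is exactly why this step is subtle and why length-$3$ links — and hence the bound $6$ rather than $5$ — appear to be intrinsic to the method. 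Finally, the lower bound $D_2(G)\ge 3$ is witnessed by a two-colouring admitting no monochromatic $2$-cover by subgraphs of diameter at most $2$, in the spirit of the extremal example for Proposition~\ref{prop:folk} and of the bipartite case of Theorem~\ref{thm:r2bipartite}; combined with $\dc_2^6(G)\le2$ this yields $3\le D_2(G)\le6$.
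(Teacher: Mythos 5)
Your setup --- fixing a non-edge $\{u,v\}$ and sorting the remaining vertices into eight classes by their (coloured) adjacency to $u$ and to $v$ --- is essentially the paper's setup with $A_x,A_y$ and $A_{ij}$. But the proof has a genuine gap, and it sits exactly where you place it: the ``degenerate case.'' Two problems. First, your global strategy is too rigid: you aim to always produce one red subgraph and one blue subgraph, each containing both $u$ and $v$ (the two targets $\mathcal{R},\mathcal{B}$). That is not always achievable, and the paper's proof does not attempt it; in several of its cases the final cover consists of two subgraphs of the \emph{same} colour, one anchored at $x$ and one at $y$ (e.g.\ when $A_{11}=A_{22}=\emptyset$ the cover is $G_1[A_x\cup\{x\}\cup A_{12}]$ and $G_1[A_y\cup\{y\}\cup A_{21}]$), or a subgraph living entirely inside a clique of non-neighbours of some vertex $w$. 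Your fallback --- ``recombine\dots using that $\tc_2(G)\le 2$ guarantees a cover exists to anchor the recombination'' --- cannot work as stated, because $\tc_2(G)\le 2$ gives components with no diameter control whatsoever; bounding the diameter of those components is precisely the content of the theorem, so you cannot use them as anchors.

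Second, you never deploy the one structural tool that actually closes the hard cases: since $\alpha(G)=2$, the set $N(v)\setminus N(u)$ (your classes $\mathrm{rn}\cup\mathrm{bn}$) together with $v$ induces a \emph{clique}, as does $N(u)\setminus N(v)$ together with $u$, and more generally the non-neighbourhood of any vertex is a clique. The paper's entire case analysis pivots on applying Proposition~\ref{prop:folk} to these $2$-coloured cliques: for each of $G[A_x\cup\{x\}]$ and $G[A_y\cup\{y\}]$, either both colours have diameter exactly $3$ or some colour has diameter at most $2$, and the three resulting cases are resolved by concrete constructions. Even then, the deepest subcases require finding \emph{further} cliques and reapplying Proposition~\ref{prop:folk} --- e.g.\ the vertices of $A_x\cup A_y$ not adjacent to $A_{21}$ span a clique $[Z_1,Z_2]$, and the non-neighbours $Z$ of a vertex $w\in A_{22}$ that sends only colour-$2$ edges to $A_x\cup A_y$ span a clique. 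You gesture at ``triangle-freeness of the non-edges forces these classes to be adjacent to large parts of the other classes,'' which is the right instinct, but without carrying out this analysis the degenerate case is simply asserted, not proved. (The lower bound $D_2(G)\ge 3$ is indeed inherited from the diameter-$3$ example following Proposition~\ref{prop:folk} and is not the issue.)
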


\begin{theorem}\label{thm:r3bipartite}
For all $G\in \mathcal{K}_2$, $\dc_3^{6}(G)\leq 4$ (i.e. $D_3(G)\leq 6$).
\end{theorem}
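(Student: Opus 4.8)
The plan is to reduce the three-color problem to the two-color bipartite result (Theorem~\ref{thm:r2bipartite}) by a double-star reduction, and then dispatch a single stubborn configuration by hand. Write the parts of $K\in\mathcal{K}_2$ as $A$ and $B$, fix any edge $ab$ with $a\in A$, $b\in B$, and assume without loss of generality that $c(ab)=1$. Partition $B=B_1\cup B_2\cup B_3$ according to the color of the edge from $a$ (so $b\in B_1$), and partition $A=A_1\cup A_2\cup A_3$ according to the color of the edge from $b$ (so $a\in A_1$). The color-$1$ double star $T_1$ with centers $a,b$, consisting of $a$ joined to $B_1$, $b$ joined to $A_1$, and the edge $ab$, is a monochromatic tree with $\diam(T_1)\le 3$ covering $\{a,b\}\cup A_1\cup B_1$. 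After removing $T_1$ it remains to cover $A_2\cup A_3\cup B_2\cup B_3$ with three more monochromatic subgraphs of diameter at most $6$.

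First I would record the key \emph{bridge} observation. For $i\in\{2,3\}$, the star $a$--$B_i$ and the star $b$--$A_i$ are both color $i$ by definition of $B_i$ and $A_i$; if in addition there is a color-$i$ edge $xy$ with $x\in A_i$ and $y\in B_i$, then $a,y,x,b$ is a color-$i$ path, and the two stars merge into a single color-$i$ subgraph $S_i$ with $\diam(S_i)\le 5$ covering $\{a,b\}\cup A_i\cup B_i$. Thus if both bridges (for $i=2$ and $i=3$) are present, then $\{T_1,S_2,S_3\}$ is already a cover with only three subgraphs; and if exactly one bridge is present, say for $i=2$, then $\{T_1,S_2\}$ together with the two separate color-$3$ stars $a$--$B_3$ and $b$--$A_3$ gives four subgraphs covering everything, each of diameter at most $5$.

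This leaves the genuinely hard case in which \emph{both} bridges fail: every edge between $A_2$ and $B_2$ has color $1$ or $3$, and every edge between $A_3$ and $B_3$ has color $1$ or $2$. Here I would exploit the resulting block structure of the residual graph: the two ``monochromatic'' blocks $[A_2,B_2]$ (colors $\{1,3\}$) and $[A_3,B_3]$ (colors $\{1,2\}$), together with the two ``cross'' blocks $[A_2,B_3]$ and $[A_3,B_2]$ of arbitrary colors. Since $[A_2,B_2]$ and $[A_3,B_3]$ are now $2$-colored complete bipartite graphs, Theorem~\ref{thm:r2bipartite} covers each with two monochromatic trees of diameter at most $4$. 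The idea is then to route color-$2$ and color-$3$ connectivity through the cross blocks---for instance a color-$2$ edge from $A_3$ to $B_2$ attaches an $A_3$-vertex to the color-$2$ star $\{a\}\cup B_2$, and a color-$2$ edge from $A_2$ to $B_3$ attaches a $B_3$-vertex to $\{b\}\cup A_2$---and to fold every color-$1$ fragment produced this way into the color-$1$ structure already built from $T_1$, so that the total number of pieces stays at four.

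The main obstacle is exactly this last case: one must show that, using only the cross blocks, the four residual parts can always be swept up by three bounded-diameter monochromatic subgraphs with all diameters at most $6$. I expect this to require a short sub-case analysis on which of $A_2,A_3,B_2,B_3$ are nonempty and on the colors appearing in the cross blocks, together with careful accounting to ensure that folding color-$1$ fragments into $T_1$ does not inflate the diameter past $6$ (naive concatenation of a diameter-$3$ piece and a diameter-$4$ color-$1$ piece along a single connecting edge is too lossy, so the merge must be performed through a common center rather than an arbitrary edge). Everything outside this case follows immediately from the double-star reduction and the bridge observation.
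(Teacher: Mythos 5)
Your reduction is set up correctly and the easy branches check out: the double star $T_1$, the bridge observation giving $\diam(S_i)\le 5$, and the disposal of the cases where at least one bridge exists are all fine. But the proposal has a genuine gap: the final case, where every $A_2,B_2$-edge has color $1$ or $3$ and every $A_3,B_3$-edge has color $1$ or $2$, is not an afterthought to be ``swept up by a short sub-case analysis''---it is where essentially the entire content of the theorem lives, and you have not carried it out. Applying Theorem~\ref{thm:r2bipartite} to each of the two $2$-colored blocks yields two trees per block, hence \emph{five} pieces including $T_1$, so you must save a piece; the only mechanisms available are (a) one block being coverable by a single monochromatic subgraph (outcome \ref{prop1} of Lemma~\ref{lem:bip2}) or (b) merging a color-$1$ fragment of one block with $T_1$ or with a fragment of the other block through the cross blocks $[A_2,B_3]$, $[A_3,B_2]$. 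Neither is automatic: when both blocks land in outcome \ref{prop2} of Lemma~\ref{lem:bip2} (the ``partition'' structure), each genuinely needs two components in every color, and whether the cross-block edges let you fuse pieces without blowing the diameter past $6$ depends on a nontrivial interaction between the two partitions that you would have to analyze case by case. Until that analysis is written down and the diameters verified, the theorem is not proved.

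For comparison, the paper avoids your configuration entirely by organizing the proof around maximal monochromatic components rather than a fixed edge: if some monochromatic component $C$ misses vertices on both sides, the two leftover bipartite graphs $[V_1\setminus C, V_2\cap C]$ and $[V_2\setminus C, V_1\cap C]$ are each $2$-colored and Lemma~\ref{lem:bip2} finishes immediately; otherwise every component swallows a whole side, and the paper takes such a component of diameter at least $6$, decomposes it into distance layers $D_0,D_1,\dots$, and extracts from the layers two $2$-colored complete bipartite graphs $[X_1,Y_1]$, $[X_2,Y_2]$ plus a diameter-$6$ piece $C^*$ of the third color that already covers the awkward layer $D_3$. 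The ensuing case analysis on Lemma~\ref{2colorbip_nodiam} is the real work, and it is the analogue of the step you have deferred. If you want to salvage your route, you should expect to write out a case analysis of comparable length for the double-bridge-failure configuration, including the degenerate cases where some of $A_2,A_3,B_2,B_3$ are empty.
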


Very interestingly, the reason Mili\'cevi\'c proved Theorem \ref{thm:mil}(i) and then formulated Conjecture \ref{con:mil} had to do with a generalization of Banach's fixed point theorem 

\begin{theorem}[Banach \cite{Ban}]
Every contracting operator on a complete metric space has a fixed point. 
\end{theorem}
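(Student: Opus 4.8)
The plan is to construct the fixed point explicitly by Picard iteration and then invoke completeness. Throughout, write $(X,d)$ for the complete metric space and let $T\colon X\to X$ be the contracting operator, so that by definition there is a constant $k\in[0,1)$ with $d(Tx,Ty)\le k\,d(x,y)$ for all $x,y\in X$. First I would fix an arbitrary basepoint $x_0\in X$ and define the orbit $x_{n+1}=Tx_n$ for $n\ge 0$. A one-line induction on $n$, applying the contraction inequality once at each step, yields the decay estimate $d(x_{n+1},x_n)\le k^{n}\,d(x_1,x_0)$.

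The main technical step is to show that $(x_n)$ is a Cauchy sequence. For $m>n$ I would telescope along the orbit and combine the triangle inequality with the previous bound and the geometric series:
\[ d(x_m,x_n)\le \sum_{j=n}^{m-1} d(x_{j+1},x_j)\le \left(\sum_{j=n}^{m-1}k^{j}\right)d(x_1,x_0)\le \frac{k^{n}}{1-k}\,d(x_1,x_0). \]
Since $k<1$, the right-hand side tends to $0$ as $n\to\infty$, uniformly in $m$, so $(x_n)$ is Cauchy. This is precisely the point at which both hypotheses are genuinely used: the geometric decay comes from the strict inequality $k<1$, and it is here that completeness of $X$ must be invoked to conclude that the sequence converges to some limit $x^{*}\in X$.

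Finally I would verify that $x^{*}$ is a fixed point. Because $T$ is Lipschitz, hence continuous, I can pass to the limit in the recurrence $x_{n+1}=Tx_n$ to get $x^{*}=\lim_n x_{n+1}=\lim_n Tx_n=Tx^{*}$; equivalently one bounds $d(Tx^{*},x^{*})\le d(Tx^{*},x_{n+1})+d(x_{n+1},x^{*})\le k\,d(x^{*},x_n)+d(x_{n+1},x^{*})\to 0$ directly. If uniqueness is also desired, any two fixed points $x^{*},y^{*}$ satisfy $d(x^{*},y^{*})=d(Tx^{*},Ty^{*})\le k\,d(x^{*},y^{*})$, which forces $d(x^{*},y^{*})=0$ since $k<1$. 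I expect the only delicate point to be the Cauchy estimate, where the strict contraction constant and completeness must be married; everything else is routine, as befits a classical analytic fact included here only to motivate the diameter versions of Ryser's conjecture.
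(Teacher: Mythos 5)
Your proof is the standard Picard-iteration argument and it is correct: the geometric decay $d(x_{n+1},x_n)\le k^n d(x_1,x_0)$, the telescoping Cauchy estimate, completeness to extract the limit, and continuity of $T$ to identify the limit as a fixed point. The paper does not prove this statement at all --- it is quoted as classical background (attributed to Banach) solely to motivate the bounded-diameter versions of Ryser's conjecture --- so there is no argument in the paper to compare against; yours is the canonical one. One small caution: the paper's footnoted definition of a \emph{contracting family} uses a strict inequality $d(f_i(x),f_i(y))<\lambda\, d(x,y)$ with $0<\lambda\le 1$ permitted, and if one read the single-operator case with $\lambda=1$ the conclusion would fail (e.g.\ $x\mapsto x+e^{-x}$ on $\mathbb{R}$); your proof correctly uses the classical hypothesis $d(Tx,Ty)\le k\,d(x,y)$ with $k<1$, which is what Banach's theorem asserts.
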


Austin \cite{Aus} conjectured that every commuting contracting family $\{f_1,f_2,\dots,f_r\}$ of operators\footnote{A family of operators (i.e.\ continuous functions from $M$ to $M$) $\{f_1,f_2,\dots,f_r\}$ is \emph{contracting} if there exists $0< \lambda\leq 1$ such that for all $x,y\in M$, there exists $i\in [r]$ such that $d(f_i(x), f_i(y))<\lambda  \cdot d(x,y)$.} on a complete metric space $(M,d)$ has a common fixed point and proved it for $r=2$.  Mili\'cevi\'c proved the case $r=3$.

\begin{theorem}[Mili\'cevi\'c \cite{M1}]\label{thm:mil3}
Every commuting contracting family $\{f_1,f_2,f_3\}$ of operators on a complete metric space has a common fixed point.
\end{theorem}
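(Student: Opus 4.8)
The plan is to derive this analytic theorem from the combinatorial Theorem~\ref{thm:mil}(i) (or its sharpening Theorem~\ref{thm:r3complete}), converting a bounded-diameter monochromatic cover of a $3$-colored complete graph into contraction estimates on the orbit of the family. Fix a complete metric space $(M,d)$, a commuting contracting family $\{f_1,f_2,f_3\}$ with constant $\lambda<1$, and a basepoint $x_0$; since the maps commute, every composition has the form $f^v:=f_1^{v_1}f_2^{v_2}f_3^{v_3}$ with $v\in\mathbb{N}^3$, and I would track the orbit points $p_v:=f^v(x_0)$. The first step is a coloring construction: for any finite set $P\subseteq M$ I form the complete graph on $P$ and color each edge $\{x,y\}$ by an index $i$ with $d(f_ix,f_iy)\le\lambda\,d(x,y)$, which exists by the contracting hypothesis. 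Theorem~\ref{thm:mil}(i) then supplies two monochromatic connected subgraphs $C_1,C_2$, of colors $i_1,i_2$ and diameter at most an absolute constant $D$, whose vertices cover $P$; in particular at most two of the three maps are active on $P$.

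The second step is the local contraction estimate. If $x,y$ lie in a common component $C_s$, a color-$i_s$ path of length at most $D$ joins them, and summing the contracted edge lengths gives $d(f_{i_s}x,f_{i_s}y)\le D\lambda\,\diam(P)$; thus applying the component's color map collapses $C_s$ to a cluster of diameter at most $D\lambda\,\diam(P)$, each cluster again consisting of orbit points one level deeper in $\mathbb{N}^3$. I would iterate this, passing from a finite configuration $P_n$ of orbit points to $P_{n+1}=f_{i_1}(C_1)\cup f_{i_2}(C_2)$, aiming to show $\diam(P_n)\to 0$. Granting this, completeness makes a sequence of representatives drawn from the $P_n$ Cauchy, with some limit $x^\ast$, and continuity of the $f_i$ together with the defining estimate should force $f_i(x^\ast)=x^\ast$ for every $i$, i.e.\ $x^\ast$ is a common fixed point; this last verification is where the contracting hypothesis, applied at the limiting scale, must be used to control even the maps that were only intermittently active.

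The hard part will be precisely the step the combinatorics is meant to enable: bounding the distance \emph{between} the two clusters $f_{i_1}(C_1)$ and $f_{i_2}(C_2)$, since a pair split across the two components is contracted by no single map. This is where commutativity of $f_{i_1}$ and $f_{i_2}$ must be exploited to realign the (at most two) active maps, effectively reducing $P_n$ to a two-map configuration so that the known case $r=2$ of Austin \cite{Aus} can be invoked as a subroutine, and it explains why only two components, rather than three, can be tolerated. A second, quantitative obstacle is that the estimate loses a factor $D$ per step and hence yields genuine geometric decay only when $D\lambda<1$; for general $\lambda<1$ one would like to replace the $f_i$ by the longer compositions $f^v$ with $|v|=m$, which again commute and form a contracting family with the much smaller constant $\lambda^m$, at the cost of increasing the number of maps and therefore the number of colors. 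Reconciling this trade-off requires the diameter bound $D$ to stay bounded as the number of colors grows, which is exactly the phenomenon isolated by Conjecture~\ref{con:mil} and by the speculation there that the diameter can be chosen independently of $r$; securing a small, color-robust diameter against few components is the crux of the whole reduction.
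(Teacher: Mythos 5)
First, a point of comparison: the paper does not actually prove Theorem~\ref{thm:mil3}; it is quoted from \cite{M1}, and the surrounding text only records that Mili\'cevi\'c's proof \emph{uses} a bounded-diameter $2$-cover lemma for $3$-colored (countably infinite) complete graphs. So there is no in-paper argument to measure you against. Your proposal correctly identifies that combinatorial ingredient as the engine of the proof, which matches the paper's description of Mili\'cevi\'c's strategy. But what you have written is a plan with its two load-bearing steps explicitly left open, and as it stands neither of them goes through.

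The first genuine gap is the inter-cluster estimate you flag yourself: a pair of orbit points split between the two components $C_1$ and $C_2$ is contracted by no single map, and your proposed remedy --- ``exploit commutativity and invoke Austin's $r=2$ case \cite{Aus} as a subroutine'' --- does not work as a black box, because the restriction of a contracting $3$-family to two of its maps need not be a contracting $2$-family (for a given pair $x,y$ the witnessing index may be exactly the discarded third map, and the cover only controls colors appearing on the finite configuration $P$, not on all of $M$). The second gap is quantitative and, as proposed, circular. The path estimate only yields $d(f_{i_s}x,f_{i_s}y)\leq D\lambda\,\diam(P)$, which is a contraction only when $D\lambda<1$; your fix of replacing $\{f_1,f_2,f_3\}$ by the compositions $f^v$ with $|v|=m$ shrinks $\lambda$ to $\lambda^m$ but inflates the number of maps to $\binom{m+2}{2}$, so it requires a bounded-diameter monochromatic $2$-cover (or at least a cover by boundedly many pieces) for arbitrarily many colors with diameter independent of the number of colors --- which is precisely the open Conjecture~\ref{con:mil} together with the paper's further speculation that $d$ is independent of $r$. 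Note also that the paper's definition of a contracting family permits $\lambda=1$, in which case no geometric decay is available at all and the whole $\diam(P_n)\to 0$ scheme collapses. Finally, even granting $\diam(P_n)\to 0$, your limit point is only directly seen to be fixed by the maps that are ``active'' along the iteration; upgrading it to a common fixed point of all three maps is a separate step you acknowledge but do not supply. In short, the reduction you describe is the right frame, but the proof is not there.
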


In the course of proving Theorem \ref{thm:mil3}, Mili\'cevi\'c requires a lemma which says that  that there exists some $\delta$ ($\delta=8$ suffices) such that $\dc_3^\delta(K)\leq 2$ for the countably infinite complete graph $K$.  

We note that Mili\'cevi\'c's proofs and our proofs apply equally well to finite or infinite (countable or uncountable) graphs.

\subsection{Examples}

Given a graph $G$, a \emph{blow-up} of $G$ is a graph obtained by replacing each vertex of $G$ with an independent set and replacing each edge of $G$ with a complete bipartite graph between the corresponding independent sets. A \emph{closed blow-up} of $G$ is a graph obtained by replacing each vertex of $G$ with a clique and each edge of $G$ with a complete bipartite graph between the corresponding cliques.  

\begin{example}
For all $n\geq 7$, $D_3(K_n)\geq 3$ (i.e.\ there exists a 3-coloring of $K_n$ such that if $\{H_1, H_2\}$ is a monochromatic 2-cover, then $\diam(H_i)\geq 3$ for some $i\in [2]$).
\end{example}

\begin{proof}
Color $K_7$ with 3 colors so that each color class is a 7-cycle.  Then take a closed blow-up of this example on $n$ vertices coloring the edges between the sets according to the original coloring and color the edges inside the sets arbitrarily.  If $\{H_1,  H_2\}$ is a monochromatic 2-cover, then for some $i\in [2]$, $H_i$ contains vertices from at least four different sets which implies $\diam(H_i)\geq 3$.  
\end{proof}

\begin{example}
For all $m\geq 3$, $n\geq 4$, $D_2(K_{m,n})\geq 3$ (i.e. there exists a 2-coloring of $K_{m,n}$ such that if $\{H_1, H_2\}$ is a monochromatic 2-cover, then $\diam(H_i)\geq 3$ for some $i\in [2]$).
\end{example}

\begin{proof}
Take a red $P_7$ and note that its bipartite complement is a blue $P_7$.  Now take a blow-up of this example on $n$ vertices coloring the edges between the sets according to the original coloring.  If $\{H_1,  H_2\}$ is a monochromatic 2-cover, then for some $i\in [2]$, $H_i$ contains vertices from at least four different sets which implies $\diam(H_i)\geq 3$. 

Another example (provided $m\geq 4$) comes from taking a red $C_8$ and noting that its bipartite complement is a blue $C_8$.
\end{proof}

\begin{example}\label{ex:random3}
For all sufficiently large $n$, $TD_3(K_n)\geq 3$ (i.e.\ there exists a 3-coloring of $K_n$ so that there is no monochromatic 2-cover consisting of trees each of diameter at most 3).
\end{example}

\begin{proof}
Trees of diameter 3 are double-stars.  In a random 3-coloring of $K_n$ (for sufficiently large $n$), no two monochromatic double stars will cover the vertex set.
\end{proof}

\begin{example}\label{ex:biprandom2}
For all sufficiently large $m$ and $n$, $TD_2(K_{m,n})\geq 3$ (i.e.\ there exists a 2-coloring of $K_{m,n}$ so that there is no monochromatic 2-cover consisting of trees each of diameter at most 3).
\end{example}

\begin{proof}
As above, in a random 2-coloring of $K_{m,n}$ (for sufficiently large $m$, $n$), no two monochromatic double stars will cover the vertex set.
\end{proof}

\subsection{Complete graphs, \texorpdfstring{$r=3$}{r=3}}

\begin{proof}[Proof of Theorem \ref{thm:r3complete}]
Let $x \in V(G)$. For $i \in [3]$, let $A_i$ be the neighbors of $x$ of color $i$. If $A_i = \emptyset$ for some $i \in [3]$, then for distinct $j,k\in [3]\setminus \{i\}$ there are trees $H_1\subseteq G_j[\{x\} \cup A_j]$ and $H_2\subseteq G_k[\{x\} \cup A_k]$ with $\diam(H_1),\diam(H_2)\leq 2$ (in this case, stars centered at $x$) which form a monochromatic cover.  So we assume $A_i \neq \emptyset$ for all $i\in[3]$. 

For distinct $i,j \in [3]$, define $B_{ij}$ to be the set of vertices $v \in A_i$ such that $v$ sends no edge of color $j$ to $A_j$.

\begin{figure}[ht]
\centering
\includegraphics{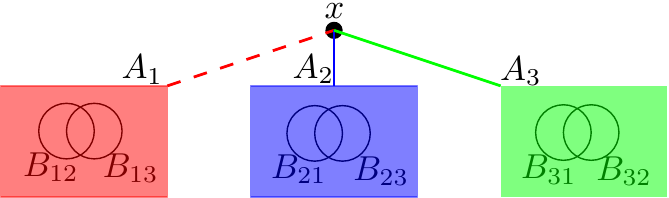}
\caption{Set up for the proof of Theorem \ref{thm:r3complete}}\label{fig:r3diam}
\end{figure}

First suppose $B_{ij} = \emptyset$ for some distinct $i,j \in [3]$.  Without loss of generality say $B_{12}=\emptyset$. This means every vertex in $A_1$ sends an edge of color 2 to $A_2$.  Thus there are trees $H_1\subseteq G_2[\{x\} \cup A_2 \cup A_1]$ and $H_2\subseteq G_3[\{x\} \cup A_3]$ with $\diam(H_1)\leq 4$ and $\diam(H_2)\leq 2$ which form a monochromatic cover.  So suppose $B_{ij} \neq \emptyset$ for all distinct $i,j \in [3]$.  Note that for all distinct $i,j,k\in [3]$,  $[B_{ij},B_{ji}]$ is a complete bipartite graph of color $k$. 

Next, suppose there exist distinct $i,j,k \in [3]$ such that $B_{ij} \setminus B_{ik} \neq \emptyset$.  Without loss of generality say $B_{12}\setminus B_{13}\neq \emptyset$ and let $z\in B_{12}\setminus B_{13}$.  Then there is a vertex $u\in A_3$ such that $zu$ is color $3$. Since every $z,B_{21}$-edge is color $3$, there are trees $H_1\subseteq G_3[\{x\} \cup A_3 \cup \{z\} \cup B_{21}]$ and $H_2\subseteq G_1[\{x\} \cup A_1 \cup (A_2 \setminus B_{21})]$ with $\diam(H_1),\diam(H_2)\leq 4$ which form a monochromatic cover. 

Finally, suppose $B_{ij} \setminus B_{ik} = \emptyset$ for all distinct $i,j,k \in [3]$. 
Then $B_{ij} = B_{ik} =: B_i$ for all distinct $i,j,k \in [3]$. If there exists $i \in [3]$ such that $A_i \neq B_i$, say $A_1\neq B_1$, then since every vertex in $(A_2\setminus B_2)\cup (A_3\setminus B_3)$ sends an edge of color 1 to $A_1\setminus B_1$ and since $G_1[B_2 \cup B_3]$ is a complete bipartite graph in color 1, there are trees $H_1\subseteq G_1[\{x\} \cup A_1 \cup (A_2 \setminus B_2) \cup (A_3 \setminus B_3)]$ and $H_2 \subseteq G_1[B_2 \cup B_3]$ with $\diam(H_1) \leq 4$ and $\diam(H_2) \leq 3$ which form a monochromatic cover. If, on the other hand, $A_i = B_i$ for all $i \in [3]$, then since $G_1[A_2 \cup A_3]$ is a complete bipartite graph in color 1, there is a tree $H_1\subseteq G_1[\{x\} \cup A_1]$ and a tree $H_2\subseteq G_1[A_2 \cup A_3]$ with $\diam(H_1) \leq 2$ and $\diam(H_2) \leq 3$  which form a monochromatic cover. 
\end{proof}

Note that it may be possible to improve the previous result by covering with two monochromatic \emph{subgraphs} of diameter at most 3, but we cannot hope to cover with two monochromatic \emph{trees} of diameter at most 3 (see Example \ref{ex:random3}).  

\begin{conjecture}
Let $G$ be a 3-colored complete graph.  There exists a monochromatic 2-cover consisting of  subgraphs of diameter at most 3.
\end{conjecture}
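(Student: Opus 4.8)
The plan is to refine the case analysis in the proof of Theorem~\ref{thm:r3complete}, where a monochromatic $2$-cover by \emph{trees} of diameter at most $4$ was produced, and to upgrade each covering object to a (not necessarily acyclic) monochromatic \emph{subgraph} of diameter at most $3$ by exploiting the extra edges available in the complete host graph. As a first reduction, fix a vertex $x$ and let $A_i$ be its color-$i$ neighborhood. If some $A_i=\emptyset$, then $\{x\}\cup A_j$ and $\{x\}\cup A_k$ (for the two remaining colors) are covered by two monochromatic stars of diameter $2$, so we may assume $A_1,A_2,A_3\neq\emptyset$, exactly as in that proof.

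Next I would re-examine the decomposition by the sets $B_{ij}$. One checks that whenever a covering object is a complete bipartite graph in a single color---as for $G_1[B_2\cup B_3]$, or for $G_1[A_2\cup A_3]$ in the subcase $A_i=B_i$ for all $i$---its diameter is already at most $2$, so no loss occurs there. The genuine loss occurs only in the cases where the covering object was a \emph{radius-$2$ spider}: a color-$i$ star centered at $x$ on $A_i$ with the vertices of some $A_j$ attached through scattered color-$i$ edges to $A_i$. Two leaves hanging off distinct centers are at distance $4$, and this is precisely where diameter $4$ is forced, so the whole problem reduces to compressing these spiders.

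The core of the argument will therefore be a \emph{spider-compression} step: in each such case I want to replace the radius-$2$ spider by a diameter-$3$ subgraph, for instance by producing a single color-$i$ edge $uv$ whose endpoints color-$i$-dominate the relevant set (so that the subgraph on $\{u,v\}\cup N_i(u)\cup N_i(v)$ has diameter at most $3$), or, failing that, by reassigning the ``far apart'' vertices of $A_j$ into the \emph{other} covering subgraph, using the freedom that we are building a cover rather than a partition and that a third color is available on the edges inside $A_j$. Concretely, in the case $B_{12}=\emptyset$ I would split $A_1$ according to whether its color-$2$ neighborhoods in $A_2$ can be funneled through a bounded hub, moving the exceptional vertices into a color-$3$ subgraph and arguing that completeness forces them to be absorbed there. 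A useful preliminary dichotomy, via Proposition~\ref{prop:folk} applied to the $2$-coloring that merges colors $2$ and $3$, is that either $G_1$ spans with diameter at most $2$ (giving a $1$-cover outright) or $G_{2,3}$ has diameter at most $3$, which constrains how the remaining two colors can distribute.

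The main obstacle I anticipate is exactly this compression: there is no reason in general for the attachments of $A_j$ to $A_i$ to funnel through a bounded set of hubs, so guaranteeing a color-$i$ dominating edge need not be possible, and the fallback of pushing exceptional vertices into the other color class may create \emph{new} far-apart pairs there, so the two reassignments must be shown to be simultaneously consistent. I expect the proof to hinge on a dichotomy of the form ``either some color-$i$ edge dominates $A_i\cup A_j$ in color $i$, or the bipartite graph $[A_i,A_j]$ is rich enough in a second color to let that second color cover $A_j$ with a diameter-$3$ subgraph,'' and verifying that one of these alternatives always holds---across all the $B_{ij}$ cases---is the crux that keeps this statement a conjecture rather than a theorem.
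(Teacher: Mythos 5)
This statement is an open conjecture in the paper, not a theorem: it is posed immediately after the proof of Theorem \ref{thm:r3complete} precisely because the authors could only achieve a $2$-cover by trees of diameter at most $4$, and they note that diameter-$3$ \emph{trees} are impossible (Example \ref{ex:random3}) while diameter-$3$ \emph{subgraphs} remain out of reach. So there is no proof in the paper to compare yours against, and your proposal does not supply one either. What you have written is an accurate diagnosis of where the diameter-$4$ loss occurs in the existing argument, together with a plan, but the plan's central step --- the ``spider-compression'' dichotomy --- is asserted rather than proved, and you say so yourself in the final paragraph. A proof attempt whose own text identifies its key step as ``the crux that keeps this statement a conjecture rather than a theorem'' is a research program, not a proof.

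To make the gap concrete: in the case $B_{12}=\emptyset$ of the proof of Theorem \ref{thm:r3complete}, every vertex of $A_1$ sends a color-$2$ edge to $A_2$, and the covering object is the color-$2$ subgraph on $\{x\}\cup A_2\cup A_1$. Two vertices $u,v\in A_1$ attached to distinct vertices $a,b\in A_2$ are at distance $4$ in this subgraph unless some shortcut exists (a color-$2$ edge $uv$, a common color-$2$ neighbor in $A_2$, a color-$2$ edge $ab$, etc.), and nothing in the hypotheses forces any such shortcut: the edges inside $A_1$, inside $A_2$, and the remaining edges of $[A_1,A_2]$ can all avoid color $2$ on the relevant pairs. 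Your two proposed remedies --- finding a single color-$2$ edge whose endpoints dominate the set, or reassigning the far-apart vertices of $A_1$ to the color-$3$ cover --- are exactly the alternatives one would want, but you give no argument that at least one of them is always available, and the second one risks creating new diameter-$4$ pairs in the color-$3$ subgraph, a consistency issue you flag but do not resolve. The same difficulty recurs in the $B_{12}\setminus B_{13}\neq\emptyset$ and $A_1\neq B_1$ cases. Until that dichotomy is actually established across all cases, the statement remains a conjecture.
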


\subsection{Complete bipartite graphs, \texorpdfstring{$r=2$}{r=2}}

\begin{lemma}\label{lem:bip2}
Let $G$ be a complete bipartite graph with vertex classes $X$ and $Y$. In any 2-coloring of $G$, one of the following properties holds:
\begin{enumerate}[label={(P\arabic*)}]
\item \label{prop3} There exists $x_1, x_2\in X$ such that every edge incident with $x_i$ has color $i$ or there exists $y_1, y_2\in Y$ such that every edge incident with $y_i$ has color $i$.  In this case, $G$ can be covered by a color $i$ tree of diameter at most 3 and color $(3-i)$ tree of diameter at most 2 for all $i\in [2]$.
\item \label{prop2} There are partitions $\{X_1, X_2\}$ of $X$ and $\{Y_1,Y_2\}$ of $Y$ such that $[X_1,Y_1] \cup [X_2,Y_2] = G_1$ and $[X_1,Y_2] \cup [X_2,Y_1] = G_2$.  In this case, $G_i$ can be covered by two color $i$ trees of diameter at most 3, for all $i\in [2]$.
\item \label{prop1} There exists $i\in [2]$ such that $G_i$ has diameter at most 6 and $G$ has a monochromatic 2-cover consisting of trees of diameter at most 4.
\end{enumerate}
\end{lemma}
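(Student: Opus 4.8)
The plan is to run a case analysis driven by two structural features of the $2$-coloring: which vertices are \emph{monochromatic} (all incident edges one color) and whether the color classes $G_1,G_2$ are connected. Throughout I write $N_c(v)$ for the set of color-$c$ neighbors of a vertex $v$, and I use repeatedly that a star has diameter $\le 2$, a double star has diameter $\le 3$, and a ``depth-$2$ spider'' (a fixed center, its neighbors, and their neighbors) has diameter $\le 4$. Call a vertex $v$ \emph{$c$-full} if every edge at $v$ has color $c$. I would first check whether some side contains full vertices of both colors. Suppose $x_1,x_2\in X$ are $1$-full and $2$-full respectively (the $Y$-case is symmetric); this is exactly the hypothesis of \ref{prop3}. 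Fix any $y_0\in Y$, so that $\{N_1(y_0),N_2(y_0)\}$ partitions $X$. For the covering claim with $i=1$: the color-$2$ star centered at $y_0$ (diameter $\le 2$) covers $\{y_0\}\cup N_2(y_0)$, while the color-$1$ double star on the edge $x_1y_0$ (diameter $\le 3$) covers $\{x_1,y_0\}\cup N_1(x_1)\cup N_1(y_0)=Y\cup N_1(y_0)$, because $x_1$ is $1$-full. Together these cover $X\cup Y$, and the case $i=2$ is symmetric using $x_2$. Having dealt with \ref{prop3}, I assume from now on that neither side contains full vertices of both colors.

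Next I would split on connectivity of $G_1$ and $G_2$. The key observation is that if $G_1$ is disconnected then every edge between two distinct color-$1$ components is color $2$; writing the $X$- and $Y$-parts of the components as $X_1,\dots,X_m$ and $Y_1,\dots,Y_m$, this gives $[X_a,Y_b]\subseteq G_2$ for all $a\ne b$. If $m\ge 3$ with the parts nonempty, these cross edges already make $G_2$ connected; hence in the subcase where \emph{both} $G_1$ and $G_2$ are disconnected we must have $m=2$. Then $G_2\supseteq [X_1,Y_2]\cup[X_2,Y_1]$, and any color-$2$ edge lying \emph{inside} a block $[X_a,Y_a]$ would join the two components $X_1\cup Y_2$ and $X_2\cup Y_1$ of this cross-structure, contradicting that $G_2$ is disconnected. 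Thus each block is monochromatic in color $1$, which is precisely the partition structure of \ref{prop2}; covering each block by a spanning double star yields the required pair of diameter-$\le 3$ trees in each color. (If some part $X_a$ or $Y_a$ is empty, the opposite vertices are full, returning us to \ref{prop3}.)

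In the remaining case at least one color class is connected, say $G_1$, and here I aim for \ref{prop1}. To build the diameter-$\le 4$ tree cover I would fix $x_0\in X$, set $Y^1=N_1(x_0)$ and $Y^2=N_2(x_0)$, and take the depth-$2$ color-$1$ spider $T_1$ rooted at $x_0$ through $Y^1$ and the depth-$2$ color-$2$ spider $T_2$ rooted at $x_0$ through $Y^2$. Together these cover all of $Y$ and every $x\in X$ having a color-$1$ edge into $Y^1$ or a color-$2$ edge into $Y^2$; the only possibly uncovered vertices form the ``swapped'' set $B$ of those $x$ that are color-$2$ to all of $Y^1$ and color-$1$ to all of $Y^2$. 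If $B=\emptyset$ we are done. Otherwise, for $x^*\in B$ the color-$1$ spider rooted at $x^*$ through $Y^2$ covers $Y^2\cup B$; pairing it with the color-$1$ spider rooted at $x_0$ through $Y^1$ covers everything except the $2$-full vertices of $X$, and since both full colors do not occur in $X$, one of the two trees can be traded for a single color-$2$ spider centered in $Y$ that absorbs these leftover vertices, keeping the cover to two trees of diameter at most $4$. Finally, for the ``$\diam(G_i)\le 6$'' half of \ref{prop1} I would give a direct distance estimate in the connected class, the value $6$ being best possible as witnessed by the self-(bipartite-)complementary $P_7$ appearing in the examples above.

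I expect the genuine work to lie in this last case: bookkeeping the swapped set $B$ together with the $2$-full vertices forces several re-pairings of the two spiders (sometimes using two trees of the \emph{same} color), and one must verify in each configuration that two trees of diameter at most $4$ still suffice. Pinning the exact constants is the delicate point, since the estimates have essentially no slack — the tree bound $4$ cannot be lowered to $3$ (random colorings admit no monochromatic double-star $2$-cover, by Example \ref{ex:biprandom2}), and the diameter bound $6$ is tight by $P_7$. By contrast, the reductions to \ref{prop3} and \ref{prop2} are short and rigid, because there a single stray edge of the wrong color immediately reconnects the complementary color class.
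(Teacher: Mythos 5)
Your handling of \ref{prop3} and \ref{prop2} is correct and essentially matches the paper (the paper first reduces to the situation where every vertex meets both colors, which makes the block partition in \ref{prop2} automatically nondegenerate; your parenthetical about empty parts achieves the same reduction). Your two-spider construction for the covering half of \ref{prop1} is also sound and is a genuinely different, arguably cleaner, route than the paper's: the paper extracts the diameter-$4$ cover from the BFS layers of a connected color class, whereas you get it directly from $N_1(x_0)$, $N_2(x_0)$ and the ``swapped'' set $B$ (and when $B\neq\emptyset$ your two color-$1$ spiders already cover everything once full vertices have been excluded, so the re-pairing you worry about never actually arises).

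The gap is in the other half of \ref{prop1}: you must also exhibit a color $i$ with $\diam(G_i)\le 6$, and ``a direct distance estimate in the connected class'' cannot do this, because the connected class can have arbitrarily large diameter. For instance, $2$-color a blow-up of a long path $P_{2k+1}$ so that consecutive blocks span color $1$ and all other cross-pairs span color $2$: every vertex meets both colors, neither \ref{prop3} nor \ref{prop2} applies, and the connected class $G_1$ has diameter $2k$. What is true --- and is the technical core of the paper's proof of this case --- is that if the connected class, say $G_1$, has diameter at least $6$, then the \emph{other} class $G_2$ is connected with radius at most $3$: take BFS layers $D_0,D_1,\dots$ in $G_1$ from a vertex witnessing $\diam(G_1)$, observe that every edge joining layers whose indices differ by at least $3$ must be color $2$, and check that every vertex is within color-$2$ distance $3$ of a vertex of $D_3$. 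You need this layering argument (or an equivalent) to conclude $\diam(G_i)\le 6$ for some $i$; as written, that conjunct of \ref{prop1} is not established.
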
 

\begin{proof}
First suppose there exists $x_1, x_2\in X$ such that every edge incident with $x_i$ has color $i$.  Let $y\in Y$ and note that the tree consisting of all color 1 edges incident with $x_1$ or $y$ has diameter at most 3 and covers $Y$.  The star consisting of all color 2 edges incident with $y$ covers the remaining vertices in $X$ and has diameter at most 2.  So suppose that every vertex is incident with, say, a color 1 edge.  If there exists $x\in X$ such that every edge incident with $x$ has color $1$, then since every vertex is incident with an edge of color 1, we have that $G$ contains a spanning tree of color 1 and diameter at most 4 in which case we are in \ref{prop1}.  Looking ahead to a potential improvement of the upper bound on the diameter in this result, we note that in this particular case (where there exists $x\in X$ such that every edge incident with $x$ has color $1$) we can say more than just that we are in \ref{prop1}.  For any $y\in Y$, the tree consisting of all color 1 edges incident with $x$ or $y$ has diameter at most 3 and covers $Y$ and the star consisting of all color 2 edges incident with $y$ covers the remaining vertices in $X$ and has diameter at most 2.  So for the rest of the proof, suppose every vertex is incident with edges of both colors.  

Suppose both $G_1$ and $G_2$ are disconnected.  Let $\{X_1, X_2\}$ and $\{Y_1, Y_2\}$ be partitions of $X$ and $Y$ respectively such that there are no color 2 edges from $X_1$ to $Y_2$ and no color 2 edges from $X_2$ to $Y_1$.  Note that $X_i\neq \emptyset$ and $Y_i\neq \emptyset$ for all $i\in [2]$ since every vertex is incident with a color 2 edge.  Thus $[X_1, Y_2]$ and $[X_2, Y_1]$ are complete bipartite graphs of color 1. Since we are assuming $G_1$ is disconnected, both $[X_1, Y_1]$ and $[X_2, Y_2]$ are complete bipartite graphs of color 2 and thus we have \ref{prop2}.  

Finally, suppose that at least one of $G_1$ and $G_2$ is connected and recall that every vertex is incident with edges of both colors.  If $\diam(G_i)=2$ for some $i\in [2]$, then we have \ref{prop1}.  
So suppose $\diam(G_i)\geq 3$ for all $i\in [2]$.  Let $x\in X$ (without loss of generality) be a vertex which witnesses $\diam(G_1)$. Let $D_i$ be the set of vertices of distance $i$ from $x$.  Note that $D_0=\{x\}$, $D_i\subseteq X$ if $i$ is even, and $D_i\subseteq Y$ if $i$ is odd.  Also note that for all $i,j\geq 0$, every edge in $[D_{2i}, D_{2j+1}]$ has color 2 if and only if $|2j+1-2i|\geq 3$.
Finally, note that if $\diam(G_1)\leq 4$, then since every vertex sees an edge of color 2, it must be the case that every vertex in $D_2$ sends a color 2 edge to $D_1\cup D_3$ ($\star$).

Now, if $\diam(G_1)=3$, then $G_1[D_0\cup D_1\cup D_2]$ is covered by a tree of radius 2 and $G_2[D_0\cup D_3]$ is covered by a tree of radius 1. 
If $\diam(G_1)=4$, then by ($\star$) $G_2$ is covered by two trees, each of radius at most 2.  If $\diam(G_1)=5$, then since every edge in $[D_0, D_3\cup D_5]$, $[D_2, D_5]$, and $[D_1, D_4]$ has color 2, $G_2$ is covered by two trees both of diameter at most 3 (a double star with central edge in $[D_0, D_5]$ and a double star with central edge in $[D_1, D_4]$).  Finally, if $\diam(G_1)\geq 6$, then $G_2$ is covered by one tree of radius at most 3 (with central vertex in $D_3$), and for similar reasons as in the case when $\diam(G_1)=5$, two trees each of diameter at most 3 (a double star with central edge in $[D_0, D_5]$ and a double star with central edge in $[D_1, D_4]$).
\end{proof}

Note that in the above proof, the only case in which we are not able to get a monochromatic 2-cover consisting of subgraphs of diameter at most 3 is the case where say $G_1$ has diameter 4 and $G_{2}$ can be covered by at most two subgraphs (trees), each of diameter at most 4. So if it were the case that $D_2(G)\geq 4$ for some $G\in \mathcal{K}_2$, then the example would have the property that both $G_1$ and $G_2$ have diameter exactly 4.

Note that Theorem \ref{thm:r2bipartite} is a direct corollary of Lemma \ref{lem:bip2}.

Later we will want to use a simpler version of Lemma \ref{lem:bip2} which doesn't make reference to the diameter of the subgraphs in the specific cases.  

\begin{lemma}\label{2colorbip_nodiam}
Let $G$ be a complete bipartite graph with vertex classes $X$ and $Y$. In any 2-coloring of $G$, one of the following properties holds:
\begin{enumerate}[label={(P\arabic*$'$)}]
\item \label{p1'} There exists $x_1, x_2\in X$ such that every edge incident with $x_i$ has color $i$ (in which case we say $Y$ is the double covered side) or there exists $y_1, y_2\in Y$ such that every edge incident with $y_i$ has color $i$ (in which case we say that $X$ is the double covered side). 
\item \label{p2'} There are partitions $\{X_1, X_2\}$ of $X$ and $\{Y_1,Y_2\}$ of $Y$ such that $[X_1,Y_1] \cup [X_2,Y_2] = G_1$ and $[X_1,Y_2] \cup [X_2,Y_1] = G_2$.
\item \label{p3'} There exists $i\in [2]$ such that $G_i$ is connected.
\end{enumerate}
\end{lemma}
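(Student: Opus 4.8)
The plan is to derive Lemma~\ref{2colorbip_nodiam} directly from Lemma~\ref{lem:bip2}, observing that each of (P1$'$), (P2$'$), (P3$'$) is simply the conclusion of the correspondingly-numbered property of Lemma~\ref{lem:bip2} with the diameter information deleted. So I would fix an arbitrary $2$-coloring of $G$, apply Lemma~\ref{lem:bip2}, and argue that whichever of its three alternatives holds forces the matching primed alternative.

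Two of the three cases are immediate: the first sentence of (P1) is exactly the content of (P1$'$), and the first sentence of (P2) is exactly the content of (P2$'$). Thus if Lemma~\ref{lem:bip2} returns (P1) or (P2), we are in (P1$'$) or (P2$'$) respectively, with nothing left to prove. The one case requiring a remark is (P3), which asserts that $\diam(G_i)\leq 6$ for some $i\in[2]$. Since $G_i$ is the spanning color-$i$ subgraph of $G$ on the full vertex set $V(G)$, and a graph of finite diameter is connected, this gives that $G_i$ is connected, i.e.\ (P3$'$). This exhausts the three alternatives and completes the proof.

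I do not anticipate a real obstacle here, as the primed properties are genuinely weaker than their counterparts; the only step that uses any content of Lemma~\ref{lem:bip2} beyond its case structure is the implication $\diam(G_i)\leq 6 \Rightarrow G_i \text{ connected}$, and this hinges on $G_i$ being a spanning subgraph (so that connectivity is connectivity on all of $V(G)$, not merely on some subset). If instead one preferred a self-contained argument, one could rerun the case analysis from the proof of Lemma~\ref{lem:bip2} while discarding every diameter estimate; in that approach the only substantive step is establishing (P2$'$), where — assuming every vertex meets both colors and both $G_1,G_2$ are disconnected — one partitions the components of $G_2$ into two nonempty groups to obtain partitions $\{X_1,X_2\}$ of $X$ and $\{Y_1,Y_2\}$ of $Y$, notes that $[X_1,Y_2]$ and $[X_2,Y_1]$ then carry only color $1$, and uses the disconnectedness of $G_1$ to force $[X_1,Y_1]$ and $[X_2,Y_2]$ to carry only color $2$.
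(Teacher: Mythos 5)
Your proposal is correct and matches the paper's intent exactly: the paper states Lemma~\ref{2colorbip_nodiam} as a "simpler version of Lemma~\ref{lem:bip2} which doesn't make reference to the diameter," and gives no separate proof because each primed property is the corresponding unprimed property with the diameter conclusions deleted. Your one substantive check — that $\diam(G_i)\leq 6$ forces the spanning subgraph $G_i$ to be connected — is the right (and only) point needing comment.
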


\begin{figure}[ht]
\begin{subfigure}[t]{.5\textwidth}
  \centering
 \includegraphics{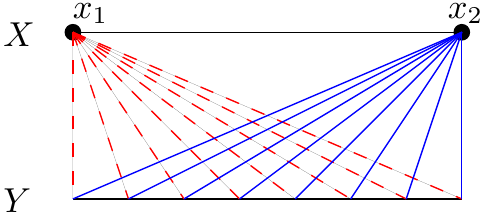} 
\caption{\ref{p1'} where $Y$ is the double covered side}\label{fig:P1}
\end{subfigure}
\begin{subfigure}[t]{.5\textwidth}
   \centering
 \includegraphics{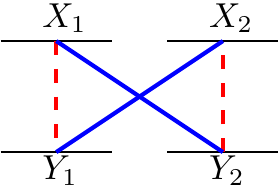} 
\caption{\ref{p2'}}\label{fig:P2}
\end{subfigure}
\caption{Two cases from Lemma \ref{2colorbip_nodiam}}\label{fig:P'}
\end{figure}

\subsection{Graphs with \texorpdfstring{$\alpha(G)=2$}{a(G)=2}, \texorpdfstring{$r=2$}{r=2}}

\begin{proof}[Proof of Theorem \ref{thm:r2a2}]
Let $\{x,y\}$ be an independent set in $G$.  Since $\alpha(G)=2$, every vertex in $V(G) \setminus \{x,y\}$ is adjacent to a vertex in $\{x,y\}$.  So we let $A_x = N(x) \setminus N(y)$, $A_y = N(y) \setminus N(x)$, and $A = N(x) \cap N(y)$. Let $A_{ij} = A \cap N_i(x) \cap N_j(y)$ for $i,j \in [2]$.
Note that $\alpha(G[A_x \cup \{x\}]) = 1$ and $\alpha(G[A_y \cup \{y\}]) = 1$, otherwise we would have an independent set of order 3. By Proposition \ref{prop:folk}, either $\diam(G_j[A_x \cup \{x\}]) = 3$ for all $j \in [2]$, or $\diam(G_j[A_x \cup \{x\}]) \leq 2$ for some $j \in [2]$.  Likewise, either $\diam(G_j[A_y \cup \{y\}]) = 3$ for all $j \in [2]$, or $\diam(G_j[A_y \cup \{y\}]) \leq 2$ for some $j \in [2]$.  Now we consider three cases.

\begin{figure}[ht]
\centering
\includegraphics{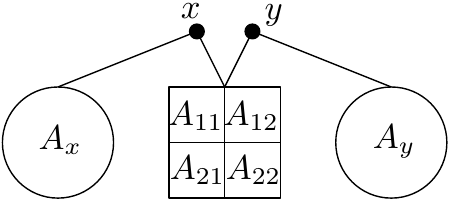}
\caption{Set-up for the proof of Theorem \ref{thm:r2a2}}\label{fig:r2a2diam}
\end{figure}

\noindent
\textbf{Case 1.} ($\diam(G_j[A_x \cup \{x\}]) = 3$ for all $j \in [2]$, or $\diam(G_j[A_y \cup \{y\}]) = 3$ for all $j \in [2]$.)

Without loss of generality, suppose that $\diam(G_j[A_x \cup \{x\}]) = 3$ for all $j \in [2]$.  Also without loss of generality, suppose that $\diam(G_1[A_y \cup \{y\}]) \leq 3$.
If $A_{11} \neq \emptyset $, then $H_1 = G_1[\{x\} \cup (A \setminus A_{22}) \cup \{y\} \cup A_y]$ and $H_2 = G_2[A_x\cup \{x\} \cup A_{22}]$ cover $G$, with $\diam(H_1) \leq 6$ and $\diam(H_2) \leq 4$. If $A_{11} = \emptyset$ and $A_{22} \neq \emptyset$, then $H_1 = G_2[A_x \cup \{x\} \cup A \cup \{y\}]$ and $H_2 = G_1[A_y \cup \{y\}]$ cover $G$, with $\diam(H_1) \leq 6$ and $\diam(H_2) \leq 3$. If $A_{11} = \emptyset$ and $A_{22} = \emptyset$, then $H_1 = G_1[A_x \cup \{x\} \cup A_{12}]$ and $H_2 = G_1[A_y \cup \{y\} \cup A_{21}]$ cover $G$, with $\diam(H_1), \diam(H_2) \leq 4$.

\noindent
\textbf{Case 2.} ($\diam(G_i[A_x \cup \{x\}]) \leq 2$ for some $i \in [2]$ and $\diam(G_j[A_y \cup \{y\}]) \leq 2$ for some $j \in [2]$.)

Without loss of generality, suppose $\diam(G_1[A_x \cup \{x\}]) \leq 2$.

\textbf{Case 2.1.} ($\diam(G_1[A_x \cup \{x\}]) \leq 2$ and $\diam(G_2[A_y \cup \{y\}]) \leq 2$.)

If $A_{ii} \neq \emptyset$ for some $i \in [2]$, say $A_{11}\neq \emptyset$, then $H_1 = G_1[A_x \cup \{x\} \cup \{y\} \cup A \setminus A_{22}]$ and $H_2 = G_2[A_y \cup \{y\} \cup A_{22}]$ cover $G$ with $\diam(H_1) \leq 5$, and  $\diam(H_2) \leq 3$. So assume $A_{ii} = \emptyset$ for all $i \in [2]$.  Notice that if any $A_x,A_{21}$-edge were color 1, then $H_1 = G_1[\{x,y\} \cup A_x \cup A_{21}]$ and $H_2 = G_2[\{y\}\cup A_{12} \cup A_y]$ cover $G$ with $\diam(H_1)\leq 5$ and $\diam(H_2)\leq 3$. So assume that every $A_x,A_{21}$-edge is color 2. Likewise we can assume that every $A_y,A_{21}$-edge is color 1. Let $Z_1 = \{v \in A_x : v \notin N(A_{21})\}$ and $Z_2 = \{v \in A_y : v \notin N(A_{21})\}$.  If $Z_i=\emptyset$ for some $i\in [2]$, say $Z_1=\emptyset$, then $H_1=G_2[\{x\}\cup A_x\cup A_{21}]$ and $H_2=G_2[\{y\}\cup A_y\cup A_{12}]$ cover $G$ with $\diam(H_1)\leq 4$ and $\diam(H_2)\leq 3$.  So suppose $Z_i\neq\emptyset$ for all $i\in [2]$.  Since there are no edges from $Z_1\cup Z_2$ to $A_{21}$, we have that $[Z_1, Z_2]$ is a complete bipartite graph, which implies $G[Z_1\cup Z_2]$ is a complete graph.  By Proposition \ref{prop:folk} we have, say $\diam(G_1[Z_1\cup Z_2])\leq 3$, and thus $H_1=G_1[\{x\}\cup A_x\cup A_{12}\cup Z_2]$ and $H_2=G_1[\{y\}\cup A_{21}\cup (A_y\setminus Z_2)]$ cover $G$ with $\diam(H_1)\leq 6$ and $\diam(H_2)\leq 4$.

\textbf{Case 2.2} ($\diam(G_1[A_x \cup \{x\}]) \leq 2$ and $\diam(G_1[A_y \cup \{y\}]) \leq 2$.)

If $A_{11} \neq \emptyset$, then $H_1 = G_1[\{x,y\} \cup A_x \cup A_y \cup (A \setminus A_{22})]$ and $H_2 = G_2[\{x\} \cup A_{22}]$ cover $G$ with $\diam(H_1)\leq 6$ and $\diam(H_2)\leq 2$. If $A_{11} = \emptyset$ and $A_{22} = \emptyset$, then $H_1 = G_1[\{x\} \cup A_x \cup A_{12}]$ and $H_2 = G_1[\{y\} \cup A_y \cup A_{21}]$ cover $G$ with diameters at most 3. So suppose $A_{11} = \emptyset$ and $A_{22} \neq \emptyset$. If every vertex in $A_{22}$ sends a color 1 edge to $A_x\cup A_y$ then let $U_i = \{v \in A_{22} : \exists u \in A_i, vu \in E(G_1)\}$ for $i \in [2]$. Then $H_1 = G_1[\{x\} \cup A_x \cup A_{12} \cup U_1]$ and $H_2 = G_1[\{y\} \cup A_y \cup A_{21} \cup (U_2\setminus U_1)]$ cover $G$ with diameters at most 4. So suppose there exists a vertex $w \in A_{22}$ such that every $w,A_i$-edge is color 2 for all $i \in [2]$. Let $Z = \{v \in A_x \cup A_y : wv \notin E(G)\}$. Since $\alpha(G) = 2$, $G[Z]$ is a complete graph, so there exists $i \in [2]$ for which $H_1 = G_i[Z]$ has diameter at most 3. Then $H_1$ and $H_2 = G_2[\{x,y\} \cup A \cup N_2(x)]$ cover $G$ with $\diam(H_2) \leq 4$.
\end{proof}

\subsection{Complete graphs, \texorpdfstring{$r=4$}{r=4}}

\begin{proof}[Proof of Theorem \ref{thm:r4complete}]
Let $x \in V(K)$. For $i \in [4]$, let $A_i$ be the neighbors of $x$ of color $i$. If $A_i = \emptyset$ for some $i \in [4]$, say $i = 4$, then letting $H_j = G_j[\{x\} \cup A_j]$ satisfies the theorem with $\diam(H_j) \leq 2$. Therefore, assume $A_i \neq \emptyset$ for all $i \in [4]$. 
For all distinct $i,j \in [4]$, define $B_{ij}$ to be the set of vertices $v \in A_i$ such that $vu$ is not color $j$ for all $u \in A_j$.  If $B_{ij} = \emptyset$ for some $i,j \in [4]$, then $H_1 = G_j[\{x\} \cup A_j \cup A_i]$, $H_2 = G_k[\{x\} \cup A_k]$ and $H_3 = G_l[\{x\} \cup A_l]$ cover $V(K)$, where $\diam(H_1)\leq 4$ and $\diam(H_2), \diam(H_3) \leq 2$. So suppose $B_{ij} \neq \emptyset$ for all $i,j \in [4]$.  Note that $[B_{ij},B_{ji}]$ is a complete bipartite graph in colors $[4] \setminus \{i,j\}$.

\begin{figure}[ht]
\centering
\includegraphics{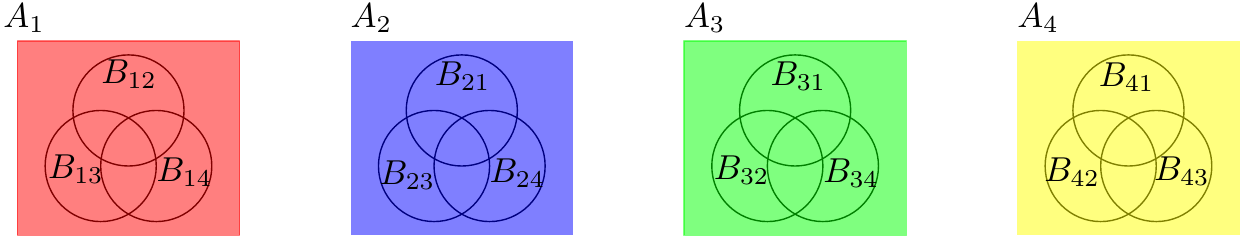}
\caption{Set-up for the proof of Theorem \ref{thm:r4complete}}\label{fig:r4diam}
\end{figure}

We first establish two claims.

\begin{claim}
We are done unless:
\begin{enumerate}[label={(C\arabic*)}]
    \item\label{c1} For all distinct $i,j,k,l \in [4]$, $B_{ij}\cap B_{ik}\cap B_{il} \neq \emptyset$, and 
    \item\label{c2} For all distinct $i,j,k,l \in [4]$, $B_{ij} \setminus (B_{ik} \cup B_{il}) = \emptyset$, and
    \item\label{c3} For all distinct $i,j,k,l \in [4]$,  $(B_{ik}\cup B_{il})\setminus B_{ij}= \emptyset$ or $(B_{ki}\cup B_{kj})\setminus B_{kl}= \emptyset$.
\end{enumerate}
\end{claim}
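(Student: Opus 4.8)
The plan is to prove the contrapositive: I treat the three possible failures $\neg$(C1), $\neg$(C2), $\neg$(C3) in turn and exhibit in each case a cover $\{H_1,H_2,H_3\}$ by monochromatic subgraphs of diameter at most $6$, so that if we are \emph{not} done then all three conditions must hold. When handling $\neg$(C3) I may assume (C1) and (C2) already hold, since the first two cases will have shown that their failure forces a cover. Throughout I use the standing hypotheses already in force: every $A_i$ is nonempty, every $B_{ij}$ is nonempty, and for distinct $i,j$ the bipartite graph $[B_{ij},B_{ji}]$ is complete in the two colors of $[4]\setminus\{i,j\}$. I also use repeatedly that the color-$m$ star at $x$ spans $\{x\}\cup A_m$ with diameter $2$, and that any vertex sending a color-$m$ edge into $A_m$ sits in the color-$m$ component of $\{x\}\cup A_m$ at distance $2$ from $x$.

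For $\neg$(C1): if $B_{ij}\cap B_{ik}\cap B_{il}=\emptyset$ for some $i$ (say $i=4$, with $\{j,k,l\}=\{1,2,3\}$), then every $v\in A_4$ lies outside some $B_{4m}$, i.e.\ sends a color-$m$ edge into $A_m$ for some $m\in\{1,2,3\}$. Assigning each such $v$ to one such color gives $A_4=A_4^{(1)}\cup A_4^{(2)}\cup A_4^{(3)}$, and I set $H_m=G_m[\{x\}\cup A_m\cup A_4^{(m)}]$. Each $H_m$ is a connected color-$m$ subgraph (the vertices of $A_4^{(m)}$ attach to $A_m$, which attaches to $x$), the three cover $\{x\}\cup A_1\cup A_2\cup A_3\cup A_4=V(K)$, and a short distance count gives $\diam(H_m)\le 4$.

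For $\neg$(C2): there are distinct $i,j,k,l$, taken to be $1,2,3,4$, and a vertex $z\in B_{12}\setminus(B_{13}\cup B_{14})$. Then $z\in A_1$ sends no color-$2$ edge into $A_2$, but sends a color-$3$ edge to some $u_3\in A_3$ and a color-$4$ edge to some $u_4\in A_4$. Since $z\in B_{12}$, the complete bipartite structure forces every $z,B_{21}$-edge to have color $3$ or $4$; split $B_{21}=B_{21}^{(3)}\cup B_{21}^{(4)}$ accordingly. I take $H_1=G_3[\{x\}\cup A_3\cup\{z\}\cup B_{21}^{(3)}]$, $H_2=G_4[\{x\}\cup A_4\cup\{z\}\cup B_{21}^{(4)}]$, and $H_3=G_1[\{x\}\cup A_1\cup(A_2\setminus B_{21})]$. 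Using $z$ as a hub (joined to $u_3$ in color $3$, to $u_4$ in color $4$, and to $B_{21}^{(3)},B_{21}^{(4)}$ in the respective colors) and the fact that $A_2\setminus B_{21}$ sends color-$1$ edges into $A_1$, each $H_t$ is connected of diameter at most $4$, and together they cover $V(K)$.

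The remaining case $\neg$(C3) is the main obstacle. Assuming (C1) and (C2), the inclusions $B_{13}\subseteq B_{12}\cup B_{14}$ and $B_{14}\subseteq B_{12}\cup B_{13}$ upgrade the witnesses: $\neg$(C3) yields a pairing, say $\{1,2\}$ and $\{3,4\}$, with $z\in A_1$ that sends a color-$2$ edge into $A_2$ but no diagonal edge into $A_3$ or $A_4$, and $z'\in A_3$ that sends a color-$4$ edge into $A_4$ but no diagonal edge into $A_1$ or $A_2$; the edge $zz'$ is then forced into $\{2,4\}$. The plan is to cover an $A_1,A_2$-part by a color-$2$ subgraph extending the star at $x$ by $A_1\setminus B_{12}$, to cover an $A_3,A_4$-part symmetrically in color $4$, and to devote the third subgraph to the leftover $B_{12}\cup B_{34}$, threaded using the complete bipartite graphs $[B_{12},B_{21}]$ and $[B_{34},B_{43}]$ as connectors and bridged through $z$, $z'$, and the edge $zz'$. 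I expect the difficulty to lie exactly here: unlike the $\neg$(C1) and $\neg$(C2) cases, the leftover blocks $B_{12}$ and $B_{34}$ do not attach to a single color for free, so one must split into subcases according to the color of $zz'$ and according to which complementary class each leftover block reaches, reassigning vertices among the three subgraphs in the worst subcases. Verifying that some assignment always produces three connected monochromatic subgraphs of diameter at most $6$ is the heart of the argument and is what pins down the constant in the theorem.
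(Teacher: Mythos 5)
Your treatments of $\neg$(C1) and $\neg$(C2) are correct and essentially the same as the paper's (for (C1) the paper attaches the full sets $A_i\setminus B_{im}$ to the three stars where you partition $A_4$, but that is cosmetic; your (C2) construction is the paper's verbatim). The genuine gap is the $\neg$(C3) case: you only outline a plan and explicitly defer its verification, so the claim is not proved. Moreover, the plan as stated is headed for trouble: covering $A_2\cup(A_1\setminus B_{12})$ in color $2$ and $A_4\cup(A_3\setminus B_{34})$ in color $4$ leaves a residual $B_{12}\cup B_{34}$ consisting of two blocks whose available connector colors do not match ($[B_{12},B_{21}]$ is colored from $\{3,4\}$ while $[B_{34},B_{43}]$ is colored from $\{1,2\}$), and there is no reason a single monochromatic bounded-diameter subgraph should absorb both — which is exactly why you foresee an open-ended case analysis.

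The paper closes this case by choosing the residual differently, so that no case analysis is needed. In your indexing: by (C2) take $u_1\in B_{13}\cap B_{14}\setminus B_{12}$ and $u_3\in B_{31}\cap B_{32}\setminus B_{34}$ (your $z$ and $z'$); the edge $u_1u_3$ has color $2$ or $4$, and since the two witnesses play symmetric roles under the relabelling $(1,2,3,4)\mapsto(3,4,1,2)$ one may assume it has color $2$. Now use color $3$ for $G_3[\{x\}\cup A_3\cup(A_1\setminus B_{13})]$, so the only uncovered vertices are those of $B_{13}$. Every edge from $u_3\in B_{31}$ to $B_{13}$ has color $2$ or $4$ — precisely the two colors already hosting $A_2$ and $A_4$ — so $B_{13}$ splits between $G_2[\{u_1,u_3\}\cup A_2\cup\{x\}\cup(N_2(u_3)\cap B_{13})]$ (connected because $u_1\notin B_{12}$ sends a color-$2$ edge to $A_2$ and $u_1u_3$ has color $2$) and $G_4[A_4\cup\{x\}\cup\{u_3\}\cup(N_4(u_3)\cap B_{13})]$ (connected because $u_3\notin B_{34}$), giving diameters at most $4$, $5$, $4$. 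The idea missing from your proposal is this choice of a \emph{single} residual block $B_{ik}$ together with a hub $u_k\in B_{ki}$ whose edges into that block are forced into the two remaining colors; without it the $\neg$(C3) case, which is the heart of the claim, remains open.
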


\begin{proof}
\begin{enumerate}[label={(C\arabic*)}]
 
\item Suppose there exists $i \in [4]$ such that $B_{ij} \cap B_{ik} \cap B_{il} = \emptyset$.  Then $H_1 = G_j[\{x\} \cup A_j \cup (A_i \setminus B_{ij})]$, $H_2 = G_k[\{x\} \cup A_k \cup (A_i \setminus B_{ik})]$ and $H_3 = G_l[\{x\} \cup A_l \cup (A_i \setminus B_{il})]$ cover $V(K)$, where $\diam(H_1), \diam(H_2), \diam(H_3) \leq 4$. 

\item Suppose there exist distinct $i,j,k,l \in [4]$ such that $B_{ij} \setminus (B_{ik} \cup B_{il}) \neq \emptyset$.  Then for a fixed $u \in B_{ij} \setminus (B_{ik} \cup B_{il})$, since every edge in $[B_{ij}, B_{ji}]$ is color $k$ or $l$ we have that $B_{ji}\subseteq N_{k}(u)\cup N_{l}(u)$. Since $u$ sends a color $k$ edge to $A_k$ and a color $l$ edge to $A_l$, $H_1 = G_i[\{x\} \cup A_i \cup (A_j \setminus B_{ji})]$, $H_2 = G_k[\{x\} \cup A_k \cup \{u\} \cup (N_k(u)\cap B_{ji})]$ and $H_3 = G_l[\{x\} \cup A_l \cup \{u\} \cup (N_l(u)\cap B_{ji})]$ cover $V(K)$, where $\diam(H_1), \diam(H_2), \diam(H_3) \leq 4$.

\item Suppose there exist distinct $i,j,k,l \in [4]$ such that $(B_{ik}\cup B_{il})\setminus B_{ij}\neq \emptyset$ and $(B_{ki}\cup B_{kj})\setminus B_{kl}\neq \emptyset$. Then by \ref{c2}, we have that $B_{ij}\subseteq B_{ik}=B_{il}$ and $B_{kl}\subseteq B_{ki}=B_{kj}$.  Let $u_i\in B_{ik}\setminus B_{ij}$ and $u_k\in B_{ki}\setminus B_{kl}$.  Note that since every edge in  $[B_{ik}, B_{ki}]$ has color $j$ or $l$, we may suppose without loss of generality that $u_iu_k$ has color $j$.  Also every edge from $u_k$ to $B_{ik}$ has color $j$ or $l$.  Since $u_k\not\in B_{kl}$, $u_k$ sends an edge of color $l$ to $A_l$ and since $u_i\not\in B_{ij}$, $u_i$ sends an edge of color $j$ to $A_j$.  Thus letting $H_1=G_k[A_i\setminus B_{ik}\cup A_k\cup \{x\}]$, $H_2=G_j[\{u_i,u_k\}\cup A_j\cup \{x\}\cup (N_j(u_k)\cap B_{ik})]$, and $H_3=G_l[A_l\cup \{x\}\cup \{u_k\}\cup (N_l(u_k)\cap B_{ik})]$ gives a cover of $V(K)$ with $\diam(H_1)\leq 4$, $\diam(H_2)\leq 5$, $\diam(H_3)\leq 4$. \qedhere
\end{enumerate}
\end{proof}

\begin{claim}\label{lastcase}
If \ref{c1}-\ref{c3} hold, then there exists distinct $i,j,k,l\in [4]$ such that $B_{ij}=B_{ik}$, $B_{jk}\cup B_{jl}\subseteq B_{ji}$, and $B_{kj}\cup B_{kl}\subseteq B_{ki}$.
\end{claim}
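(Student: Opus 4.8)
The plan is to recast the three required properties in terms of a single relation on the colors and then show that \ref{c3} forces this relation to be rich. For distinct $a,b\in[4]$, write $\{c,c'\}=[4]\setminus\{a,b\}$ and say that \emph{$b$ dominates $a$} if $B_{ac}\cup B_{ac'}\subseteq B_{ab}$; call an unordered pair $\{a,b\}$ \emph{mutual} if $b$ dominates $a$ and $a$ dominates $b$. Two observations make this the right language. First, if both $j$ and $k$ dominate $i$, then $B_{ik}\subseteq B_{ij}$ and $B_{ij}\subseteq B_{ik}$, so $B_{ij}=B_{ik}$. Second, ``$i$ dominates $j$'' is by definition exactly $B_{jk}\cup B_{jl}\subseteq B_{ji}$, and ``$i$ dominates $k$'' is exactly $B_{kj}\cup B_{kl}\subseteq B_{ki}$. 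Hence it suffices to find a color $i\in[4]$ lying in two mutual pairs $\{i,j\}$ and $\{i,k\}$; taking $l$ to be the remaining color, all three conclusions of the claim follow at once.

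Next I would reread \ref{c3} in this language: for distinct $i,j,k,l$, the clause $(B_{ik}\cup B_{il})\subseteq B_{ij}$ says ``$j$ dominates $i$'' and $(B_{ki}\cup B_{kj})\subseteq B_{kl}$ says ``$l$ dominates $k$'', so \ref{c3} reads ``$j$ dominates $i$ or $l$ dominates $k$''. The key step is to prove that each of the three partitions of $[4]$ into two pairs contains a mutual pair. Fix a partition $\{a,b\}\mid\{c,d\}$ and apply \ref{c3} to the four orderings $(a,b,c,d)$, $(a,b,d,c)$, $(b,a,c,d)$, $(b,a,d,c)$. Writing $p,p'$ for ``$b$ dominates $a$'' and ``$a$ dominates $b$'', and $q,q'$ for ``$d$ dominates $c$'' and ``$c$ dominates $d$'', these four instances assert precisely $p\lor q$, $p\lor q'$, $p'\lor q$, $p'\lor q'$. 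A one-line propositional argument finishes it: if $q$ is false then $p\lor q$ and $p'\lor q$ give $p\land p'$, and if $q'$ is false then $p\lor q'$ and $p'\lor q'$ give $p\land p'$; thus $(p\land p')\lor(q\land q')$, i.e.\ $\{a,b\}$ is mutual or $\{c,d\}$ is mutual.

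Finally I would assemble the counting step. The three pair-partitions of $[4]$ are exactly the three perfect matchings of $K_4$, and every pair of colors lies in exactly one of them; since each matching contains a mutual pair, there are at least three mutual pairs, one in each matching and hence pairwise distinct. A graph on four vertices with at least three edges has a vertex of degree at least two (the degree sum is at least $6$), so some color $i$ lies in two mutual pairs, which is exactly the structure needed to conclude the claim.

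I expect the main obstacle to be bookkeeping rather than conceptual: making the quantifier structure of \ref{c3} line up so that the four chosen orderings really yield the clauses $p\lor q$, $p\lor q'$, $p'\lor q$, $p'\lor q'$ with no sign errors, and verifying the two implications ``$j,k$ dominate $i \Rightarrow B_{ij}=B_{ik}$'' and ``$i$ dominates $j \Rightarrow B_{jk}\cup B_{jl}\subseteq B_{ji}$'' against the precise definitions of the $B$'s. One notable feature is that this route uses only \ref{c3}; I would double-check whether \ref{c1} and \ref{c2} are genuinely required (they constrain the $B$'s further but do not appear necessary for this particular conclusion), in case the intended proof instead leverages them for a cleaner case division.
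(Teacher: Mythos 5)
Your proof is correct, and it runs on the same engine as the paper's: both arguments use only \ref{c3}, read as a family of disjunctions of ``domination'' containments, and extract a color $i$ that is dominated by two others $j,k$ each of which it dominates in return, which yields $B_{ij}=B_{ik}$ together with $B_{jk}\cup B_{jl}\subseteq B_{ji}$ and $B_{kj}\cup B_{kl}\subseteq B_{ki}$ exactly as you describe. The difference is organizational: the paper proceeds by a terse ad hoc chain of deductions with one case split (``we will have the desired property unless $B_{ki}$ is a proper subset of $B_{kj}\cup B_{kl}$\dots''), whereas your route --- each of the three pair-partitions of $[4]$ contains a mutual pair because the four relevant instances of \ref{c3} give $(p\lor q)\land(p\lor q')\land(p'\lor q)\land(p'\lor q')$, hence the ``mutual graph'' has at least three edges and therefore a vertex of degree two --- is a cleaner, fully explicit packaging of the same deduction, and your closing observation is also accurate: the paper's proof of this claim likewise invokes only \ref{c3} (conditions \ref{c1} and \ref{c2} are used elsewhere in the proof of Theorem \ref{thm:r4complete}, not here).
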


\begin{proof}
Using \ref{c3}, we have that for every ordering $(i,j,k,l)$ of the elements $\{1,2,3,4\}$, we have $B_{ik}\cup B_{il}\subseteq B_{ij}$ or $B_{ki}\cup B_{kj}\subseteq B_{kl}$.  From this we can deduce that there exists $i,j\in [4]$ such that $B_{ik}\cup B_{il}\subseteq B_{ij}$ and $B_{jk}\cup B_{jl}\subseteq B_{ji}$.  Likewise, since either $B_{ij}\cup B_{il}\subseteq B_{ik}$ or $B_{ji}\cup B_{jk}\subseteq B_{jl}$, we may assume that $B_{ij}=B_{ik}$.  We will have the desired property unless $B_{ki}$ is a proper subset of $B_{kj}\cup B_{kl}$. But this implies $B_{ji}\cup B_{jk}\subseteq B_{jl}$ and $B_{li}\cup B_{lk}\subseteq B_{lj}$.  So $B_{ji}=B_{jl}$ and $B_{ik}\cup B_{il}\subseteq B_{ij}$ and $B_{li}\cup B_{lk}\subseteq B_{lj}$ giving us the desired property.  
\end{proof}

We have one final case remaining from Claim \ref{lastcase}.  We use Lemma \ref{lem:bip2} to show that we are done in this final case.   

Without loss of generality, suppose $B_{14}\subseteq B_{12}=B_{13}$ and $B_{23}\cup B_{24}\subseteq B_{21}$ and $B_{32}\cup B_{34}\subseteq B_{31}$.  First, we let $H_1=G_4[\{x\}\cup A_4\cup (A_1\setminus B_{12})\cup (A_2\setminus B_{21})\cup (A_3\setminus B_{31})]$ and note that $\diam(H_1)\leq 4$ and $H_1$ covers everything except $B_{12}=B_{13}$, $B_{21}$ and $B_{31}$.  

Let $F_1=[B_{12}, B_{21}]$ and $F_2=[B_{13}, B_{31}]$ and note that all edges in $F_1$ are color $3,4$ and all edges in $F_2$ are color $2,4$.  We apply Lemma \ref{lem:bip2} to each of $F_1$ and $F_2$.  If say $F_2$ satisfies \ref{prop1}, then letting $H_2$ be the monochromatic subgraph covering $F_2$, and $H_3=G_2[\{x\}\cup B_{21}]$, we have the desired cover of $V(K)$ with $\diam(H_1)\leq 4$, $\diam(H_2)\leq 6$, and $\diam(H_3)\leq 2$.  So suppose neither $F_1$ nor $F_2$ satisfy \ref{prop1}. If both $F_1$ and $F_2$ satisfy \ref{prop2}, then $F_1 \cup F_2$ can be covered with at most two monochromatic subgraphs of color 4. If $F_1 \cup F_2$ can be covered with exactly one monochromatic subgraph of color 4, let $H_2$ be this subgraph and we have the desired cover of $V(K)$ with $\diam(H_1)\leq 4$ and $\diam(H_2)\leq 6$. If $F_1 \cup F_2$ must be covered with two monochromatic subgraphs of color 4, let $H_2, H_3$ be these subgraphs and we have the desired cover of $V(K)$ with $\diam(H_1)\leq 4$, $\diam(H_2)\leq 4$, and $\diam(H_3)\leq 2$.

Now suppose say $F_2$ satisfies \ref{prop2}. If $F_1$ satisfies \ref{p1'} where $B_{12}$ is the double covered side, then letting $H_2$ be the nontrivial color 3 subgraph of $F_1$ and letting $H_3$ be the color 4 subgraph which covers the rest of $F_1 \cup F_2$, we have the desired cover of $V(K)$ with $\diam(H_1)\leq 4$, $\diam(H_2)\leq 3$, and $\diam(H_3)\leq 4$. If $F_1$ satisfies \ref{p1'} where $B_{21}$ is the double covered side, then letting $H_2$ be the color 4 subgraph that covers $B_{21}$ along with the the color 4 subgraph of $F_2$ which it intersects and letting $H_3$ be the other color 4 subgraph of $F_2$, we have the desired cover of $V(K)$ with $\diam(H_1)\leq 4$, $\diam(H_2)\leq 4$, and $\diam(H_3)\leq 3$.

We may now suppose both $F_1$ and $F_2$ satisfy \ref{p1'}. If say $B_{21}$ is the double covered side of $F_1$ and $B_{12} = B_{13}$ is the double covered side of $F_2$, then letting $H_2$ be the color 4 subgraph which covers $F_1$ and letting $H_3 = G_3[\{x\} \cup B_{31}]$, we have the desired cover of $V(K)$ with $\diam(H_1)\leq 4$, $\diam(H_2)\leq 3$, and $\diam(H_3)\leq 2$. Now suppose $B_{21}$ is the double covered side of $F_1$ and $B_{31}$ is the double covered side of $F_2$. If the two nontrivial color 4 components intersect, then letting $H_2$ be the resulting color 4 subgraph and letting $H_3 = G_1[\{x\} \cup B_{12}]$, we have the desired cover of $V(K)$ with $\diam(H_1)\leq 4$, $\diam(H_2)\leq 6$, and $\diam(H_3)\leq 2$. If the color 4 components do not intersect, then every vertex in the nontrivial color 4 component of $F_1$ sends only color 2 edges to $B_{31}$ and every vertex in the nontrivial color 4 component of $F_2$ sends only color 3 edges to $B_{21}$. Thus letting $H_2$ be the resulting color 2 subgraph and letting $H_3$ be the resulting color 3 subgraph, we have the desired cover of $V(K)$ with $\diam(H_1)\leq 4$, $\diam(H_2)\leq 3$, and $\diam(H_3)\leq 3$.

We now claim that we are done unless both $F_1$ and $F_2$ satisfy \ref{p1'} where $B_{12}$ is the double covered side in each case.  Suppose we are in this case.  For $i\in \{2,3\}$, let $B_i'$ be the vertices in $B_{i1}$ which are in the component of color 4 and let $B_i''$ be the vertices in $B_{i1}\setminus B_i'$ which send an edge of color 4 to $B_{3-i}'$.  Finally, let $\bar{B}_i=B_{i1}\setminus (B_i'\cup B_i'')$.  If there were an edge of color 3 from $\bar{B}_2$ to $B_{31}$ we would be done, and if there were an edge of color 2 from $\bar{B}_3$ to $B_{21}$ we would be done.  So all edges in $[\bar{B}_2, \bar{B}_3]$ are color $1,4$, all edges in $[\bar{B}_2, B_3']$ are color $1,2$, and all edges in $[\bar{B}_3, B_2']$ are color $1,3$.  If every vertex in $B_2'$ sent an edge of color 3 to $\bar{B}_3$, then we would be done, so suppose there exists a vertex $y\in B_2'$ such that $y$ only sends edges of color $1$ to $\bar{B}_3$.  Likewise, there exists a vertex $z\in B_3'$ such that $z$ only sends edges of color $1$ to $\bar{B}_2$.  If there is an edge of color 1 between $\bar{B}_2$ and $\bar{B}_3$, then we would be done using the component of color 4 and the component of color 1.  Otherwise all edges between $\bar{B}_2$ and $\bar{B}_3$ are color 4, and we are done using two components of color 4.
\end{proof}

\subsection{Complete graphs, \texorpdfstring{$r=5$}{r=5}}

We note that in order to generalize of proof of Theorem \ref{thm:r4complete} to prove Conjecture \ref{con:mil} for $r=5$, it would be helpful to solve the following problem which is analogous to the last (main) case in the proof of Theorem \ref{thm:r4complete}.

\begin{conjecture}\label{prob:r=5}
Suppose  $G$ is a complete 4-partite graph with vertex set partitioned as $\{V_1, V_2, V_3, V_4\}$.  If $[V_i, V_j]$ is colored with colors from $[5]\setminus \{i,j\}$ for all distinct $i,j\in [4]$, then $G$ can be covered with at most three monochromatic subgraphs of bounded diameter.
\end{conjecture}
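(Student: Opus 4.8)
The plan is to mirror, one color up, the argument used in the final case of Theorem~\ref{thm:r4complete}: that case reduced exactly to a configuration of this shape but with four colors (so the leftover bipartite pieces were two-colored and fell to Lemma~\ref{lem:bip2}), and here the analogous reduction leaves three-colored bipartite pieces. Throughout I would exploit the symmetry of the palette: color $i\in[4]$ is permitted only on the complete tripartite graph spanned by the three parts $V_j$ with $j\neq i$, while color $5$ is the unique color permitted on every one of the six pairs $[V_i,V_j]$. In particular a monochromatic subgraph of color $i\in[4]$ can only cover vertices outside $V_i$, whereas a color-$5$ subgraph can cover anything; since we are allowed three subgraphs and there are four parts, any three distinct colors chosen so as to miss three different parts already suffice for reachability, so the entire content is to produce connected pieces of bounded diameter that actually cover every vertex.

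First I would dispose of the degenerate configurations. If some $V_i=\emptyset$ the graph collapses to a complete (at most) $3$-partite graph on a palette that still contains the universal color $5$, a strictly smaller instance that should be handled directly. Using the closure idea of Observation~\ref{obs:closure} together with Proposition~\ref{prop:folk} and the radius/diameter facts, I would also clear the cases where some single color class is already connected and spans a ``dominant'' union of parts: there a bounded-diameter spanning tree of that class serves as one of the three subgraphs, leaving a small remainder.

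The core of the argument would use color $5$ as a backbone, exactly as color $4$ served as the backbone $H_1$ in the last case of Theorem~\ref{thm:r4complete}. The goal is to select a single color-$5$ subgraph $H_1$ of bounded diameter covering ``most'' of $G$, so that the uncovered remainder is forced into a union of complete bipartite graphs between prescribed parts, each using a restricted three-color palette $[5]\setminus\{i,j\}$. To finish within the remaining budget of two subgraphs, I would establish a three-color analogue of Lemma~\ref{lem:bip2} and Lemma~\ref{2colorbip_nodiam}: a structural dichotomy for complete bipartite graphs colored with three colors that, in each branch, yields a cover by a bounded number of bounded-diameter monochromatic pieces and, crucially, produces pieces sharing a common color, so that pieces coming from different bipartite leftovers can be merged into a single connected subgraph. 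Stitching $H_1$ to these (at most two) additional pieces and tracking diameters through Proposition~\ref{prop:folk} and the spanning-tree facts would keep every diameter below an absolute constant.

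The main obstacle is the tightness of the budget against the richness of three-colored bipartite graphs. Lemma~\ref{lem:bip2} is so effective for $r=4$ precisely because two-colored complete bipartite graphs admit a clean trichotomy; three colors break this, and the number of structural branches for the bipartite remainder explodes (the same explosion reflected in the jump from the two leftover signatures for $r=5$ to the $173$ signatures for $r=6$ in Section~\ref{sec:tuza}). Concretely, the hard step is proving that, after removing a well-chosen color-$5$ backbone, the remainder is ``almost bipartite with a shared color'' --- simple enough to be absorbed by only two further pieces --- and it is exactly this step that has so far resisted a uniform argument, which is why the statement is posed as a conjecture rather than a theorem.
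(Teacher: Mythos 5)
This statement is posed in the paper as an open conjecture (Conjecture \ref{prob:r=5}); the paper offers no proof of it, only the remark that solving it would be the key step toward extending Theorem \ref{thm:r4complete} to five colors. Your submission is likewise not a proof but a strategy outline, and you say so yourself in the final sentence. So there is no complete argument to check against anything, and the gap is the one you already name: everything hinges on a three-color analogue of Lemma \ref{lem:bip2} / Lemma \ref{2colorbip_nodiam} --- a structural dichotomy for $3$-colored complete bipartite graphs producing a bounded number of bounded-diameter monochromatic pieces whose colors can be merged across different bipartite remainders --- and that lemma is nowhere proved. Without it the ``stitching'' step is vacuous.

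Two further points where the sketch is weaker than it may appear. First, the existence of the color-$5$ backbone $H_1$ is asserted, not derived. In the proof of Theorem \ref{thm:r4complete} the backbone $H_1=G_4[\{x\}\cup A_4\cup\cdots]$ emerges only after the detailed analysis of the sets $B_{ij}$ and Claims (C1)--(C3); in the present setting color $5$ is merely \emph{permitted} on every pair $[V_i,V_j]$, and nothing forces a large connected color-$5$ subgraph of bounded diameter to exist at all (the coloring could use color $5$ sparsely or not at all). Second, your opening observation about ``reachability'' --- that three colors missing three different parts suffice --- does not by itself constrain the cover, since the difficulty is never which vertices a color \emph{could} reach but which monochromatic components of bounded diameter actually exist. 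The honest summary is that you have correctly identified why the problem is hard and where the analogue of the $r=4$ argument breaks, but you have not advanced beyond the point at which the authors stopped.
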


\subsection{Complete bipartite graphs, \texorpdfstring{$r=3$}{r=3}}

\begin{proof}[Proof of Theorem \ref{thm:r3bipartite}]
Suppose there exists a component $C$ of color $\chi\in [3]$ such that $V_i\setminus V(C)\neq \emptyset$ for all $i\in [2]$. Then since each of $[V_1\setminus C, V_2\cap C]$ and $[V_2\setminus C, V_1\cap C]$ are colored with colors from $[3]\setminus \{\chi\}$, we are done by applying Lemma \ref{lem:bip2} to each of $[V_1\setminus C, V_2\cap C]$ and $[V_2\setminus C, V_1\cap C]$.  So for every $\chi\in [3]$ and every component $C$ of color $\chi$ there exists $i\in [2]$ such that $V_i\subseteq V(C)$.  If there exists a component $C$ such that $V_i\subseteq V(C)$ for some $i\in [2]$ and $\diam(C)\leq 5$, then we are done either because $V_{3-i}\setminus V(C)=\emptyset$ or by applying Lemma \ref{lem:bip2} to $[V_{3-i}\setminus V(C), V_i]$.  So finally, suppose that there exists a component $C$, say of color $3$, such that $V_i\subseteq V(C)$ for some $i\in [2]$, say $i=2$, and $\diam(C)\geq 6$.  Let $v\in V(G)$ which witnesses $\diam(C)$ and for all $0\leq i\leq d$, let $D_i$ be the vertices at distance $i$ from $v$ in $C$.  Now let $X_1=D_0\cup D_2$, $X_2=(V_1\setminus V(C))\cup D_4\cup \dots \cup D_{2\floor{d/2}}$, $Y_2=D_1$, $Y_1=D_5\cup \dots \cup D_{2\ceiling{d/2}-1}$, and $Y_0=D_3$.  Note that $[X_i, Y_i]$ is a complete $[2]$-colored bipartite graph for all $i\in [2]$ and $[Y_0, (V_1\setminus V(C))\cup D_0\cup D_6\cup \dots \cup D_{2\floor{d/2}}]$ is a complete $[2]$-colored bipartite graph.  Also note that $C^*=C[D_0\cup D_1\cup D_2\cup D_3]$ is a subgraph of color 3 which has diameter at most 6 and has $X_1\cup Y_0\cup Y_2=D_0\cup D_1\cup D_2\cup D_3\subseteq V(C^*)$.  

We must now analyze cases based on what happens with $[X_i, Y_i]$ for each $i\in [2]$.  First, assume that for some $i \in [2]$, $[X_i,Y_i]$ satisfies \ref{prop1} in some color $j\in[2]$.  Using that $C^*$ covers $Y_0$ and applying Lemma \ref{lem:bip2} to $[X_{3-i}, Y_{3-i}]$ completes this case.  

Now assume \ref{p1'} holds for $[X_i, Y_i]$ for some $i \in [2]$ where $X_i$ is the double covered side.  Since for any $x \in X_i \setminus (D_2\cup D_4)$, for every $y \in Y_0$, the edge $xy$ exists and is color 1 or 2, let $H_1 = G_1[X_i \cup Y_i \cup Y_0]$ and $H_2 = G_2[X_i \cup Y_i \cup Y_0]$. These two subgraphs cover all of $X_i\cup Y_i\cup Y_0$ each with diameter at most 4. Lemma \ref{lem:bip2} applies to $[X_{3-i}, Y_{3-i}]$ for $H_3$ and $H_4$ each with diameter at most 6. 

Now assume \ref{p1'} holds for $[X_1, Y_1]$ where $Y_1$ is the double covered side.  Let $H_1$ be the color 1 component from $[X_1, Y_1]$ which covers $Y_1$.  Using that $C^*$ covers $X_1\cup Y_0$ and applying Lemma \ref{lem:bip2} to $[X_2, Y_2]$ completes this case.  

Now assume \ref{prop2} holds for $[X_1, Y_1]$ and \ref{p1'} holds for $[X_2, Y_2]$ where $Y_2$ is the double covered side.  Let $Y_0'=N_1(v)\cap Y_0$ and $Y_0''=N_2(v)\cap Y_0$.  If there exists a vertex $x'$ in $X_2\setminus D_4$ which is in both the color 1 and color 2 component of $[X_2,Y_2]$, then we are done since every vertex in $Y_0$ is adjacent to $x'$ in color 1 or 2.  So let $x'\in X_2\setminus D_4$ and suppose without loss of generality that $x'$ is only in the color 1 component.  If $x'$ sends a color 1 edge to $Y_0'$, then we are done.  Otherwise, $x'$ only sends color 2 edges to $Y_0'$.  So if $x'$ sends a color 2 edge to $Y_0''$, then we are done by using the two color 2 subgraphs from $[X_1, Y_1]$, as one of these subgraphs has now been extended to cover all of $Y_0$; or else $x'$ only sends color 1 edges to $Y_0''$, in which case we are done by using the two color 1 subgraphs from $[X_1, Y_1]$, one of which contains $Y_0'$, together with the color 1 subgraph from $[X_2, Y_2]$, which contains $Y_0''$, and the color 2 subgraph from $[X_2, Y_2]$.  

Lastly, assume \ref{prop2} holds for both $[X_1,Y_1]$ and $[X_2,Y_2]$.  Let $Y_0'=N_1(v)\cap Y_0$ and $Y_0''=N_2(v)\cap Y_0$.  If any vertex in $Y_0'$ sends an edge of color 1 to $X_2$, then we are done.  Otherwise there is a vertex in $X_2$ which only sends color 2 edges to $Y_0'$ in which case we are done.  
\end{proof}

\section{Monochromatic covers of complete multipartite graphs}\label{sec:multi}

In this section, we prove Conjecture \ref{con:rpart2} for $r\leq 4$.  Let $\mathcal{K}_k$ be the family of complete $k$-partite graphs.  Lemma \ref{2colorbip_nodiam} implies the following (which was already known by \cite{CFGLT}, and was almost certainly a folklore result before that).

\begin{theorem}\label{2partite}
For all $k\geq 2$ and all $K\in \mathcal{K}_k$, $\tc_2(K)\leq 2$.
\end{theorem}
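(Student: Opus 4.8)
The plan is to reduce the $k$-partite problem to the bipartite case already encoded in Lemma \ref{2colorbip_nodiam}. Write the parts of $K$ as $V_1, \dots, V_k$ and set $X = V_1$ and $Y = V_2 \cup \cdots \cup V_k$. The crucial observation is that the bipartite graph $[X,Y]$ is a \emph{complete} bipartite graph (every vertex of $V_1$ is joined to every vertex outside $V_1$) whose vertex set $X \cup Y$ is all of $V(K)$. Moreover, every edge of $[X,Y]$ is an edge of $K$, so any monochromatic connected subgraph of $[X,Y]$ is a monochromatic connected subgraph of $K$. Hence it suffices to produce a monochromatic $2$-cover of $[X,Y]$ with respect to the induced $2$-coloring: such a cover automatically covers $V(K)$ and consists of legitimate subgraphs of $K$. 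Note that the edges lying inside $Y$ are simply discarded for this argument.

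Having made this reduction, I would apply Lemma \ref{2colorbip_nodiam} to the $2$-colored complete bipartite graph $[X,Y]$ and check that each of its three outcomes yields a monochromatic $2$-cover. If \ref{p3'} holds, then some $G_i$ is connected, hence spanning (a vertex meeting no color-$i$ edge would be isolated), so we even get a monochromatic $1$-cover. If \ref{p2'} holds, then $G_1 = [X_1, Y_1] \cup [X_2, Y_2]$ is a disjoint union of two complete bipartite, hence connected, color-$1$ graphs whose union covers $X \cup Y$, giving a monochromatic $2$-cover; in the degenerate situation where one of $X_1, X_2, Y_1, Y_2$ is empty one instead takes one connected piece of each color (for instance $[X_2,Y_2]$ in color $1$ and $[X_2,Y_1]$ in color $2$), which still covers everything. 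Finally, if \ref{p1'} holds, say $Y$ is the double-covered side witnessed by $x_1, x_2 \in X$, then fixing any $y \in Y$ the color-$1$ component through $x_1$ covers $Y$ (via the edges at $x_1$) together with every vertex of $X$ joined to $Y$ in color $1$, while the color-$2$ star at $y$ covers the remaining vertices of $X$ (those all of whose edges to $Y$ are color $2$, in particular the edge to $y$); this is again a monochromatic $2$-cover. Combining the three cases proves $\tc_2(K) \le 2$.

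The argument is short, and the only points requiring care are essentially bookkeeping: confirming in each case of Lemma \ref{2colorbip_nodiam} that the chosen pieces actually cover all of $X \cup Y$ (in particular dealing with empty sub-parts in \ref{p2'} and making sure the two stars in \ref{p1'} between them capture every vertex of $X$), rather than any genuine difficulty. The main conceptual content is the reduction itself: grouping $V_2, \dots, V_k$ into a single side discards the edges inside $Y$ but retains a spanning complete bipartite graph, and that is all one needs since a cover of a spanning subgraph is a cover of $K$. It is worth emphasizing that this reduction is completely insensitive to $k$, which is exactly why an absolute constant bound ($\le 2$) survives for the \emph{cover} version; this stands in sharp contrast to the partition version, where Theorem \ref{thm:partmulti} shows that no bound depending only on $k$ can exist.
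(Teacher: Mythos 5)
Your proof is correct and follows exactly the route the paper intends: the paper simply asserts that Lemma \ref{2colorbip_nodiam} implies the theorem, and the implicit argument is precisely your reduction to the spanning complete bipartite graph $[V_1, V_2\cup\cdots\cup V_k]$ followed by a case check of \ref{p1'}--\ref{p3'}. Your added care about degenerate parts in \ref{p2'} and about connected implying spanning in \ref{p3'} is sound (the paper's conventions make $G_i$ a spanning subgraph) and just makes explicit what the paper leaves to the reader.
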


Now we consider complete 3-partite graphs.

\begin{theorem}\label{3partite}
For all $k\geq 3$ and all $K\in \mathcal{K}_k$, $\tc_3(K)\leq 3$.
\end{theorem}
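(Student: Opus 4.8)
The plan is to reduce to the case $k=3$ and then to attack complete tripartite graphs by a vertex-centered analysis, feeding the two-color results already proved (Theorem \ref{2partite} and Lemma \ref{2colorbip_nodiam}) into the argument.

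\textbf{Reduction to $k=3$.} Fix a $3$-coloring of $K\in\mathcal{K}_k$ with $k\geq 4$ and parts $V_1,\dots,V_k$, and let $K'$ be the complete tripartite graph obtained by merging $V_3,\dots,V_k$ into a single part, keeping the induced coloring. Then $K'$ is a spanning subgraph of $K$, so every monochromatic connected subgraph of $K'$ is a monochromatic connected subgraph of $K$ on the same vertices; hence any monochromatic $3$-cover of $K'$ is a monochromatic $3$-cover of $K$, giving $\tc_3(K)\leq \tc_3(K')$. Thus it suffices to prove $\tc_3(K)\leq 3$ for all $K\in\mathcal{K}_3$, which is exactly the $r=4$ instance of Conjecture \ref{con:rpart2}.

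\textbf{A clean first case (peeling a color).} Let $K\in\mathcal{K}_3$ have nonempty parts $V_1,V_2,V_3$ and a fixed $3$-coloring. Suppose some color $i$ has at most one nontrivial color-$i$ component $D$, i.e.\ every color-$i$ edge lies inside $D$. Let $W=V(K)\setminus V(D)$ be the set of vertices incident to no color-$i$ edge. Every edge of $K[W]$ then avoids color $i$, so $K[W]$ is a complete multipartite graph using only two colors; by Theorem \ref{2partite} it admits a monochromatic $2$-cover. Adjoining the single component $D$ yields a monochromatic $3$-cover of $K$. So we may assume henceforth that \emph{every} color has at least two nontrivial components.

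\textbf{The remaining case via a vertex.} Here I would mimic the structure of the proof of Theorem \ref{thm:r4complete}: pick $v\in V_1$, set $A_i=N_i(v)\subseteq V_2\cup V_3$ for $i\in[3]$, and start from the three monochromatic stars at $v$, which already cover $\{v\}\cup V_2\cup V_3$. A vertex $u\in V_1\setminus\{v\}$ is swallowed into the color-$i$ star whenever it sends a color-$i$ edge to $A_i$, so if the ``bad'' set $U=\{u\in V_1\setminus\{v\}:\ u\text{ sends no color-}i\text{ edge to }A_i\text{ for every }i\}$ is empty we are done with three components. Otherwise each $u\in U$ has all of its edges to $A_i$ avoiding color $i$; following the template of Theorem \ref{thm:r4complete}, I would introduce the sets $B_{ij}$ of vertices of $A_i$ sending no color-$j$ edge to $A_j$, observe that $[B_{ij},B_{ji}]$ is complete bipartite in the remaining color, and reconfigure which three subgraphs to use (trading one of the stars for a cross-component routed through $V_2\cup V_3$) so that $U$ is absorbed without a fourth component. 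Since the two participating sets of a final biclique carry only two colors, Lemma \ref{2colorbip_nodiam} governs that last piece.

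\textbf{Main obstacle.} The difficulty is exactly the bad set $U$: because $U\subseteq V_1$ is independent, its vertices can only be joined through $V_2\cup V_3$, and their color patterns are ``complementary'' (each misses color $i$ toward $A_i$), so connecting them forces a consistent color on a biclique between two of the $B_{ij}$'s. Keeping the total at three — rather than four — while simultaneously covering $U$, the rest of $V_1$, and all of $V_2\cup V_3$ is the crux, and I expect it to require a short finite case analysis on which of the sets $A_i$ and $B_{ij}$ are empty, entirely analogous to (but much shorter than) the signature bookkeeping of Section \ref{sec:tuza} and the final case of Theorem \ref{thm:r4complete}.
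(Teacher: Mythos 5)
Your reduction to $k=3$ is valid (the paper makes the same reduction implicitly), and your ``peeling'' step is essentially sound, up to one degenerate subcase: if the set $W$ of vertices missed by the unique nontrivial color-$i$ component $D$ lies entirely inside one part $V_j$, then $K[W]$ is edgeless and Theorem \ref{2partite} does not apply to it. The fix is easy --- every edge \emph{incident} with $W$ avoids color $i$, so $[W,V(K)\setminus V_j]$ is a $2$-colored complete bipartite graph, and a monochromatic $2$-cover of it together with $D$ covers $K$ --- but as written the step has a hole.

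The genuine gap is that the remaining case, where every color has at least two nontrivial components, is not proved at all. You describe a template (fix $v$, form the neighborhoods $A_i$ and the sets $B_{ij}$, try to absorb the bad set $U\subseteq V_1$) and then state explicitly that keeping the count at three while covering $U$ ``is the crux'' and that you ``expect it to require a short finite case analysis.'' That case analysis \emph{is} the theorem; without it there is no proof, and it is not evident that your vertex-centered route closes in three components rather than four (the analogous analysis for Theorem \ref{thm:r4complete} is long and delicate). The paper takes a different route here: after disposing of the case where some monochromatic component contains an entire part, it proves (Claim \ref{clm:C1}) that some monochromatic component $C$ meets all three parts, observes that the six sets $V_i\cap V(C)$ and $V_i\setminus V(C)$ form a $2$-colored blow-up of a $C_6$, and then carries out a complete case analysis --- driven by applying Lemma \ref{2colorbip_nodiam} to two opposite bicliques of that blow-up --- showing that any $2$-colored blow-up of $C_6$ has a monochromatic $3$-cover. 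Nothing in your proposal substitutes for that argument.
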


\begin{proof}
Let $\{V_1, V_2, V_3\}$ be the tripartition of $K$ (we may assume $K$ is 3-partite).  First suppose there exists a monochromatic component $C$, say of color 3, which covers, say $V_3$.  Then either $C$ covers all of $V(K)$ and we are done, or $K[(V_1\cup V_2)\setminus V(C), V_3]$ is a complete 2-colored bipartite graph and thus can be covered by two monochromatic components and we are done.  So suppose for the remainder of the proof that for all monochromatic components $C$ and all $i\in [3]$, $V_i\setminus V(C)\neq \emptyset$ ($\star$).  

\begin{claim}\label{clm:C1}
There exists a monochromatic component $C$ so that $V_i\cap V(C)\neq\emptyset$ for all $i\in [3]$. 
\end{claim}

\begin{proof}
Let $C$ be a monochromatic component and without loss of generality, suppose $V(C)\cap V_3=\emptyset$.  Let $K'=K[V_3, (V_1\cup V_2)\cap V(C)]$ be the induced 2-colored complete bipartite graph, say the colors are red and blue.  We apply Lemma \ref{2colorbip_nodiam} to $K'$.  By ($\star$), $K'$ cannot satisfy \ref{p3'}.  If \ref{p1'} is the case, then by ($\star$), it cannot be that $V_3$ is the double covered side, so $(V_1\cup V_2)\cap V(C)$ is the double covered side and thus we have a monochromatic component which has nontrivial intersection with all three parts.  So finally, suppose \ref{p2'} is the case, and let $\{X_1, X_2\}$ be the corresponding bipartition of $V_3$ and let $\{Y_1, Y_2\}$ be the corresponding bipartition of $(V_1\cup V_2)\cap V(C)$.  If for some $i\in [2]$, $Y_i\cap V_1\neq \emptyset$ and $Y_i\cap V_2\neq \emptyset$, then we have a monochromatic component which has nontrivial intersection with all three parts.  Otherwise we have, without loss of generality, $Y_1=V_1\cap V(C)$ and $Y_2=V_2\cap V(C)$.  Every edge in $[X_1, Y_2]$ is say blue and every edge in $[X_2, Y_2]$ is then red.  Since every edge from $Y_2$ to $V_1\setminus Y_1$ is either red or blue, this gives us a monochromatic component which has nontrivial intersection with all three parts.
\end{proof}

Now by Claim \ref{clm:C1}, there exists a monochromatic component $C$ so that $V_i\cap V(C)\neq\emptyset$ for all $i\in [3]$.  Let $X_1=V_1\cap V(C)$, $X_2=V_1\setminus V(C)$, $Y_1=V_2\cap V(C)$, $Y_2=V_2\setminus V(C)$, $Z_1=V_3\cap V(C)$, $Z_2=V_3\setminus V(C)$ and note that by $(\star)$, all of these sets are non-empty.  Note that the sets $X_1, Y_2, Z_1, X_2, Y_1, Z_2$ form a 2-colored (say red and blue) blow-up of a $C_6$.  In the remainder of the proof, we implicity prove the general result that a 2-colored blow-up of a $C_6$ can be covered by at most 3 monochromatic components.

\begin{claim}\label{clm:C2}
If there exists a monochromatic component covering any of $X_1\cup Y_2\cup Z_1$, $X_2\cup Y_2\cup Z_1$, $X_2\cup Y_1\cup Z_1$, $X_2\cup Y_1\cup Z_2$, $X_1\cup Y_1\cup Z_2$, or $X_1\cup Y_2\cup Z_2$, then we have a monochromatic 3-cover.  
\end{claim}

\begin{proof}
Suppose without loss of generality that there is a monochromatic component covering $X_1\cup Y_2\cup Z_1$.  Then since $[Y_1, X_2\cup Z_2]$ is a 2-colored complete bipartite graph, we are done by Theorem \ref{2partite}.
\end{proof}

We begin by focusing on the 2-colored (say red and blue) complete bipartite graphs $K_1=[Z_1, X_2\cup Y_2]$ and $K_2=[Z_2, X_1\cup Y_1]$, but note that $[X_1, Y_2]$ and $[X_2, Y_1]$ are also 2-colored complete bipartite graphs colored with red and blue.  We apply Corollary \ref{2colorbip_nodiam} to each of $K_1$ and $K_2$.  

\noindent
\textbf{Case 1} ($K_1$ or $K_2$ satisfies \ref{p3'})
Say $K_1$ satisfies \ref{p3'}.  Since $K_2$ can be covered by at most two monochromatic components, we are done; thus we may assume that \ref{p3'} is never the case.

\noindent
\textbf{Case 2} ($K_1$ or $K_2$ satisfies \ref{p2'})

Without loss of generality, say $K_1$ satisfies \ref{p2'}.  This means there are two red components covering $K_1$ and there are two blue components covering $K_1$.  

\textbf{Case 2.1} ($K_2$ satisfies \ref{p2'})

There are two red components covering $K_2$ and there are two blue components covering $K_2$.  Using the fact that $[X_1, Y_2]$ is a 2-colored complete bipartite graph, there is a, say, red edge from $X_1$ to $Y_2$.  This red edge joins one of the red components covering $K_1$ to one of the red components covering $K_2$ and thus there are at most three red components covering $K$.

\textbf{Case 2.2} ($K_2$ satisfies \ref{p1'} and $X_1\cup Y_1$ is the double covered side)

So there is a red component $R$ and a blue component $B$ which together cover $K_2$. Using the fact that $[X_1, Y_2]$ is a 2-colored complete bipartite graph, there is a, say, red edge from $X_1$ to $Y_2$.  This red edge joins one of the red components covering $K_1$ to the red component $R$ and thus there are two red components and one blue component ($B$) which together cover $K$.

\textbf{Case 2.3} ($K_2$ satisfies \ref{p1'} and $Z_2$ is the double covered side)

So there is a red component $R$ and a blue component $B$ which together cover $K_2$.  Using the fact that both $[X_1, Y_2]$ and $[X_2, Y_1]$ are 2-colored complete bipartite graphs, we either have that there is a blue edge from $B$ to $X_2\cup Y_2$ in which case there are two blue components and one red component ($R$) which together cover $K$, or every edge from $B$ to $X_2\cup Y_2$ is red and thus there are three red components which cover $K$.  

\noindent
\textbf{Case 3} ($K_1$ and $K_2$ both satisfy \ref{p1'})

In $K_1$ we have that $Z_1$ is the double covered side or $X_2\cup Y_2$ is the double covered side, and in $K_2$ we have that $Z_2$ is the double covered side or $X_1\cup Y_1$ is the double covered side.  For all $i\in [2]$, let $R_i$ and $B_i$ be, respectively, the red and blue components covering $K_i$.

We will split into two subcases.

\textbf{Case 3.1} ($X_1\cup Y_1$ is the double covered side or $X_2\cup Y_2$ is the double covered side)

Without loss of generality, say $X_2\cup Y_2$ is the double covered side in $K_1$.  If there is a blue edge from $B_2$ to $X_2\cup Y_2$, then $B_1$ and $B_2$ are contained together in a single blue component $B$ and thus $B, R_1, R_2$ forms a monochromatic cover of $K$.  So suppose every edge from $B_2$ to $X_2\cup Y_2$ is red.  So there is a red component $R$ which covers $R_1$ and $B_2\cap (V_1\cup V_2)$.  Thus $R, B_1, R_2$ forms a monochromatic cover of $K$.

\textbf{Case 3.2} ($Z_1$ is the double covered side and $Z_2$ is the double covered side)

If there is a blue edge between $B_1$ and $B_2$ or a red edge between $R_1$ and $R_2$, we would have three monochromatic components which cover $K$, so suppose every edge between $B_1$ and $B_2$ is red and every edge between $R_1$ and $R_2$ is blue ($\star \star$).

For all $i\in [2]$, let $X_i(B)=X_i\cap V(B_{3-i})$, $X_i(R)=X_i\cap V(R_{3-i})$, $Y_i(B)=Y_i\cap V(B_{3-i})$, and $Y_i(R)=Y_i\cap V(R_{3-i})$.

\begin{figure}[ht]
\begin{subfigure}[t]{.5\textwidth}
  \centering
 \includegraphics{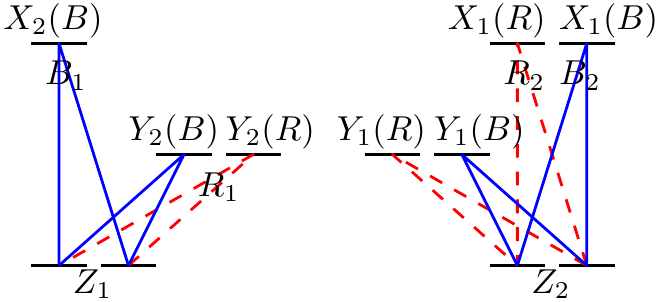}
\caption{Case 3.2.1: Some of the sets $Y_2(B)$, $Y_1(R)$, $Y_1(B)$, $X_1(R)$, $X_1(B)$ may be empty}\label{fig:321}
\end{subfigure}
\begin{subfigure}[t]{.5\textwidth}
   \centering
 \includegraphics{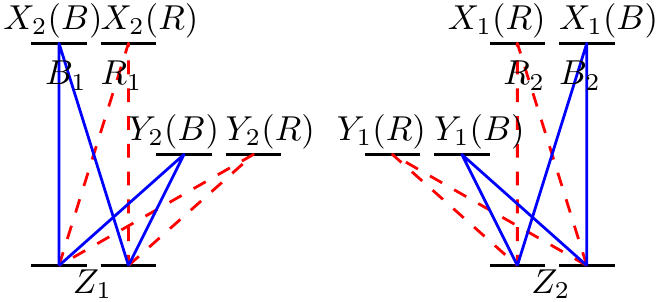}
\caption{Case 3.2.2}\label{fig:322}
\end{subfigure}
\caption{Case 3.2}\label{fig:32}
\end{figure}

\underline{Case 3.2.1} (For some $i\in [2]$, some $X_i(R), X_i(B), Y_i(R), Y_i(B)$ is empty)

Without loss of generality, suppose $X_2(R)=\emptyset$ (which implies $Y_2(R)\neq \emptyset$).  

If $Y_1(R)=\emptyset$, then we must have $X_1(R)\neq \emptyset$ and thus by ($\star\star$), we have that the complete bipartite graph $[Y_2(R), X_1(R)]$ colored in blue, together with $B_1$ and $B_2$ form a monochromatic cover.  So suppose $Y_1(R)\neq \emptyset$.

If $Y_1(B)=\emptyset$, then either every vertex in $X_2(B)$ sends a red edge to $Y_1(R)$ and thus there is red component covering $X_2\cup Y_1\cup Z_2$ and we are done by Claim \ref{clm:C2}, or there is a vertex in $X_2(B)$ which only sends blue edges to $Y_1(R)$ and thus there is a blue component covering $Z_1\cup X_2\cup Y_1$ and we are again done by Claim \ref{clm:C2}.  So suppose $Y_1(B)\neq \emptyset$.  

By ($\star\star$), we have that every edge in $[X_2, Y_1(B)]$ is red.  So if there is a red edge from $X_2$ to $Y_1(R)$, then there is a red component covering $X_2\cup Y_1\cup Z_2$ and we are done by Claim \ref{clm:C2}.  So suppose every edge from $X_2(B)$ to $Y_1(R)$ is blue, which implies there is a blue component $B$ covering $B_1$ and $Y_1(R)$.  Now either $X_1(R)\neq \emptyset$ in which case every edge in $[Y_2(R), X_1(R)]$ is blue and thus $B, [Y_2(R), X_1(R)], B_2$ forms the desired monochromatic cover, or $X_1(R)= \emptyset$ in which case $B, R_1, B_2$ forms the desired monochromatic cover.

\underline{Case 3.2.2} (For all $i\in [2]$, $X_i(R), X_i(B), Y_i(R), Y_i(B)$ are non-empty)

We have by ($\star\star$) that every edge from $Y_2(R)$ to $X_1(R)$ is blue and every edge from $Y_2(B)$ to $X_1(B)$ is red.  Suppose without loss of generality that there is a red edge from $X_1(R)$ to $Y_2(B)$ in which case there is a red component $R$ covering $V(R_1)\cup Y_2(B)\cup X_1(B)$ and thus $R, R_2$ and the red component covering $X_2(B)\cup Y_1(B)$ is the desired monochromatic cover.  
\end{proof}

The following example shows that in general, Conjecture \ref{con:rpart2} is best possible if true.

\begin{example}
For all $k, r\geq 2$, there exists $K\in \mathcal{K}_k$ such that $\tc_r(K)\geq r$.
\end{example}

\begin{proof}
Let $K\in \mathcal{K}_k$ in which one of the parts $X$ has order at least $r$, and let $x_1, \dots, x_r$ be $r$ distinct vertices in $X$.  For all $i\in [r]$, color all edges incident with $x_i$ with color $i$ and color all other edges arbitrarily.  
\end{proof}

The following is another example which shows that Theorem \ref{3partite} is best possible with the additional property that all of the parts have order 2.

\begin{example}
For all $k\geq 3$, there exists $K\in \mathcal{K}_k$ such that $\alpha(K)=2$ and $\tc_3(K)\geq 3$.
\end{example}

\begin{proof}
Let $K\in \mathcal{K}_k$ such that three parts $X,Y,Z$ have order 2.  Let $X=\{x_1, x_2\}, Y=\{y_1, y_2\}, Z=\{z_1,z_2\}$.  Let $\{x_1,y_1,z_1\}$ be a blue clique, let $\{x_2,y_1,z_2\}$ be a red clique, let $\{x_2,y_2,z_1\}$ be a green clique, let $\{x_1,y_2\}$ be red, let $\{y_2,z_2\}$ be blue, let $\{x_1,z_2\}$ be green, let every other edge incident with $\{y_1,z_2\}$ be red, every other edge incident with $\{x_1, z_1\}$ be blue, and every other edge incident with $\{x_2,y_2\}$ be green.
\end{proof}

Finally, we see that Conjecture \ref{con:SnotS} holds for $r=3$ by either Theorem \ref{thm:r3-5} with $r=3$ or Theorem \ref{2partite}, and Conjecture \ref{con:SnotS} holds for $r=4$ by combining Theorem \ref{thm:r3-5} with $r=4$ and Theorem \ref{3partite}.

Note that if Conjecture \ref{con:rpart} was true for $r=5$ (or, even stronger, if Conjecture \ref{con:rpart2} is true for $r-1=4$), then together with Theorem \ref{thm:r3-5} with $r=5$ this would imply Conjecture \ref{con:SnotS} is true for $r=5$.

\section{Partitioning into monochromatic connected subgraphs}\label{sec:partition}

Erd\H{o}s, Gy\'arf\'as, and Pyber proved the following theorem which strengthens the $\alpha=1$, $r=3$ case of Ryser's conjecture.

\begin{theorem}[Erd\H{o}s, Gy\'arf\'as, Pyber \cite{EGP}]\label{thm:EGP}
For all finite complete graphs $K$, $\tp_3(K)=2$.
\end{theorem}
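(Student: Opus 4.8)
The plan is to prove the upper bound $\tp_3(K)\le 2$ (the matching lower bound $\tp_3(K)\ge 2$ being witnessed, for $|V(K)|\ge 4$, by the affine plane of order $2$: color $K_4$ by its three parallel classes, i.e.\ three perfect matchings, so that no color class is connected and hence no single monochromatic connected subgraph can span $V(K)$). Throughout I work directly in $G$ rather than passing to the closure: unlike for covers (Observation~\ref{obs:closure}), taking the closure only makes a monochromatic partition easier to find, so it cannot be used to bound $\tp_3$ from above. I first record two quick wins. If some color class spans a connected subgraph of $K$, then that subgraph is a monochromatic $1$-partition and we are done. If some color class has exactly two components, then those two components are vertex-disjoint, monochromatic, connected, and partition $V(K)$, so again we are done. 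Hence I may assume every color is disconnected.

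Next I set up the main reduction around a largest monochromatic component. Let $R$ be a monochromatic component, say of color red, of maximum order among all monochromatic components in all three colors, and put $W=V(K)\setminus R$; we may assume $W\ne\emptyset$. Since $R$ is a red component, no $R,W$-edge is red, so $[W,R]$ is a complete bipartite graph colored only with blue and green. Maximality of $R$ gives two structural facts: no $w\in W$ is joined to all of $R$ in a single color (such a $w$ would lie in a blue or green component of order $>|R|$), and no single color spans $[W,R]$ (that color would span $R\cup W=V(K)$, again beating $R$). I now apply Lemma~\ref{2colorbip_nodiam} to $[W,R]$. Property~\ref{p3'} is impossible by the second fact. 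In property~\ref{p1'} the side $W$ cannot be the double-covered side by the first fact, so only the $R$-side option can occur. In property~\ref{p2'} we obtain partitions $W=W_1\sqcup W_2$ and $R=R_1\sqcup R_2$ with the blue edges equal to $[W_1,R_1]\cup[W_2,R_2]$; then $W_1\cup R_1$ and $W_2\cup R_2$ are each spanned by a complete blue bipartite graph, hence blue-connected, and they partition $V(K)$ — a monochromatic $2$-partition. (The degenerate cases of~\ref{p2'} with an empty part collapse into~\ref{p1'} and are treated there.)

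This leaves the crux: property~\ref{p1'} with $R$ the double-covered side, i.e.\ there exist $r_1,r_2\in R$ with $r_1$ joined to all of $W$ in blue and $r_2$ joined to all of $W$ in green. Let $B$ be the blue component and $\Gamma$ the green component with $W\cup\{r_1\}\subseteq B$ and $W\cup\{r_2\}\subseteq\Gamma$. Any vertex of $R\setminus B$ sends no blue edge to $W$, hence (cross edges being blue or green) is joined to all of $W$ in green, so $R\setminus B\subseteq\Gamma$, and symmetrically $R\setminus\Gamma\subseteq B$; thus $B$ and $\Gamma$ each consist of $W$ together with a subset of $R$, and both are genuinely monochromatically connected. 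Consequently $\Gamma$ is a monochromatic connected set whose complement $V(K)\setminus\Gamma=R\setminus\Gamma$ is a subset of $R$ all joined to $W$ in blue; symmetrically for $B$. So it suffices to cover one of the residual sets $R\setminus\Gamma$ or $R\setminus B$ by a single monochromatic connected subgraph.

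The main obstacle is exactly this residual step: a subset of the red component $R$ need not itself be red-connected, so neither residual can be taken for free. The plan to finish is to induct on $|V(K)|$, partitioning the complete graph $K[R]$ into two monochromatic connected subgraphs $P,Q$ and then absorbing $W$: if the part containing $r_1$ is blue we append $W$ to it (it stays blue-connected since $r_1$ is blue to all of $W$), and symmetrically if the part containing $r_2$ is green; either way $\{P,Q\}$ with $W$ absorbed is the desired $2$-partition. The one configuration this does not immediately settle — when both $r_1$ and $r_2$ lie in parts whose colors forbid absorbing $W$ (most critically, both in a single red part) — is the heart of the argument; I expect to dispatch it by exploiting the extra freedom in choosing the inductive partition of $R$ together with the blue/green complete-bipartite structure between the residual sets and $W$ exhibited above, and this is where the real work lies.
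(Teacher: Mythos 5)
The paper itself only quotes this theorem from \cite{EGP} and gives no proof, so there is nothing in-house to compare you against; I can only judge the proposal on its own terms. Your reduction is correct as far as it goes: the two quick wins, the choice of a maximum monochromatic (red) component $R$ with $W=V(K)\setminus R$, the application of Lemma~\ref{2colorbip_nodiam} to the blue/green bipartite graph $[W,R]$, the disposal of \ref{p2'} and \ref{p3'}, and the identification of the surviving configuration: vertices $r_1,r_2\in R$ dominating $W$ in blue and green, so that the blue component $B$ and the green component $\Gamma$ both contain $W$, with $R\setminus \Gamma$ blue-complete to $W$ and $R\setminus B$ green-complete to $W$. (Two cosmetic points: your use of ``double covered side'' is reversed relative to the convention of Lemma~\ref{2colorbip_nodiam}, and the lower-bound construction is only written for $K_4$, though it extends by a blow-up.)

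The proof is nevertheless incomplete, and the missing step is not a loose end but essentially the whole theorem. Put $X=R\setminus\Gamma$ and $Y=R\setminus B$ (both nonempty, else one color spans $V(K)$) and $Z=R\cap B\cap\Gamma$; then $\Gamma=V\setminus X$ and $B=V\setminus Y$, every $X,Y$-edge is forced to be red, every green edge at $X$ stays inside $X$, and every blue edge at $Y$ stays inside $Y$. This is exactly configuration (iii) of Lemma~\ref{lem:3col} (with the roles of the four sets permuted), i.e.\ the genuinely hard case in the structure of $3$-colored complete graphs. Your proposed finish --- inductively $2$-partition $K[R]$ and absorb $W$ into the part containing $r_1$ if it is blue or the part containing $r_2$ if it is green --- gives you no control over the colors of the inductive parts, so it fails precisely when neither part has a usable color at $r_1$ or $r_2$ (for instance when both lie in a single red part), and you explicitly defer this to ``the real work.'' Nor can the residual sets be taken for free: $R\setminus\Gamma$ and $R\setminus B$ are subsets of the red component and need not be monochromatically connected. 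Note also that no argument confined to the $2$-colored bipartite graph $[W,R]$ can possibly close the gap, since by this paper's own Theorem~\ref{thm:partmulti} (via Corollary~\ref{cor:badmulti}) a $2$-colored complete bipartite graph need not admit a $2$-partition into monochromatic connected pieces; the edges inside $R$ and inside $W$ must be exploited in an essential way, and the proposal does not say how. As written, this is a genuine gap.
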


Interestingly, no proof is known for infinite graphs (although the existence of a proof via personal communication is referenced in \cite{EGP}).   On the other hand, Hajnal proved \cite[Theorem 1]{HKSS} the weaker result that $\tp_r(K)\leq r$ for all infinite (countable or uncountable) complete graphs $K$ (in fact it can be specified that the trees have distinct colors and radius at most 2).

\begin{problem}\label{prob:infinite}
Let $K$ be a (countably) infinite complete graph.  Prove $\tp_3(K)=2$.
\end{problem}

The following is a beautiful strengthening of K\"onig's theorem (Theorem \ref{konig}, Theorem \ref{dualkonig}).  We reproduce the proof here in a less general form than what is found in \cite{FFGT1}.

\begin{theorem}[Fujita, Furuya, Gy\'arf\'as, T\'oth \cite{FFGT1}]
For all finite graphs $G$, $\tp_2(G)\leq \alpha(G)$.
\end{theorem}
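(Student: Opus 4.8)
The plan is to prove the bound by induction on $\alpha(G)$, peeling off one monochromatic connected part at a time. For the base case $\alpha(G)=1$ the graph $G$ is complete, so the Erd\H{o}s--Rado fact $\tc_2(K)=1$ supplies a single spanning monochromatic connected subgraph, which is by itself a monochromatic $1$-partition. For the inductive step it suffices to establish the reduction: in any $2$-colored $G$ with $\alpha(G)=\alpha\ge 2$ there is a monochromatic connected subgraph $T$ with $\alpha(G-V(T))\le \alpha-1$. Granting this, the induction hypothesis partitions $G-V(T)$ into at most $\alpha(G-V(T))\le \alpha-1$ monochromatic connected parts, and adjoining $T$ (which is vertex-disjoint from $G-V(T)$) yields a monochromatic partition of $G$ into at most $\alpha$ parts.

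The heart of the argument is producing such a $T$, and this is where the partition problem is genuinely harder than its cover analogue. For covers, Theorem~\ref{dualkonig} follows from K\"onig's theorem after passing to the closure $\hat{G}$, in which every monochromatic component is a clique; but that reduction is unavailable here, since restricting a monochromatic part to a disjoint vertex subset can disconnect it. So I would argue directly in $G$. The cleanest target is a single \emph{monochromatic component} $M$ that meets every maximum independent set of $G$: any such $M$ satisfies $\alpha(G-V(M))\le \alpha-1$, because a maximum independent set of $G-V(M)$ would be a maximum independent set of $G$ avoiding $M$. To find $M$, I would fix a maximum independent set $S$ and a monochromatic component $M$ of maximum order, say red, and run an exchange argument: if some maximum independent set $S'$ were disjoint from $V(M)$, then any red edge from $M$ to $S'$ would force a vertex of $S'$ into $M$, so \emph{every} edge between $V(M)$ and $S'$ must be blue; thus $S'$ dominates $M$ in blue, and the goal is to merge blue components through $S'$ (or enlarge the independent set) so as to contradict the maximality of $M$, hence of $\alpha$.

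I expect the main obstacle to be exactly this last step: forcing the contradiction while respecting connectivity. Observing that all $M$--$S'$ edges are monochromatic is easy, but turning this into a strictly larger monochromatic component (or a larger independent set) is delicate, since $V(M)$ may split across several blue components and none need dominate on its own; it may be necessary to take $T$ to be a carefully constructed connected \emph{subgraph} rather than a whole component, or to iterate the exchange. By contrast, assembling the final partition from the reduction and verifying the base case are routine. Should the exchange argument resist, a natural fallback is to phrase the reduction through an auxiliary bipartite ``component graph'' and invoke K\"onig's $\nu=\tau$ duality to certify that deleting a suitable component drops the tracked parameter by one, but here one must be careful that this parameter really equals $\alpha(G)$ and not $\alpha(\hat{G})$, as the two can differ precisely because of the connectivity subtlety above.
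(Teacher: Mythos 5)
Your plan reduces the theorem to the claim that every $2$-colored graph $G$ with $\alpha(G)\geq 2$ contains a single monochromatic connected subgraph $T$ with $\alpha(G-V(T))\leq \alpha(G)-1$, and this claim is never established: the ``exchange argument'' is only a description of what you would like to contradict (the maximality of a largest monochromatic component $M$), not a mechanism for doing so. You correctly identify the two obstructions yourself --- a largest component need not meet every maximum independent set, and the K\H{o}nig-type duality certifies a drop in $\alpha(\hat{G})$ rather than in $\alpha(G)$, and these can differ --- but the proposal ends exactly where the work begins. As written, this is a genuine gap; the one-part-at-a-time reduction is essentially the $r=2$ case of Lov\'asz' deletion conjecture phrased for $\alpha(G)$ rather than for the hypergraph matching number, and nothing in your sketch proves it.

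The paper's proof avoids this reduction entirely. It starts from the \emph{cover} bound $\tc_2(G)\leq\alpha(G)$ (Theorem \ref{dualkonig}), takes such a cover with red components $R_1,\dots,R_p$ and blue components $B_1,\dots,B_q$, $p+q\leq\alpha(G)$, and sets $R'=R\setminus B$ and $B'=B\setminus R$ where $R$ and $B$ are the unions of the red, respectively blue, vertex sets. Since any $R',B'$-edge would place its endpoint in the other side's union, there are no edges between $R'$ and $B'$, whence $\alpha(G[R'])+\alpha(G[B'])\leq\alpha(G)$ and both terms are strictly smaller than $\alpha(G)$. Induction gives partitions of $G[R']$ and $G[B']$ into $p'$ and $q'$ parts with $p'+q'\leq\alpha(G)$, and the pigeonhole inequality $(p'+q)+(p+q')\leq 2\alpha(G)$ lets one combine, say, the partition of $R'$ with $B_1,\dots,B_q$ (which partition $V(G)\setminus R'$) into at most $\alpha(G)$ disjoint monochromatic connected subgraphs. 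So the induction is on $\alpha$ as in your plan, but the inductive step recurses on two disjoint induced subgraphs simultaneously and uses an averaging step to assemble the partition, rather than peeling off one part whose existence is the hard (and here unproven) point.
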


\begin{proof}
We are done if $\alpha(G)=1$, so suppose $\alpha(G)\geq 2$ and the statement holds for all $G'$ with $\alpha(G')<\alpha(G)$.

We know that $\tc_2(G)\leq \alpha(G)$.  Let $R_1, \dots, R_p$ be the red components and let $B_1, \dots, B_q$ be the blue components in such a monochromatic cover.  Note that $$p+q\leq \alpha(G).$$
Let $R=\bigcup_{i=1}^p V(R_i)$ and $B=\bigcup_{i=1}^q V(B_i)$.  Let $R'=R\setminus B$ and $B'=B\setminus R$.  Note that we are done unless $R'\neq \emptyset$ and $B'\neq \emptyset$.  Also note that there are no edges between $R'$ and $B'$ so $$\alpha(G[R'])+\alpha(G[B'])\leq \alpha(G).$$
Thus $\alpha(G[R'])<\alpha(G)$ and $\alpha(G[B'])<\alpha(G)$.  By induction we have $p':=\tp_2(G[R'])\leq \alpha(G[R'])$ and $q':=\tp_2(G[B'])\leq \alpha(G[B'])$. Let $C_1, \dots, C_{p'}$ be the monochromatic partition of $G[R']$ and let $D_1, \dots, D_{q'}$ be the monochromatic partition of $G[B']$.  Note that $$p'+q'\leq \alpha(G[R'])+\alpha(G[B'])\leq \alpha(G).$$
Since $p'+q+p+q'\leq 2\alpha(G)$, we have say $p'+q\leq \alpha (G)$.  So $C_1, \dots, C_{p'}, B_1, \dots, B_q$ is the desired monochromatic partition.  
\end{proof}

Haxell and Kohayakawa proved a weaker version of Conjecture \ref{con:EGP} (but stronger in the sense that the subgraphs have bounded radius).

\begin{theorem}[Haxell, Kohayakawa \cite{HK}]\label{thm:HK}
Let $r\geq 2$.  If $n\geq \frac{3r^4r!\ln r}{(1-1/r)^{3(r-1)}}$, then $\tp_r(K_n)\leq r$. 
Furthermore, it can be specified that the trees have radius at most $2$ and have distinct colors.
\end{theorem}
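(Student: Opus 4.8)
The plan is to prove this as a \emph{covering} statement and then upgrade it to a partition. Concretely, I would first produce, for each color $i\in[r]$, a single monochromatic tree $T_i$ of color $i$ and radius at most $2$ with $\bigcup_{i}V(T_i)=V(K_n)$. Because the $r$ trees carry \emph{distinct} colors, a vertex lying in several $T_i$ can be reassigned to exactly one of them: assign each vertex to the covering tree of smallest index, and note that each retained tree keeps its center together with, for every surviving leaf, the middle vertex joining that leaf to the center, so connectivity and the radius bound of $2$ are preserved. This cover-to-partition cleanup is routine but must be checked carefully so that no tree is disconnected and all colors remain distinct.

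For the covering step, the key elementary fact is that every vertex $w$ satisfies $\sum_{i=1}^{r} d_i(w)=n-1$, where $d_i(w)$ is the color-$i$ degree of $w$; hence $w$ has a color $i$ with $d_i(w)\ge (n-1)/r$. Moreover, a radius-$2$ tree of color $i$ centered at $c$ absorbs \emph{every} vertex that lies in $N_i[c]$ or merely shares a color-$i$ neighbor with $c$, so a single dense color-$i$ center reaches far into the second neighborhood. I would process the colors in a uniformly random order and build the trees greedily: when color $i$ is handled, let $U$ be the currently uncovered set, pick $c_i\in U$ maximizing the color-$i$ degree into $U$, let $T_i$ be the radius-$2$ color-$i$ tree it spans inside $U$, and delete $V(T_i)$ from $U$. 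The existence of a sufficiently dense color/center at each round (so that the process does not stall) is exactly the kind of guarantee provided by Theorem~\ref{thm:fur} together with the pigeonhole bound above, and the second-neighborhood reach is what lets each round clear a constant fraction of the vertices for which the current color is ``good.''

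The analysis I would then carry out fixes a vertex $w$ and bounds the probability, over the random ordering, that $w$ survives all $r$ rounds. Splitting colors according to whether $d_i(w)$ is large and tracking the round in which $w$'s densest color is processed, one shows that each of the roughly $r-1$ rounds contributes a survival factor close to $(1-1/r)^{3}$, so a single vertex stays uncovered with probability controlled by $(1-1/r)^{3(r-1)}$ up to lower-order factors; requiring the expected number of uncovered vertices to drop below $1$ (a union/expectation bound over the $n$ vertices and the $r$ colors) then yields the threshold $n\ge \frac{3r^4 r!\ln r}{(1-1/r)^{3(r-1)}}$, with the $r!$ reflecting the cost of the random permutation of colors, the $\ln r$ the union bound across colors, and $(1-1/r)^{3(r-1)}$ a bounded correction tending to $e^{-3}$. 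The \textbf{main obstacle} is precisely this probability estimate: the radius-$2$ reach of a center is genuinely structure-dependent (the color-$i$ graph could be, say, a disjoint union of cliques), so the difficulty is controlling the second-neighborhood coverage \emph{simultaneously} for all vertices rather than one at a time. This is what forces the random ordering and dictates the exact shape of the bound on $n$; the final cover-to-partition conversion, while necessary, is a comparatively minor technical point.
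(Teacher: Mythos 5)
Your proposal has two genuine gaps, and the route it takes is not the one in \cite{HK} (as summarized in Section \ref{sec:partition} of this paper).

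First, the cover-to-partition cleanup does not work as described. If a vertex $v$ lies at distance $2$ from the center $c_i$ of $T_i$ via a middle vertex $u$, and $u$ is also covered by some $T_j$ with $j<i$, then under your rule $u$ is reassigned to $T_j$ and $v$ (which may be covered only by $T_i$) loses its path to $c_i$. Your claim that each retained tree ``keeps its center together with, for every surviving leaf, the middle vertex joining that leaf to the center'' is precisely what fails: you cannot keep that middle vertex without putting it in two parts, which violates the partition requirement; the same issue can even delete a tree's center. Second, the key estimate for the covering step is asserted rather than proven. There is no reason a radius-$2$ color-$i$ tree centered at a maximum-degree vertex of $U$ clears a $1-(1-1/r)^3$ fraction of $U$: the color-$i$ graph on $U$ could be a disjoint union of many small cliques, in which case every radius-$2$ color-$i$ tree is tiny. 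Theorem \ref{thm:fur} guarantees one large monochromatic \emph{component}, not a large radius-$2$ subtree, and not in a color of your choosing, so it cannot supply the per-round guarantee your analysis needs. You acknowledge this estimate as the ``main obstacle,'' which is another way of saying the central step is missing.

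The actual argument builds the partition directly rather than repairing a cover. One greedily constructs $X=\{x_1,\dots,x_r\}$ and a disjoint set $Y$ so that each $x_i$ sends only color-$i$ edges to $Y$ (this greedy construction is where the $r!$-type factor in the bound on $n$ originates), sets $Z=V(K_n)\setminus(X\cup Y)$, and then finds a \emph{good partition} $\{Y_1,\dots,Y_r\}$ of $Y$, meaning every $z\in Z$ sends an edge of color $i$ to $Y_i$ for some $i$. The existence of a good partition is a one-line union bound (Lemma \ref{lem:Ypartition}): each $z$ rules out only $(r-1)^{|Y|}$ of the $r^{|Y|}$ partitions, so it suffices that $|Z|<\bigl(\tfrac{r}{r-1}\bigr)^{|Y|}$. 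Assigning each $z$ to one such $i$ yields disjoint sets $\{x_i\}\cup Y_i\cup Z_i$, each spanned by a color-$i$ tree of radius at most $2$ centered at $x_i$, with distinct colors, and disjointness is automatic. If you want to salvage your approach, you would need both a correct disjointification argument and a proof of the per-round coverage bound; as written, neither is in place.
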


Given this result, it would be interesting to prove a bounded diameter strengthening of Theorem \ref{thm:EGP}.  

\begin{problem}\label{prob:diameterpartition}
Does there exist a constant $d$ such that in every $3$-coloring of $K_n$ there exists a monochromatic $2$-partition consisting of subgraphs of diameter at most $d$?
\end{problem}

The lower bound on $n$ in Theorem \ref{thm:HK} was slightly improved by Bal and DeBiasio \cite{BD} to $n\geq 3r^2r!\ln r$.  The proofs in \cite{HK} and \cite{BD} go as follows:  Construct a set $X=\{x_1, \dots, x_r\}$ and disjoint set $Y$ so that for all $i\in [r]$, $x_i$ only sends edges of color $i$ to $Y$.  Then letting $Z=V(K_n)\setminus (X\cup Y)$, we have an $r$-colored complete bipartite graph $[Y,Z]$.  We say that $Y$ has a \emph{good partition} if there exists an integer $1\leq k\leq r$ and a partition $\{Y_1, \dots, Y_k\}$ of $Y$ (allowing for parts of the partition to be empty) such that for all $z\in Z$, there exists $i\in [k]$ and $y\in Y_i$ such that $zy$ has color $i$.  Then it is shown that if $Y$ is large enough (equivalently, $Z$ is small enough) then $Y$ has a good partition.  If $Y$ has a good partition, then there exists a partition $\{Z_1, \dots, Z_k\}$ of $Z$ (allowing for parts of the partition to be empty) such that for all $i\in [k]$ and $z\in Z_i$, $z$ sends an edge of color $i$ to $Y_i$.  Thus for all $i\in [k]$, the graph of color $i$ induced on $\{x_i\}\cup Y_i\cup Z_i$ can be covered by a tree of radius at most 2.

The following lemma is a slight modification of the relevant lemma in \cite{BD}. 

\begin{lemma}\label{lem:Ypartition}
Let $r\geq 2$ and let $G\in \mathcal{K}_2$ with parts $Y$ and $Z$, where $Y$ is finite if $r\geq 3$.  
If $|Z|<(\frac{r}{r-1})^{|Y|}$, then for every $r$-coloring of the edges of $G$ there exists a good partition of $Y$.
\end{lemma}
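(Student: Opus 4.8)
The plan is to reinterpret a good partition of $Y$ as a vertex-coloring of $Y$ and then show, by a first-moment (counting) argument, that a suitable coloring must exist. Fix the given $r$-coloring $c$ of the edges of $G$. Any function $\phi\colon Y\to[r]$ determines a partition $\{Y_1,\dots,Y_r\}$ of $Y$ by setting $Y_i=\phi^{-1}(i)$ (parts are allowed to be empty, so taking $k=r$ is legitimate), and unwinding the definition of a good partition, this partition is good precisely when every $z\in Z$ is \emph{covered}, meaning there is some $y\in Y$ with $\phi(y)=c(zy)$. Thus it suffices to produce a single coloring $\phi$ under which every $z\in Z$ is covered.

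First I would treat the case that $Y$ is finite, which includes every case with $r\ge 3$. For a fixed $z\in Z$, a coloring $\phi$ fails to cover $z$ exactly when $\phi(y)\neq c(zy)$ for all $y\in Y$; since for each $y$ this forbids a single value of $\phi(y)$, there are exactly $(r-1)^{|Y|}$ such failing colorings. Summing over $z\in Z$, the number of colorings that fail to cover some vertex of $Z$ is at most $|Z|\,(r-1)^{|Y|}$. The hypothesis $|Z|<\left(\frac{r}{r-1}\right)^{|Y|}$ rearranges to $|Z|\,(r-1)^{|Y|}<r^{|Y|}$, and since $r^{|Y|}$ is the total number of functions $Y\to[r]$, some coloring covers every $z\in Z$ and yields the desired good partition. (Equivalently, one may phrase this probabilistically: color each $y\in Y$ independently and uniformly from $[r]$; then $\Pr{z\text{ uncovered}}=\left(\frac{r-1}{r}\right)^{|Y|}$, and a union bound gives probability strictly below $1$ that some $z$ is uncovered.)

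The remaining case is $r=2$ with $Y$ infinite, where the counting must be read as cardinal arithmetic and the main subtlety lies. Here I would exploit that when $r=2$ the failing coloring for a given $z$ is \emph{unique}: $\phi$ fails to cover $z$ iff $\phi(y)=3-c(zy)$ for all $y$, which pins $\phi$ down completely. Hence the set of colorings failing to cover some $z$ has cardinality at most $|Z|$, while the set of all colorings $Y\to\{1,2\}$ has cardinality $2^{|Y|}=\left(\frac{r}{r-1}\right)^{|Y|}$. Removing a subset of strictly smaller cardinality from a set leaves a nonempty (indeed equinumerous) remainder, so under the hypothesis $|Z|<2^{|Y|}$ a covering coloring survives.

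I expect the only genuine obstacle to be the bookkeeping in the infinite regime: for $r\ge 3$ the per-$z$ count of failing colorings is $(r-1)^{|Y|}$, which for infinite $Y$ collapses to $2^{|Y|}=r^{|Y|}$ by cardinal arithmetic, so the union-bound margin is destroyed---this is precisely why the hypothesis restricts to finite $Y$ when $r\ge 3$, and why the uniqueness trick is special to $r=2$. Everything else is a direct union bound, so no further difficulties are anticipated.
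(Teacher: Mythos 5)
Your proposal is correct and is essentially the paper's own proof: both identify partitions of $Y$ with functions $Y\to[r]$, count exactly $(r-1)^{|Y|}$ partitions that are bad for a fixed $z\in Z$, and conclude by the union bound $|Z|\cdot(r-1)^{|Y|}<r^{|Y|}$. Your explicit treatment of the infinite-$Y$, $r=2$ case via the uniqueness of the failing coloring is a nice clarification, but it is the same calculation the paper's inequality already encodes (since $(r-1)^{|Y|}=1$ when $r=2$).
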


\begin{proof}
We say that a partition $\{Y_1, \dots, Y_k\}$ of $Y$ is \emph{good for} $z\in Z$ if there exists $i\in [k]$ and $y\in Y_i$ such that $zy$ has color $i$; otherwise we say that the partition of $Y$ is \emph{bad for} $z$.  

For all $z\in Z$, there are $(r-1)^{|Y|}$ partitions of $Y$ which are bad for $z$.  Since $$|Z|\cdot (r-1)^{|Y|}<\left(\frac{r}{r-1}\right)^{|Y|}\cdot (r-1)^{|Y|}=r^{|Y|},$$ there exists a partition of $Y$ which is good for every vertex in $Z$.  
\end{proof}

Our original intention was to come up with a new proof of Theorem \ref{thm:EGP} which would allow us to solve Problem \ref{prob:infinite} or Problem \ref{prob:diameterpartition}, but in the process we found an example to show that Lemma \ref{lem:Ypartition} is tight when $r=2$ or $Y$ is infinite.

\begin{example}\label{ex:r2}
Let $G\in \mathcal{K}_2$ with parts $Y$ and $Z$ where $Y$ and $Z$ are possibly infinite.  If $|Z|\geq 2^{|Y|}$, then there exists a $2$-coloring of the edges of $G$ such that $Y$ does not have a good partition.
\end{example}

\begin{proof}
Let $\{Z_{\mathbf{b}}: \mathbf{b}\in \{0,1\}^{|Y|}\}$ be a partition of $Z$ indexed by the binary strings of length $|Y|$ (i.e.\ functions from $Y$ to $\{0,1\}$) where every set is non-empty (which is possible since $|Z|\geq 2^{|Y|}$). For each $z\in Z_{\mathbf{b}}$ and $y\in Y$, color $zy$ with $\mathbf{b}(y)$ (so the colors are 0 and 1).  

\begin{figure}[ht]
\begin{center}
\includegraphics{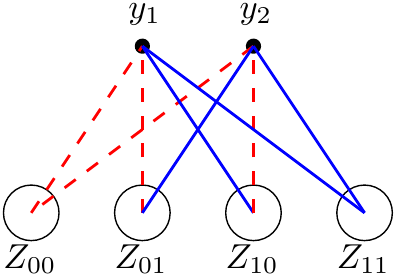}
\caption{Example \ref{ex:r2} in the case $|Y|=2$.}
\end{center}
\end{figure}

Consider a partition $\{Y_0,Y_1\}$ of $Y$ (with $Y_i$ possibly empty) and consider the binary string $\mathbf{a}$ where $\mathbf{a}(y)=j$ if and only if $y\in Y_j$.  Let $\mathbf{b}$ be the binary string where $\mathbf{b}(y)=1-\mathbf{a}(y)$ for all $y\in Y$.  So $z\in Z_{\mathbf{b}}$ does not send any edges of color $i$ to $Y_i$ for all $i\in \{0,1\}$.  
\end{proof}

We also have an example which shows that Lemma \ref{lem:Ypartition} is close to tight when $Y$ is finite and $r\geq 3$.

\begin{example}\label{ex:rdrandom}
Let $r\geq 3$ and let $G\in \mathcal{K}_2$ with parts $Y$ and $Z$ (where $Y$ and $Z$ are finite).  If $|Z|> 4|Y|\ln r(\frac{r}{r-1})^{|Y|}$, then there exists an $r$-coloring of the edges of $G$ such that $Y$ does not have a good partition.  
\end{example}

This is obtained by showing that with positive probability, a random $r$-coloring of $G$ does not have a good partition.  However, we don't give the details here since this result will be superseded by an upcoming result with a better constant term.

With regards to Problem \ref{prob:infinite}, Lemma \ref{lem:Ypartition} has the following consequence.

\begin{corollary}
Let $K$ be a 3-colored complete graph on a set $V$.  If there exists a maximal monochromatic component $C$ (that is a monochromatic component which is not properly contained in a monochromatic component of another color) such that $|V(C)|<2^{|V\setminus V(C)|}$,
then there exists a 2-partition of $K$.  In particular, if $V$ is countably infinite, then there is a 2-partition of $K$ unless every maximal monochromatic component is 
cofinite.
\end{corollary}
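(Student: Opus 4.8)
The plan is to work in the dual-free language of the 3-colored complete graph $K$ itself and exploit the color-$1$ connectivity of $C$ as a backbone. Write $W = V \setminus V(C)$ and assume without loss of generality that $C$ has color $1$. Since $C$ is a \emph{component} of the color-$1$ graph, no color-$1$ edge leaves $V(C)$, so every $V(C),W$-edge has color $2$ or $3$; that is, $[V(C),W]$ is a $\{2,3\}$-colored complete bipartite graph. Here is where maximality enters: if some $w \in W$ sent only color-$2$ edges to $V(C)$, then $\{w\}\cup V(C)$ would lie in a single color-$2$ component properly containing $V(C)$, contradicting that $C$ is not properly contained in a monochromatic component of another color; the same holds for color $3$. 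So I first record that every $w\in W$ has both a color-$2$ and a color-$3$ neighbor in $V(C)$.

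Next I would invoke Lemma~\ref{lem:Ypartition} with $r=2$, taking $Y=W$ and $Z=V(C)$ (legitimate because for $r=2$ the lemma places no finiteness restriction on $Y$, and the hypothesis $|V(C)|<2^{|W|}$ is exactly $|Z|<(\tfrac{r}{r-1})^{|Y|}$). This yields a good partition $\{W_2,W_3\}$ of $W$: each $z\in V(C)$ has a color-$2$ neighbor in $W_2$ or a color-$3$ neighbor in $W_3$. Set $Z_2=\{z\in V(C): z\text{ is color-}2\text{ adjacent to }W_2\}$ and $Z_3=V(C)\setminus Z_2$, so that each $z\in Z_3$ is color-$3$ adjacent to $W_3$. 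The target $2$-partition is $\{\,Z_2\cup W_2,\ Z_3\cup W_3\,\}$, colored $2$ and $3$ respectively; the degree conditions needed for each piece to have no isolated vertex follow from the good partition together with the maximality observation (every $w\in W_2$ has a color-$2$ neighbor, which automatically lands in $Z_2$, and symmetrically for $W_3$).

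The hard part will be upgrading these covering conditions to genuine \emph{connectivity} of the two monochromatic pieces. This is a real obstacle rather than a formality: Theorem~\ref{thm:partmulti} shows that $\tp_2(K)$ is unbounded over $K\in\mathcal{K}_2$, so connectivity cannot be extracted from the bipartite graph $[V(C),W]$ alone and must be supplied by the color-$1$ structure of $V(C)$. The plan is to use that $Z_2,Z_3\subseteq V(C)$ sit inside one color-$1$ connected set to drive an iterative reassignment: whenever the color-$2$ subgraph on $Z_2\cup W_2$ splits off a piece that is not attached to the rest, move the offending vertices to the color-$3$ side (and vice versa), arguing that this process stabilizes — in the finite case by a monovariant, and in the general case by a maximality/Zorn argument — leaving two monochromatic connected subgraphs that partition $V$. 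Verifying that this reassignment terminates while preserving that the two final pieces remain connected and monochromatic is the technical crux of the argument.

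Finally, the ``in particular'' clause follows quickly once the main statement is in hand. Containment chains of monochromatic components alternate colors and cannot repeat a color (a monochromatic component of a given color is not properly contained in another component of the same color), so such chains have length at most three and maximal monochromatic components exist. If $V$ is countably infinite and some maximal monochromatic component $C$ is not cofinite, then $W=V\setminus V(C)$ is infinite, whence $|V(C)|\le\aleph_0<2^{\aleph_0}=2^{|W|}$, and the main statement produces a $2$-partition. Contrapositively, a $2$-partition exists unless every maximal monochromatic component is cofinite, as claimed.
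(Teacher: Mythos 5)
Your setup is correct as far as it goes --- the maximality observation (every $w\in W$ has neighbors of both colors $2$ and $3$ in $V(C)$) and the application of Lemma \ref{lem:Ypartition} with $Y=W$, $Z=V(C)$ both match the paper --- but there is a genuine gap exactly where you flag one. The good partition only guarantees that each $z\in Z_i$ has a color-$i$ neighbor in $W_i$ and each $w\in W_i$ has a color-$i$ neighbor in $Z_i$; this does not make $Z_i\cup W_i$ connected in color $i$ (the color-$i$ edges between $Z_i$ and $W_i$ could, for instance, form two disjoint stars, with no color-$i$ edges inside $Z_i$ or inside $W_i$ joining them --- edges inside $V(C)$ are unconstrained since $C$ is only a color-$1$ component). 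Your proposed repair, an iterative reassignment controlled by a monovariant or a Zorn argument, is not carried out, and it is not clear it can be: when a stray color-$2$ piece is moved to the color-$3$ side there is no reason it attaches there either, and nothing prevents the process from cycling or from disconnecting the other side. As you yourself note via Theorem \ref{thm:partmulti}, covering-type information on $[V(C),W]$ alone cannot yield a $2$-partition, so some further structural input is mandatory.

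The paper supplies that input by first running Lemma \ref{2colorbip_nodiam} on the $\{2,3\}$-colored bipartite graph $[V(C),W]$ \emph{before} invoking Lemma \ref{lem:Ypartition}. Cases \ref{p2'} and \ref{p3'} give the $2$-partition outright, and your maximality observation rules out \ref{p1'} with $V(C)$ as the double covered side; so one may assume \ref{p1'} holds with $W$ as the double covered side, i.e.\ there exist hub vertices $z_2,z_3\in V(C)$ such that every edge from $z_i$ to $W$ has color $i$. These hubs are precisely what make the two pieces of the good partition connected: $z_i$ necessarily lands in $Z_i$ (it sends only color $i$ to $W$), every vertex of $W_i$ is joined to $z_i$ by a color-$i$ edge, and every vertex of $Z_i$ reaches $z_i$ by a color-$i$ path of length two through $W_i$. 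Your argument is missing exactly this reduction, and without the hubs the connectivity claim does not follow. (Your derivation of the ``in particular'' clause from the main statement is fine.)
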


\begin{proof}
By the assumption, let $C$ be a maximal monochromatic component with $|V(C)|<2^{|V\setminus V(C)|}$ and without loss of generality, suppose $C$ is green.  Set $Z=V(C)$ and $Y=V\setminus V(C)$.  By maximality of $C$, we may suppose that all edges between $Y$ and $Z$ are either red or blue.  We apply Lemma \ref{2colorbip_nodiam} and note that we are done unless \ref{p1'} holds where $Y$ is the double covered side (again by maximality of $C$).  Now since $|Z|<2^{|Y|}$, we can apply Lemma \ref{lem:Ypartition} to get a good partition $\{Y_1, Y_2\}$ of $Y$ and a corresponding partition $\{Z_1, Z_2\}$ of $Z$ such that $Y_1\cup Z_1$ and $Y_2\cup Z_2$ induce monochromatic components.   
\end{proof}

%
%
%

The following corollary provides a proof of Theorem \ref{thm:partmulti}.

\begin{corollary}\label{cor:badmulti}Let $k\geq 2$ and $G\in \mathcal{K}_k$ with the vertex partition $\{V_1, \dots, V_k\}$.  If there exists $i\in [k]$, such that $|V_i|\geq 2^{|V(G)\setminus V_i|}$, then $\tp_2(G)\geq \floor{|V_i|/2^{|V(G)\setminus V_i|}}$.  In particular, for all integers $t\geq 1$ and $k\geq 2$, there exists $G\in \mathcal{K}_k$ such that $\tp_2(G)>t$.
\end{corollary}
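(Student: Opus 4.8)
The plan is to exhibit, for each such $G$, a single $2$-coloring in which every monochromatic partition is forced to contain many singleton components lying inside $V_i$. Write $W=V(G)\setminus V_i$ and $w=|W|$. First I would apply the coloring of Example \ref{ex:r2} to the complete bipartite graph $[V_i,W]$, with $V_i$ playing the role of $Z$ and $W$ the role of $Y$: partition $V_i$ into $2^{w}$ classes $\{Z_{\mathbf b}:\mathbf b\in\{0,1\}^{W}\}$ \emph{as evenly as possible}, so that $|Z_{\mathbf b}|\ge \floor{|V_i|/2^{w}}$ for every $\mathbf b$, and color the edge $zy$ with color $\mathbf b(y)$ whenever $z\in Z_{\mathbf b}$ and $y\in W$. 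The edges inside $W$ may be colored arbitrarily. (The hypothesis $|V_i|\ge 2^{w}$ guarantees $\floor{|V_i|/2^{w}}\ge 1$, so the bound is non-trivial, but the even split itself works for any size.)

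Next, fix an arbitrary monochromatic $2$-partition $\cT$ of this coloring. Since $V_i$ is an independent set, any $T\in\cT$ with $V(T)\subseteq V_i$ is a single vertex. Each $y\in W$ lies in a unique member of $\cT$; let $c(y)\in\{0,1\}$ be the color of that component, and let $\overline c\in\{0,1\}^{W}$ be the complementary string $\overline c(y)=1-c(y)$. The key step is to show that every vertex of $Z_{\overline c}$ is a singleton of $\cT$. Indeed, if some $z\in Z_{\overline c}$ were not a singleton, its component $T$ would contain at least two vertices, hence — as $V_i$ is independent — at least one vertex $y\in W$, and $z$ would be joined to $T$ by an edge of color $\mathrm{col}(T)$ to some $W$-vertex $y'\in V(T)$ (the only possible neighbors of $z$ lie in $W$). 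But then $c(y')=\mathrm{col}(T)$, whereas $z$ sends $y'$ the edge of color $\overline c(y')=1-c(y')\neq c(y')$, a contradiction. Therefore all $|Z_{\overline c}|\ge \floor{|V_i|/2^{w}}$ vertices of $Z_{\overline c}$ are singleton components, so $|\cT|\ge \floor{|V_i|/2^{w}}$.

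Since this holds for every partition of the chosen coloring, we conclude $\tp_2(G)\ge \floor{|V_i|/2^{w}}$, which is the first assertion. For the ``in particular'' statement, given $t\ge 1$ and $k\ge 2$ I would take $G\in\mathcal{K}_k$ with $V_2,\dots,V_k$ singletons and $|V_1|=(t+1)2^{k-1}$; then $w=k-1$ and $\floor{|V_1|/2^{w}}=t+1>t$. The one genuinely delicate point — and the main thing to get right — is that the complementary string $\overline c$ is not known in advance, as it depends on the partition $\cT$; the construction must therefore make \emph{every} class $Z_{\mathbf b}$ simultaneously large, which is exactly why the even distribution is used here rather than the uneven one that suffices in Example \ref{ex:r2}. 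Everything else is bookkeeping.
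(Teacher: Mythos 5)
Your proposal is correct and follows essentially the same route as the paper: both apply the coloring of Example \ref{ex:r2} to $[V_i, V(G)\setminus V_i]$ with the classes $Z_{\mathbf b}$ split as evenly as possible, and both observe that whatever red/blue pattern the partition induces on $V(G)\setminus V_i$, the complementary class of $V_i$ is cut off from every component meeting $V(G)\setminus V_i$ and so consists entirely of singletons. Your write-up just makes explicit the singleton argument and the choice of $G$ for the ``in particular'' clause, which the paper leaves implicit.
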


\begin{proof}
We let $Z=V_i$ and $Y=V(G)\setminus V_i$ and color the edges between $Y$ and $Z$ as in Example \ref{ex:r2} (where we partition $Z$ into as equal sized sets as possible so that each part of the partition has at least $\floor{|Z|/2^{|Y|}}$ elements).  Regardless of the edges inside the set $Y$, no matter how the set $Y$ gets partitioned into red and blue subgraphs, there will be a part of the partition of $Z$ which sends blue edges to the red subgraphs and red edges to the blue subgraphs.
\end{proof}

The following question essentially asks whether the situation described in Corollary \ref{cor:badmulti} is the only way to avoid having a 2-partition of a 2-colored multipartite graph.

\begin{problem}
Is the following true?\\
Let $k\geq 2$ be an integer and let $G\in \mathcal{K}_k$ with vertex partition $\{V_1, \dots, V_k\}$.  If for all $i\in [k]$, $|V_i|<2^{|V(G)\setminus V_i|}$, then in every 2-coloring of the edges of $G$ there exists a 2-partition of $G$.
\end{problem}

Given an $r$-colored graph $G$ and a color $i\in [r]$, let $G_{cross}(i)$ be the multipartite graph consisting of the edges going between the components of color $i$.  So if there are $k$ components of color $i$, then $G_{cross}(i)$ is a $k$-partite graph colored with $[r]\setminus \{i\}$.  

\begin{problem}\label{ex:3cross}
Is the following true?\\
There exists a 3-coloring of a complete graph such that for all $i\in [3]$, there are at least three components of color $i$ and there is no partition of $G_{cross}(i)$ into two monochromatic connected subgraphs.  
\end{problem}

Encouraged by the exact answer for $r=2$ (from Lemma \ref{lem:Ypartition} and Example \ref{ex:r2}), we attempted to obtain a precise answer for $r\geq 3$ (even though it wouldn't help improve the lower bound in Theorem \ref{thm:HK} by any significant amount).  Towards this end, for all integers $r\geq 2$ and $d\geq 1$, let $Z(r,d)$ be the smallest positive integer $z$ such that if $G$ is a complete bipartite graph with parts $Y$ and $Z$ with $|Y|=d$ and $|Z|=z$, then there exists an $r$-coloring of $G$ in which there is no good partition of $Y$.  In this language, we know from Lemma \ref{lem:Ypartition}, Example \ref{ex:r2}, and Example \ref{ex:rdrandom} that $Z(2,d)=2^d$ for all $d\geq 1$ and $\left(\frac{r}{r-1}\right)^d\leq Z(r,d)\leq 4d\ln r\left(\frac{r}{r-1}\right)^d$ for all $r\geq 3, d\geq 1$.

\begin{problem}\label{prob:Zrd}
For all $r\geq 3$ and $d\geq 1$, determine $Z(r,d)$.
\end{problem}

We begin with a few simple observations.

\begin{observation}\label{obs:Zrd}
For all $r\geq 2$ and $d\geq 1$,
\begin{enumerate}
    \item If $r'\geq r$, then $Z(r',d)\leq Z(r,d)$.
    \item $Z(r,d)\geq d+1$.
    \item If $r\geq d+1$, then $Z(r,d)\leq d+1$.
    \item $Z(r,r)\leq r+\ceiling{\frac{r}{2}}+1$
\end{enumerate}
\end{observation}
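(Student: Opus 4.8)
The plan is to pass to the equivalent ``defeating vector'' formulation. Fixing $|Y|=d$, I identify each $z\in Z$ with its row vector $(c(zy))_{y\in Y}\in[r]^d$ and each candidate partition with the function $\phi\colon Y\to[r]$ sending $y$ to the index of its part. Then $\phi$ covers $z$ iff $\phi$ agrees with the row of $z$ in some coordinate, so $Y$ has \emph{no} good partition iff the ``anti-boxes'' $A(v)=\{\phi\in[r]^d:\phi_y\neq v_y\text{ for all }y\}$ of the rows cover all of $[r]^d$. Hence $Z(r,d)$ is exactly the least number of vectors in $[r]^d$ whose anti-boxes cover $[r]^d$. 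Part (i) is then immediate: a bad $r$-coloring is also an $r'$-coloring for $r'\ge r$, and a good $r'$-partition of it can use no part indexed by a color in $\{r+1,\dots,r'\}$ (no edge carries such a color), so collapsing those parts yields a good $r$-partition; thus a bad $r$-coloring stays bad over $r'$ colors and $Z(r',d)\le Z(r,d)$.

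For part (ii) I would show that $|Z|\le d$ forces a good partition: choose an injection $\iota\colon Z\hookrightarrow Y$ (possible since $|Z|\le d=|Y|$) and set $\phi(\iota(z))=c(z\,\iota(z))$, assigning the remaining vertices of $Y$ arbitrarily; then every $z$ is covered by $\iota(z)$. Hence no bad coloring exists with $|Z|\le d$, i.e.\ $Z(r,d)\ge d+1$. For part (iii), when $r\ge d+1$ I take $Z=\{z_1,\dots,z_{d+1}\}$ and color every edge at $z_i$ with color $i$ (these $d+1$ colors are available). A partition $\phi$ covers $z_i$ iff color $i$ lies in $\phi(Y)$; since $|\phi(Y)|\le|Y|=d<d+1$, some color is missed and some $z_i$ is uncovered, so this coloring is bad and $Z(r,d)\le d+1$, matching (ii).

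Part (iv) is the substantial one. Here $d=r$, and I want $r+\ceiling{r/2}+1$ rows whose anti-boxes cover $[r]^r$. The $r$ constant rows $(i,\dots,i)$ have anti-boxes equal to exactly the non-surjective $\phi$, so these dispose of every $\phi$ that misses a color; the whole difficulty is covering the \emph{surjective} $\phi$ (the permutations of $[r]$) and any leftover functions using only $\ceiling{r/2}+1$ more rows. My plan is to replace some constant rows by the near-constant rows $N_a=(a,\dots,a,b)$ with $b\neq a$, whose anti-box catches \emph{all} permutations with $\phi(y_r)=a$ together with many non-surjective functions; letting $a$ range over $[r]$ catches every permutation, and a short calculation isolates the functions still uncovered as a structured ``almost-injective'' family (those whose first $r-1$ coordinates already realize all but one color). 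The goal is then to cover this residual family with an extra gadget of $\ceiling{r/2}+1$ rows built by pairing up colors.

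The main obstacle is precisely the construction and verification of this last gadget. The clean two-regime split ``$r$ rows for the non-surjective functions, a few rows catching all permutations'' provably cannot reach the bound: for $r=4$ one checks that a single row is discordant with at most $D_4=9$ permutations, and that three permutation-rows cover at most $23$ of the $24$ permutations (distinct $9$-catching rows are permutation-rows, which pairwise share at least two permutations), so the permutations alone already require more than $\ceiling{4/2}+1=3$ rows. Thus the savings encoded in $r+\ceiling{r/2}+1$ must come from rows doing double duty, and pinning down the explicit $\ceiling{r/2}+1$-row gadget---guided by the small cases $r\le 3$, where the bound is tight, and checked by computer for the next few values---is where the real work lies. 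An attractive alternative is an inductive construction realizing the per-step increments $Z(r,r)-Z(r-1,r-1)\in\{1,2\}$ forced by the formula, whose inductive step (adding one color, one vertex of $Y$, and one or two vertices of $Z$) would then be the crux.
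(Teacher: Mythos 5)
Parts (i)--(iii) of your proposal are correct and essentially identical to the paper's proof: (i) a bad $r$-coloring remains bad over $r'\geq r$ colors, (ii) an injection $Z\hookrightarrow Y$ produces a good partition whenever $|Z|\leq d$, and (iii) the $d+1$ "constant" vertices $v_i$ (all edges at $v_i$ colored $i$) give a bad coloring since $|\phi(Y)|\leq d$.

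Part (iv), however, is a genuine gap: you explicitly stop short of constructing the $\ceiling{r/2}+1$-row gadget, and this is precisely the content of the statement. Worse, the obstruction you offer to the "two-regime split" is erroneous, and the paper's construction is exactly of that form. Take $Z=\{v_1,\dots,v_r,u_1,\dots,u_{\ceiling{r/2}+1}\}$ with $Y=\{y_1,\dots,y_r\}$; the $v_i$ are the constant rows, forcing any good partition $\phi$ to be a permutation of $[r]$, and $u_i$ sends color $i$ to the first half $\{y_1,\dots,y_{\ceiling{r/2}}\}$ and color $i+1$ to the second half (with $u_{\ceiling{r/2}+1}$ sending color $1$ to the second half). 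Writing $S=\phi(\{y_1,\dots,y_{\ceiling{r/2}}\})$, the row $u_i$ is uncovered exactly when $i\notin S$ and $i+1\in S$ (cyclically on $[\ceiling{r/2}+1]$); since $|S|=\ceiling{r/2}$, the set $[\ceiling{r/2}+1]$ can neither be contained in $S$ nor disjoint from $S$ (as $|[r]\setminus[\ceiling{r/2}+1]|=\floor{r/2}-1<|S|$), so such an $i$ always exists. Your claim that "the permutations alone already require more than $3$ rows" for $r=4$ fails because you only bound the union of anti-boxes of \emph{permutation}-rows; the rows $(1,1,2,2)$, $(2,2,3,3)$, $(3,3,1,1)$ each catch only $8$ permutations but catch pairwise disjoint sets, covering all $24$. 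So the gadget you were searching for exists, is simple, and does not require rows to "do double duty" with the non-surjective functions.
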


\begin{proof}
Let $G$ be a complete bipartite graph with parts $Y$ and $Z$ with $|Y|=d$ and $|Z|=z$. 
\begin{enumerate}
    \item If there exists an $r$-coloring of $G$ such that every partition of $Y$ is bad, then since $r'\geq r$ the $r$-coloring of $G$ is an $r'$-coloring of $G$ such that every partition of $Y$ is bad.
    \item If $z\leq d$, then let $Y=\{u_1, \dots, u_d\}$ and $Z=\{v_1, \dots, v_z\}$ and suppose we are given an $r$-coloring of the edges of $G$.  Let $\{Y_1, \dots, Y_r\}$ be a partition of $Y$ (with some sets possibly empty) such that for all $i\in [z]$, $u_i\in Y_c$ where $c$ is the color of the edge $u_iv_i$, and for all $z+1\leq i\leq d$, $u_i\in Y_1$.  Clearly $\{Y_1, \dots, Y_r\}$ is a good partition of $Y$.  Thus $Z(r,d)\geq d+1$.
    \item Label the vertices of $Z$ as $v_1, \dots, v_z$ and consider the coloring of $G$ where for all $i\in [d+1]$, all edges incident with $v_i$ get color $i$.  Then every partition of $Y$ is bad for some vertex in $\{v_1, \dots, v_{d+1}\}$.
    \item Suppose $z=r+\ceiling{\frac{r}{2}}+1$.  Label the vertices of $Z$ as $v_1, \dots, v_r, u_1, \dots, u_{\ceiling{\frac{r}{2}}+1}$ and label the vertices of $Y$ as $y_1, \dots, y_r$.  Consider the following coloring of $G$. For all $i\in [r]$, all edges incident with $v_i$ get color $i$. For all $i\in [\ceiling{\frac{r}{2}}+1]$, the edges from $u_i$ to $\{y_1, \dots, y_{\ceiling{\frac{r}{2}}}\}$ are colored with $i$ and the edges from $u_i$ to $Y\setminus \{y_1, \dots, y_{\ceiling{\frac{r}{2}}}\}$ are colored with $i+1$, except that the edges from $u_{\ceiling{r/2}+1}$ to $Y\setminus \{y_1, \dots, y_{\ceiling{\frac{r}{2}}}\}$ are colored with 1.  If there is a good partition $\{Y_1, \dots, Y_{r}\}$ of $Y$, it must be the case that all sets in the partition are singletons because otherwise one of $v_1, \dots, v_r$ would witness a bad partition.  Also there is exactly one vertex $u_i\in \{u_1, \dots, u_{\ceiling{\frac{r}{2}}+1}\}$ which is not satisfied by a vertex from $\{y_1, \dots, y_{\ceiling{\frac{r}{2}}}\}$; however, the only color that $u_i$ sends to $Y\setminus \{y_1, \dots, y_{\ceiling{\frac{r}{2}}}\}$ has already been used on $\{y_1, \dots, y_{\ceiling{\frac{r}{2}}}\}$.\qedhere
\end{enumerate}
\end{proof}

We were able to compute some small values of $Z(r,d)$ using an integer linear program.  Surprisingly, we didn't even have enough computing power to determine $Z(4,4)$ or $Z(3,5)$.

\begin{table}[ht]
\centering
\begin{tabular}{|l|l|l|l|l|l|l|}
\hline
\backslashbox{$d$}{$r$} & \textbf{2} & \textbf{3} & \textbf{4} & \textbf{5} & \textbf{6} & \textbf{7} \\ \hline
\textbf{1} &  \cellcolor{yellow}2 & \cellcolor{yellow}2 & \cellcolor{yellow}2 & \cellcolor{yellow}2 & \cellcolor{yellow}2& \cellcolor{yellow}$\rightarrow$ \\ \hline
\textbf{2} &  \cellcolor{yellow}4 & \cellcolor{yellow}3 & \cellcolor{yellow}3 & \cellcolor{yellow}3 &\cellcolor{yellow}3& \cellcolor{yellow}$\rightarrow$ \\ \hline
\textbf{3} &  \cellcolor{yellow}8 & \cellcolor{yellow}5 & \cellcolor{yellow}4 & \cellcolor{yellow}4  &\cellcolor{yellow}4&  \cellcolor{yellow}$\rightarrow$ \\ \hline
\textbf{4} &  \cellcolor{yellow}16 & \cellcolor{yellow}8 & 6/7 & \cellcolor{yellow}5 &\cellcolor{yellow}5& \cellcolor{yellow}$\rightarrow$ \\ \hline
\textbf{5} &  \cellcolor{yellow}32 & 11/12 &  &  & \cellcolor{yellow}6&\cellcolor{yellow}$\rightarrow $\\ \hline
\textbf{6} & \cellcolor{yellow}$\downarrow$ &  &  &  & & \cellcolor{yellow}$\searrow$\\ \hline
\end{tabular}
\caption{Values of $Z(r,d)$.  Exact values are highlighted in yellow.}\label{tab:Zrd} 
\end{table}

Note that $Z(r,d)$ is equivalent to the following.  Let $d$ and $r$ be positive integers and let $\mathcal{W}_{r,d}$ be the set of functions from $[d]$ to $[r]$ (which we think of as words of length $d$ over the alphabet $[r]$).  Say that two functions $f,g\in\mathcal{W}_{r,d}$  are \emph{everywhere different} if  $f(i)\neq g(i)$ for all $i\in [d]$.  Let $Z(r,d)$ be the smallest integer $z$ such that there exists $Z\subseteq \mathcal{W}_{r,d}$ with $|Z|= z$ such that for all $f\in \mathcal{W}_{r,d}$, there exists $g\in Z$ such that $f$ and $g$ are everywhere different.  For instance when $d=2$ and $r=3$, it is easy to see that $Z=\{(1,1), (2,2), (3,3)\}$ is a smallest such set with this property.  In the case $d=3=r$, one can check that $Z=\{(1,1,2), (1,2,1), (2,1,1), (2,2,2), (3,3,3)\}$ is a smallest such set.
To see that this is equivalent to the bipartite graph version, we think of the set $Z$ as the colorings of the edges from each vertex $v\in Z$ to the set $Y$. Then each partition of $Y$ corresponds to a function $f\in \mathcal{W}_{r,d}$ which is everywhere different from the edge coloring incident with some vertex in $Z$.

We now show that $Z(r,d)$ is equivalent to two other well-studied parameters whose bounds seem to be difficult to improve in general.

Let $G^{\times d}$ be the $d$-fold directed product of $G$; that is, $V(G^{\times d})$ is the set of $d$-dimensional vectors with entries in $V(G)$ and $(a_1, \dots, a_d)$ is adjacent to $(b_1, \dots, b_d)$ if and only if $a_ib_i\in E(G)$ for all $i\in [d]$.  We will be interested in $K_r^{\times d}$, where we have that $(a_1, \dots, a_d)$ is adjacent to $(b_1, \dots, b_d)$ if and only if $a_i\neq b_i$ for all $i\in [d]$.  We say that $S\subseteq V(G)$ is a total dominating set if every vertex in $V(G)$ has a neighbor in $S$.  Let $\gamma_t(G)$ be the number of vertices in a minimum total dominating set of $G$.  Let $H(r,d)$ be the $(r-1)^d$-uniform hypergraph on $V:=V(K_r^{\times d})$ where $S\subseteq V$ is an edge of $H(r,d)$ if and only if there exists $v\in V(K_r^{\times d})$ such that $N(v)=S$; equivalently, for each $(a_1, \dots, a_d)\in V(H(r,d))$, the set $S_{(a_1, \dots, a_d)}=\{(b_1, \dots, b_d): b_i\neq a_i \text{ for all } i\in [k]\}$ is an edge of $H(r,d)$.

\begin{theorem}\label{thm:equiv}
For all $r\geq 2$ and $d\geq 1$, $Z(r,d)=\gamma_t(K_r^{\times d})=\tau(H(r,d))$
\end{theorem}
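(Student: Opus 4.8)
The plan is to prove both equalities purely by unwinding definitions, observing that each of $Z(r,d)$, $\gamma_t(K_r^{\times d})$, and $\tau(H(r,d))$ is the minimum cardinality of a subset of the common ground set $V:=V(K_r^{\times d})=\mathcal{W}_{r,d}$ subject to a constraint, and that the three constraints are literally the same constraint.

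First I would establish $Z(r,d)=\gamma_t(K_r^{\times d})$. The key observation is that two words $f,g\in\mathcal{W}_{r,d}$ are everywhere different exactly when they are adjacent in $K_r^{\times d}$, since adjacency in the directed product means $f(i)\neq g(i)$ for all $i\in[d]$. Therefore a set $Z\subseteq V$ is admissible for $Z(r,d)$ --- meaning every $f\in\mathcal{W}_{r,d}$ has some everywhere-different $g\in Z$ --- if and only if every vertex of $K_r^{\times d}$ has a neighbor in $Z$, i.e.\ $Z$ is a total dominating set. I would emphasize that the $Z(r,d)$ condition quantifies over all $f$, including those $f\in Z$; this matches the stipulation in total domination that even the vertices of the dominating set must have a neighbor inside it. Since the two families of admissible sets coincide, so do their minimum cardinalities.

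Next I would establish $\gamma_t(K_r^{\times d})=\tau(H(r,d))$. By construction the edge set of $H(r,d)$ is precisely $\{N(v):v\in V\}$, the collection of neighborhoods in $K_r^{\times d}$. Hence a set $T\subseteq V$ is a vertex cover of $H(r,d)$ if and only if $T\cap N(v)\neq\emptyset$ for every $v\in V$, which is again exactly the condition that $T$ be a total dominating set of $K_r^{\times d}$. So vertex covers of $H(r,d)$ and total dominating sets of $K_r^{\times d}$ are the same subsets of $V$, and their minimum sizes agree.

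Finally, I would record the one sanity check that makes all three quantities finite and well defined: because $r\geq 2$, each coordinate offers at least one value distinct from any fixed $a_i$, so $|N(v)|=(r-1)^d\geq 1$ for every $v$; in particular $H(r,d)$ has no empty edges and $V$ itself is a vertex cover. I expect no genuine obstacle here, as the entire argument is a dictionary translation between ``everywhere different,'' ``adjacent in the directed product,'' and ``neighborhood''; the only point requiring care is the self-inclusion subtlety of total domination noted in the first step, and there is no counting or extremal content to carry out.
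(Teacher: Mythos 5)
Your proposal is correct and follows essentially the same route as the paper: both equalities are obtained by the same dictionary identifying ``everywhere different'' with adjacency in $K_r^{\times d}$ and the edges of $H(r,d)$ with neighborhoods, so that admissible sets for $Z(r,d)$, total dominating sets, and vertex covers of $H(r,d)$ are literally the same subsets of $\mathcal{W}_{r,d}$. Your explicit remark on the self-inclusion point (that the $Z(r,d)$ condition quantifies over $f\in Z$ as well, matching the loopless total-domination requirement) is a nice touch that the paper leaves implicit.
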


\begin{proof}
Let $r\geq 2$ and $d\geq 1$ be given.  First note that $\gamma_t(K_r^{\times d})=\tau(H(r,d))$ since the vertex sets of $H(r,d)$ and $K_r^{\times d}$ correspond to each other, and the edges of $H(r,d)$ correspond to the neighborhoods of vertices in $K_r^{\times d}$.  Clearly a transversal in $H(r,d)$ corresponds to a total dominating set in $K_r^{\times d}$.

To see that $Z(r,d)=\gamma_t(K_r^{\times d})$, suppose that we have a total dominating set $T$ of order $z$ in $K_r^{\times d}$, each vertex of which is a vector of length $d$ over the alphabet $\{0, \dots, r-1\}$.  Now let $Z$ be a set of $z$ vertices and for each vertex in $Z$, color the edges according to the corresponding vertex (vector) from $T$.  Every partition of $Y$ now corresponds to a vertex $(x_1, \dots, x_d)$ in $V(K_r^{\times d})$ and since $T$ is a total dominating set (and the definition of $K_r^{\times d}$), there exists a vertex $(x_1', \dots, x_d')\in T$ such that $x_i\neq x_i'$ for all $i\in [d]$ which means $(x_1, \dots, x_d)$ is a bad partition of $Y$.  On the other hand if $Z$ is a set of $z-1$ vertices, then since every set $T'$ of $z-1$ vertices in $K_r^{\times d}$ is not a total dominating set, there exists a vertex $(x_1, \dots, x_d)$ in $V(K_r^{\times d})$ which is not adjacent to anything in $T'$ and this vertex corresponds to a good partition of $Y$.  
\end{proof}

The following is a result independently obtained by Johnson \cite{Joh}, Lov\'asz \cite{Lov}, and Stein \cite{Ste}.

\begin{proposition}\label{prop:1}
For all hypergraphs $H$ with maximum degree $\Delta$, $\tau^*(H)\leq \tau(H)\leq (1+\ln \Delta)\tau^*(H)$.
\end{proposition}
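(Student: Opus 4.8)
The plan is to prove the two inequalities separately. The left inequality $\tau^*(H)\le \tau(H)$ is immediate: given any (integral) vertex cover $C\subseteq V(H)$, its indicator function $t=\mathbb{1}_C$ is a fractional vertex cover, since every edge $e$ meets $C$ and hence $\sum_{v\in e}t(v)=|e\cap C|\ge 1$; as $\sum_v t(v)=|C|$, minimizing over $C$ gives $\tau^*(H)\le \tau(H)$. So the substance is the right inequality, for which I would run the greedy covering algorithm and analyze it by a dual-fitting argument against a fractional matching, using the fact (quoted earlier in the excerpt) that $\nu^*(H)=\tau^*(H)$.

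Concretely, I would build a vertex cover greedily: repeatedly choose a vertex covering the largest number of currently uncovered edges, until every edge is covered, obtaining a cover $C$. To account for the cost, whenever the algorithm selects a vertex $v$ that newly covers a set $N$ of edges, I assign each edge $e\in N$ a price $p(e)=1/|N|$, so that by construction $|C|=\sum_{e\in E(H)}p(e)$. The heart of the argument is the charging lemma: for every vertex $w\in V(H)$,
\[
\sum_{e\ni w}p(e)\ \le\ \sum_{k=1}^{\deg(w)}\frac1k\ =:\ H_{\deg(w)}\ \le\ H_\Delta .
\]
Granting this, the scaled function $p/H_\Delta$ is a feasible fractional matching (each $p(e)/H_\Delta\le 1$, and its weighted degree at every vertex is at most $1$), so $\sum_e p(e)/H_\Delta\le \nu^*(H)=\tau^*(H)$, whence $\tau(H)\le |C|=\sum_e p(e)\le H_\Delta\,\tau^*(H)$.

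The step I expect to require the most care is the charging lemma. Fixing $w$ and listing the $\deg(w)$ edges through $w$ as $e_1,\dots,e_{\deg(w)}$ in the order greedy covers them (breaking ties arbitrarily), I would argue that when $e_j$ first becomes covered---by some vertex $v$ whose newly-covered set is $N$---the edges $e_j,\dots,e_{\deg(w)}$ are all still uncovered, so $w$ itself could cover at least $\deg(w)-j+1$ of them; greedy's maximality then forces $|N|\ge \deg(w)-j+1$ and hence $p(e_j)=1/|N|\le 1/(\deg(w)-j+1)$. Summing over $j$ and reindexing yields the harmonic number $\sum_{e\ni w}p(e)\le \sum_{k=1}^{\deg(w)}1/k=H_{\deg(w)}\le H_\Delta$. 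Finally, the elementary estimate $H_\Delta=\sum_{k=1}^\Delta 1/k\le 1+\int_1^\Delta \frac{dx}{x}=1+\ln\Delta$ converts this into $\tau(H)\le (1+\ln\Delta)\tau^*(H)$, completing the proof.
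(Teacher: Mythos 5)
Your argument is correct. Note, however, that the paper offers no proof of this proposition at all: it is stated as a known result, attributed to Johnson, Lov\'asz, and Stein, with only citations. What you have written is essentially the classical greedy/dual-fitting analysis from those original papers (choose a vertex covering the most uncovered edges, price each edge at the reciprocal of the number of edges newly covered at its step, and show via the harmonic-sum charging lemma that the prices scaled by $1/H_\Delta$ form a feasible fractional matching, then invoke $\nu^*=\tau^*$). All the steps check out: the indicator-function argument for $\tau^*\le\tau$, the observation that just before $e_j$ is first covered the edges $e_j,\dots,e_{\deg(w)}$ through $w$ are all still uncovered so greedy's choice must cover at least $\deg(w)-j+1$ edges, the feasibility of $p/H_\Delta$ as a fractional matching, and the bound $H_\Delta\le 1+\ln\Delta$. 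So you have supplied a complete, self-contained proof of a statement the paper only cites.
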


Because $H(r,d)$ is $(r-1)^d$-uniform and $(r-1)^d$-regular, one can see that $\tau^*(H(r,d))=(\frac{r}{r-1})^d$ and thus we have the following corollary. 

\begin{corollary}\label{cor:1}
For all $r\geq 2$ and $d\geq 1$,
$(\frac{r}{r-1})^d\leq \tau(H(r,d))\leq (1+d\ln(r-1))(\frac{r}{r-1})^d$.
\end{corollary}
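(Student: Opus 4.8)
The plan is to compute the fractional cover number $\tau^*(H(r,d))$ exactly and then feed it into Proposition \ref{prop:1}, so that both inequalities of the corollary become direct substitutions.

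First I would record the basic parameters of $H(r,d)$. The vertex set is $V(K_r^{\times d})$, so $|V(H(r,d))|=r^d$, and each edge $S_{(a_1,\dots,a_d)}$ is the neighborhood of a vertex of $K_r^{\times d}$, hence has exactly $(r-1)^d$ elements; thus $H(r,d)$ is $(r-1)^d$-uniform. Moreover, since non-adjacency in $K_r^{\times d}$ is symmetric, a vertex $u$ lies in the edge $S_v$ precisely when $v$ lies in $S_u$, so every vertex lies in exactly $(r-1)^d$ edges; that is, $H(r,d)$ is $(r-1)^d$-regular, with maximum degree $\Delta=(r-1)^d$. Counting incidences two ways, $r^d\cdot(r-1)^d$ from the vertex side equals $|E|\cdot(r-1)^d$ from the edge side, which gives $|E(H(r,d))|=r^d$ as well.

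Next I would pin down $\tau^*$ using that $H(r,d)$ is simultaneously uniform and regular. Assigning $t(v)=(r-1)^{-d}$ to every vertex yields a fractional cover of total weight $r^d(r-1)^{-d}=(r/(r-1))^d$, so $\tau^*(H(r,d))\le (r/(r-1))^d$. Dually, assigning $m(e)=(r-1)^{-d}$ to every edge yields a fractional matching, since each vertex meets exactly $(r-1)^d$ edges and therefore has load exactly $1$, of the same total weight, so $\nu^*(H(r,d))\ge (r/(r-1))^d$. Because $\tau^*=\nu^*$ for every hypergraph by LP duality, these two bounds collapse to the exact value $\tau^*(H(r,d))=(r/(r-1))^d$.

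Finally, the lower bound in the corollary is immediate from the inequality $\tau^*\le\tau$ of Proposition \ref{prop:1}, and the upper bound follows from the inequality $\tau\le(1+\ln\Delta)\tau^*$ of the same proposition, taking $\Delta=(r-1)^d$ so that $\ln\Delta=d\ln(r-1)$. I do not anticipate a genuine obstacle here: the whole argument is a substitution into Proposition \ref{prop:1}, and the only step requiring a moment's care is the exact evaluation $\tau^*(H(r,d))=(r/(r-1))^d$, which hinges precisely on $H(r,d)$ being both uniform and regular, so that the uniform weightings are optimal for the covering and matching linear programs at once.
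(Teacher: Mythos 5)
Your proposal is correct and follows essentially the same route as the paper: the paper likewise observes that $H(r,d)$ is $(r-1)^d$-uniform and $(r-1)^d$-regular, deduces $\tau^*(H(r,d))=(\frac{r}{r-1})^d$, and substitutes into Proposition \ref{prop:1} with $\Delta=(r-1)^d$. Your write-up merely makes explicit the uniform fractional cover and fractional matching certifying the value of $\tau^*$, which the paper leaves to the reader.
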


The following is a known fact about the total domination number of a graph (see \cite{HY}).

\begin{proposition}\label{prop:2}
Let $G$ be a graph on $n$ vertices with minimum degree $\delta$ and maximum degree $\Delta$.  Then
$\frac{n}{\Delta}\leq \gamma_t(G)\leq \frac{1+\ln \delta}{\delta}n$
\end{proposition}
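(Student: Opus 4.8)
The plan is to establish the two inequalities separately: the lower bound $n/\Delta\leq\gamma_t(G)$ comes from a one-line counting argument, while the upper bound $\gamma_t(G)\leq\frac{1+\ln\delta}{\delta}n$ comes from the probabilistic method (a random subset, then a patching step), optimized over the sampling probability.

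For the lower bound I would take any total dominating set $S$. By definition every vertex of $G$ has a neighbor in $S$, so $\bigcup_{v\in S}N(v)=V(G)$; since $|N(v)|\leq\Delta$ for every $v$, this gives $n\leq\sum_{v\in S}|N(v)|\leq|S|\Delta$, hence $|S|\geq n/\Delta$. Taking $S$ to be minimum yields $\gamma_t(G)\geq n/\Delta$.

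For the upper bound I would include each vertex of $G$ independently with probability $p\in[0,1]$ to form a random set $A$, and let $U$ be the set of vertices having \emph{no} neighbor in $A$. For a fixed vertex $v$, the probability that $v\in U$ equals $(1-p)^{\deg(v)}\leq(1-p)^\delta\leq e^{-p\delta}$, so by linearity of expectation
\[
\E{|A|+|U|}\leq pn+ne^{-p\delta}.
\]
Thus some outcome satisfies $|A|+|U|\leq pn+ne^{-p\delta}$. For that outcome I would build $S=A\cup W$, where $W$ contains, for each $v\in U$, one arbitrarily chosen neighbor $w(v)\in N(v)$ (which exists because $\delta\geq1$). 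Every vertex outside $U$ has a neighbor in $A\subseteq S$, and every vertex of $U$ has its chosen neighbor in $W\subseteq S$, so $S$ is a total dominating set with $|S|\leq|A|+|U|\leq pn+ne^{-p\delta}$. Choosing $p=\frac{\ln\delta}{\delta}$ (legitimate since $0\leq\ln\delta\leq\delta$ for $\delta\geq1$) gives $pn+ne^{-p\delta}=\frac{\ln\delta}{\delta}n+\frac{1}{\delta}n=\frac{1+\ln\delta}{\delta}n$, which is the claimed bound.

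The counting inequality and the expectation computation are routine; the one step that needs genuine care is the patching that converts the "dominated outside $U$" random set into a bona fide \emph{total} dominating set. The point is that vertices of the dominating set must themselves have an \emph{external} neighbor, which is precisely why one adds a neighbor $w(v)$ of each undominated vertex (rather than the undominated vertices themselves) to $A$. Once that is set up correctly, the only remaining verification is that $p=\ln\delta/\delta$ lies in $[0,1]$ and produces the stated constant.
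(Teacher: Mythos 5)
Your proof is correct. The paper does not actually prove Proposition \ref{prop:2}; it cites it as a known fact from the literature on total domination, and your argument is precisely the standard one for that fact: the counting bound $n\leq\sum_{v\in S}|N(v)|\leq|S|\Delta$ for the lower bound, and the random-sample-plus-patch argument with $p=\ln\delta/\delta$ for the upper bound. You also correctly identify the one point where total domination differs from ordinary domination --- the undominated vertices must be repaired by adding a \emph{neighbor} of each, not the vertices themselves --- so nothing is missing.
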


We have $|V(K_r^{\times d})|=r^d$ and $\delta(K_r^{\times d})=\Delta(K_r^{\times d})=(r-1)^d$ and thus we have the following corollary.

\begin{corollary}\label{cor:2}
For all $r\geq 2$ and $d\geq 1$,
$(\frac{r}{r-1})^d\leq \gamma_t(K_r^{\times d})\leq (1+d\ln(r-1))(\frac{r}{r-1})^d$.
\end{corollary}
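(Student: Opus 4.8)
The plan is to obtain both bounds by a direct application of Proposition \ref{prop:2} to the graph $G = K_r^{\times d}$, once its three relevant parameters — order, minimum degree, and maximum degree — have been identified.

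First I would record the vertex count and degree of $K_r^{\times d}$. By definition $V(K_r^{\times d})$ consists of all $d$-tuples over an alphabet of size $r$, so $n := |V(K_r^{\times d})| = r^d$. For the degrees, recall that $(a_1,\dots,a_d)$ is adjacent to $(b_1,\dots,b_d)$ precisely when $a_i \neq b_i$ for every $i \in [d]$; thus each coordinate $b_i$ may be chosen independently in exactly $r-1$ ways, giving every vertex exactly $(r-1)^d$ neighbors. Hence $K_r^{\times d}$ is $(r-1)^d$-regular, i.e.\ $\delta(K_r^{\times d}) = \Delta(K_r^{\times d}) = (r-1)^d$, as asserted in the sentence preceding the corollary.

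Next I would substitute $n = r^d$ and $\delta = \Delta = (r-1)^d$ into the two inequalities of Proposition \ref{prop:2}. The lower bound $n/\Delta$ becomes $r^d/(r-1)^d = (r/(r-1))^d$, which is exactly the claimed lower bound. For the upper bound, using $\ln\delta = \ln\big((r-1)^d\big) = d\ln(r-1)$ gives
\[
\frac{1+\ln\delta}{\delta}\, n = \frac{1 + d\ln(r-1)}{(r-1)^d}\, r^d = \big(1 + d\ln(r-1)\big)\left(\frac{r}{r-1}\right)^d,
\]
which is precisely the claimed upper bound.

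There is no genuine obstacle here: the corollary is a pure specialization of Proposition \ref{prop:2}, and the only substantive point is the elementary counting argument showing that $K_r^{\times d}$ is $(r-1)^d$-regular. One should merely note that the boundary case $r = 2$ is unproblematic, since there $\delta = 1$ and $\ln\delta = 0$, so the upper bound collapses to $2^d$, consistent with the exact value $Z(2,d)=2^d$ recorded earlier (and with the identity $Z(r,d)=\gamma_t(K_r^{\times d})$ from Theorem \ref{thm:equiv}).
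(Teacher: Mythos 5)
Your proposal is correct and is exactly the paper's argument: the paper also obtains Corollary \ref{cor:2} by noting $|V(K_r^{\times d})|=r^d$ and $\delta(K_r^{\times d})=\Delta(K_r^{\times d})=(r-1)^d$ and substituting into Proposition \ref{prop:2}. You have simply spelled out the degree computation and the resulting algebra in more detail.
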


Note that by Theorem \ref{thm:equiv}, Corollary \ref{cor:1} and Corollary \ref{cor:2} can be derived from each other; however, it is interesting to note that they can be derived independently using the known bounds from Proposition \ref{prop:1} and Proposition \ref{prop:2} respectively.

\section{Monochromatic covers of hypergraphs}\label{sec:hypergraph}

The $\alpha=1$ case of Ryser's conjecture says $\tc_r(K_n^2)\leq r-1$.  Kir\'aly \cite{K} surprisingly gave a very simple proof that for all $k\geq 3$, $\tc_r(K_n^k)= \ceiling{r/k}$.  Earlier, Aharoni and Ziv \cite{AZ} proved that for $k\geq 3$, $\tc_r(K_n^k)\leq \ceiling{\frac{r-1}{k-1}}$ (they proved this in the dual language of $r$-partite hypergraphs in which every $k$ edges intersect).  Part of the reason determining $\tc_r(K_n^k)$ is so much easier for $k\geq 3$ than $k=2$ seems to come down to the very weak notion of connectivity typically used for hypergraphs.  Inspired by some recent results (\cite{CKK2}, \cite{CKK3}, \cite{CKP}, \cite{GHM}), we propose a more general problem which allows for stronger notions of connectivity in hypergraphs.

Let $c,\ell,k$ be positive integers with $k\geq 2$ and $c,\ell\leq k-1$ and let $H$ be a $k$-uniform hypergraph.  Say that a pair of $c$-sets $S, S'\in \binom{V(H)}{c}$ is $\ell$-connected if there exists edges $e_1, \dots, e_p$ such that $S\subseteq e_1$, $S'\subseteq e_p$, and $|e_i\cap e_{i+1}|\geq \ell$ for all $i\in [p-1]$.  A $(c,\ell)$-component $C$ of $H$ is a maximal set of pairwise $\ell$-connected $c$-sets.  Note that we can define a relation $\sim$ on $\binom{V(H)}{c}$ where $S\sim S'$ if and only if $S$ and $S'$ are $\ell$-connected.  When $c\geq \ell$, this is an equivalence relation and the $(c,\ell)$-components of $H$ are just the equivalence classes.

Let $\tc_r^{c,\ell}(H)$ be the smallest integer $t$ such that in every $r$-coloring of the edges of $H$, there exists a set of at most $t$ monochromatic $(c,\ell)$-components $\mathcal{C}$ (that is, each $C\in \mathcal{C}$ is a component in $H_i$ for some $i\in [r]$) such that $\bigcup_{C\in \mathcal{C}}C=\binom{V(H)}{c}$.  When $c=\ell$, we write $\tc_r^{\ell}(H)$ to mean $\tc_r^{\ell,\ell}(H)$.  

In this language, we can state Kir\'aly's result as follows.  We will also give Kir\'aly's proof of the upper bound.

\begin{theorem}[Kir\'aly \cite{K}]\label{kiraly}
For $n\geq k\geq 3$ and $r\geq 1$, $\tc_r^1(K_n^k)=\ceiling{r/k}$.
\end{theorem}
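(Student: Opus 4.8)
The plan is to prove the two inequalities $\tc_r^1(K_n^k)\le \ceiling{r/k}$ and $\tc_r^1(K_n^k)\ge \ceiling{r/k}$ separately, with the bulk of the work in the upper bound (the part the paper reproduces from Kir\'aly). First I would record that for $c=\ell=1$ the relation ``$\ell$-connected'' is exactly ordinary (loose) hypergraph connectivity, and covering $\binom{V}{1}$ means covering $V$; hence $\tc_r^1(K_n^k)=\tc_r(K_n^k)$ and I may argue entirely in terms of monochromatic covers of the vertex set. For the upper bound I would prove, by induction on $r$, that every $r$-coloring of $K_m^k$ with $m\ge k$ admits a monochromatic cover of size at most $\ceiling{r/k}$. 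The base case is $r\le k$: an $r$-coloring is in particular a $k$-coloring, so Gy\'arf\'as' theorem $\tc_k(K_m^k)=1$ (stated above) yields a single spanning monochromatic component, and $1=\ceiling{r/k}$.

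For the inductive step ($r>k$) the key is the following peeling statement, which I expect to be the main obstacle: in any $r$-coloring of $K_m^k$ there is a monochromatic component $C$ such that the induced coloring on $V\setminus V(C)$ uses at most $r-k$ colors, and moreover $V\setminus V(C)$ is either empty or has at least $k$ vertices. Granting this, the hypergraph induced on $V\setminus V(C)$ is again a complete $k$-uniform hypergraph, now using at most $r-k$ colors, so by induction it has a monochromatic cover of size at most $\ceiling{(r-k)/k}=\ceiling{r/k}-1$; together with $C$ this covers $V$ with $\ceiling{r/k}$ components. The size condition on $V\setminus V(C)$ is what rules out the degenerate situation (visible already in the extremal examples) where deleting a near-spanning component strands fewer than $k$ vertices that carry no edge and hence cannot be recaptured by the induction.

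To prove the peeling statement I would adapt the cut/crossing argument underlying Gy\'arf\'as' base case. Reformulating, each color $i$ partitions $V$ into its monochromatic components, and $k$-uniformity forces every $k$-set to lie inside a single block of at least one of these $r$ partitions; the base case is precisely the fact that at most $k$ such partitions cannot all be nontrivial, since otherwise choosing one side of each would give at most $k$ nontrivial cuts with no common crossing $k$-set, contradicting that any family of at most $k$ nontrivial cuts of an $n\ge k$ element set has a $k$-subset meeting both sides of every cut. I would then try to locate $k$ colors whose edges are all incident to the vertex set of one suitably chosen (typically small) component $C$, so that those $k$ colors disappear from $V\setminus V(C)$; the extremal ``missed special vertex'' colorings suggest taking $C$ to be a small component rather than a large one. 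Establishing that such a $C$ always exists is the crux.

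Finally, for the lower bound I would exhibit a coloring needing $\ceiling{r/k}$ components. The case $\ceiling{r/k}=2$ (that is $k<r\le 2k$) is clean: fix distinct special vertices $v_1,\dots,v_r$, and color each $k$-set $e$ by the index of some special vertex it avoids, which is possible since $|e|=k<r$ forces $e$ to miss at least one $v_i$. Then no edge of color $i$ contains $v_i$, so $v_i$ is isolated in color $i$ and no single monochromatic component spans $V$; hence two components are needed. For larger $\ceiling{r/k}$ the construction nests this idea, arranging the coloring so that the complements of the usable monochromatic components are pairwise, and more precisely $(\ceiling{r/k}-1)$-wise, intersecting; I would set this up via a design/finite-geometry-type assignment of colors to the sets of special vertices missed. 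This matching lower bound is due to Kir\'aly.
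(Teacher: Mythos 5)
Your lower bound sketch and your base case are fine (the lower bound you describe is essentially the construction the paper gives later for the general $(c,\ell)$ setting, specialized to $c=\ell=1$), but the inductive step rests on a ``peeling lemma'' that is false, so the upper bound cannot be completed along the route you propose. Your lemma asks for a single monochromatic component $C$ such that at most $r-k$ colors survive on the $k$-sets inside $V\setminus V(C)$. Here is a counterexample with $k=3$, $r=4$ (so you would need a component whose complement carries at most one color). Partition $V$ into four large blocks $V_1,V_2,V_3,V_4$. Give every triple $e$ that meets at least two blocks the color $\min\{i: e\cap V_i=\emptyset\}$; color the triples inside $V_1$ and inside $V_2$ using colors $3$ and $4$ (both occurring), the triples inside $V_3$ using colors $1$ and $2$, and the triples inside $V_4$ using colors $2$ and $3$. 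Every edge of color $j$ avoids $V_j$, and one checks that each color class forms a single component with vertex set exactly $V\setminus V_j$: the cross-block edges of color $j$ already connect all of $V\setminus V_j$, and the recolored within-block edges of color $j$ lie inside that set and attach to it. Hence there are exactly four monochromatic components, and for each one the complement is a block $V_j$ on which two distinct colors appear. No component satisfies your peeling condition, even though (as the theorem guarantees) two components, e.g.\ those of colors $1$ and $2$, do cover $V$. The structural reason is that deleting one component can only be expected to neutralize one color, not $k$ colors at once.

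The paper's (Kir\'aly's) induction instead decreases $r$ by exactly one, and the pivot is a dichotomy on $(k-1)$-sets rather than on components: either some $(k-1)$-set $S$ lies in edges of at most $\ceiling{r/k}$ colors, in which case the at most $\ceiling{r/k}$ components through $S$ already cover $V$ because $S\cup\{v\}$ is an edge for every vertex $v$; or every $(k-1)$-set lies in edges of at least $\ceiling{r/k}+1$ colors, in which case a pigeonhole count over the $k$ many $(k-1)$-subsets of any color-$r$ edge $e$ produces two of them lying in a common color $i\in[r-1]$, and since these two subsets overlap in $k-2\geq 1$ vertices they lie in a common color-$i$ component containing all of $e$; so color $r$ can be eliminated by recoloring without changing any monochromatic component, and induction on $r$ finishes. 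If you want to salvage your plan, you need to replace the peeling lemma by a one-color-at-a-time reduction of this flavor; the $k$-colors-at-once reduction is precisely the step that fails.
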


\begin{proof}
If $r=1$, the result is trivial, so let $r\geq 2$ and suppose that $\tc_{r-1}^1(K_n^k)\leq \ceiling{(r-1)/k}$.

If there exists a set $S$ of $k-1$ vertices such that $S$ is contained in edges of at most $\ceiling{r/k}$ colors, then we are done. So for every set $S\subseteq V(K)$ of order $k-1$, $S$ is contained in edges of at least $\ceiling{r/k}+1$ colors.  For every edge $e$ of color $r$, there are $k$ distinct $k-1$ sets contained in $e$ and thus there are distinct $S, S'\subseteq e$ with $|S|=k-1=|S'|$ and $i\in [r-1]$ such that $S$ and $S'$ are contained in a component of color $i$ which implies that $e$ is contained in a component of color $i$.  Since all of the edges of color $r$ are contained in a component of color $i\in [r-1]$, we actually have an $(r-1)$-coloring of $K$ and thus by induction there is a monochromatic $\ceiling{(r-1)/k}$-cover (which is of course a $\ceiling{r/k}$-cover).
\end{proof}

We propose the following general problem.

\begin{problem}
Let $r,c,\ell,k$ be positive integers such that $c,\ell\leq k-1$.  Determine the value of $\tc_r^{c,\ell}(K_n^k)$.
\end{problem}

We prove the following results.  

\begin{theorem}\label{thm:k/3}
Let $r,c,\ell,k$ be positive integers such that $1\leq \ell\leq c\leq k/3$.  Then 
\[
\tc_r^{c,\ell}(K_n^k)=\ceiling{\frac{r}{\floor{k/c}}}.
\]
\end{theorem}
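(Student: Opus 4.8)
The plan is to establish the two inequalities $\tc_r^{c,\ell}(K_n^k) \le \ceiling{r/m}$ and $\tc_r^{c,\ell}(K_n^k) \ge \ceiling{r/m}$ separately, where $m := \floor{k/c}$; note that the hypothesis $c \le k/3$ is exactly the condition $m \ge 3$, and it also gives $k \ge 3c$. The first thing I would record is a monotonicity observation in $\ell$: decreasing $\ell$ only makes it easier for two $c$-sets to become $\ell$-connected, so $\tc_r^{c,\ell}$ is nondecreasing in $\ell$ on the range $1 \le \ell \le c$. Since the target value does not depend on $\ell$, this reduces the upper bound to the hardest case $\ell = c$ and the lower bound to the easiest case $\ell = 1$, and it explains why $\ell$ is absent from the formula.

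For the upper bound (with $\ell = c$) I would mimic Kir\'aly's induction on $r$ (Theorem~\ref{kiraly}), with $c$-sets playing the role of his vertices and $m$ playing the role of his uniformity $k$. The base case $r=1$ is immediate since $K_n^k$ is a single $(c,c)$-component. For the inductive step I would use the dichotomy: either (Case I) some $c$-set $S_0$ is contained in edges of at most $\ceiling{r/m}$ colors, or (Case II) every $c$-set lies in edges of at least $\ceiling{r/m}+1$ colors. Case I yields a cover directly and cleanly: for every $c$-set $T$ one can choose a $(k-c)$-set $R \subseteq V \setminus S_0$ with $R \supseteq T \setminus S_0$ (possible since $|T \setminus S_0| \le c \le k-c$), so $S_0 \cup R$ is an edge containing both $S_0$ and $T$; as all equally-colored edges through $S_0$ pairwise meet in $S_0$ (of size $c = \ell$) they form a single $(c,c)$-component, and taking one component per color in the palette of $S_0$ gives a cover of size at most $\ceiling{r/m}$.

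The heart of the argument, and the step I expect to be the main obstacle, is Case II, where I must absorb color $r$ into $[r-1]$ by showing that for each color-$r$ edge $e$ all of its $c$-subsets lie in a single lower-color $(c,c)$-component; recoloring every color-$r$ edge by its absorbing color then produces an $(r-1)$-coloring to which induction applies. I would decompose $e = D_1 \sqcup \cdots \sqcup D_m \sqcup L$ into $m$ disjoint $c$-blocks plus a leftover of size $k - mc < c$ and apply a pigeonhole: each block $D_j$ is a $c$-set, hence (Case II) meets at least $\ceiling{r/m}$ colors of $[r-1]$, and since $m\ceiling{r/m} \ge r > r-1$ some color $i \in [r-1]$ is shared by two blocks. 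The condition $k \ge 3c$ is what makes connectivity automatic: any two of the $(k-c)$-sets $e \setminus D_j$ overlap in at least $k - 2c \ge c = \ell$ vertices, so color-$i$ edges extending different $e \setminus D_j$ lie in one component for free. The genuine difficulty, which is absent in Kir\'aly's $c=1$ setting where two $(k-1)$-subsets already cover an edge with no ``straddling,'' is to cover \emph{every} $c$-subset of $e$ — including transversal $c$-sets meeting all $m$ blocks, which arise when $c \ge m$ — by a single absorbing color. I would address this by enlarging the family of extended subsets to a covering design inside $e$ (so that every $c$-set is disjoint from, hence contained in the complement of, some member), using the Case II richness together with the slack from $m \ge 3$ to keep all the required extensions in one common color class.

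For the lower bound (with $\ell = 1$) I would transfer the extremal coloring witnessing $\tc_r^1(K_N^m) = \ceiling{r/m}$ from Theorem~\ref{kiraly} along a block structure: fix disjoint $c$-blocks $B_1, \dots, B_N$ in $V$ and color a $k$-edge according to the Kir\'aly-color of the $m$-set of blocks it ``uses,'' while coloring \emph{every} edge (not only unions of blocks) by a global rule so that no unintended low-overlap edge creates a connectivity shortcut between block-components. Then any cover of all $c$-sets of $K_n^k$ in particular covers the blocks, and the monochromatic $(c,1)$-components restricted to the blocks induce a monochromatic cover of $K_N^m$, which requires at least $\ceiling{r/m}$ pieces; verifying that the global coloring creates no shortcuts is the part that needs the most care on this side.
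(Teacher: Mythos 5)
Your skeleton (split into two bounds, reduce via monotonicity in $\ell$, use Kir\'aly-style ideas on both sides) points in the right direction, but the step you yourself flag as ``the main obstacle'' — Case II of the upper bound — is a genuine gap, not just a technicality. The pigeonhole gives you two blocks $D_j, D_{j'}$ of a color-$r$ edge $e$ that each extend to color-$i$ edges $f, f'$ with $|f\cap f'|\ge k-2c\ge c$, which handles $c$-sets contained in a single block. But a transversal $c$-set $T\subseteq e$ meeting several blocks need not be contained in \emph{any} color-$i$ edge, so it cannot lie in any color-$i$ $(c,c)$-component of the original coloring, and no ``covering design inside $e$ kept in one common color class'' is guaranteed to exist: Case~II richness gives each $c$-subset of $e$ many colors, but nothing forces a single color to work for all of them simultaneously. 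Consequently the recoloring of $e$ really does change the $(c,\ell)$-component structure (unlike Kir\'aly's $c=1$ setting, where $e\subseteq f\cup f'$ already places every vertex of $e$ in the absorbing component), and the induction does not close. The paper avoids this entirely: it proves $\tc_r^{c,\ell}(K_n^k)\le \tc_r^{1}(K_n^{\floor{k/c}})$ by forming the \emph{complete} $\floor{k/c}$-uniform auxiliary hypergraph on vertex set $\binom{V}{c}$, where a set of $\floor{k/c}$ $c$-sets is an edge of color $i$ whenever some color-$i$ edge of $K_n^k$ contains their union (possible since $\floor{k/c}\cdot c\le k$); a monochromatic $(1,1)$-walk there translates into an $\ell$-walk in $K_n^k$ because consecutive auxiliary edges share a whole $c$-set. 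Kir\'aly's theorem is then applied as a black box (using $\floor{k/c}\ge 3$), so his induction never has to be re-run at the level of $c$-sets.

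Your lower bound is also only a sketch, and the ``no shortcuts'' verification you defer is exactly where it would stall if pursued as stated. The paper's construction makes this issue evaporate: partition $V$ into $\binom{r}{q}$ parts of size at least $c$ indexed by the $q$-subsets of $[r]$, where $q=\ceiling{r/\floor{k/c}}-1$, and color each edge $e$ with a color \emph{outside} the union of the index sets of all parts that $e$ meets in at least $c$ vertices (such a color exists because $e$ can meet at most $\floor{k/c}$ parts in $\ge c$ vertices). The point is then purely about containment, not connectivity: any edge containing a $c$-subset of the part $V_A$ meets $V_A$ in $\ge c$ vertices and hence avoids every color in $A$, so no component with a color in $A$ covers any $c$-set inside $V_A$; a cover by $q$ components uses at most $q$ colors, which form such an $A$. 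I'd recommend adopting the auxiliary-hypergraph reduction for the upper bound and this containment-based coloring for the lower bound; as written, neither half of your argument is complete.
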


Note that this gives Theorem \ref{kiraly} when $c=1=\ell$.  

In the case when $r=2$, we are essentially able to give a complete answer.

\begin{theorem}\label{thm:r2hyper}
Let $c,\ell,k$ be positive integers such that $\ell, c\leq k-1$.  Then 
\[
\tc_2^{c,\ell}(K_n^k)=
\begin{cases} 
1 & \text{ if } c\leq k/2\\
2 & \text{ if } k/2<c\leq k-\ell/2\\
\Omega(n) & \text{ if } \max\{k-\ell/2, k/2\}<c\leq k-1
\end{cases}
  \]
\end{theorem}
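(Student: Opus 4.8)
The plan is to treat the two directions (a covering exists for every coloring; a coloring exists defeating every small cover) separately in each regime, writing $m=k-c$ so that the three cases read $2m\ge k$ (answer $1$), $\ell\le 2m<k$ (answer $2$), and $2m<\ell$ (answer $\Omega(n)$). Two elementary facts will drive everything. First, two $c$-sets $S,T$ lie in a common edge iff $|S\cup T|\le k$, i.e.\ $|S\cap T|\ge 2c-k=c-m$; in particular \emph{every} pair of $c$-sets shares an edge exactly when $c\le k/2$. Second, if $S,T$ lie in a common edge $e$ of color $i$, then they are $\ell$-connected in color $i$ via the single-edge path $e$, \emph{regardless of $\ell$}, since the overlap condition is vacuous for a one-edge path. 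These let me connect ``nearby'' $c$-sets for free and isolate the role of $\ell$ in long-range connectivity.

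\textbf{Case $c\le k/2$ (upper bound $1$; the lower bound $\ge 1$ is trivial).} First I would show that one color \emph{spans}: some color $i$ has every $c$-set contained in an edge of color $i$. Otherwise there is a $c$-set all of whose edges are color $2$ and another all of whose edges are color $1$, but these share an edge (as $2c\le k$), a contradiction. Next, for every pair $S,T$ a common edge is color $1$ or $2$, so $S,T$ lie together in a color-$1$ or a color-$2$ $(c,\ell)$-component. When $c\ge\ell$ the relation ``$\ell$-connected'' is an equivalence relation, so the color-$1$ and color-$2$ components give two \emph{partitions} of the set of $c$-sets with the property that every pair is together in at least one; a short argument then forces one of the partitions to be trivial (if both had $\ge 2$ blocks, choosing $x,y$ in distinct blocks of one partition and $z$ in a suitable block of the other yields a pair together in neither). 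When $c<\ell$ this only gives single-edge adjacency, and I would additionally show the spanning color is fully $(c,\ell)$-connected by building high-overlap monochromatic bridges between edges, which exist in abundance once $n$ is large; this robustness step is the one delicate point of Case $1$.

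\textbf{Case $\ell\le 2m<k$ (answer $2$).} For the lower bound I take disjoint $c$-sets $S_0,T_0$ (possible since $c>k/2$); as $2c>k$ no edge contains both, so I may consistently color every edge containing $S_0$ with color $1$ and every edge containing $T_0$ with color $2$. Then $S_0$ lies in no color-$2$ component and $T_0$ in no color-$1$ component, so no single monochromatic component covers both and $\tc_2^{c,\ell}(K_n^k)\ge 2$. For the upper bound I would exploit that $\ell\le 2m$ is exactly the threshold permitting two disjoint $c$-sets to be joined by a two-edge monochromatic path: taking $A\subseteq T$ and $B\subseteq S$ with $|A|=|B|=m$, the edges $S\cup A$ and $T\cup B$ overlap in $A\cup B$, of size $2m\ge\ell$. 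Combining this with the free ``nearby'' connections above, the goal is to reduce covering all $c$-sets to a K\"onig/Erd\H{o}s--Rado-type argument on an auxiliary complete graph and extract a cover by exactly two monochromatic components. Pinning down this organization into two components (rather than a growing number) is, I expect, the main obstacle of the whole theorem.

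\textbf{Case $2m<\ell$ (lower bound $\Omega(n)$).} Here I would use a clean construction. Take $N=\floor{n/c}=\Omega(n)$ pairwise disjoint $c$-sets (``blocks'') $T_1,\dots,T_N$ and color an edge with color $1$ if it contains some block and with color $2$ otherwise. Since $c>k/2$, no edge contains two blocks, so each color-$1$ edge belongs to a unique block. All edges containing a fixed $T_i$ are mutually $\ell$-connected (change one outside vertex at a time, keeping overlap $k-1\ge\ell$), forming one color-$1$ component. Crucially, an edge containing $T_i$ and an edge containing $T_j$ with $i\neq j$ intersect in at most $2m$ vertices, and $2m<\ell$ in this regime, so they are non-adjacent; since every color-$1$ edge contains a block, the color-$1$ components are exactly these $N$ block-stars, pairwise distinct. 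Finally, every edge containing $T_i$ is color $1$, so $T_i$ lies in \emph{no} color-$2$ component and must be covered by its own block-star; covering $T_1,\dots,T_N$ therefore forces at least $N=\Omega(n)$ components. The inequality $2m<\ell$ is used precisely to guarantee non-adjacency of distinct block-stars, which is exactly what separates this regime from the answer-$2$ regime.
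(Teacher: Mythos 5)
Your proposal is correct where it is complete, but the upper bound $\tc_2^{c,\ell}(K_n^k)\le 2$ in the middle regime $k/2<c\le k-\ell/2$ --- which you yourself flag as ``the main obstacle of the whole theorem'' --- is left genuinely unproved, and this is precisely the only part of the theorem that requires a real argument beyond the constructions. Your two-edge bridge between disjoint $c$-sets $S,T$ (the edges $S\cup A$ and $T\cup B$ with overlap $2(k-c)\ge\ell$) only produces a monochromatic $\ell$-walk if both bridge edges happen to receive the same color, and nothing forces that; so you have not exhibited any pair of $c$-sets that is guaranteed to be monochromatically $\ell$-connected, which is exactly what a K\"onig-type argument on the closure needs. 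The paper's Theorem \ref{thm:r^2} supplies the missing combinatorial step: given any \emph{three} $c$-sets $X_1,X_2,X_3$, one packs three $\ell$-sets $Y_1,Y_2,Y_3$ into a single edge (using $(r+1)(c+\ell-k)\le k$, which is where the hypotheses $c>k/2$, $c\le k-\ell/2$, and $c\ge\ell$ enter) so that $Y_i$ lies in a common edge with each $X_j$, $j\ne i$, and the $Y_i$ are pairwise in a common edge of some fixed color; a short analysis of which $X_i$ reach that color then forces a monochromatic $\ell$-walk between two of the $X_i$. This bounds the independence number of the closure graph on $c$-sets by $2$, and Theorem \ref{dualkonig} gives the $2$-cover. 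Without this (or an equivalent) argument the middle case is not established.

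The remaining pieces of your proposal are sound and close to the paper's. Your $\Omega(n)$ construction and its verification (consecutive edges of an $\ell$-walk meet in at most $2(k-c)<\ell$ vertices unless they contain the same block, so the block-stars are pairwise disjoint components and each block $T_i$ lies in no color-$2$ component) is essentially Example \ref{ex:n/c} specialized to $r=2$, with a cleaner write-up. Your middle-regime lower bound is the $r=2$ instance of the paper's general lower-bound example. For $c\le k/2$ with $c\ge\ell$, your direct Erd\H{o}s--Rado argument on the two component-partitions is a legitimate alternative to the paper's route (Theorem \ref{thm:ellcupper}, which passes to an auxiliary complete $\floor{k/c}$-uniform hypergraph on the $c$-sets and invokes Theorem \ref{kiraly}); for $r=2$ and $\floor{k/c}=2$ the two arguments coincide. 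Note finally that your admitted vagueness in the subcase $c<\ell$ of the first regime mirrors the paper, whose proof of this theorem is explicitly stated only for $c\ge\ell$.
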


The case $c<\ell$ is harder to analyze, but we are able to determine one interesting case exactly.

\begin{theorem}\label{thm:kr3tight}
$\tc_3^{1,2}(K_n^3)=1$ (i.e.\ in every 3-coloring of $K_n^3$ there is a spanning monochromatic tightly connected component)
\end{theorem}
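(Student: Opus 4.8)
The plan is to prove the equivalent statement that in every $3$-coloring of $K_n^3$ some color class has a spanning tight component; note that for $k=3$ the condition ``$|e_i\cap e_{i+1}|\ge 2$'' in the definition of $(1,2)$-connectivity is exactly tight connectivity, so $\tc_3^{1,2}(K_n^3)=1$ is precisely the $r=3$ case of Conjecture~\ref{con:tight}. I would argue by extremality: among all monochromatic tightly connected components (over all three colors) fix one, call it $C$, whose vertex set $S=V(C)$ is as large as possible, say $C$ has color $1$, and suppose toward a contradiction that $T:=V\setminus S\neq\emptyset$. The central tool I would record first is the observation that the link of a vertex is a $3$-edge-colored complete graph: for $t\in T$, let $L(t)$ be the $3$-coloring of the complete graph on $S$ in which the pair $\{a,b\}$ receives the color of the triple $\{t,a,b\}$. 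If two pairs $\{a,b\}$ and $\{a,c\}$ of the same color $j$ share the vertex $a$, then the triples $\{t,a,b\}$ and $\{t,a,c\}$ share the pair $\{t,a\}$ and are tightly adjacent; hence the color-$j$ triples through $t$ are tightly connected exactly as their pairs are connected in the color-$j$ subgraph of $L(t)$.

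This yields the key engine: if for some $t\in T$ and some color $j$ the color-$j$ subgraph of $L(t)$ is connected and spanning on $S$, then all the color-$j$ triples $\{t,a,b\}$ lie in a single tight component covering $\{t\}\cup S$, which has $|S|+1$ vertices and contradicts the maximality of $C$. Thus for every $t\in T$ the $3$-colored complete graph $L(t)$ has no connected spanning color class. Two consequences follow. First, calling a pair $\{a,b\}\subseteq S$ \emph{active} if it lies in some color-$1$ edge of $C$, the active pairs form a connected spanning graph $G_C$ on $S$ (a tight path of color-$1$ edges has connected shadow), and no active pair can receive color $1$ in any $L(t)$, for otherwise $\{t,a,b\}$ would be tightly adjacent to the witnessing edge of $C$ and $t$ would already lie in $C$. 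Second, were $G_C$ the complete graph on $S$, then $L(t)$ would use only colors $2$ and $3$, and the remark of Erd\H{o}s and Rado ($\tc_2(K)=1$ for complete graphs) would force one of colors $2,3$ to be connected and spanning, again contradicting the key engine; so a maximal component necessarily has an inactive pair.

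The main obstacle is exactly this last situation, where $G_C$ is a proper connected spanning subgraph of $K_S$ and each link $L(t)$ is a genuine $3$-coloring with no spanning color class (such colorings of a single $K_S$ certainly exist, so the contradiction must exploit the simultaneous structure of all the links together with the triples having two vertices in $T$). To finish I would, for each $t\in T$, invoke Aharoni's theorem in the form $\tc_3(K_S)\le 2$ to cover $S$ by two monochromatic components of $L(t)$; lifting these through $t$ gives two tight components that together cover $\{t\}\cup S$, each omitting at least one vertex of $S$. The goal is then to \emph{merge} such a lifted component either with the color-$1$ component $C$ through an active pair, or with a lifted component at a second vertex $t'\in T$ through a triple $\{t,t',s\}$ or an edge inside $T$, so as to build a tight component on more than $|S|$ vertices. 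I expect the bookkeeping to be the crux: one organizes the pairs of $S$ by whether they are active and by their colors across the various links, and applies the two-colored complete bipartite analysis of Lemma~\ref{2colorbip_nodiam} to the bipartite graphs between a lifted piece and its complement in $S$, exactly as the boundary arguments are run in Sections~\ref{sec:diameter} and~\ref{sec:multi}. Carrying out this case analysis, and in particular ruling out the configurations in which every possible merge is blocked, is where the real work lies; the extremal set-up and the link reformulation above reduce the theorem to this finite combinatorial core.
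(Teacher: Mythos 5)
Your set-up is sound as far as it goes: the link reformulation (pairs in $L(t)$ sharing a vertex correspond to tightly adjacent triples through $t$), the observation that a spanning connected color class in a link lifts to a tight component on $|S|+1$ vertices, the fact that active pairs cannot receive color $1$ in any link, and the Erd\H{o}s--Rado argument when the active graph is complete are all correct. This overlaps substantially with the paper's starting point, which also works in the link graph of a vertex and uses the same lifting principle (this is Observation~\ref{obs:basic}(iv) together with the first line of Claim~\ref{clm:tight}).

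However, there is a genuine gap: the entire combinatorial core of the theorem is deferred rather than proved. You explicitly write that ``carrying out this case analysis \ldots is where the real work lies,'' and that work is not done. The merging strategy you sketch --- take a $2$-cover of each link $L(t)$ via Aharoni's theorem and then merge lifted pieces with $C$ or with each other --- is not obviously executable: Aharoni's theorem only tells you that two monochromatic components cover $S$, with no control over their colors or their interaction with the active graph $G_C$, and ``ruling out the configurations in which every possible merge is blocked'' is precisely the hard part. The paper avoids this by not using an extremal component at all. Instead it fixes a single vertex $u$, and when no color class of $K(u)$ is spanning and connected it invokes Lemma~\ref{lem:3col}, which gives the \emph{exact} structure of a $3$-colored complete graph with no spanning monochromatic component (the two types (ii) and (iii), with specified complete monochromatic bipartite pairs between the parts $W,X,Y,Z$). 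This precise structure, combined with the two gluing conditions of Claim~\ref{clm:tight}, is what makes the subsequent case analysis finite and tractable; the analysis then proceeds by repeatedly locating triples $abc$ with $ab$ and $cd$ in prescribed parts and deducing forced colors until a contradiction or a spanning tight component appears. To complete your argument you would need either to carry out an analogous structural classification of the links $L(t)$ (at which point your extremal framing buys you little over the paper's single-vertex approach) or to supply a genuinely different merging argument, which you have not done.
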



\subsection{Lower bounds}

The following example generalizes Kir\'aly's example (which corresponds to the case $c=\ell=1$) and provides the lower bound in Theorem \ref{thm:k/3}.

\begin{example}
For all $c,\ell\geq 1, r\geq 2, k\geq 3$ and $n\geq c\cdot \binom{r}{\ceiling{\frac{r}{\floor{k/c}}}-1}$,  $\tc_{r}^{c,\ell}(K_n^{k})\geq \ceiling{\frac{r}{\floor{k/c}}}$.
\end{example}

\begin{proof}
Set $t:=\floor{k/c}$ and $q:=\ceiling{r/t}-1$. Let $K=K_n^{k}$ and partition $V(K)$ into $m:=\binom{r}{q}$ sets $V_{x_1}, \dots, V_{x_m}$ of order at least $c$, where $x_1, \dots, x_m$ represent each of the subsets of $[r]$ of order $q$.  For each edge $e\in E(K)$, let $\phi(e)=\bigcup_{i:|e\cap V_{x_i}|\geq c}x_i$.  Since $|e|=k<(\floor{k/c}+1)c=(t+1)c$, $e$ intersects at most $t$ of the sets $V_{x_1}, \dots, V_{x_m}$ in at least $c$ vertices, so $|\phi(e)|\leq tq<r$ and thus $[r]\setminus \phi(e)\neq \emptyset$.  Color $e$ with the smallest $j\in [r]\setminus \phi(e)$.  Now let $A\subseteq [r]$ with $|A|=q$ and note that there exists $i$ such that $A=x_i$.  Note that no $(c,\ell)$-component having a color in $A$ contains any of the $c$-sets from $V_{x_i}$ and thus the number of $(c,\ell)$-components needed to cover $\binom{V(K)}{c}$ is more than $q$; i.e. $\tc_{r}^{c,\ell}(K_n^{k})\geq q+1=\ceiling{\frac{r}{\floor{k/c}}}$.
\end{proof}



The next example provides the lower bound in the last case of Theorem \ref{thm:r2hyper}.

\begin{example}\label{ex:n/c}
Let $c,\ell\geq 1, r\geq 2, n\geq k\geq 3$.  If $c>\max\{k-(1-1/r)\ell, k/2\}$, then $\tc_r^{c,\ell}(K_n^k)\geq \floor{n/c}+1$.
\end{example}

\begin{proof}
Set $t:=\floor{n/c}$, let $K=K_n^{k}$, and choose $t$ disjoint sets $x_1, \dots, x_t\subseteq V(K)$ each of order $c$.  Let $X=\{x_1, \dots, x_t\}$.  First note that since $c>k-(1-1/r)\ell$ we have 
\begin{equation}\label{eq:kell}
r(c+\ell-k)>r(c+\ell-(c+(1-1/r)\ell))=\ell.  
\end{equation}
For each $\ell$-set $y\in \binom{V(K)}{\ell}$ let $$I_y=\{i\in [t]: \ell+c-|y\cap x_i|=|y|+|x_i|-|y\cap x_i|=|y\cup x_i|\leq k\}$$ (equivalently $I_y=\{i\in [t]: y\cup x_i\subseteq e\in E(K)\}$) and note that $|I_y|\leq r-1$ as otherwise
$$\ell=|y|\geq \sum_{i\in I_y}|y\cap x_i|\geq r(\ell+c-k),$$ contradicting \eqref{eq:kell}.  

Let $\phi_y$ be an injective function from $I_y$ to $[r-1]$ and for all $i \in I_y$, color all edges containing $y\cup x_i$ with color $\phi_y(i)$.  Now color all other edges with color $r$.  Since $c>2k$, no edge in $E(K)$ contains more than one element of $x$ as a subset.  So by this fact and the way in the which the coloring was defined, no pair of $c$-sets from $X$ is in the same monochromatic $(c,\ell)$-component.
\end{proof}

\subsection{Upper bounds}

We begin with a few basic observations.

First, let $0\leq c\leq k$ be integers and let $H$ be a $k$-uniform hypergraph. For a set $S\in \binom{V(H)}{c}$, the \emph{link hypergraph of $S$}, denoted $H(S)$, is the hypergraph on vertex set $V(H)\setminus S$ and edge set $\{T\in \binom{V(H)}{k-c}: S\cup T\in E(H)\}$.  If $H$ is edge colored, then the edges of $H(S)$ inherit the color of the corresponding edge from $H$.

\begin{observation}\label{obs:basic}
Let $k\geq 2$ and $c,\ell,r\geq 1$ with $c,\ell\leq k-1$.
\begin{enumerate}
\item $\tc_r^{c,\ell}(K_n^{k+1})\leq \tc_r^{c,\ell}(K_n^{k})$.
\item $\tc_r^{c,\ell}(K_n^{k})\leq \tc_r^{c,\ell+1}(K_n^{k})$.
\item $\tc_r^{c,\ell}(K_n^k)\leq \tc_r^{c+1,\ell}(K_n^k)$.
\item $\tc_r^{c,\ell+1}(K_n^{k+1})\leq \tc_r^{c,\ell}(K_{n-1}^{k})$.
\end{enumerate}
\end{observation}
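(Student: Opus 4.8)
The plan is to prove all four inequalities by the same mechanism: in each case I will take a witnessing cover for the right-hand side and transform it, component by component, into a cover for the left-hand side using no more components. The only facts about $(c,\ell)$-connectivity I will need are the evident monotonicity properties: that $(\ell+1)$-connectivity implies $\ell$-connectivity, that enlarging the edges along a connecting walk can only increase consecutive intersections, and that any family of pairwise $\ell$-connected $c$-sets of a single color lies inside one $(c,\ell)$-component of that color. Throughout I assume $n$ is large enough that the relevant $c$-sets and edges exist (e.g.\ $n\ge k+1$ for (1) and $n\ge c+1$ for (3) and (4)); the degenerate cases are trivial.

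Parts (2) and (3) keep the host hypergraph $K_n^k$ and a fixed $r$-coloring $\chi$, so there I only re-coarsen the component structure. For (2), since $|e_i\cap e_{i+1}|\ge \ell+1$ implies $|e_i\cap e_{i+1}|\ge \ell$, every monochromatic $(c,\ell+1)$-component of color $i$ is a family of pairwise $\ell$-connected $c$-sets of color $i$ and hence lies inside a single $(c,\ell)$-component of color $i$; replacing each member of a minimum $(c,\ell+1)$-cover by the $(c,\ell)$-component containing it yields a $(c,\ell)$-cover of the same size. For (3), I will show each monochromatic $(c+1,\ell)$-component $D$ of color $i$ ``projects'' into one $(c,\ell)$-component: if $T,T'\in D$ are $\ell$-connected in color $i$ via edges $e_1\supseteq T,\dots,e_p\supseteq T'$, then for any $c$-subsets $S\subseteq T$ and $S'\subseteq T'$ the same walk witnesses $S\subseteq e_1$ and $S'\subseteq e_p$, so all $c$-subsets of members of $D$ are pairwise $\ell$-connected in color $i$ and lie in one $(c,\ell)$-component $\tilde D$. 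As every $c$-set extends to a $(c+1)$-set, which is covered by the given $(c+1,\ell)$-cover, the family $\{\tilde D\}$ covers $\binom{V}{c}$ with no more components.

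For (1), I will transfer a coloring of $K_n^{k+1}$ down to $K_n^k$. Given an $r$-coloring $\chi$ of $K_n^{k+1}$, choose for each $k$-set $f$ an arbitrary vertex $w_f\notin f$ and set $\chi'(f)=\chi(f\cup\{w_f\})$. If $S,S'$ are $(c,\ell)$-connected in $(K_n^k,\chi')$ in color $i$ via $f_1,\dots,f_p$, then the $(k+1)$-edges $f_j\cup\{w_{f_j}\}$ all have color $i$, contain $S$ and $S'$ at the two ends, and satisfy $|(f_j\cup\{w_{f_j}\})\cap(f_{j+1}\cup\{w_{f_{j+1}}\})|\ge |f_j\cap f_{j+1}|\ge \ell$, so $S,S'$ are $(c,\ell)$-connected in $(K_n^{k+1},\chi)$ in color $i$. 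Thus each $\chi'$-component lies in a $\chi$-component of the same color; since the vertex set is unchanged, a minimum $(c,\ell)$-cover of $(K_n^k,\chi')$ pushes forward to a $(c,\ell)$-cover of $(K_n^{k+1},\chi)$ of the same size, giving the bound.

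Part (4) is the link construction, and is where the real work lies. Fix $v\in V$; the link $H(\{v\})$ is a copy of $K_{n-1}^k$ on $V\setminus\{v\}$ colored by $\chi'(T)=\chi(T\cup\{v\})$, so it admits a cover $\mathcal{C}'$ of $\binom{V\setminus\{v\}}{c}$ by at most $\tc_r^{c,\ell}(K_{n-1}^k)$ components. Adding $v$ to each link edge turns an $\ell$-connecting walk into an $(\ell+1)$-connecting walk (every consecutive intersection gains the vertex $v$), so each component $C'\in\mathcal{C}'$ of color $i$ is contained in a $(c,\ell+1)$-component $\hat C$ of color $i$ in $K_n^{k+1}$, and these cover every $c$-set avoiding $v$. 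The main obstacle is covering the $c$-sets $S=\{v\}\cup S_0$ that contain $v$, and the key observation is that $\mathcal{C}'$ already witnesses this: extending $S_0$ to a non-$v$ $c$-set $S'=S_0\cup\{u\}$, the set $S'$ lies in some $\hat C$ of color $i$, and because $S'$ belongs to a color-$i$ component of the link it is contained in a color-$i$ link edge $T\supseteq S'\supseteq S_0$; then the lifted edge $e=T\cup\{v\}$ has color $i$ and contains both $S$ and $S'$, so the single edge $e$ makes $S$ and $S'$ $(\ell+1)$-connected in color $i$, whence $S\in\hat C$. Therefore $\{\hat C\}$ covers all of $\binom{V}{c}$ with at most $\tc_r^{c,\ell}(K_{n-1}^k)$ components, proving (4). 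I expect this final step---showing that no extra component is needed for the $c$-sets through $v$---to be the crux, while the other three parts are direct applications of monotonicity.
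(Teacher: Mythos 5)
Your proofs of (1)--(3) are correct and follow exactly the paper's (one-sentence-each) arguments: push the coloring down to $k$-sets for (1), coarsen $(\ell+1)$-components into $\ell$-components for (2), and project $(c+1)$-sets onto their $c$-subsets for (3); you simply spell out the monotonicity checks the paper leaves implicit. For (4) you also use the paper's link construction, and you correctly identify the point the paper glosses over entirely, namely that the lifted link components must also cover the $c$-sets \emph{containing} $v$. Your argument for that point is essentially right but has one imprecision: the final inference ``$S$ is $(\ell+1)$-connected to $S'\in\hat C$ via the single edge $e=T\cup\{v\}$, whence $S\in\hat C$'' silently uses transitivity of the connectivity relation, which (as the paper itself emphasizes in Section 7) is only guaranteed when $c\geq \ell+1$. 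When $c\leq \ell$, a maximal pairwise-$(\ell+1)$-connected family $\hat C$ containing $C'$ need not absorb $S$ just because $S$ is connected to one of its members. The repair is short: observe that $C'$ together with \emph{all} the sets $S=\{v\}\cup S_0$ you assign to it is already pairwise $(\ell+1)$-connected in color $i$ --- for any $\ell$-walk $e_1,\dots,e_p$ in the link from $S'=S_0\cup\{u\}$ to another member, the lifted walk $e_1\cup\{v\},\dots,e_p\cup\{v\}$ has $S\subseteq e_1\cup\{v\}$ since $S_0\subseteq S'\subseteq e_1$, so the same lifted walk connects $S$ to every member of $C'$ and to every other such lifted set --- and then define $\hat C$ as a maximal family containing this enlarged collection rather than just $C'$. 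With that adjustment your proof of (4) is complete and is in fact more careful than the paper's.
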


\begin{proof}~
\begin{enumerate}
\item Suppose we are given an $r$-coloring of $K_n^{k+1}$.  This induces an $r$-coloring of all of the $k$-sets.  So if we have a monochromatic $(c,\ell)$-cover of the $r$-coloring $K_n^k$, then this gives a $(c,\ell)$-cover of $K_n^{k+1}$.  
\item This follows since a monochromatic $(c,\ell+1)$-component is a monochromatic $(c,\ell)$-component.  
\item Since every $c$-set is contained in a $(c+1)$-set, a monochromatic $(c+1, \ell)$-component covers all of the $c$-sets contained in the $(c+1)$-sets.  
\item Suppose we are given an $r$-coloring of $K_n^{k+1}$.  Consider the link hypergraph of a vertex $v$ which induces an $r$-coloring of $K_{n-1}^{k}$.  Note that a monochromatic $(c,\ell)$-component in the link hypergraph of $v$ is a $(c, \ell+1)$-component in $K_n^{k+1}$.  So any monochromatic cover with $(c,\ell)$-components of the link hypergraph of $v$ gives a monochromatic cover with $(c, \ell+1)$-components of $K_n^{k+1}$.   \qedhere
\end{enumerate}
\end{proof}

\subsubsection{\texorpdfstring{$c\geq \ell$}{c>=l}}

\begin{observation}
Let $k-1\geq c\geq \ell\geq 1$, $r\geq 2$, and let $H$ be an $r$-colored $k$-uniform hypergraph.  Let $2\leq k'\leq \binom{k}{c}$ and let $G$ be a $k'$-uniform hypergraph on vertex set $\binom{V(H)}{c}$ where $\{S_1, \dots, S_{k'}\}\in E(G)$ if and only if there exists $e\in E(H)$ such that $S_1\cup  \dots \cup S_{k'}\subseteq e$.  Furthermore, color $\{S_1, \dots, S_{k'}\}\in E(G)$ with the color of the edge $e\in E(H)$ such that $S_1\cup  \dots \cup S_{k'}\subseteq e$, and note that $\{S_1, \dots, S_{k'}\}$ may receive more than one color.

If there are $t$ monochromatic $(1,1)$-components in $G$ which cover $V(G)$, then there exists $t'\leq t$ monochromatic $(c,\ell)$-components in $H$ which cover $\binom{V(H)}{c}$.
\end{observation}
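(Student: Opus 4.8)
The plan is to show that the monochromatic component structure of $G$ refines that of $H$: each monochromatic $(1,1)$-component of $G$ of color $i$ is contained in a single monochromatic $(c,\ell)$-component of $H$ of the same color $i$. Since a $(1,1)$-component of $G_i$ is just an ordinary connected component of the color-$i$ subhypergraph, and since the vertex set of $G$ is literally $\binom{V(H)}{c}$, this refinement gives the result at once: if $D_1,\dots,D_t$ are monochromatic $(1,1)$-components covering $V(G)=\binom{V(H)}{c}$, I replace each $D_s$ by the $(c,\ell)$-component $C_s$ of $H$ containing it; the $C_s$ are at most $t$ distinct $(c,\ell)$-components and $\bigcup_s C_s \supseteq \bigcup_s D_s = \binom{V(H)}{c}$, so $t'\le t$.

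First I would record the single-edge case. Let $\{S_1,\dots,S_{k'}\}$ be an edge of $G$ of color $i$, so there is $e\in E(H)$ of color $i$ with $S_1\cup\cdots\cup S_{k'}\subseteq e$. For any two of these $c$-sets $S_a,S_b$ we have $S_a\subseteq e$ and $S_b\subseteq e$; taking $p=1$ in the definition of $\ell$-connectivity — where the condition on consecutive intersections is vacuous — shows that $S_a$ and $S_b$ lie in a common $(c,\ell)$-component of $H_i$. Hence all $c$-sets belonging to a single edge of $G_i$ lie in one $(c,\ell)$-component of $H_i$.

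The key step is to propagate this along paths, and here the hypothesis $c\ge\ell$ is essential. Writing $\sim_i$ for the relation "$\ell$-connected using only color-$i$ edges", transitivity holds: if $S\sim_i S'$ via color-$i$ edges $e_1,\dots,e_p$ (with $S\subseteq e_1$, $S'\subseteq e_p$) and $S'\sim_i S''$ via color-$i$ edges $e_1',\dots,e_q'$ (with $S'\subseteq e_1'$, $S''\subseteq e_q'$), then concatenation produces a color-$i$ walk from $S$ to $S''$, because $S'\subseteq e_p\cap e_1'$ forces $|e_p\cap e_1'|\ge|S'|=c\ge\ell$. Thus the monochromatic $(c,\ell)$-components of $H_i$ partition $\binom{V(H)}{c}$. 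Now take a color-$i$ path $f_1,\dots,f_p$ in $G$ witnessing that two vertices $v,v'$ lie in the same $(1,1)$-component of $G_i$. By the single-edge case each $f_j$ lies in a single $(c,\ell)$-component $C_j$ of $H_i$; since $f_j\cap f_{j+1}\neq\emptyset$ the components $C_j$ and $C_{j+1}$ share a $c$-set and hence coincide, so $C_1=\cdots=C_p$. As $v\in f_1$ and $v'\in f_p$, both $v$ and $v'$ lie in this common component, proving the refinement claim; the conclusion then follows as in the first paragraph.

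The only genuine subtlety is this equivalence-relation argument: without $c\ge\ell$, concatenating two $\ell$-walks at a shared $c$-set $S'$ need not yield a valid $\ell$-walk, since the guaranteed overlap has size $c$ which may be less than $\ell$. In that regime the $(c,\ell)$-components of $H_i$ need not partition $\binom{V(H)}{c}$ and the collapse of a $G$-path to a single $H$-component can fail, which is exactly why the statement is restricted to $c\ge\ell$. Finally, isolated vertices of $G$ (singleton $(1,1)$-components) cause no difficulty: a lone $c$-set $S$ belongs to its own trivial $(c,\ell)$-component, so the assignment $D_s\mapsto C_s$ remains well defined.
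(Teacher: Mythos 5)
Your proof is correct and follows essentially the same route as the paper's: both lift a monochromatic path in $G$ to a monochromatic $\ell$-walk in $H$ by noting that consecutive $G$-edges share a $c$-set, so the corresponding $H$-edges intersect in at least $c\geq\ell$ vertices. Your extra scaffolding (the single-edge case plus the transitivity of $\sim_i$ when $c\geq\ell$) is just a more explicit packaging of the same idea, which the paper compresses into a direct construction of the walk $f_1,\dots,f_m$.
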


\begin{proof}
Let $S$ and $S'$ be vertices in $G$ and suppose there is a monochromatic $(1,1)$-component which contains both $S$ and $S'$.  So there is a collection of edges, all the same color, $e_1, \dots, e_m\in E(G)$ such that $S\in e_1$, $S'\in e_m$ and $e_i\cap e_{i+1}\neq \emptyset$ for all $i\in [m-1]$.  Thus there are edges $f_1, \dots, f_m\in E(H)$, all the same color, such that $S\subseteq f_1$, $S'\subseteq f_m$ and $|f_i\cap f_{i+1}|\geq c\geq \ell$ for all $i\in [m-1]$.  Thus there is a monochromatic $\ell$-walk from $S$ to $S'$ in $H$.  The result follows.
\end{proof}

\begin{theorem}\label{thm:r^2}
Let $r\geq 2$, $k\geq 3$, and $c\geq \ell\geq 1$.  If $k/2<c\leq k-(1-1/r)\ell$, then $\tc_r^{c,\ell}(K_n^k)\leq r^2$, unless $r=2$ in which case $\tc_2^{c,\ell}(K_n^k)\leq 2$, or $r=3$ in which case $\tc_3^{c,\ell}(K_n^k)\leq 6$.
\end{theorem}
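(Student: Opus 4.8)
The plan is to pass to an auxiliary graph whose vertices are the $c$-sets of $K=K_n^k$ and whose cover number controls $\tc_r^{c,\ell}(K)$, and then to bound that cover number through the independence number. Fix an arbitrary $r$-coloring of $K$ and let $\Gamma$ be the (multicolored) graph on vertex set $\binom{V(K)}{c}$ in which a pair $\{S,S'\}$ receives color $i$ whenever $S$ and $S'$ lie in a common monochromatic $(c,\ell)$-component of color $i$ in $K$. Since $c\ge\ell$, ``lying in a common color-$i$ component'' is an equivalence relation on $c$-sets (two color-$i$ chains meeting at a common $c$-set overlap there in $\ge c\ge\ell$ vertices, so they concatenate), hence every color-$i$ connected subgraph of $\Gamma$ is contained in a single color-$i$ $(c,\ell)$-component of $K$. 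Consequently a cover of $\Gamma$ by $t$ monochromatic connected subgraphs yields at most $t$ monochromatic $(c,\ell)$-components covering $\binom{V(K)}{c}$; this is exactly the reduction supplied by the preceding observation (taken with $k'=2$), upgraded so that the edges of $\Gamma$ record genuine $\ell$-chain connectivity rather than mere common containment. Thus $\tc_r^{c,\ell}(K)\le \tc_r(\Gamma)$.

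The heart of the argument will be the claim that $\alpha(\Gamma)\le r$, that is, among any $r+1$ $c$-sets some two are $(c,\ell)$-connected in a common color. Granting this, all three bounds fall out of standard cover/independence inequalities applied to $\Gamma$: in general $\tc_r(\Gamma)\le r\,\alpha(\Gamma)\le r^2$ by Fact~\ref{fact:trivial}; for $r=2$, König's theorem (Theorem~\ref{dualkonig}) gives $\tc_2(\Gamma)\le\alpha(\Gamma)\le 2$; and for $r=3$, Aharoni's proof of the $r=3$ case of Ryser's conjecture \cite{A} gives $\tc_3(\Gamma)\le 2\,\alpha(\Gamma)\le 6$. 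Because $\Gamma$ is multicolored, one first refines it to a properly $r$-colored graph by retaining a single color on each edge; this leaves $\alpha$ unchanged and only shrinks the connected pieces used, so all three inequalities remain applicable.

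To prove $\alpha(\Gamma)\le r$ is where the hypotheses $k/2<c\le k-(1-1/r)\ell$ enter. Writing $t=c+\ell-k$, the upper bound on $c$ is precisely $rt\le\ell$. The basic device is that a single $\ell$-set can be made to meet each of $r$ given (worst case disjoint) $c$-sets in at least $t$ vertices: choose a $t$-subset $T_j\subseteq S_j$ for $S_1,\dots,S_r$, using $rt\le\ell$ vertices, and pad to size $\ell$ to obtain a core $C$ with $|C\cap S_j|\ge t$, hence $|S_j\cup C|\le k$, for every $j$. Extending each $S_j\cup C$ to an edge $e_j$ gives $r$ edges through the common core $C$, pairwise overlapping in $\ge\ell$ vertices; if two of $e_1,\dots,e_r$ share a color, the corresponding $S_j,S_{j'}$ are color-connected and we are done. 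Hence a would-be independent set of $c$-sets forces every such ``fan'' of $r$ edges to be rainbow, which in turn forces each base $S_j\cup C_i$ to be monochromatic: all of its edge-extensions share one color $\sigma_i(j)$, and $\sigma_i$ is a bijection from the remaining indices onto $[r]$.

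I expect the main obstacle to be the final step: defeating $r+1$ pairwise nonadjacent sets when a single core only controls $r$ of them (the hypothesis gives $rt\le\ell$ but typically $(r+1)t>\ell$). The target is to show that, under the rainbow-fan assumption, the color of each $S_j$ is independent of the fan, i.e.\ $\sigma_i(j)=\gamma_j$; then the conditions $\{\gamma_j:j\neq i\}=[r]$ for all $i$ are contradictory by pigeonhole on the $r+1$ values $\gamma_0,\dots,\gamma_r\in[r]$. Making $\sigma_i(j)$ fan-independent cannot be done by co-embedding $S_j\cup C_i\cup C_{i'}$ in one edge, since that set has size about $c+\ell>k$; instead one must route monochromatic $\ell$-chains through \emph{bridging} edges. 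Here the hypotheses give just enough room: $\ell\le c\le k-(1-1/r)\ell$ forces $\ell\le\frac{r}{2r-1}k\le\frac{r}{r+1}k$, whence $|C_i\cup C_{i'}|=\ell+t\le k$, so an edge containing two cores exists and overlaps each fan edge in $\ge\ell$ vertices (the condition $c>k/2$ placing us in the genuinely nontrivial regime, complementary to the $\tc=1$ case of Theorem~\ref{thm:r2hyper}). Carefully engineering overlapping cores and verifying that chains through such bridges actually merge two of the $r+1$ sets into one monochromatic $(c,\ell)$-component — rather than insisting on a single common edge, which the size constraints forbid — is the delicate part on which the whole proof rests.
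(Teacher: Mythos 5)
Your reduction to the closure graph $\hat G$ on $\binom{V(K)}{c}$ and the observation that everything follows from $\alpha(\hat G)\leq r$ (via Fact \ref{fact:trivial}, K\"onig for $r=2$, and Aharoni for $r=3$) match the paper exactly, and your construction of an $\ell$-set ``core'' meeting each of $r$ prescribed $c$-sets in at least $t=c+\ell-k$ vertices is also the right first move. The problem is that you never actually prove $\alpha(\hat G)\leq r$: the step you yourself flag as ``the delicate part on which the whole proof rests'' is the entire content of the theorem, and the route you sketch for it does not close. A bridging edge $e\supseteq C_i\cup C_{i'}$ carries a single color $\chi$, so it only connects the color-$\chi$ components through the two cores; at best this forces $\sigma_i^{-1}(\chi)=\sigma_{i'}^{-1}(\chi)$ for that one color $\chi$ (and even this fails when $\sigma_i^{-1}(\chi)=i'$ and $\sigma_{i'}^{-1}(\chi)=i$, since those indices are outside the respective domains). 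You have no control over which color each bridge receives, so you cannot conclude that $\sigma_i(j)$ is independent of $i$, and the intended pigeonhole contradiction never materializes.

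The paper's resolution is a different and cleaner choice of cores: because $t\leq \ell/r$ and $\ell\leq rk/(2r-1)$, one can choose all $r+1$ cores $Y_1,\dots,Y_{r+1}$ (with $Y_i$ compatible with every $X_j$, $j\neq i$) so that $|Y_1\cup\dots\cup Y_{r+1}|\leq \ell+t\leq k$, i.e.\ \emph{all} cores sit inside a single edge, whose color we may call $r$. Then either two of the $X_j$ lie in color-$r$ edges through some cores, in which case that single common edge links the two cores and yields a color-$r$ $\ell$-walk between them; or at most one $X_j$ does, in which case the remaining $r$ sets all attach to the one core $Y_{r+1}$ by edges using only the $r-1$ colors $[r-1]$, and pigeonhole gives two attaching edges of the same color overlapping in $Y_{r+1}$ (size $\ell$). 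Either way two of the $r+1$ sets are $\ell$-connected in one color. You should replace your fan-plus-bridges scheme with this single-common-edge construction; without it, the argument has a genuine gap.
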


\begin{proof}
Let $K:=K_n^{k}$ with a given $r$-coloring of the edges.

Suppose $k/2<c\leq k-(1-1/r)\ell$.  First note that given a $c$-set $A$ and an $\ell$-set $B$, there exists $e\in E(K)$ such that $A\cup B\subseteq e$ if and only if $k\geq |A\cup B|=|A|+|B|-|A\cap B|=c+\ell-|A\cap B|$; i.e. $|A\cap B|\geq c+\ell-k$.  

Given any family of $r+1$ $c$-sets $X=\{X_1, \dots, X_{r+1}\}$, since $$r(c+\ell-k)\leq r(c+\ell-(c+(1-1/r)\ell))=\ell,$$ for every set of $r$ elements of $X$, there exists an $\ell$-set which is contained in an edge with each of the $r$ elements.  Furthermore, since $c\geq 2\ell/r$ (using $c\geq \ell$ and $r\geq 2$) we have
$$(r+1)(c+\ell-k)\leq k,$$ 
and we can choose a family of $r+1$ $\ell$-sets  $Y = \{Y_1, \dots, Y_{r+1}\}$ such that $Y_i$ is contained in an edge with every element in $X\setminus \{X_i\}$ and $|Y_1\cup \dots \cup Y_{r+1}|\leq k$ which implies that every pair $Y_i,Y_j$ is contained in the same edge of $K$ of some color, say $r$.  So if any two sets $X_i,X_j$ are both contained in an edge of color $r$ with an element in $Y$, there would be a $\ell$-walk of color $r$ between $X_i$ and $X_j$ in $K$.  So suppose that at most one element from $X$, say $X_{r+1}$, is contained in an edge of color $r$ with some element of $Y$.  However, now $Y_{r+1}$ is contained in an edge with every element in $X\setminus \{X_{r+1}\}$ and since there are only $r-1$ colors used on such edges, there is a monochromatic $\ell$-walk between some $X_i$ and $X_j$ in $K$.  Altogether, this implies that there is a monochromatic $\ell$-walk between some pair of distinct elements from $X$, so the closure of $G$ (the auxiliary graph $\hat{G}$ with an edge of color $i$ between any two vertices ($c$-sets) which have an $\ell$-walk of color $i$ between them) has independence number at most $r$ and thus by Observation \ref{obs:closure} and Fact \ref{fact:trivial}, we have $\tc_r(G)\leq \tc_r(\hat{G})\leq r\alpha(\hat{G})\leq r^2$.  If $r=2$, then Theorem \ref{dualkonig} applies and we have $\tc_2(G)\leq \tc_2(\hat{G})\leq \alpha(\hat{G})\leq 2$.  If $r=3$, then Aharoni's theorem \cite{A} applies (in the dual language) and we have $\tc_3(G)\leq \tc_3(\hat{G})\leq 2\alpha(\hat{G})\leq 6$.
\end{proof}

We now prove an upper bound on $\tc_r^{c,\ell}(K_n^k)$ when $c\leq k/2$ and $c\geq \ell$.  In particular, when $c\leq k/3$, this provides the upper bound for Theorem \ref{thm:k/3}.

\begin{theorem}\label{thm:ellcupper}
For $c\geq \ell\geq 1$, $r\geq 2$, and $c\leq k/2$, $\tc_r^{c,\ell}(K_n^{k})\leq \tc_r^{1}(K_n^{\floor{k/c}})$.  In particular, when $c\leq k/3$, we have $\tc_r^{c,\ell}(K_n^{k})\leq \ceiling{\frac{r}{\floor{k/c}}}$.
\end{theorem}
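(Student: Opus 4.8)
The plan is to reduce the $(c,\ell)$-cover problem on $K_n^k$ to an ordinary tree-cover problem on a complete $\floor{k/c}$-uniform hypergraph, by using the auxiliary-hypergraph construction in the Observation immediately preceding this theorem. Write $t=\floor{k/c}$; since $c\le k/2$ we have $t\ge 2$, so I may apply that Observation with $k'=t$ (the hypotheses $c\ge\ell\ge 1$ and $2\le t\le\binom{k}{c}$ all hold). This produces a $t$-uniform, $r$-(multi)colored hypergraph $G$ on vertex set $\binom{V}{c}$, in which $\{S_1,\dots,S_t\}$ is an edge (carrying every color $i$ realized by a color-$i$ edge of $K_n^k$ containing $S_1\cup\dots\cup S_t$) whenever $S_1\cup\dots\cup S_t$ lies in a single edge of $K_n^k$. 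The Observation guarantees that a monochromatic $(1,1)$-cover of $G$ using $s$ components yields a monochromatic $(c,\ell)$-cover of $K_n^k$ using at most $s$ components (this is where $c\ge\ell$ is used). Hence $\tc_r^{c,\ell}(K_n^k)\le \tc_r^1(G)$.

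The key point, and the step that makes the uniformity $\floor{k/c}$ appear, is that $G$ is in fact the \emph{complete} $t$-uniform hypergraph on $\binom{V}{c}$. Indeed, for any $c$-sets $S_1,\dots,S_t$ we have $|S_1\cup\dots\cup S_t|\le tc=\floor{k/c}\cdot c\le k$, so their union is contained in some $k$-edge of the complete hypergraph $K_n^k$; therefore $\{S_1,\dots,S_t\}$ is an edge of $G$. So every $t$-subset of $\binom{V}{c}$ is an edge, i.e.\ $G=K_{\binom{n}{c}}^t$ as an (uncolored) hypergraph, equipped with an $r$-multicoloring.

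It then remains to bound $\tc_r^1(G)$ by the tree-cover number of a complete $t$-uniform hypergraph, for which I would first dispose of the multicoloring: choosing, for each edge of $G$, a single one of its colors gives a genuine $r$-coloring $c'$ of $K_{\binom{n}{c}}^t$ in which each color class is a subhypergraph of the corresponding class of $G$; any tree cover valid for $c'$ is therefore valid for $G$, so $\tc_r^1(G)\le \tc_r^1(K_{\binom{n}{c}}^t)$. Combining the displayed inequalities gives $\tc_r^{c,\ell}(K_n^k)\le \tc_r^1(K_{\binom{n}{c}}^t)$, which is the asserted bound. For the ``in particular'' clause, $c\le k/3$ forces $t=\floor{k/c}\ge 3$, so Kir\'aly's Theorem \ref{kiraly} applies and gives $\tc_r^1(K_{\binom{n}{c}}^t)=\ceiling{r/t}=\ceiling{r/\floor{k/c}}$, yielding the claimed upper bound (the matching lower bound needed for the equality in Theorem \ref{thm:k/3} is supplied by the earlier example).

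I expect the main difficulty to be bookkeeping rather than a genuine obstruction: one must check carefully that $G$ really is complete (the inequality $tc\le k$, which is exactly the defining property of $t=\floor{k/c}$), and that passing from a multicolored $G$ to a single-colored refinement only makes covering easier. The one genuinely delicate point is the discrepancy between the $\binom{n}{c}$ vertices of $G$ and the $n$ appearing in the statement $\tc_r^1(K_n^t)$; this is harmless precisely when $t\ge 3$, since by Kir\'aly's formula $\tc_r^1(K_N^t)=\ceiling{r/t}$ is independent of the number of vertices $N\ge t$, and that is exactly the regime $c\le k/3$ in which the sharp conclusion is drawn.
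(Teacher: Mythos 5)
Your proposal is correct and follows essentially the same route as the paper: form the complete $\floor{k/c}$-uniform auxiliary hypergraph on $\binom{V}{c}$ (complete because $\floor{k/c}\cdot c\leq k$), invoke the preceding Observation to transfer a $(1,1)$-cover back to a $(c,\ell)$-cover using $c\geq\ell$, and finish with Kir\'aly's theorem when $\floor{k/c}\geq 3$. Your added remarks about discarding the multicoloring and about the vertex count $\binom{n}{c}$ versus $n$ being immaterial (since Kir\'aly's bound is independent of the number of vertices) are correct points that the paper leaves implicit.
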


\begin{proof}
We have an $r$-coloring of $K:=K_n^{k}$ with vertex set $V$.  Let $H$ be an $r$-colored $\floor{k/c}$-uniform hypergraph on vertex set $\binom{V}{c}$ where $\{X_1,\dots, X_{\floor{k/c}}\}$ is an edge of color $i$ in $H$ if and only if there exists an edge $e$ of color $i$ in $K$ such that $X_1\cup  \dots \cup X_{\floor{k/c}}\subseteq e$.  Note that since $K$ is $k$-uniform, $H$ is a complete $\floor{k/c}$-uniform graph.  So $\tc_r^{c,\ell}(K_n^{k})\leq \tc_r^{1}(K_n^{\floor{k/c}})$.

When $\floor{k/c}\geq 3$, it follows from Theorem \ref{kiraly} that $\tc_r^{c,\ell}(K)\leq \tc_r^1(H)\leq \ceiling{\frac{r}{\floor{k/c}}}$.
\end{proof}

We now obtain the following complete answer when $r=2$ and $c\geq \ell$.

\begin{proof}[Proof of Theorem \ref{thm:r2hyper}]
If $c\leq k/2$, then by Theorem \ref{thm:ellcupper} we have $\tc_2^{c,\ell}(K_n^k)\leq \tc_2^{1}(K_n^{\floor{k/c}})=1$ where the last inequality holds by Theorem \ref{kiraly}. If $k/2<c\leq k-\ell/2$, then Theorem \ref{thm:r^2} applies and we have $\tc_2^{c,\ell}(K_n^k)\leq 2$.  Finally, if $\max\{k-\ell/2, k/2\}<c\leq k-1$, then Example \ref{ex:n/c} applies and we have $\tc_2^{c,\ell}(K_n^k)\geq \floor{n/c}=\Omega(n)$.
\end{proof}

In the case when $c\geq \ell$, we are left with the following.

\begin{problem}\label{prob:ell}
Determine $\tc_r^{c,\ell}(K_n^{k})$ when $c\geq \ell$, $c\geq 2$, $r\geq 3$ and $k/3<c\leq k-(1-1/r)\ell$.
\end{problem}

An interesting test case for $c>\ell$ would be $\tc_3^{2,1}(K_n^3)$.  We have $\tc_3^{2,1}(K_n^3)\leq \tc_3^{2}(K_n^3)\leq 6$ (from Observation \ref{obs:basic}(ii) and Theorem \ref{thm:r^2}), but perhaps $\tc_3^{2,1}(K_n^3)\leq 2$?

An interesting test case for $c=\ell$ and $k/2<c\leq k-2\ell/3$ (when $c=\ell$, this is $k/2<c\leq 3k/5$) would be $\tc_3^{3}(K_n^5)$.  

An interesting test case for $c=\ell$ and $k/3<c\leq k/2$ would be $\tc_4^{2}(K_n^4)$.

\subsection{\texorpdfstring{$c<\ell$}{c<l}}

The case $c<\ell$ seems to be harder to analyze because of the fact mentioned earlier that we can define a relation $\sim$ on $\binom{V(H)}{c}$ where $S\sim S'$ if and only if $S$ and $S'$ are $\ell$-connected.  When $c\geq \ell$, this is an equivalence relation and the $(c,\ell)$-components of $H$ are just the equivalence classes; however, when $c<\ell$ it is not necessarily the case that $\sim$ is transitive -- it may happen that $A, B, C$ are $c$-sets and there is an $\ell$-walk from $A$ to $B$ and an $\ell$-walk from $B$ to $C$, but this does not guarantee an $\ell$-walk from $A$ to $C$.

The only general non-trivial upper bound is the following observation which is an extension of Observation \ref{obs:basic}(iv). 

\begin{observation}\label{obs:c<l}
If $c<\ell$, then for all $0\leq s\leq \ell-1$, $\tc_r^{c,\ell}(K_n^k)\leq \tc_r^{c,\ell-s}(K_{n-s}^{k-s})$.  In particular, $\tc_r^{c,\ell}(K_n^k)\leq \tc_r^c(K_{n-(\ell-c)}^{k-(\ell-c)})$. 
\end{observation}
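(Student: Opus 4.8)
The plan is to prove the inequality by induction on $s$, using Observation \ref{obs:basic}(iv) as the sole engine. All of the substantive work -- passing to the link of a single vertex, which simultaneously drops the uniformity and the vertex count by one while raising the connectivity threshold by one -- is already contained in that observation, so what remains is purely to iterate it and track indices carefully. The base case $s=0$ is trivial, since both sides of the claimed inequality are then $\tc_r^{c,\ell}(K_n^k)$.

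For the inductive step I would assume $1\leq s\leq \ell-1$ and that the statement holds for $s-1$, that is,
\[
\tc_r^{c,\ell}(K_n^k)\leq \tc_r^{c,\ell-(s-1)}\!\left(K_{n-(s-1)}^{k-(s-1)}\right).
\]
I would then apply Observation \ref{obs:basic}(iv) with the roles of its parameters $(n,k,\ell)$ played by $(n-s+1,\,k-s,\,\ell-s)$. This is a legitimate instance: $\ell-s\geq 1$ (as $s\leq \ell-1$), $k-s\geq 2$ (as $s\leq \ell-1\leq k-2$), and $c\leq (k-s)-1$ throughout the relevant range. Under this substitution Observation \ref{obs:basic}(iv) reads
\[
\tc_r^{c,(\ell-s)+1}\!\left(K_{n-s+1}^{(k-s)+1}\right)\leq \tc_r^{c,\ell-s}\!\left(K_{(n-s+1)-1}^{k-s}\right).
\]
Rewriting the exponents and subscripts, the left-hand side is precisely $\tc_r^{c,\ell-(s-1)}(K_{n-(s-1)}^{k-(s-1)})$ while the right-hand side is $\tc_r^{c,\ell-s}(K_{n-s}^{k-s})$. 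Chaining this with the induction hypothesis gives
\[
\tc_r^{c,\ell}(K_n^k)\leq \tc_r^{c,\ell-s}\!\left(K_{n-s}^{k-s}\right),
\]
completing the induction.

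Finally, I would deduce the ``in particular'' clause by specializing to $s=\ell-c$, which satisfies $0\leq s\leq \ell-1$ precisely because $1\leq c<\ell$. Then $\ell-s=c$, so using the convention $\tc_r^{c}=\tc_r^{c,c}$ yields
\[
\tc_r^{c,\ell}(K_n^k)\leq \tc_r^{c,c}\!\left(K_{n-(\ell-c)}^{k-(\ell-c)}\right)=\tc_r^{c}\!\left(K_{n-(\ell-c)}^{k-(\ell-c)}\right).
\]
There is no genuine obstacle in the argument; the entire difficulty was already dispatched by the link construction of Observation \ref{obs:basic}(iv), and this statement is a bookkeeping iteration of it. The one point that requires attention is verifying that each successive application lands on admissible parameters -- uniformity at least $2$, connectivity threshold at least $1$, and $c$ within range. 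For the ``in particular'' case every intermediate hypergraph $K_{n-j}^{k-j}$ with $0\leq j\leq \ell-c$ satisfies $c\leq (k-j)-1$ because $\ell\leq k-1$, so every step is valid, and the same check governs the general statement.
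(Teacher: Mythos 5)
Your proof is correct and takes essentially the same approach as the paper: the paper passes to the link hypergraph of an $s$-set in a single step (directly generalizing the argument of Observation \ref{obs:basic}(iv)), which is exactly what your $s$-fold iteration of Observation \ref{obs:basic}(iv) amounts to, since the link of an $s$-set is the iterated link of its vertices. Your index bookkeeping and admissibility checks are accurate, so nothing further is needed.
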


\begin{proof}
Let $[n]=V(K_n^k)$ and let $S\subseteq \binom{[n]}{s}$ and let $H(S)$ be the link hypergraph of $S$ (which is a complete $(k-s)$-uniform hypergraph on $n-s$ vertices).  Note that if $T$ and $T'$ are two $c$-sets in $\binom{V(H(S))}{c}$ and there is a monochromatic $(\ell-s)$-walk between $T$ and $T'$ in $H(S)$, then there is a monochromatic $\ell$-walk between $T$ and $T'$ in $K_n^k$.  So if there are $t:=\tc_r^{c,\ell-s}(K_{n-s}^{k-s})$ monochromatic $(c,\ell-s)$-components covering $\binom{[n]\setminus S}{c}$, then there are $t$ monochromatic $(c,\ell)$-components covering $\binom{[n]}{c}$.
\end{proof}

Note that previous observation in particular implies $\tc_r^{1,k-1}(K_n^k)\leq \tc_r^1(K_n^2)=\tc_r(K_n)$.

The first interesting test case for $c<\ell$ is $\tc_3^{1,2}(K_n^3)$. We have $\tc_3^{1,2}(K_n^3)\leq \tc_3^1(K_n^2)=2$ from above, but perhaps, $\tc_3^{1,2}(K_n^3)=1$?  We now show that this is indeed the case by more carefully considering the possible structures in the 3-colored link graph of a vertex.

The following Lemma appears in \cite{DM16}, but we reproduce it here for completeness. 

\begin{lemma}
  \label{lem:3col}
Let $K$ be a complete graph. For every 3-coloring of $K$, either
\begin{enumerate}
\item there exists a monochromatic connected subgraph on $n$ vertices, or

\item there exists a partition $\{W, X, Y, Z\}$ of $[n]$ (all parts non-empty),
such that $B^1:=[W,X]$ and $B^2:=[Y,Z]$ are complete in blue, $R^1:=[W,Y]$ and
$R^2:=[X,Z]$ are complete in red, and $G^1:=[W,Z]$ and $G^2:=[X,Y]$ are complete
in green, or

\item there exists a partition $\{W, X, Y, Z\}$ of $[n]$ with $X,Y,Z$ non-empty
such that $B:=W\cup X\cup Y$ is connected in blue, $R:=W\cup X\cup Z$ is
connected in red, and $G:=W\cup Y\cup Z$ is connected in green.  Furthermore,
$[X,Y]$ is complete in blue, $[X,Z]$ is complete in red, and $[Y,Z]$ is complete
in green, whereas no edge in $[W,X]$ is green, no edge in $[W,Y]$ is red, and no
edge in $[W,Z]$ is blue.
\end{enumerate}
\end{lemma}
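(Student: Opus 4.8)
The plan is to dispose of conclusion (1) first, then reduce the problem to a single two-coloured bipartite graph and feed it into Lemma~\ref{2colorbip_nodiam}, matching each of its three outcomes \ref{p1'}, \ref{p2'}, \ref{p3'} to one of the three conclusions. The observation that drives every ``forced colour'' step is the following: if two vertices $u,v$ lie in a common monochromatic component of \emph{exactly one} of the three colours, then the edge $uv$ has that colour, since an edge of either of the other two colours would place $u$ and $v$ in a common component of that colour as well. I would suppose conclusion (1) fails, so each of blue, red, green is disconnected. As blue is disconnected, there is a partition $V=S\cup Z$ with $S,Z\neq\emptyset$ and no blue edge between $S$ and $Z$; hence $[S,Z]$ is a complete bipartite graph coloured with only red and green, and I apply Lemma~\ref{2colorbip_nodiam} to it.

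If \ref{p3'} held, then red or green would be connected on $[S,Z]$, hence spanning on $V=S\cup Z$, contradicting that every colour is disconnected; so \ref{p3'} cannot occur. If \ref{p2'} holds we obtain partitions $\{S_1,S_2\}$ of $S$ and $\{Z_1,Z_2\}$ of $Z$ with $[S_1,Z_1]\cup[S_2,Z_2]$ red and $[S_1,Z_2]\cup[S_2,Z_1]$ green. A red edge inside $[S_1,S_2]$ would merge the red components $S_1\cup Z_1$ and $S_2\cup Z_2$ into a spanning red component, and a green edge there would likewise make green spanning; both contradict our assumption, so $[S_1,S_2]$ is entirely blue, and symmetrically $[Z_1,Z_2]$ is entirely blue. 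Taking $(W,X,Y,Z)=(S_1,S_2,Z_1,Z_2)$ then gives exactly conclusion (2), all four parts being non-empty. (If some part of the \ref{p2'} partition is empty the picture collapses to three parts joined by three distinct colours, which is the monochromatic ``triangle'' of conclusion (3) with $W=\emptyset$.)

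The remaining outcome \ref{p1'} is the heart of the argument and should yield conclusion (3). Suppose $Z$ is the double-covered side, witnessed by $s_1\in S$ all-red to $Z$ and $s_2\in S$ all-green to $Z$. I would set $X$ to be the vertices of $S$ joined to $Z$ entirely in red, $Y$ those joined entirely in green, and $W=S\setminus(X\cup Y)$ (the vertices sending both colours to $Z$). Then $s_1\in X$ and $s_2\in Y$, so $X,Y\neq\emptyset$, and by construction $[X,Z]$ is red, $[Y,Z]$ is green, and $[W,Z]$ contains no blue edge, matching three of the requirements of (3). It remains to show that $[X,Y]$ is entirely blue and that $B=W\cup X\cup Y$, $R=W\cup X\cup Z$, $G=W\cup Y\cup Z$ are the required connected monochromatic pieces, with the stated colour restrictions on $[W,X]$, $[W,Y]$, $[W,Z]$; the colour restrictions themselves follow from the forced-colour observation applied to vertices sharing a single monochromatic component, and the connectivities from the fact that each vertex of $W$ reaches the appropriate monochromatic ``side''.

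The main obstacle I anticipate is precisely the claim that $[X,Y]$ is blue: a red edge $xy$ with $x\in X$, $y\in Y$ puts $Z\cup X\cup W\cup\{y\}$ into one red component \emph{without} immediately forcing red to span, so excluding it is not automatic and requires using the disconnectivity of red and green together, while tracking the red/green pattern of the $W$-edges and of the edges inside $S$. The expected resolution is that any such offending red (or green) edge, chased through the already-monochromatic bipartite graphs $[X,Z]$ and $[Y,Z]$, either extends a red or green component to all of $V$ (returning us to conclusion (1)) or regenerates the parallel-class pattern of conclusion (2); in the surviving case $[X,Y]$ is forced blue and (3) follows. I would expect this last step to carry essentially all of the case-checking, and a plausible simplification worth trying first is to re-choose the blue bipartition so that $Z$ is a single blue component, or to use the symmetry among the three colours to cut down the number of subcases.
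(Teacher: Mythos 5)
Your reduction of conclusions (1) and (2) to Lemma \ref{2colorbip_nodiam} works, but the case you yourself call the heart of the argument --- outcome \ref{p1'} leading to conclusion (3) --- is not merely unfinished: the partition you propose there is the wrong one, so no amount of ``chasing'' will close the gap with $W,X,Y$ as you defined them. Taking $X$ to be the vertices of $S$ that are all-red to $Z$, $Y$ those all-green to $Z$, and $W$ the rest does not force ``no edge in $[W,X]$ is green'' or ``$[X,Y]$ is complete blue,'' and neither follows from your forced-colour observation, because two vertices of $S$ can lie in a common red component \emph{and} a common green component via edges inside $S$. Concretely, take $Z=\{z\}$ and $S=\{x,y,y'\}$ with $xz,xy$ red, $yz,y'z$ green, and $xy',yy'$ blue: no colour spans, there are no blue $S,Z$-edges, $Z$ is the double covered side, and your recipe gives $X=\{x\}$, $Y=\{y,y'\}$, $W=\emptyset$, yet the edge $xy\in[X,Y]$ is red. (A similar six-vertex example places a green edge inside your $[W,X]$.) A further problem is that your $S$ is only one side of a blue disconnection, not necessarily a blue component, so the connectivity of $B=W\cup X\cup Y$ in blue required by (3) is also not established.

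The paper sidesteps all of this by defining the parts through \emph{maximal monochromatic components} rather than through the colours of edges sent to $Z$: it fixes a maximal blue component $B$, sets $U=V\setminus B$, takes a maximal red component $R$ meeting both $B$ and $U$, and, in the subcase $U\subseteq R$, takes the maximal green component $G\supseteq U\cup(B\setminus R)$ and sets $W=B\cap R\cap G$, $X=B\setminus G$, $Y=B\setminus R$, $Z=U$. Every colour restriction in (3) is then immediate from maximality (a green edge from $W\subseteq G$ into $X=B\setminus G$ would contradict the maximality of $G$, a red edge into $Y=B\setminus R$ would contradict that of $R$, and so on), and the three connectivity statements are exactly the statements that $B$, $R$, $G$ are components. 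In the example above this places $y$ into $W$ even though $y$ sends only green to $Z$ --- precisely the adjustment your construction misses. Repairing your proof means replacing your definitions of $W,X,Y$ by membership in the maximal red and green components meeting $Z$, at which point you are essentially reproducing the paper's argument.
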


\begin{proof}
Suppose $B$ is a maximal monochromatic, say blue, connected subgraph and set $U=V(K)\setminus B$.
If $U=\emptyset$ then we are in case (i); so suppose not.  Note that all edges from $B$ to $U$ are either red or green.  Let $R$ be a maximal, say red, component which intersects both $B$ and $U$. By the maximality of $B$, we have $B\setminus R\neq \emptyset$. 

First suppose $U\setminus R\neq \emptyset$.  In this case, both $[B\cap R,
U\setminus R]$ and $[B\setminus R, U\cap R]$ are complete in green.  This
implies $[B\cap R, U\cap R]$ and $[B\setminus R, U\setminus R]$ are complete in
red and $[B\cap R, B\setminus R]$ and $[U\cap R, U\setminus R]$ are complete in
blue.  So we are in case (ii), setting $W:=B\cap R$, $X:=B\setminus R$, $Y:=U\cap R$,
and $Z:=U\setminus R$.

Finally, suppose $U\setminus R=\emptyset$.  In this case $[B\setminus R, U]$ is
complete in green, so there is a maximal green component $G$ containing $U\cup
(B\setminus R)$.  Then we are in case (iii), setting $W:=B\cap R\cap G$, $X:=B\setminus
G$, $Y:=B\setminus R$, and $Z:=U$. 
\end{proof}

\begin{figure}[ht]
\centering
\includegraphics[scale=0.85]{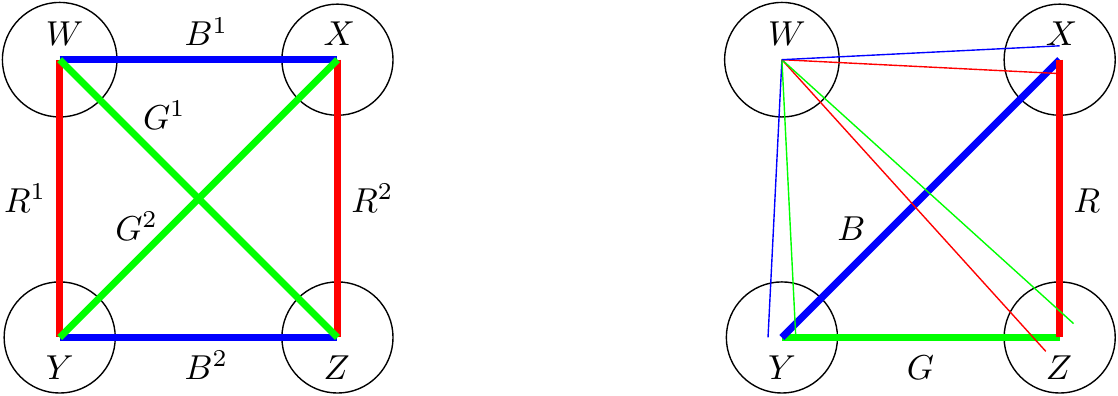}
\caption{Colorings of type (ii) and (iii) respectively.}\label{fig:types}
\end{figure}

\begin{proof}[Proof of Theorem \ref{thm:kr3tight}]
Let $K=K_n^3$ and let $u\in V(K)$.  If the link graph $K(u)$ is connected in any color, then we are done (as in the proof of Observation \ref{obs:basic}(iv)).  So by Lemma \ref{lem:3col}, there are two cases (type (ii) and type (iii)).  We will consider how the edges which do not contain $u$ (which have order 3) interact with the link graph $K(u)$ (which is a 2-uniform hypergraph).

\begin{claim}\label{clm:tight}
\begin{enumerate}
\item Let $H$ be a connected color $i$ subgraph in the link graph $K(u)$ for some $i\in [3]$.  If for all $c\in V(K(u))\setminus V(H)$, there exists $ab\in E(H)$ such that $abc$ is a color $i$ edge of $K$, then there is a monochromatic spanning tight component of $K$.

\item Suppose $H_1$ and $H_2$ are connected color $i$ subgraphs in the link graph $K(u)$ such that $\{V(H_1), V(H_2)\}$ forms a partition of $V(K(u))$.  If there exists $ab\in E(H_1)$ and $cd\in E(H_2)$ such that $abc, bcd$ are color $i$ edges of $K$, then we have a monochromatic spanning tight component. 
\end{enumerate}
\end{claim}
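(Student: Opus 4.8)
The plan is to translate connectivity in the link graph $K(u)$ into tight connectivity in $K$, using the elementary fact that whenever $xy$ and $yz$ are edges of the same color $i$ in $K(u)$, the triples $\{u,x,y\}$ and $\{u,y,z\}$ are both color $i$ in $K$ and share the two vertices $\{u,y\}$. First I would record the observation that drives both parts: if $H$ is a connected color $i$ subgraph of $K(u)$ containing at least one edge (as is guaranteed in both parts of the claim), then the entire set $\{u\}\cup V(H)$ lies in a single color $i$ tight component of $K$. Indeed, any path $v_0 v_1 \cdots v_m$ in $H$ produces the color $i$ triples $\{u,v_0,v_1\}, \{u,v_1,v_2\}, \dots, \{u,v_{m-1},v_m\}$, in which consecutive triples share $\{u,v_j\}$; this is a monochromatic tight walk that simultaneously joins $u$ to every $v_j$.

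For part (i) I would apply this observation to $H$, placing $\{u\}\cup V(H)$ into a single color $i$ tight component $T$, and then show that each $c \in V(K(u))\setminus V(H)$ can be appended to $T$. By hypothesis there is an edge $ab \in E(H)$ with $\{a,b,c\}$ of color $i$. Since $\{u,a,b\}$ is also color $i$ (because $ab \in E(H)$) and the two triples share the pair $\{a,b\}$, the vertex $c$ is tightly connected in color $i$ to $T$. As $c$ was arbitrary, $T$ spans $V(K)=\{u\}\cup V(K(u))$, yielding the desired monochromatic spanning tight component.

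For part (ii) the observation supplies a color $i$ tight component $T_1 \supseteq \{u\}\cup V(H_1)$ and a color $i$ tight component $T_2 \supseteq \{u\}\cup V(H_2)$. The key point, and the one place where care is genuinely needed, is that one cannot conclude $T_1=T_2$ merely because both contain $u$: in the present regime (covering single vertices with $\ell=2$) the relation $\sim$ is not transitive, so the tight walks witnessing membership in $T_1$ and $T_2$ may intersect only in the single vertex $u$. The hypothesis is designed precisely to repair this gap. Using $ab \in E(H_1)$, $cd \in E(H_2)$ together with the color $i$ triples $\{a,b,c\}$ and $\{b,c,d\}$, I would exhibit the explicit color $i$ tight walk
\[
\{u,a,b\},\ \{a,b,c\},\ \{b,c,d\},\ \{u,c,d\},
\]
whose consecutive triples share $\{a,b\}$, $\{b,c\}$, and $\{c,d\}$ respectively. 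This walk bridges $\{u,a,b\}\in T_1$ to $\{u,c,d\}\in T_2$, forcing $T_1$ and $T_2$ into a common color $i$ tight component, which then covers $\{u\}\cup V(H_1)\cup V(H_2)=V(K)$.

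The only real obstacle here is conceptual rather than computational: recognizing that in this regime one must actively construct a bridging tight walk rather than invoke the (false) transitivity of $\sim$, and then verifying that each consecutive pair of triples in the displayed walk shares exactly two vertices. Everything else is straightforward bookkeeping of which triples receive color $i$ and how the link-graph paths lift to tight walks through $u$.
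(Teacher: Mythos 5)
Your proposal is correct and follows essentially the same route as the paper: lift paths in the connected link-graph pieces to color-$i$ tight walks through $u$ (consecutive triples sharing $\{u,v_j\}$), use the hypothesis triples $abc$ (and $bcd$) to attach outside vertices in (i) and to bridge the two pieces in (ii), and verify pairwise tight-connectivity via explicit concatenated walks rather than appealing to transitivity. No substantive differences from the paper's argument.
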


\begin{proof} Without loss of generality, suppose $i=1$ and say this color is red.  In either case, there is clearly a red tight walk between $u$ and any other vertex in $V(K(u))$.  So let $v,w \in V(K(u))$.  
\begin{enumerate}
\item If $v\in V(K(u))\setminus V(H)$, then let $e\in E(H)$ be the red edge guaranteed by the hypothesis and if $v\in V(H)$, then let $e\in E(H)$ such that $v\in e$.  Likewise, let $f$ be the red edge corresponding to $w$. Now any path between $e$ and $f$ in $H$, together with $u$ gives a red tight walk from $v$ to $w$ in $K$.

\item Let $e\in E(H)$ such that $v\in e$ and let $f\in E(H)$ such that $w\in f$.  If $e\in E(H_i)$ and $f\in E(H_i)$ for some $i\in [2]$, then there is a red tight walk from $v$ to $w$, so suppose $e\in E(H_1)$ and $f\in E(H_2)$.  Let $P$ be a path from $e$ to $ab$ in $H_1$ and let $Q$ be a path from $f$ to $cd$ in $H_2$.  The path $P$ together with $u$ gives us a red tight walk from $v$ to $ab$ (the last edge being $uab$), the edges $abc$ and $bcd$ give us a tight red walk from $v$ to $cd$ (the last edge being $bcd$), and finally $Q$ together with $u$ again, gives us a red tight walk from $bc$ to $w$.  So all together we have a red tight walk from $v$ to $w$.\qedhere
\end{enumerate}
\end{proof}

\tbf{Case 1} ($K(u)$ has a type (ii) coloring).

If for every vertex $v\in W\cup Y$, there exists a red $v,X,Z$-edge, then we are done by \ref{clm:tight}(i).  Likewise if for every vertex $v\in X\cup Z$, there exists a red $v,W,Y$-edge.  So suppose without loss of generality that there exists, say $w'\in W$ and $x'\in X$ such that no $w',X,Z$-edge is red and no $x',W,Y$-edge is red.  So every $w',x',Y\cup Z$-edge is either blue or green.  If all such edges are blue, then we are done by Claim \ref{clm:tight}(i), so suppose without loss of generality that there exists $y\in Y$ such that $w'x'y$ is green.  If any $w',x',Z$-edge is green, then we are done by \ref{clm:tight}(ii); so suppose every $w',x',Z$-edge is blue.  If for all $y\in Y$, there exists a blue $y,W,X$-edge, then we are done by Claim \ref{clm:tight}(i); so suppose there exists $y'\in Y$ such that no $y',W,X$-edge is blue.  We already know that no $x',y',W$-edge is red and we know that no $x',y', W$-edge is blue, so every $x',y', W$-edge is green.  If there exists a green $w',y',Z$-edge, then we are done by Claim \ref{clm:tight}(ii) and if there exists a blue $w',y',Z$-edge, then we are done by Claim \ref{clm:tight}(ii), so every $w',y',Z$-edge is red.  Since every $x',y',W$-edge is green, there exists $z'\in Z$ such that no $z',X,Y$-edge is green (by Claim \ref{clm:tight}(i)).  Finally, consider the edge $x'y'z'$.  We just showed that $x'y'z'$ cannot be green,  if $x'y'z'$ is red, then we are done by Claim \ref{clm:tight}(ii), and if $x'y'z'$ is blue, then we are done by Claim \ref{clm:tight}(ii).

\tbf{Case 2} ($K(u)$ has a type (iii) coloring).  Note that by Claim \ref{clm:tight}(i) we would be done if for all $x\in X$, there exists a green $x,Y,Z$-edge, or for all $y\in Y$, there exists a red $y,X,Z$-edge, or for all $z\in Z$, there exists a blue $z,X,Y$-edge; so suppose there exists $x'\in X$ such that no $x',Y,Z$-edge is green, $y'\in Y$ such that no $y', X,Z$-edge is red, and $z'\in Z$ such that no $z', X,Y$-edge is blue.  However, this is a contradiction as the edge $x'y'z'$ is either red, blue, or green.
\end{proof}

\subsection{Partitioning}

We now draw attention to the partition version of the problem raised by Fujita, Furuya, Gy\'arf\'as, and T\'oth \cite{FFGT1}.  Let $\tp_r^{c,\ell}(H)$ be the smallest integer $t$ such that in every $r$-coloring of the edges of $H$, there exists a set $\cT$ of at most $t$ monochromatic $(c,\ell)$-components $\mathcal{C}$ (that is, each $C\in \mathcal{C}$ is a component in $H_i$ for some $i\in [r]$) such that $\bigcup_{C\in \mathcal{C}}C=\binom{V(H)}{c}$ and $C\cap C'=\emptyset$ for all distinct $C,C'\in \mathcal{C}$.

\begin{problem}[{Fujita, Furuya, Gy\'arf\'as, T\'oth \cite[Problem 14]{FFGT1}}]
Is $\tp_6^1(K_n^3)=2$?
\end{problem}

\subsection{Large monochromatic subgraphs}

Finally, we raise the problem of determining the largest monochromatic $(c,\ell)$-component (recall that a $(c,\ell)$ component is a subset of $\binom{V(H)}{c}$) in an arbitrary $r$-coloring of $H$.   Let $\mc_r^{c,\ell}(H)$ be largest integer $m$ such that in every $r$-coloring of $H$, there is a monochromatic $(c,\ell)$ component of order at least $m$.

In this language Theorem \ref{thm:fur} says $\mc_r^{1}(G)\geq \frac{n}{(r-1)\alpha(G)}$. 
From Theorem \ref{thm:kr3tight}, we know $\mc_3^{1,2}(K_n^3)=n$. And from Theorem \ref{thm:k/3} we have have that if $1\leq \ell\leq c\leq k/3$, then $$\mc_r^{c,\ell}(K_n^k)\geq \frac{\binom{n}{c}}{\ceiling{\frac{r}{\floor{k/c}}}}.$$
So we ask the following question.

\begin{problem}
Determine $\mc_r^{c,\ell}(K_n^k)$.  In particular, determine $\mc_r^{1,2}(K_n^3)$ for $r\geq 4$.  
\end{problem}

\section{Further generalizations and strengthenings}\label{sec:further}

\subsection{Aharoni's proof for \texorpdfstring{$r=3$}{r=3}}\label{sec:aharoni}


Given a hypergraph $H$ and a matching $M$ in $H$, let $\rho(M)$ be the minimum size of a set of edges $F$ having the property that every edge in $M$ intersects some edge in $F$.  Let the \emph{matching width} of $H$, denoted $\mathrm{mw}(H)$, be the maximum value of $\rho(M)$ over all matchings $M$ in $H$.  Note that $\mathrm{mw}(H)$ is witnessed by a maximal matching.

\begin{observation}\label{mwobs}
Given a hypergraph of rank at most $r$ (that is, each edge has order at most $r$), $\mathrm{mw}(H)\leq \nu(H)\leq r\cdot \mathrm{mw}(H)$.
\end{observation}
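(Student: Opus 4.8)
The statement packages two independent inequalities, and the plan is to handle each by a short counting argument against the defining properties of a matching.

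First I would establish the lower bound $\mathrm{mw}(H)\leq \nu(H)$. The key observation is that for \emph{any} matching $M$, the family $F:=M$ itself is a valid witness for $\rho$: every edge of $M$ trivially intersects itself, and that edge lies in $F$. Hence $\rho(M)\leq |M|$, and since $M$ is a matching we have $|M|\leq \nu(H)$. As this holds for every matching $M$, taking the maximum of $\rho(M)$ over all $M$ yields $\mathrm{mw}(H)\leq \nu(H)$.

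For the upper bound $\nu(H)\leq r\cdot \mathrm{mw}(H)$, I would start from a maximum matching $M$, so that $|M|=\nu(H)$, and fix a minimum-size family of edges $F$ witnessing $\rho(M)$; that is, $|F|=\rho(M)$ and every edge of $M$ meets some edge of $F$. The heart of the argument is a double count of the incidences between $F$ and $M$. Since $H$ has rank at most $r$, each $f\in F$ contains at most $r$ vertices; and because the edges of $M$ are pairwise disjoint, each vertex of $f$ belongs to at most one edge of $M$. Consequently $f$ meets at most $|f|\leq r$ edges of $M$, so the whole family $F$ can meet at most $r|F|=r\,\rho(M)$ edges of $M$ in total. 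As \emph{every} edge of $M$ is met by some edge of $F$, this forces $|M|\leq r\,\rho(M)\leq r\cdot \mathrm{mw}(H)$, which is the desired inequality.

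There is no serious obstacle here: both directions reduce to playing pairwise disjointness of the matching against the rank bound. The only point that requires a little care is in the second inequality, where one must argue that disjointness of $M$ limits how many edges of $M$ a single bounded-size edge of $F$ can touch — this is precisely where the factor $r$ (the rank) enters, and the bound is seen to be tight exactly when each witnessing edge meets $r$ distinct edges of $M$ in $r$ distinct vertices.
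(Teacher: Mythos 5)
Your proof is correct and follows essentially the same argument as the paper: $\rho(M)\leq|M|$ via $F=M$ for the first inequality, and for the second, a counting argument showing that a witnessing family $F$ of size at most $\mathrm{mw}(H)$ can meet at most $r|F|$ pairwise disjoint edges. Your version is phrased slightly more carefully (starting from a maximum matching and counting incidences rather than vertices spanned by $F$), but the substance is identical.
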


\begin{proof}
Clearly $\rho(M)\leq |M|$ for all matchings $M$, which implies $\mathrm{mw}(H)\leq \nu(H)$.  Let $F$ be a set of edges $F$ which witnesses $\mathrm{mw}(H)$. Since $H$ has rank $r$, there are at most $r\cdot \mathrm{mw}(H)$ vertices spanned by $F$, and thus at most $r\cdot \mathrm{mw}(H)$ disjoint edges can intersect $F$.
\end{proof}

Aharoni's \cite{A} proof of Ryser's conjecture for the case $r=3$ implicitly shows that if $\tau(H')\leq (r-1)\mathrm{mw}(H')$ for all $(r-1)$-partite hypergraphs $H'$, then $\tau(H)\leq (r-1)\nu(H)$ for all $r$-partite hypergraphs $H$.  So we ask the following question.

\begin{problem}Let $r\geq 4$ and let $H'$ be an $(r-1)$-partite hypergraph. Is it true that $\tau(H')\leq (r-1)\mathrm{mw}(H')$?
\end{problem}

In the case $r=2$, this is trivial. In the case $r=3$ we have $\tau(H')=\nu(H')$ by K\"onig's theorem, and thus $\nu(H')\leq 2\mathrm{mw}(H')$ by Observation \ref{mwobs}.  We stress that we have no evidence one way or the other, but since a positive answer would imply Ryser's conjecture, we wouldn't be surprised if the answer is negative.\footnote{After submitting the paper, we asked this question on MathOverflow \url{https://mathoverflow.net/questions/372992/relationship-between-minimum-vertex-cover-and-matching-width} and indeed a construction providing a negative answer was given by Alex Ravsky.}

\subsection{Lov\'asz' conjecture}
Lov\'asz \cite{Lov75} made the following conjecture which would give an inductive proof of Ryser's conjecture.

\begin{conjecture}[Lov\'asz \cite{Lov75}]
In every $r$-partite hypergraph, there exists a set of at most $r-1$ vertices whose deletion decreases the matching number.

In the dual language (R2), in every $r$-colored graph, there exists a set of at most $r-1$ monochromatic components whose deletion decreases the independence number.
\end{conjecture}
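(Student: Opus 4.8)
The plan is to establish the equivalence of the two formulations, since this is exactly what the phrase ``In the dual language (R2)\ldots'' is implicitly claiming; the conjecture itself is open (it would imply Ryser's conjecture), so I would prove the translation unconditionally and then record the inductive consequence. The whole argument runs through the duality dictionary of the Duality subsection. Starting from an $r$-colored graph $G$, I pass to its closure $\wh{G}$ (Observation~\ref{obs:closure}), so that every monochromatic component is a clique, and build the $r$-partite hypergraph $H$ whose vertex set is the set of monochromatic components of $\wh{G}$ (partitioned into $r$ parts by color) and whose edges are the maximal families of components with a common vertex; conversely, from an $r$-partite $H$ one builds $G$ on $V(G)=E(H)$ as in the paper. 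Under either construction the vertices of $H$ correspond bijectively to the monochromatic components of $G$, while matchings of $H$ correspond to independent sets of $\wh{G}$, so that $\nu(H)=\alpha(\wh{G})$. The key reduction is therefore to show that ``deleting a vertex of $H$'' matches ``deleting a monochromatic component of $G$,'' and that the corresponding notions of ``strictly decreasing the extremal parameter'' agree.

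For the forward direction (hypergraph version $\Rightarrow$ graph version), fix $G$, pass to $\wh{G}$, and form $H$. For a set $S\subseteq V(H)$, that is, a set of monochromatic components of $G$, write $H\setminus S$ for the hypergraph on $V(H)\setminus S$ whose edges are those of $H$ meeting no vertex of $S$. A matching of $H$ avoiding $S$ is precisely a collection of vertices of $\wh{G}$, pairwise in distinct components of every color, none lying in any component of $S$; equivalently, an independent set of $\wh{G}$ contained in $V(G)\setminus\bigcup_{C\in S}V(C)$. Hence $\nu(H\setminus S)=\alpha\bigl(\wh{G}-\bigcup_{C\in S}V(C)\bigr)$, so $\nu(H\setminus S)<\nu(H)$ exactly when deleting the components in $S$ strictly lowers $\alpha(\wh{G})$. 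Applying the hypergraph formulation to $H$ yields $S$ with $|S|\le r-1$ and $\nu(H\setminus S)<\nu(H)$, which is a family of at most $r-1$ monochromatic components of $G$ whose deletion decreases the independence number.

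The reverse direction (graph version $\Rightarrow$ hypergraph version) is the same computation read through the second construction: given $r$-partite $H$, form $G$ on $V(G)=E(H)$ with an edge of color $i$ between $e,f$ whenever $e\cap f\cap V_i\neq\emptyset$, so the monochromatic components are cliques and correspond to vertices of $H$, while independent sets of $G$ are matchings of $H$. Deleting a monochromatic component of $G$ removes the corresponding vertex of $H$ together with the hyperedges it destroys, and the identical bookkeeping turns a decrease of $\alpha$ into $\nu(H\setminus S)<\nu(H)$. Combining the two directions gives the equivalence. The one delicate point here is that the R2 statement speaks of $\alpha(G)$ whereas the dictionary identifies $\nu(H)$ with $\alpha(\wh{G})$; this is harmless because the components, and hence the deletions, are unchanged under closure, and on the natural common class (graphs whose monochromatic components are cliques, which is exactly the class arising from $r$-partite hypergraphs) one has $\alpha(G)=\alpha(\wh{G})=\nu(H)$.

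The main obstacle is not the translation but the conjecture itself. Once the equivalence is in hand, the natural attack is induction: delete the guaranteed set of at most $r-1$ vertices to drop $\nu$ by one and recurse, which is precisely how the statement would yield $\tau(H)\le(r-1)\nu(H)$; consequently an unconditional proof appears to be at least as hard as Ryser, and I would not expect to obtain one in general. What I would realistically deliver is the equivalence above, together with the small cases $r\le 3$, where K\"onig's theorem (Theorem~\ref{konig}) and Aharoni's theorem \cite{A} control the relevant $2$- and $3$-partite hypergraphs. Even there the subtle requirement is that the deleted vertices must meet \emph{every} maximum matching, so the argument must exhibit an essential vertex set rather than merely a minimum cover.
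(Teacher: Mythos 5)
The first thing to say is that the paper contains no proof of this statement: it is an open conjecture (Lov\'asz's route to an inductive proof of Ryser's conjecture), and the paper merely records it in both languages and cites the partial results of Haxell--Narins--Szab\'o and Aharoni--Bar\'at--Wanless. You correctly recognize this, so what remains to assess is the equivalence of the two formulations, which is the content you actually propose to deliver. The direction from the graph statement to the hypergraph statement is fine: the graph built from an $r$-partite hypergraph has every monochromatic component a clique, so there $\alpha(G)=\alpha(\wh{G})=\nu(H)$ and your bookkeeping is exact.

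The gap is in the other direction, at precisely the point you declare ``harmless.'' Your computation correctly gives $\nu(H\setminus S)=\alpha\bigl(\wh{G}-\bigcup_{C\in S}V(C)\bigr)$, so the hypergraph statement hands you a set $S$ of at most $r-1$ components whose deletion decreases $\alpha(\wh{G})$ --- not $\alpha(G)$. For the covering inequality this discrepancy really is harmless, because $\tc_r(G)=\tc_r(\wh{G})\le(r-1)\alpha(\wh{G})\le(r-1)\alpha(G)$; but ``deleting $S$ decreases the independence number'' does not transfer from $\wh{G}$ to $G$ by any such monotonicity. The obstruction is that a maximum independent set of $G$ need not be independent in $\wh{G}$, hence corresponds to no matching of $H$, and the dictionary gives you no control over whether $S$ meets it. For instance, in the $4$-cycle $a,b,c,d$ with $ab,bc$ of color $1$ and $cd,da$ of color $2$, one has $\wh{G}=K_4-bd$; the set $\{a,c\}$ is a maximum independent set of $G$ that is invisible to $H$ (the edges $e_a$ and $e_c$ of $H$ intersect), so nothing in your argument forces the deleted components to meet it. Retreating to ``the natural common class'' of graphs with clique components, as you propose, establishes only the equivalence of the hypergraph form with that restricted graph form; the R2 statement as written in the paper quantifies over \emph{every} $r$-colored graph and refers to $\alpha(G)$, and the passage from the closed case to the general case is exactly the step that is missing. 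Either supply that step, or state explicitly that the dual form is to be read in the closure convention of Observation 3.2; as it stands the claimed equivalence is not proved.
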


Haxell, Narins, and Szab\'o \cite{HNS} proved this for all $3$-partite hypergraphs in which $\tau(H)=2\nu(H)$.  Aharoni, Bar\'at, Wanless \cite{ABW} proved a fractional version of this; that is, in every $r$-partite hypergraph $H$, there exists a set $S$ of at most $r-1$ vertices (which is contained in an edge) such that $\nu^*(H-S)\leq \nu^*(H)-1$.

\subsection{Monochromatic covers with extra restrictions}

Aharoni conjectured (see \cite{ABW}) the following strengthening of Ryser's conjecture (stated here in the dual language): in any $r$-coloring of $K_n$, there is a monochromatic $(r-1)$-cover in which either all the components have the same color, or there is a vertex which is contained in all the components.  Francetic, Herke, McKay, and Wanless disproved \cite[Theorem 3.1]{FHMW} this conjecture  by constructing  a 13-coloring of $K_n$ such that every color class has 13 components and every set of 12 monochromatic components which cover $V(K_n)$ has empty intersection.

In \cite{ABW} it was already noted that the following stronger conjecture is not true for all $r\geq 3$: in any $r$-coloring of $K_n$, there exists $i\in [r]$ such that there is a monochromatic $(r-1)$-cover in which all of the components have color $i$, or there is a monochromatic $(r-1)$-cover in which all of the components have colors in $[r]\setminus \{i\}$ and some vertex is contained in all of the components.

Note that our Conjecture \ref{con:rpart} is a weakening of this stronger conjecture.

\subsection{\texorpdfstring{$t$}{t}-intersecting hypergraphs/$t$-multicolored edges}

Bustamente and Stein \cite{BS}, and Kir\'aly and T\'othm\'er\'esz \cite{KT} independently studied the following problem.

\begin{problem}[\cite{BS}, \cite{KT}]
Let $r$ and $t$ be integers with $r\geq 2$ and $t\geq 1$ and let $H$ be an $r$-partite hypergraph in which every pair of edges intersects in at least $t$ vertices.  Determine an upper bound on $\tau(H)$.  

In the dual language (R2), we have a $r$-colored complete graph in which every pair of vertices is contained in components of at least $t$ different colors (equivalently, every edge gets $t$ different colors) and we are looking for the monochromatic cover with the smallest number of components.  
\end{problem}

The best known results are due to Bishnoi, Das, Morris, Szab\'o \cite{BDMS}; in fact, when $t>\frac{r}{3}$, their results are tight.

\begin{theorem}[Bishnoi, Das, Morris, Szab\'o \cite{BDMS}]\label{2wiset}
Let $t$ and $r$ be integers with $\frac{r}{3}< t\leq r$. If $H$ is an $r$-partite hypergraph in which every pair of edges has intersection size at least $t$, then $\tau(H)\leq \ceiling{\frac{r-t+1}{2}}$, and this is best possible.
\end{theorem}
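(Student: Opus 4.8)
The plan is to recast the statement in a coordinate/codeword form and then run a short extremal argument. First I would record that, since $|e|\ge|e\cap f|\ge t$, every edge has at least $t$ vertices, and I would reduce to the $r$-uniform case: given an $r$-partite $t$-intersecting $H$, pad each edge $e$ with a \emph{fresh} vertex in every part $V_i$ meeting $e\cap V_i=\emptyset$, all padding vertices distinct. The resulting $H'$ is $r$-uniform and $r$-partite, and because distinct padding vertices never lie in two edges we have $e'\cap f'=e\cap f$, so $H'$ is still $t$-intersecting. Replacing any padding vertex appearing in a cover of $H'$ by a genuine vertex of its (unique) edge shows $\tau(H)\le\tau(H')$, so it suffices to bound $\tau$ for $r$-uniform $t$-intersecting hypergraphs. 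In that case I identify each edge with a word assigning one vertex to each part, so that ``$|e\cap f|\ge t$'' becomes ``$e$ and $f$ agree in at least $t$ coordinates.''

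Next I would pick two edges $c,c'$ whose intersection $m_0:=|c\cap c'|\ge t$ is \emph{minimum} over all pairs (if $H'$ has at most one edge the bound is trivial), and set $A:=c\cap c'$ with $|A|=m_0$ and $d_0:=r-m_0\le r-t$. The heart of the argument is the claim that every edge $f$ misses the common value of $c,c'$ on at most $\floor{d_0/2}$ coordinates of $A$. To see this, let $x=|A\setminus f|$ count the coordinates of $A$ where $f$ differs from the common value of $c,c'$, and on the $d_0$ coordinates of $[r]\setminus A$ (where $c,c'$ differ) let $p,q,w$ count those where $f$ agrees with $c$ only, with $c'$ only, and with neither, so $p+q+w=d_0$. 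Minimality of $m_0$ gives $|f\cap c|\ge m_0$ and $|f\cap c'|\ge m_0$, i.e.\ the Hamming distances satisfy $d(f,c),d(f,c')\le d_0$, and then
\[
d(f,c)+d(f,c')=(x+q+w)+(x+p+w)=2x+d_0+w\le 2d_0,
\]
whence $2x+w\le d_0$ and therefore $x\le\floor{d_0/2}$.

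Finally I would finish by choosing \emph{any} $\floor{d_0/2}+1$ coordinates of $A$ together with the common value of $c,c'$ on them. This is exactly where the hypothesis $t>r/3$ enters: it gives $r<3t\le 3m_0$, hence $d_0=r-m_0<2m_0$, so $\floor{d_0/2}+1\le m_0=|A|$ and the required coordinates really exist inside $A$. Every edge $f$ misses at most $\floor{d_0/2}$ of them, so it contains the chosen vertex on at least one; thus these $\floor{d_0/2}+1$ vertices cover $H'$. Since $\floor{d_0/2}+1=\ceiling{(d_0+1)/2}\le\ceiling{(r-t+1)/2}$, this yields the stated bound.

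For tightness I would use a ``middle slice'' construction: take parts $V_i=\{0_i,1_i\}$ of size two, set $k=\floor{(r-t)/2}$, and take the edge $\{0_1,\dots,0_r\}$ together with, for every $k$-subset $K\subseteq[r]$, the edge using $1_i$ for $i\in K$ and $0_i$ otherwise. Two such edges agree on at least $r-2k\ge t$ coordinates, so the family is $t$-intersecting, while a short argument (a minimum cover built from value-$0$ vertices must meet the support of every weight-$k$ edge, forcing $k+1$ vertices) gives $\tau=k+1=\ceiling{(r-t+1)/2}$. I expect the genuine content to sit entirely in the displayed double-counting step and its companion observation that $t>r/3$ is precisely the threshold making $\floor{d_0/2}+1$ coordinates fit inside the $m_0$-coordinate agreement set $A$; the uniformization reduction and the tightness computation are then routine bookkeeping.
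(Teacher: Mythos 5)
Your argument is correct, and it takes a genuinely different route from the one in the paper. The paper only proves the upper bound, and does so by passing to the dual language: it shows that in an $r$-coloring of $K_n$ where every edge receives at least $t$ colors, every triple of vertices sees at least $\ceiling{\frac{3t-r}{2}}$ colors appearing twice, lifts this to an $r$-coloring of $K_n^3$ in which every $3$-edge gets that many colors, and then invokes the $k$-wise intersecting result (Theorem \ref{kwiset}) with $k=3$; the payoff of that route is the extra conclusion that the cover can avoid any prescribed set of $\ceiling{\frac{3t-r}{2}}-1$ colors. You instead argue directly on the hypergraph side: after a harmless padding to the $r$-uniform case, you pick two edges $c,c'$ at minimum intersection $m_0$, and your double count $d(f,c)+d(f,c')=2x+d_0+w\le 2d_0$ shows every edge contains all but at most $\floor{d_0/2}$ vertices of $A=c\cap c'$, so any $\floor{d_0/2}+1$ vertices of $A$ form a cover; the hypothesis $t>r/3$ enters exactly to guarantee $\floor{d_0/2}+1\le m_0$. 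I checked the arithmetic ($\floor{d_0/2}+1=\ceiling{(d_0+1)/2}\le\ceiling{(r-t+1)/2}$) and the padding reduction (fresh vertices lie in a unique edge, so intersections and $\tau$ are controlled, and in fact $A$ consists only of genuine vertices), and both are sound. Your approach is more elementary and self-contained, produces a cover lying inside the intersection of two specific edges, and — unlike the paper, which cites \cite{BDMS} for optimality — also supplies the tightness construction, whose verification (a weight-$k$ edge avoiding any $k$ prescribed vertices exists because $r\ge 2k$) is correct. What it does not give is the paper's color-avoidance refinement.
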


Bishnoi, Das, Morris, Szab\'o \cite{BDMS} also studied a generalization where every set of $k$ edges intersect in at least $t$ vertices.  It turns out that their result is a generalization of Theorem \ref{kiraly} (which is the case $k\geq 3$, $t=1$ below). 

\begin{theorem}[Bishnoi, Das, Morris, Szab\'o \cite{BDMS}]\label{kwiset}
Let $k$, $r$, and $t$ be integers with $k\geq 3$, $r\geq 2$, and $t\geq 1$.  If $H$ is an $r$-partite hypergraph in which every set of $k$ edges has intersection size at least $t$, then $\tau(H)\leq \ceiling{\frac{r-t+1}{k}}=\floor{\frac{r-t}{k}}+1$, and this is best possible.  

In the dual language (R2), given an $r$-coloring of $K_n^k$ in which every set of $k$ vertices is contained in components of $t$ different colors (equivalently, every edge gets $t$ different colors), then there is a monochromatic $\ceiling{\frac{r-t+1}{k}}$-cover.  
\end{theorem}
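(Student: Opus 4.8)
The plan is to prove both the upper bound and its tightness, working in the dual (R2) language in which the hypothesis becomes a coloring of $K_n^k$ (with $n=|E(H)|$) where every edge receives at least $t$ colors, and $\tau(H)$ becomes the size of a smallest monochromatic cover; the primal bound on $\tau(H)$ then follows from the R1/R2 equivalence recorded earlier. For the upper bound, the main point is that the general-$t$ statement collapses in a single step to Kir\'aly's theorem (Theorem \ref{kiraly}). First I would fix an arbitrary set $D\subseteq[r]$ of $t-1$ colors and delete these colors from the color set of every edge. Since each edge originally carried at least $t$ colors, after deletion every edge still carries at least one color from the palette $[r]\setminus D$, which has size $r-t+1$; choosing one surviving color per edge produces an ordinary $(r-t+1)$-coloring of $K_n^k$. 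By Theorem \ref{kiraly} this coloring has a monochromatic cover of size $\ceiling{\frac{r-t+1}{k}}=\floor{\frac{r-t}{k}}+1$.

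The only thing to verify in this reduction is that the cover lifts back to the original multicoloring, and this is immediate in the correct direction. For a surviving color $i\notin D$, an edge is assigned color $i$ in the chosen single-coloring only if $i$ belonged to its original color set, so each monochromatic color-$i$ component of the single-coloring is a subgraph of the corresponding color-$i$ component of the original coloring; in particular its vertex set is contained in that of the original component. Hence a cover of $V(K_n^k)$ by components of the single-coloring is also a cover by monochromatic components of the original coloring, giving the bound $\ceiling{\frac{r-t+1}{k}}$ and therefore $\tau(H)\le\ceiling{\frac{r-t+1}{k}}$.

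For the matching lower bound (tightness) I would generalize Kir\'aly's extremal construction. Set $q':=\floor{\frac{r-t}{k}}$ and partition $V(K_n^k)$ into nonempty sets $\{V_X: X\in\binom{[r]}{q'}\}$ indexed by the $q'$-subsets of $[r]$, each of size at least $k$ (so $n\ge k\binom{r}{q'}$ suffices). For each edge (i.e.\ $k$-set) $e$, put $\phi(e)=\bigcup\{X: V_X\cap e\neq\emptyset\}$; since $|e|=k$, the edge $e$ meets at most $k$ of the parts, so $|\phi(e)|\le kq'\le r-t$, whence $|[r]\setminus\phi(e)|\ge t$ and I may color $e$ with any $t$-subset of $[r]\setminus\phi(e)$. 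Every edge then receives exactly $t$ colors, so this is a legitimate instance. If some monochromatic cover used only colors from a set $A$ with $|A|\le q'$, I would enlarge $A$ to a set $X\in\binom{[r]}{q'}$; every edge meeting $V_X$ satisfies $X\subseteq\phi(e)$ and so carries no color in $X\supseteq A$, forcing $V_X$ to be uncovered, a contradiction. Thus every monochromatic cover uses at least $q'+1=\ceiling{\frac{r-t+1}{k}}$ distinct colors, and hence has at least that many components.

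I do not anticipate a serious obstacle: once one notices the color-deletion reduction, the upper bound is essentially free from Theorem \ref{kiraly}, and the lower bound is a routine adaptation of Kir\'aly's example. The two points that genuinely require care are (a) checking, in the reduction, that passing from a multicoloring to a chosen single-coloring only \emph{shrinks} monochromatic components, so that covers transfer from the reduced instance back to the original (and not the reverse); and (b) the arithmetic $k\floor{\frac{r-t}{k}}\le r-t$ guaranteeing that at least $t$ colors remain available on every edge of the construction, together with the nonemptiness of the parts $V_X$. Both are straightforward to confirm.
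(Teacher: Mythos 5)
Your upper-bound argument is correct and is essentially the same as the paper's alternate proof: fix an arbitrary $(t-1)$-set of colors to avoid, observe every edge retains a color from the remaining $r-t+1$, and invoke Theorem \ref{kiraly}, with the (correct) observation that components of the reduced coloring sit inside components of the original, so the cover lifts back. Your tightness construction (the generalization of Kir\'aly's example with parts indexed by $\binom{[r]}{q'}$, $q'=\floor{(r-t)/k}$) also checks out, and is a small bonus since the paper only cites \cite{BDMS} for optimality rather than proving it.
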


What follows is an alternate proof of the upper bound in Theorem \ref{kwiset}.  In fact, this alternate proof shows that the monochromatic cover can be chosen to avoid any set of $t-1$ colors.

\begin{proof}
Let $S\subseteq [r]$ with $|S|=t-1$.  Every edge has a color from the set $[r]\setminus S$, which has size $r-(t-1)$ and thus by Theorem \ref{kiraly}, there is a monochromatic $\ceiling{\frac{r-(t-1)}{k}}$-cover where all of the components have a color from the set $[r]\setminus S$.
\end{proof}

Now we use Theorem \ref{kwiset} to give an alternate proof of the upper bound of Theorem \ref{2wiset} (with the additional property that the monochromatic cover can be chosen to avoid any set of $\ceiling{\frac{3t-r}{2}}-1$ colors).

\begin{proof}
For all distinct $x,y\in V(K)$, let $A_{xy}$ be the set of colors appearing on the edge $xy$.  We claim that in every set $\{x,y,z\}$ of three vertices there are at least $\ceiling{\frac{3t-r}{2}}$ colors which appear more than once.  To see this, we may assume $|A_{xy}\cap A_{xz}\cap A_{yz}|\leq \floor{\frac{3t-r}{2}}$, as otherwise we are done.  So, the number of colors appearing at least twice is 
\begin{align*}
&~~~~|A_{xy}\cap A_{xz}|+|A_{xy}\cap A_{yz}|+|A_{xz}\cap A_{yz}|-2|A_{xy}\cap A_{xz}\cap A_{yz}|\\
&=|A_{xy}|+|A_{xz}|+|A_{yz}|-|A_{xy}\cup A_{xz}\cup A_{yz}|-|A_{xy}\cap A_{xz}\cap A_{yz}|\\
&\geq 3t-r-\floor{\frac{3t-r}{2}}=\ceiling{\frac{3t-r}{2}}.
\end{align*}

Now form a complete 3-uniform hypergraph $H$ on $V(K)$ where for each edge $xyz$ and all $i\in [r]$, we color $xyz$ with any color $i$ if color $i$ appears more than once on $\{x,y,z\}$ in $K$.  So $H$ is an $r$-colored complete 3-uniform hypergraph where each edge gets at least $\ceiling{\frac{3t-r}{2}}$ colors.  Now applying Theorem \ref{kwiset} to $H$, we see that there is a monochromatic $\floor{\frac{r-\ceiling{\frac{3t-r}{2}}}{3}}+1$-cover of $H$ where $$\floor{\frac{r-\ceiling{\frac{3t-r}{2}}}{3}}+1\leq \floor{\frac{r-\frac{3t-r}{2}}{3}}+1=\ceiling{\frac{r-t+1}{2}}.$$  This monochromatic $\ceiling{\frac{r-t+1}{2}}$-cover of $H$ corresponds to a monochromatic $\ceiling{\frac{r-t+1}{2}}$-cover of $K$.
\end{proof}

\subsection{Linear hypergraphs/linear colorings}

We say that $H$ is a \emph{linear} hypergraph if every pair of vertices is contained in at most one edge.  Francetic, Herke, McKay, and Wanless \cite{FHMW} proved Ryser's conjecture for intersecting linear hypergraphs in the case $r\leq 9$.

\begin{theorem}
Let $r\geq 2$ and let $H$ be an $r$-partite intersecting linear hypergraph.  If $r\leq 9$, then $\tau(H)\leq r-1$.  

In the dual language (R2), this says that for all $2\leq r\leq 9$ and every $r$-coloring of a complete graph $K$ in which every monochromatic component is a clique and every edge gets exactly one color, there exists a monochromatic $(r-1)$-cover of $K$.
\end{theorem}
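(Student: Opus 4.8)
The plan is to work entirely in the dual (R2) formulation recorded in the statement: an $r$-partite intersecting linear hypergraph corresponds to an $r$-coloring of a complete graph $K$ on vertex set $E(H)$ in which every monochromatic component is a clique and every edge receives exactly one color (the intersecting property forces $K$ to be complete, and linearity forces each edge to get a single color). Since each color class is then a vertex-disjoint union of cliques, the colors define $r$ partitions $P_1,\dots,P_r$ of $V(K)$ whose blocks (the monochromatic components) have the property that every pair of vertices lies in a common block in \emph{exactly one} of the $r$ partitions. Thus $H$ is encoded by a resolvable linear space with $r$ parallel classes, and a monochromatic $(r-1)$-cover is precisely a choice of $r-1$ blocks, drawn from across the classes, that covers $V(K)$.

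First I would dispose of the cases $r\le 5$ directly from Tuza's theorem (the $\nu=1$ cases with $r\le 5$ recorded in the introduction), which already yields $\tau(H)\le r-1$ with no use of linearity; so it remains to treat $6\le r\le 9$, where linearity is essential. The first structural reduction is immediate: if some parallel class $P_i$ has at most $r-1$ blocks, then that class is itself a monochromatic $(r-1)$-cover, so I may assume every class has at least $r$ blocks. Dually, fixing any vertex $x$, the $r$ blocks through $x$ (one per color) already cover $V(K)$, since every other vertex is joined to $x$ by an edge of a unique color; hence the task is only to \emph{save one block}. The obstruction to doing this naively is exactly linearity: a block of color $j$ meets any color-$i$ clique in at most one vertex, so two blocks from different classes cannot be merged into a single block. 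This rigidity is what ultimately makes the problem finite.

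The heart of the argument is to bound the size of a minimal counterexample by a function of $r$. Here I would pass to the dual linear space whose \emph{points} are the edges of $H$ and whose \emph{lines} are the vertices of $H$ (two edges meeting in their unique common vertex): every two points lie on a unique line, and, because $H$ is $r$-partite, every point lies on at most $r$ lines. By de Bruijn--Erd\H{o}s-type constraints such configurations are highly restricted (near-pencils and projective-plane-like structures); moreover a vertex of large degree corresponds to a long line that by itself covers many edges, after which the edges avoiding it form a smaller sub-configuration to be handled inductively with the remaining $r-2$ colors. Combining these with the fractional bound $\tau^*(H)\le (r-1)\nu(H)=r-1$ of F\"uredi and the large-component bound of Theorem~\ref{thm:fur}, I would force the number of edges of a minimal non-coverable $H$ into a finite range depending only on $r$. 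The extremal configurations are exactly the truncated projective planes of order $r-1$ (which are linear and attain $\tau=r-1$), so the analysis must interact with the existence of a projective plane of order $r-1$; in particular $r=7$ (order $6$, where no plane exists) should be checked separately.

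Once the configuration space is finite, I would verify the claim for each $r\in\{6,7,8,9\}$ by an exhaustive, computer-assisted search over the reduced configurations, confirming in every case that some $r-1$ blocks cover all points. The main obstacle I expect is precisely this finiteness reduction: bounding the number of edges of a minimal counterexample \emph{tightly} enough that the search is feasible is delicate, and the search cost grows rapidly with $r$, which is exactly why the method halts at $r=9$. A secondary difficulty is controlling degenerate features uniformly within the counting bounds --- singleton blocks, isolated vertices, and classes of wildly different sizes --- so that no configuration slips past the bounds and escapes the search.
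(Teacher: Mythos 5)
First, a point of comparison: the paper does not prove this theorem at all --- it is quoted from Francetic, Herke, McKay, and Wanless \cite{FHMW} as background, so there is no in-paper proof to measure your argument against. Your proposal therefore has to stand on its own, and as written it is a research plan rather than a proof. The parts you actually argue are fine: the dual translation, the disposal of $r\le 5$ via Tuza's theorems, and the observation that a color class with at most $r-1$ components is itself a cover are all correct. The finiteness reduction you treat as the delicate heart of the matter is in fact much easier than you make it: in an intersecting linear hypergraph, either all edges pass through one vertex (so $\tau=1$), or every vertex $v$ has degree at most $r$, because two edges through $v$ meet only at $v$ by linearity and hence meet any edge avoiding $v$ in distinct vertices; it follows that $|E(H)|\le 1+r(r-1)=r^2-r+1$. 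No appeal to F\"uredi's fractional bound, to Theorem \ref{thm:fur}, or to de Bruijn--Erd\H{o}s is needed for this.

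The genuine gap is everything after that. For $6\le r\le 9$ the entire content of the theorem is deferred to ``an exhaustive, computer-assisted search over the reduced configurations,'' which is neither performed nor shown to be feasible. The bound $|E(H)|\le r^2-r+1$ allows up to $73$ edges at $r=9$, and the space of $r$-partite linear intersecting hypergraphs of that size is astronomically large; naive enumeration is hopeless, so the reduction you have is nowhere near strong enough to make the search you invoke possible. The actual proof in \cite{FHMW} rests on a substantial structure theory for linear intersecting hypergraphs with large cover number --- relating them to truncated projective planes and exploiting, for instance, the nonexistence of a projective plane of order $6$ in the case $r=7$ --- before any computation is attempted, and none of that is supplied here. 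Until either that structural analysis or the computation itself is actually carried out, the cases $6\le r\le 9$ remain unproved.
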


\subsection{Local colorings}

A local $r$-coloring of $G$ is an edge coloring in which each vertex is adjacent to edges of at most $r$ different colors.  We define $\tc_{r-loc}$ and $\tp_{r-loc}$ analogously to $\tc_r$ and $\tp_r$.

S\'ark\"ozy extended the methods of \cite{BD} to prove a strengthening of Theorem \ref{thm:HK} for local colorings.

\begin{theorem}[S\'ark\"ozy \cite{Sar80}]
For all $r\geq 1$ and $n\geq r^{2(r+2)}$, $\tp_{r-loc}(K_n)\leq r$ (in fact, the subgraphs can be chosen to be trees of radius at most 2 of distinct colors).  Furthermore, $\tc_{r-loc}(K_n)\geq r$ whenever a projective plane of order $r-1$ exists.
\end{theorem}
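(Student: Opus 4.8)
The plan is to treat the two assertions separately, and the lower bound is the easier one. For it I would use a projective plane of order $r-1$ directly as a local coloring. Let $\Pi$ be such a plane, with point set $P$ and line set $\mathcal{L}$, so that $|P|=|\mathcal{L}|=(r-1)^2+(r-1)+1=r^2-r+1$, every line meets exactly $r$ points, and every two points lie on a unique common line. Color the complete graph on $P$ by assigning to each edge $uv$ the unique line through $u$ and $v$, using one color per line. Since each point lies on exactly $r$ lines, every vertex is incident to edges of exactly $r$ colors, so this is a local $r$-coloring. For a line $\ell$ the edges of its color are precisely the $\binom{r}{2}$ edges inside $\ell$, so each monochromatic component is a clique on the $r$ points of some line. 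Hence a family of monochromatic connected subgraphs covering $P$ corresponds to a family of lines covering all points, and since $r-1$ lines cover at most $(r-1)r=r^2-r<|P|$ points, at least $r$ lines are needed. This gives $\tc_{r-loc}(K_{r^2-r+1})\ge r$, and a blow-up (coloring the edges inside each blob by one of the $r$ lines through the corresponding point, to preserve the local-$r$ property) extends the bound to all larger $n$.

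For the upper bound I would adapt the Haxell--Kohayakawa/Bal--DeBiasio machinery described before Lemma \ref{lem:Ypartition}. First I would construct, greedily, vertices $x_1,\dots,x_r$, a nested shrinking set $Y$, and distinct colors $c_1,\dots,c_r$ such that $x_i$ sends only color $c_i$ to $Y$: at step $i$ the chosen vertex $x_i$ sees at most $r$ colors on the current $Y$, so by pigeonhole some color appears on at least a $1/r$-fraction of its $Y$-edges; selecting a color not among $c_1,\dots,c_{i-1}$ whenever possible and passing to the corresponding color class keeps the colors distinct while costing a factor $r$ in $|Y|$ per step, so that $|Y|\ge n/r^r$ remains large once $n\ge r^{2(r+2)}$. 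Writing $Z=V(K_n)\setminus(X\cup Y)$, it then suffices, exactly as in Bal--DeBiasio, to produce a good partition $\{Y_1,\dots,Y_r\}$ of $Y$ for which every $z\in Z$ has an edge of color $c_i$ to some vertex of $Y_i$; such a partition yields $r$ vertex-disjoint monochromatic trees of radius at most $2$ and distinct colors, with $x_i$ the center of the $c_i$-tree, $Y_i$ at distance $1$, and the matching part of $Z$ at distance $2$.

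The substitute for Lemma \ref{lem:Ypartition} in the local setting is a counting argument: for a fixed $z$ the number of partitions that are bad for $z$ is at most $r^{|Y|}(1-1/r)^{d(z)}$, where $d(z)$ is the number of $y\in Y$ with $zy$ of a color in $\{c_1,\dots,c_r\}$, so a good partition exists as soon as $\sum_{z\in Z}(1-1/r)^{d(z)}<1$, which holds when every $d(z)$ exceeds roughly $r\ln|Z|$. The main obstacle is precisely guaranteeing this lower bound on $d(z)$: in a genuine local coloring a vertex $z$ may use $r$ colors none of which is a base color $c_i$, in which case $d(z)=0$ and the counting collapses. I would resolve this by anchoring the base colors to a single vertex rather than choosing them arbitrarily: since in $K_n$ any two palettes share the color of the connecting edge, fixing $\{c_1,\dots,c_r\}$ to be the palette of a well-chosen vertex $v$ forces every other vertex's palette to meet $\{c_1,\dots,c_r\}$, and by taking $Y$ inside a large monochromatic neighborhood of $v$ and iterating the selection one can push each $d(z)$ up to order $|Y|/r$. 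Making this anchoring compatible with the distinctness of $c_1,\dots,c_r$ and with the greedy shrinking of $Y$ --- that is, ruling out or absorbing the few ``rogue'' vertices whose dominant color on $Y$ is not a base color --- is the delicate point, and it is exactly here that the extra factor of roughly $r^r$ over the Bal--DeBiasio threshold, hence the bound $n\ge r^{2(r+2)}$, is spent.
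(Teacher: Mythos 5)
The paper does not actually prove this statement; it is quoted from S\'ark\"ozy \cite{Sar80}, so your argument has to be judged on its own terms. Your lower bound is complete and correct: coloring $K_{r^2-r+1}$ by the lines of a projective plane of order $r-1$ gives a local $r$-coloring whose nontrivial monochromatic components are exactly the $r$-point line-cliques, $r-1$ of which cover at most $r^2-r$ vertices, and the blow-up preserves both the locality and the count. Your reduction of the upper bound to a ``good partition'' of $Y$, and the count of bad partitions as $r^{|Y|}(1-1/r)^{d(z)}$, are also the right framework.

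The gap is in the upper bound, precisely at the step you yourself label the delicate point, and the anchoring device you propose does not close it. Writing $P(u)$ for the set of colors on edges at $u$, taking $\{c_1,\dots,c_r\}=P(v)$ only guarantees $P(z)\cap P(v)\neq\emptyset$, i.e.\ that the single edge $zv$ carries a base color; the color of an edge $zy$ with $y\in Y$ lies in $P(z)\cap P(y)$ and need not lie in $P(v)$, even if $Y$ sits inside a monochromatic neighborhood of $v$. Concretely, already for $r=2$: let $P(v)=\{1,2\}$, put $Y\subseteq N_1(v)$, and give $z$ and every $y\in Y$ the palette $\{1,3\}$ with all $z,Y$-edges colored $3$. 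This extends to a legitimate local $2$-coloring of $K_n$, satisfies every hypothesis of your anchoring scheme, and yields $d(z)=0$, so the claim that one can ``push each $d(z)$ up to order $|Y|/r$'' is not a consequence of anything you have set up --- it is the missing lemma. Handling the rogue vertices whose edges to $Y$ avoid all base colors requires a genuinely new idea (e.g.\ restructuring the greedy choice of the $x_i$ and $c_i$ around such vertices rather than fixing the palette in advance), and that is where the real content of the theorem lies. A secondary unresolved point is the distinctness of $c_1,\dots,c_r$: ``selecting a color not among $c_1,\dots,c_{i-1}$ whenever possible'' can fail when every color $x_i$ sends to the current $Y$ has already been used, and that case also needs an argument.
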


This raises the following question.

\begin{problem}
Is $\tc_{r-loc}(K_n)\geq r$ for all $r$? In particular, is $\tc_{7-loc}(K_n)\geq 7$?
\end{problem}

\subsection{Monochromatic covers with subgraphs of special types}

It was conjectured by Gy\'arf\'as \cite{Gy89} that in every $r$-coloring of a complete graph $K$, there is a partition into at most $r$ monochromatic paths, and there is an example to show that this is best possible.  For finite complete graphs, this is known for $r\leq 3$ \cite{Pok}.  Interestingly, this is known for all $r$ for countably infinite complete graphs \cite{R78} and uncountably infinite complete graphs \cite{Souk}.  

It was conjectured by Erd\H{o}s, Gy\'arf\'as, and Pyber \cite{EGP} that in every $r$-coloring of a complete graph $K$, there is a partition into at most $r$ monochromatic cycles.  For finite complete graphs, this is known for $r=2$ \cite{BT}, but it is not true for $r\geq 3$ \cite{Pok}.  It may still be true that there is a cover with at most $r$ monochromatic cycles or a partition into at most $r+1$ monochromatic cycles.

Since Ryser's conjecture is known to be true for $r=3$, it would be interesting to ask whether it remains true if the monochromatic cover must consist of certain types of graphs.  A \emph{branch vertex} in a tree is a vertex of degree at least 3.  A \emph{spider} is a tree with at most one branch vertex.  A \emph{broom} is a tree obtained by joining the center of a star to an endpoint of a path (equivalently, obtained by repeatedly subdividing one edge of a star).
It is a fun and simple exercise to prove that in every 2-coloring of $K_n$ there exists a monochromatic spanning spider (not a particular spider, but some member of the family of spiders).  A more challenging result due to Burr (the result is unpublished, but a very nice proof can be found in \cite{GySurv1}) is that in every 2-coloring of $K_n$ there is a monochromatic spanning broom (not a particular broom, but some member of the family of brooms). 

So we ask the following specific questions.  

\begin{problem}
In every 3-coloring of $K_n$ is there a monochromatic 2-cover consisting of spiders? consisting of brooms?
\end{problem}

\subsection{Monochromatic covers of Steiner triple systems}

A \emph{Steiner triple system (STS) of order $n$} is a 3-uniform hypergraph on $n$ vertices such that every pair of vertices is contained in exactly one edge.  It is well known that an STS of order $n$ exists if and only if $n\equiv 1,3\bmod 6$.  For a given $n$, let $\mathcal{STS}_n$ be the family of all STS of order $n$.  Gy\'arf\'as \cite{GySTS} proved that for all $H\in \mathcal{STS}_n$, $\mc_3(H)\geq \frac{2n}{3}+1$ and that this is best possible when $n\equiv 3\bmod 18$.  Gy\'arf\'as also proved that for all $r\geq 3$ and $H\in \mathcal{STS}_n$, $\mc_r(H)\geq \frac{n}{r-1}$ and this is best possible for infinitely many $n$ when $r-1\equiv 1,3 \bmod 6$ and an affine plane of order $r-1$ exists. DeBiasio and Tait \cite{DT} extended these results showing that, in particular, for almost all $H\in \mathcal{STS}_n$, $\mc_3(H)\geq (1-o(1))n$.  We propose the following problem.

\begin{problem}
Let $r\geq 2$ and let $n\equiv 1,3 \bmod 6$.  
\begin{enumerate}
\item Determine bounds on $\tc_r(H)$ which hold for all $H\in \mathcal{STS}_n$. 
\item Is $\tc_r(H)=\tc_r(H')$ for all $H, H'\in \mathcal{STS}_n$?
\end{enumerate}
\end{problem}

\subsection{Monochromatic covers of complete $r$-partite $r$-uniform hypergraphs}

Just as Kir\'aly was able to prove that a natural generalization of Ryser's conjecture holds for complete $k$-uniform hypergraphs when $k\geq 3$ (Theorem \ref{kiraly}), Gy\'arf\'as and Kir\'aly \cite{GK} proved that a natural generalization of Conjecture \ref{con:GL} holds for complete $k$-partite $k$-uniform hypergraphs when $k\geq 3$; that is, they showed that if $k\geq 3$ and $G$ is a complete $k$-partite $k$-uniform hypergraph, then $\tc_r(G)=r$.

\subsection{Monochromatic covers of bounded diameter graphs}

Tonoyan \cite{Ton} proved that for all $r\geq 1$, $d\geq D\geq 1$, $n\geq 2$, there exists $N$ such that in every $r$-coloring of the edges of every graph on at least $N$ vertices with diameter at most $D$ contains a monochromatic subgraph on at least $n$ vertices of diameter at most $d$.

It would be interesting to consider host graphs of bounded diameter and ask whether it is possible to cover them with monochromatic subgraphs of bounded diameter.

\begin{problem}
Let $r,D,\delta$ be positive integers. 
\begin{enumerate}
\item Determine an upper bound on $\tc_r(G)$ which holds for all graphs $G$ of diameter at most $D$. (Even the case $r=2=D$ is open).
\item Determine an upper bound on $\dc^\delta_r(G)$ which holds for all graphs $G$ of diameter at most $D$.
\end{enumerate}
\end{problem}

\subsection{Monochromatic covers of large minimum degree graphs}

Bal and DeBiasio made the following conjecture about monochromatic covers of graphs with large minimum degree and proved a weaker result.

\begin{conjecture}[Bal, DeBiasio \cite{BD}]\label{BD:con1}
For all integers $r\geq 1$, if $G$ is a graph on $n$ vertices with $\delta(G)> \frac{r(n-r-1)}{r+1}$, then $\tc_r(G)\leq r$ (possibly $\tp_r(G)\leq r$).
\end{conjecture}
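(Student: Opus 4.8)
The plan is to prove Conjecture \ref{BD:con1} by induction on $r$, using an \emph{anchor vertex} together with the monochromatic components through it, and then absorbing the (provably small) uncovered set using the density hypothesis.

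For the base case $r=1$ the hypothesis reads $\delta(G) > \frac{n-2}{2}$, which forces $G$ to be connected, so the single trivial ``colour'' class spans $G$ and $\tc_1(G)=1$. For the main step, fix any vertex $v$ and, for each colour $i\in[r]$, let $C_i$ be the monochromatic component of colour $i$ containing $v$ (with $C_i=\{v\}$ if $v$ is incident to no colour-$i$ edge). Since every neighbour $w$ of $v$ lies in $C_{c(vw)}$, the $r$ components $C_1,\dots,C_r$ cover $N[v]$, and hence $U:=V(G)\setminus\bigcup_i V(C_i)\subseteq V(G)\setminus N[v]$. The degree hypothesis yields
\[
|U|\le n-1-\delta(G) < \frac{n+r^2-1}{r+1},
\]
so $U$ is smaller than a $\frac{1}{r+1}$-fraction of $V(G)$; in particular $\{C_1,\dots,C_r\}$ is already a monochromatic $r$-cover whenever $U=\emptyset$, and the whole problem reduces to absorbing $U$ into the $C_i$ without creating new components.

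The observation driving the absorption is that membership in $U$ is very restrictive: if $u\in U$ and $w$ is any neighbour of $u$, then $w\notin C_{c(uw)}$, for otherwise the edge $uw$ would place $u$ in $C_{c(uw)}$. Because $u$ has at most $|U|-1$ neighbours inside $U$, it has more than $\delta(G)-|U| > \frac{(r-1)n}{r+1}-O(r)$ neighbours inside $\bigcup_i V(C_i)$, each joined to $u$ by an edge whose colour $j$ certifies $w\notin C_j$. The goal is to show that the density forces, for some choice of $v$, that no such $u$ can exist -- equivalently, that every vertex is monochromatically connected to $v$ in some colour. I would attempt this in two complementary ways: (a) a \emph{rerouting/swap} argument, where for an offending $u$ one replaces a single $C_i$ by the union of a distinct colour-$i$ component meeting $U$ with a short monochromatic bridge (whose existence is guaranteed by the many neighbours of $u$ in $\bigcup_i C_i$), keeping the total at $r$; and (b) an \emph{averaging} argument over the anchor $v$, aiming to produce a $v$ with $U=\emptyset$ by bounding the total number of (anchor, uncovered-vertex) incidences. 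For small $r$ one instead routes the argument through the exact covering theorems available earlier: applying König's theorem (Theorem \ref{dualkonig}) to the two-coloured bipartite pieces that arise when $r=2$, and Aharoni's theorem (the $r=3$ case of Ryser, \cite{A}) to the corresponding three-coloured auxiliary graphs, using density to keep these auxiliary independence numbers bounded.

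The hard part will be step (a)/(b): the minimum-degree hypothesis controls the \emph{size} of $U$ but not the adversary's freedom to colour every edge from $U$ into $\bigcup_i C_i$ so as to avoid the matching component-colours, and it is precisely this ``affine-plane-like'' obstruction that should make the threshold $\frac{r(n-r-1)}{r+1}$ sharp. One possible way around it is to first pass to the \emph{fractional} setting, where Füredi's inequality $\tau^*(H)\le (r-1)\nu(H)$ together with the fractional relaxation of Theorem \ref{thm:fur} gives a fractional monochromatic cover whose weight is bounded in terms of $\alpha(G)$, and then to use the density to round this fractional cover to an integral $r$-cover; the rounding is again the crux. Finally, for the speculative partition strengthening $\tp_r(G)\le r$, I would imitate the Fujita--Furuya--Gyárfás--Tóth induction reproduced above for $\tp_2(G)\le\alpha(G)$, splitting along an $r$-cover and recursing on the two sides $G[R']$ and $G[B']$; the obstruction there is that the minimum-degree hypothesis need not survive to the induced subgraphs, so one would have to propagate a weighted or relative density condition through the recursion.
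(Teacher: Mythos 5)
The statement you are trying to prove is an open conjecture: the paper does not prove Conjecture \ref{BD:con1}, it only records it (attributing it to Bal and DeBiasio) and notes that the only cases settled are $r=2$ (the partition version, by Gir\~ao, Letzter, and Sahasrabudhe) and a much weaker general result requiring $\delta(G)\geq (1-\frac{1}{er!})n$. So there is no ``paper proof'' to compare against, and your proposal must be judged as a standalone argument --- and as such it is not a proof. Your setup is fine: the base case $r=1$ is correct, the components $C_1,\dots,C_r$ through an anchor $v$ cover $N[v]$, and the bound $|U|<\frac{n+r^2-1}{r+1}$ is computed correctly. But everything after that is a wish list. You yourself flag that steps (a) and (b) are unresolved, and they are exactly where the entire difficulty of the conjecture lives: the degree hypothesis bounds the \emph{size} of $U$ but gives no leverage against an adversary who colours every $U$-to-$\bigcup_i C_i$ edge with a colour mismatching the component it lands in, which is precisely the affine-plane-type obstruction that makes the threshold $\frac{r}{r+1}n$ conjecturally sharp. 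The swap in (a) has no argument that the replacement component plus a ``bridge'' stays monochromatic and connected while still covering what $C_i$ covered, and the averaging in (b) has no incidence bound behind it. The size of the gap between your target and the best known result ($1-\frac{1}{er!}$ versus $\frac{r}{r+1}$) is itself strong evidence that an elementary anchor-plus-absorption argument cannot close it without a substantially new idea.

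Two specific claims are also unsupported as written. First, for $r=2$ you propose invoking K\"onig's theorem ``using density to keep these auxiliary independence numbers bounded,'' but $\delta(G)>\frac{2(n-3)}{3}$ only forces $\alpha(G)<\frac{n+6}{3}$, so K\"onig yields $\tc_2(G)\leq\alpha(G)=\Theta(n)$ rather than $2$; the known $r=2$ proof is genuinely harder than this. Second, the fractional detour does not help: F\"uredi's inequality bounds $\tau^*$, and the paper's own discussion makes clear that the integrality gap (the ``rounding'') is the whole problem, not a technicality. If you want to make progress, the honest scope is to verify the conjecture for $r=2$ or $r=3$ with a complete absorption argument, or to prove it under a stronger degree hypothesis than $(1-\frac{1}{er!})n$; as it stands the proposal identifies the right obstruction but does not overcome it.
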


\begin{theorem}[Bal, DeBiasio \cite{BD}]
For all integers $r\geq 1$ there exists $n_0$ such that if $G$ is a graph on $n\geq n_0$ vertices with $\delta(G)\geq (1-\frac{1}{er!})n$, then $\tp_r(G)\leq r$.
\end{theorem}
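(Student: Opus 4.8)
The plan is to adapt the hub-and-core argument behind Theorem \ref{thm:HK} (and Lemma \ref{lem:Ypartition}) to the almost-complete host graph $G$. Fix an $r$-coloring of $E(G)$ and write $\beta=\tfrac{1}{er!}$, so that the hypothesis $\delta(G)\ge(1-\beta)n$ says every vertex has at most $\beta n$ non-neighbours in $G$. First I would build a set of hubs $X=\{x_1,\dots,x_r\}$ together with a large common core $Y$ disjoint from $X$ such that, after relabeling colors, $x_i$ is joined to every vertex of $Y$ by a color-$i$ edge; equivalently $Y\subseteq\bigcap_{i\in[r]}N_i(x_i)$. This is the only place where completeness is used in the $K_n$ argument, and the requirement $Y\subseteq N_i(x_i)$ (rather than merely ``$x_i$ sends only color $i$ into $Y$'') is what guarantees that the color-$i$ star centered at $x_i$ spans $\{x_i\}\cup Y_i$ for any $Y_i\subseteq Y$. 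Setting $Z=V(G)\setminus(X\cup Y)$, the goal is then to partition $Y\cup Z$ into $r$ color-$i$ trees of radius at most $2$, each rooted at $x_i$.

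The core-to-$Z$ step I would run probabilistically so that it tolerates missing edges. Assign each $y\in Y$ an independent uniform color in $[r]$, giving a random partition $\{Y_1,\dots,Y_r\}$; call it bad for $z\in Z$ if for every $i$ the vertex $z$ has no color-$i$ edge into $Y_i$. For fixed $z$ the relevant events over $y\in N(z)\cap Y$ are independent, so $\Pr[\text{bad for }z]=\left(\tfrac{r-1}{r}\right)^{a_z}$ with $a_z=|N(z)\cap Y|\ge|Y|-\beta n$. Hence the expected number of bad vertices is at most $|Z|\left(\tfrac{r-1}{r}\right)^{|Y|-\beta n}$, which is $<1$ as soon as $|Y|>\beta n+\log_{r/(r-1)}|Z|$; since $|Z|\le n$ and $\ln\!\left(\tfrac{r}{r-1}\right)\ge 1/r$, it suffices to have $|Y|>\beta n+r\ln n$. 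A labeling good for all of $Z$ yields a partition $\{Z_1,\dots,Z_r\}$ with each $z\in Z_i$ sending a color-$i$ edge into $Y_i$, so $\{x_i\}\cup Y_i\cup Z_i$ induces a connected color-$i$ subgraph of radius at most $2$, and these $r$ pieces partition $V(G)$. This is exactly the complete-bipartite mechanism of Lemma \ref{lem:Ypartition} and Theorem \ref{thm:equiv}, with the extra factor $\left(\tfrac{r}{r-1}\right)^{\beta n}$ absorbing the non-edges incident to each $z$.

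The crux is producing a core with $|Y|>\beta n+r\ln n$, and this is where the precise constant $\tfrac{1}{er!}$ is spent. Since each hub destroys at most an additive $\beta n$ slice and then passes to one of its color classes, the naive ``majority color'' tower only guarantees $|Y|\gtrsim n/r^{\,r}$, which is already smaller than $\beta n$ once $r\ge 3$; so the hubs must be chosen more economically. The target is $|Y|\ge(1-o(1))\,n/r!$: choosing the hubs to use distinct colors and, at each stage, passing to the largest surviving color class (equivalently, selecting the $r$-tuple of hubs maximizing the common color-respecting neighbourhood $\bigcap_i N_i(x_i)$) should lose only a factorial factor rather than $r^{\,r}$. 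Because $\tfrac1{r!}>\tfrac1{er!}=\beta$, the resulting surplus $\left(1-\tfrac1e\right)\tfrac{n}{r!}$ dominates the additive $\beta n+r\ln n$ threshold once $n\ge n_0(r)$, closing the argument. I expect this tower-size estimate to be the main obstacle: one must show that at most $\beta n$ missing edges per vertex at each stage erode the core by only a lower-order amount, and that the factorial-type loss is genuinely attainable — this quantitative heart is precisely what pins the hypothesis at $\delta(G)\ge\left(1-\tfrac{1}{er!}\right)n$.
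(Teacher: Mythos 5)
Your proposal follows the same hub-and-core framework that the paper sketches for \cite{HK} and \cite{BD} in Section \ref{sec:partition} (note the paper itself does not prove this theorem; it only cites \cite{BD} and describes the method around Lemma \ref{lem:Ypartition}). The second half of your argument is sound: the random-partition computation $\Pr{\text{bad for }z}=\left(\frac{r-1}{r}\right)^{a_z}$ with $a_z\geq |Y|-\beta n$ is correct, the events over $y\in N(z)\cap Y$ really are independent, and this is exactly the counting in Lemma \ref{lem:Ypartition} with an extra $\left(\frac{r}{r-1}\right)^{\beta n}$ absorbing the missing edges. Insisting on $Y\subseteq\bigcap_i N_i(x_i)$ rather than the weaker ``$x_i$ sends only color $i$ into $Y$'' is also the right call, since otherwise the non-neighbours of $x_i$ inside $Y_i$ would be left uncovered.

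The genuine gap is the step you yourself flag as the ``main obstacle'': the construction of $r$ hubs of \emph{distinct} colors with a common core of size roughly $n/r!$. Nothing forces the majority color among the edges from a prospective hub $x_k$ into the current core $Y_{k-1}$ to be a color not already used by $x_1,\dots,x_{k-1}$; if one insists on an unused color, the largest unused color class can be arbitrarily small (e.g.\ all but one edge from $x_k$ to $Y_{k-1}$ could carry color $c_1$), so ``passing to the largest surviving color class'' does not yield the factor $r-k+1$ you need, and the claimed bound $|Y|\geq(1-o(1))n/r!$ is asserted rather than proved. Resolving this requires an additional extremal idea (a maximality/exchange argument on the tower), and it is precisely where the constant is determined: the additive losses of up to $\beta n$ at each tower step, after being divided by the subsequent multiplicative factors, accumulate to roughly $(e-1)\beta n$, and together with the final $\beta n$ from the non-neighbours of $z$ in $Y$ this forces $e\beta<1/r!$, i.e.\ $\beta<\frac{1}{er!}$. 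Your final comparison ``surplus $(1-\frac1e)\frac{n}{r!}$ versus $\beta n+r\ln n$'' charges only one additive $\beta n$ and silently folds the other $r$ into the unproven tower estimate, so as written the accounting does not close even granting the framework. The proposal is therefore a correct outline with the quantitatively hardest lemma missing.
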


Gir\~ao, Letzter, and Sahasrabudhe \cite{GLS} proved the partition version of Conjecture \ref{BD:con1} for $r=2$.  

Buci\'c, Kor\'andi, and Sudakov \cite{BKS} proved the following theorem which solved a different conjecture from \cite{BD}.

\begin{theorem}[Buci\'c, Kor\'andi, Sudakov \cite{BKS}]
For all integers $r\geq 1$, if $G$ is a graph on $n$ vertices with $\delta(G)\geq (1-\frac{1}{2^r})n$, then $G$ has a monochromatic $r$-cover consisting of components of different colors.
\end{theorem}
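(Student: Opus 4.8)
The plan is to argue by induction on $r$. The base case $r=1$ is immediate: here the hypothesis reads $\delta(G)\ge n/2$, which forces $G$ to be connected (a smallest component would have at most $n/2$ vertices, so one of its vertices would have degree at most $n/2-1<\delta(G)$, a contradiction); since there is a single colour, all of $G$ is one monochromatic component and covers $V(G)$.

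For the inductive step I would assume the statement for $r-1$ colours and try to \emph{peel off one colour}. Concretely, I would single out a colour $c\in[r]$ and a monochromatic component $C$ of colour $c$, place $C$ into the cover, and then cover the remaining set $U:=V(G)\setminus V(C)$ using the other $r-1$ colours by applying the induction hypothesis to $G[U]$ with all colour-$c$ edges discarded. This lifting is clean: any colour-$i$ component ($i\ne c$) of $G[U]$ is contained in a colour-$i$ component of $G$, and since the colours used on $U$ avoid $c$, the resulting at most $r$ components automatically have distinct colours. Thus the whole scheme reduces to choosing $c$ and $C$ so that the residual graph $G[U]$, after deleting colour $c$, satisfies the \emph{stronger} minimum-degree hypothesis $\delta\ge\bigl(1-2^{-(r-1)}\bigr)|U|$ needed to invoke the case of $r-1$ colours.

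The technical heart is the degree bookkeeping. For $u\in U$ the degree surviving in $G[U]$ once colour $c$ is removed is at least $\deg_G(u)-|N(u)\cap V(C)|-|N_c(u)\cap U|$, where $N_c(u)$ denotes the colour-$c$ neighbourhood. If I take $C$ to be a \emph{largest} monochromatic component of $G$, then every colour-$c$ component meeting $U$ has at most $|V(C)|$ vertices, so $|N_c(u)\cap U|\le |V(C)|-1$, while $|N(u)\cap V(C)|\le|V(C)|$. Together with $\deg_G(u)\ge(1-2^{-r})n$ this yields
\[
\deg_{G[U]}(u)\ \ge\ (1-2^{-r})n-2|V(C)|+1 ,
\]
and this must be compared with the required threshold $\bigl(1-2^{-(r-1)}\bigr)|U|=\bigl(1-2^{-(r-1)}\bigr)\bigl(n-|V(C)|\bigr)$; the difference of the two sides simplifies to $2^{-r}n+1-|V(C)|\bigl(1+2^{-(r-1)}\bigr)$.

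The main obstacle is precisely making this comparison come out nonnegative, and it is genuinely the crux rather than a routine estimate: the displayed difference is automatically nonnegative only when $|V(C)|$ is of order at most $2^{-r}n$, whereas covering forces the peeled component to be large. I expect the resolution to run through a dichotomy that mirrors the doubling in the bound $1-2^{-r}$. When the largest monochromatic component is only moderately large the deletion estimate above closes the induction directly; when it already covers all but a $2^{-(r-1)}$-fraction of $V$, the small leftover set $U$ should be absorbed into a single further monochromatic component of another colour, using that any two vertices of the tiny set $U$ share many common neighbours in $V(C)$ and that all $U$-to-$C$ edges avoid colour $c$ (exactly the phenomenon one checks by hand in the case $r=2$, $\delta(G)\ge\tfrac34 n$). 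Supplying the quantitative ``large monochromatic component'' input that drives this dichotomy — controlling simultaneously the size of $C$ and the colour-$c$ degrees that survive inside $U$ — is where the exponential threshold and the halving of the uncovered set at each step originate, and I expect it to be the most delicate part of the argument.
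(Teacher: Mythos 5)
First, a remark on provenance: this theorem is stated in the survey only as a cited result of Buci\'c, Kor\'andi, and Sudakov \cite{BKS}; the paper contains no proof of it, so there is nothing internal to compare your argument against. Judged on its own terms, your proposal has a genuine gap, and it sits exactly where you say it does. The peeling induction with the degree bookkeeping you carry out only closes when the peeled colour-$c$ component $C$ satisfies $|V(C)|\le \frac{2^{-r}n+1}{1+2^{-(r-1)}}$, i.e.\ when $C$ is \emph{small} --- roughly a $2^{-r}$-fraction of the vertices. Nothing forces the largest monochromatic component to be that small; indeed the whole point of a covering theorem is that components tend to be large. Your proposed dichotomy handles only the two extremes: $|V(C)|\lesssim 2^{-r}n$ (direct induction) and $|V(C)|\ge (1-2^{-(r-1)})n$ (absorb the leftover). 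The entire middle range $2^{-r}n\lesssim |V(C)|\lesssim (1-2^{-(r-1)})n$ is unaddressed, and no mechanism in the sketch rules it out.

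Moreover, even the second horn of the dichotomy is not actually established for $r\ge 3$. When $U=V\setminus V(C)$ is small, it is true that all $U$-to-$C$ edges avoid colour $c$ and that any two vertices of $U$ have at least $(1-2^{-(r-1)})n$ common neighbours, hence common neighbours in $V(C)$; but those edges may use any of the remaining $r-1\ge 2$ colours, so pigeonhole does not place all of $U$ into a single further monochromatic component (this only works for $r=2$, where one colour remains). Nor can you fall back on applying the induction hypothesis to $G[U]$ in this regime: the required threshold $(1-2^{-(r-1)})|U|$ scales with $|U|$, and when $U$ is small $G[U]$ may have very low minimum degree or even be edgeless, so $U$ must be covered by components reaching in from $V(C)$, which your induction hypothesis does not provide. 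In short, the reduction to $r-1$ colours is clean as a lifting statement, but the quantitative input that would drive it --- simultaneously controlling the size of the peeled component and the surviving degrees --- is precisely what is missing, and it is the heart of the theorem rather than a routine estimate. The argument in \cite{BKS} is organised differently (it does not proceed by peeling a largest component and re-applying the same minimum-degree hypothesis to the residue), so I would not expect this route to be repairable without a substantially new idea for the intermediate case.
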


\subsection{Monochromatic covers of random graphs}

Bal and DeBiasio proved the following results about monochromatic covers of random graphs.  

\begin{theorem}[Bal, DeBiasio \cite{BD}]~ For all integers $r\geq 2$,
\begin{enumerate}
\item if $p\ll \left(\frac{r\log n}{n}\right)^{1/r}$, then a.a.s.\ $\tc_r(G_{n,p})\to \infty$.

\item if $p\gg \left(\frac{r\log n}{n}\right)^{1/(r+1)}$, then a.a.s.\ $\tc_r(G_{n,p})\leq r^2$.

\item if $p\gg \left(\frac{r\log n}{n}\right)^{1/3}$, then a.a.s.\ $\tp_r(G_{n,p})\leq 2$.
\end{enumerate}
\end{theorem}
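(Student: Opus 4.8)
The unifying observation is that all three thresholds in the statement are common-neighbor (codegree) thresholds for $G_{n,p}$. For a fixed $s$, the property ``every $s$-set of vertices has a common neighbor'' has threshold $p\asymp(s\log n/n)^{1/s}$: a fixed $s$-set has expected codegree $\approx np^{s}$, a fixed $s$-set fails to have a common neighbor with probability $\approx e^{-np^{s}}$, and the union bound over the $\le n^{s}$ sets succeeds exactly when $np^{s}\gg s\log n$. The plan is to read part (ii) off the ``every $(r+1)$-set has a common neighbor'' regime, part (iii) off the ``every triple has a common neighbor'' regime, and part (i) off the failure of the ``every $r$-set has a common neighbor'' regime. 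So first I would record the following a.a.s.\ facts (the first two by the union bound, the last by a second-moment argument): if $p\gg(r\log n/n)^{1/(r+1)}$ then every $r+1$ vertices have a common neighbor; if $p\gg(r\log n/n)^{1/3}$ then every triple has $\Omega(np^{3})=\omega(1)$ common neighbors (hence every pair has $\Omega(np^{2})$); and if $p\ll(r\log n/n)^{1/r}$ then many $r$-sets have no common neighbor.

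For part (ii) the argument mirrors the proof of Theorem~\ref{thm:r^2}. Fix any $r$-coloring and pass to the closure $\hat G$ of Observation~\ref{obs:closure}, where two vertices are adjacent iff they lie in a common monochromatic component. I claim $\alpha(\hat G)\le r$ a.a.s.: given any $r+1$ vertices, the property above supplies a common neighbor $w$, and the $r+1$ edges from $w$ carry only $r$ colors, so two of them share a color; the corresponding two vertices then lie in a single monochromatic component and are adjacent in $\hat G$. Thus no $r+1$ vertices are independent in $\hat G$, and Fact~\ref{fact:trivial} applied to $\hat G$ gives $\tc_r(G_{n,p})=\tc_r(\hat G)\le r\,\alpha(\hat G)\le r^{2}$.

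For part (iii) the plan is to use the abundance of common neighbors of triples (hence of pairs) to build a $2$-partition by absorption. Fix a vertex $v$ and its majority color, say color $1$, so that $|N_1(v)|\ge(1-o(1))np/r$; let $A$ be the color-$1$ component reachable from $v$ within two steps, absorbing every vertex that sends a color-$1$ edge to $\{v\}\cup N_1(v)$. The leftover set $L$ consists of vertices all of whose edges into $N_1(v)$ avoid color $1$; since each such vertex has a dominant color $\ne 1$ on the large set $N_1(v)$, and since pairs have $\Omega(np^{2})$ common neighbors inside $N_1(v)$, I would route the vertices of $L$ together through $N_1(v)$ and merge them into a single second monochromatic part, using the good-partition mechanism of Lemma~\ref{lem:Ypartition} to control how $L$ attaches. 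For part (i) the plan is to exhibit, below the $r$-set threshold, an explicit bad coloring: when $np^{r}\ll r\log n$ one finds a.a.s.\ a large set $S$ whose local neighborhoods are sparse enough that a carefully tailored $r$-coloring leaves no monochromatic component meeting $S$ in more than one vertex, whence $\tc_r(G_{n,p})\ge|S|\to\infty$. This adapts the affine-plane lower-bound constructions for complete graphs to the sparse random setting.

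I expect the main obstacle to be part (i). Unlike the upper bounds, it requires a structured coloring rather than a random one: a uniformly random $r$-coloring produces giant monochromatic components well above this threshold and does not force a large cover, so the work is to locate inside $G_{n,p}$ the right sparse substructure and to verify that the tailored coloring really does prevent any two chosen vertices from sharing a monochromatic component. A secondary difficulty is the merging step in part (iii): ruling out that $L$ irreparably splits into several color classes is precisely where the cubic (triple) codegree threshold must be used quantitatively, rather than merely qualitatively as in part (ii).
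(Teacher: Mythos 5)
This theorem is quoted in the survey from \cite{BD} without proof, so there is no internal argument to compare you against and your proposal has to stand on its own. Part (ii) does stand: the codegree calculation is routine (for $p\gg(r\log n/n)^{1/(r+1)}$ the expected number of $(r+1)$-sets without a common neighbor is at most $n^{r+1}e^{-(1+o(1))np^{r+1}}\to 0$), and the deduction that $\alpha(\hat{G})\le r$ --- a common neighbor of $r+1$ vertices sends them $r+1$ edges in only $r$ colors, so two of them lie in one monochromatic component --- followed by Observation \ref{obs:closure} and Fact \ref{fact:trivial} is exactly the standard argument, the same one driving Theorem \ref{thm:r^2}.

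Parts (i) and (iii) each have a genuine gap. For (i), the fact that below $p=(r\log n/n)^{1/r}$ many $r$-sets lack a common neighbor does not by itself yield a coloring with unbounded cover number: a missing common neighbor only disables the one-step argument of part (ii), and does nothing to prevent two vertices of your set $S$ from being joined by a longer monochromatic path through the rest of the graph. The ``carefully tailored $r$-coloring'' that is supposed to keep every monochromatic component from meeting $S$ twice is the entire content of the lower bound, and you leave it unspecified; you have correctly identified the hard part but not supplied it. For (iii), two separate problems. First, the target is a \emph{partition} into at most two monochromatic connected subgraphs, while your construction aims at a cover; trimming a $2$-cover to a $2$-partition is not free, since deleting the overlap from one piece can disconnect it. Second, the merging step is not supported by the tool you invoke: Lemma \ref{lem:Ypartition} produces a good partition of $Y$ into up to $r$ classes, hence up to $r$ monochromatic pieces hanging off $Z$, not a single second piece. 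The leftover set $L$ really can attach to $N_1(v)$ in several different colors among $\{2,\dots,r\}$, and collapsing all of $L$ into \emph{one} connected monochromatic part while keeping it disjoint from a still-connected color-$1$ part is precisely where the triple-codegree condition $np^{3}\gg r\log n$ must do quantitative work; as written your argument gives a cover by up to $r$ monochromatic components rather than $\tp_r(G_{n,p})\le 2$.
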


Kohayakawa, Mota, and Schacht \cite{KMS} proved that if $p\gg \left(\frac{r\log n}{n}\right)^{1/2}$, then a.a.s.\ $\tp_r(G_{n,p})\leq 2$ and proved that for $r\geq 3$ if $p\ll  \left(\frac{r\log n}{n}\right)^{1/(r+1)}$, then a.a.s.\ $\tc_r(G_{n,p})>r$ which disproved a conjecture from \cite{BD}.  Recently Buci\'c, Kor\'andi, and Sudakov extended these results, disproving a conjecture from \cite{KMS} in such a way which drastically reshaped the known picture.  Many problems remain regarding the sharpening of these results.

\begin{theorem}[Buci\'c, Kor\'andi, Sudakov \cite{BKS}]~ For all integers $r\geq 2$, there exist constants $C,c>0$ such that 
\begin{enumerate}
\item if $p< \left(\frac{c\log n}{n}\right)^{\sqrt{r}/2^{r-2}}$, then a.a.s.\ $\tc_r(G_{n,p})>r$.

\item if $p> \left(\frac{C\log n}{n}\right)^{1/r}$, then a.a.s.\ $\tc_r(G_{n,p})\leq r^2$.

\item if $p>\left(\frac{C\log n}{n}\right)^{1/2^r}$, then a.a.s.\ $\tc_r(G_{n,p})\leq r$. 
\end{enumerate}
\end{theorem}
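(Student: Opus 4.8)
The plan is to treat the three parts by separate arguments, since the lower bound in (i) and the two upper bounds in (ii)--(iii) require genuinely different ideas. Throughout I would work with the ``good partition'' machinery of Lemma~\ref{lem:Ypartition} and the total-domination parameter $Z(r,d)$ and its bounds (Corollaries~\ref{cor:1} and~\ref{cor:2}), adapting them from complete bipartite host graphs to random ones. For the two upper bounds I would follow the anchoring strategy behind Theorem~\ref{thm:HK}. Fix an arbitrary $r$-coloring $c$ of $G=G_{n,p}$. The aim is to isolate a small ``core'' $Y\subseteq V(G)$ of size $d$ together with reference vertices $x_1,\dots,x_r$ so that $x_i$ sends only color-$i$ edges into $Y$, and then reduce covering the remainder $Z=V(G)\setminus(Y\cup\{x_1,\dots,x_r\})$ to finding a good partition $\{Y_1,\dots,Y_r\}$ of $Y$: if every $z\in Z$ has a color-$i$ neighbour in some $Y_i$, then the $r$ color-$i$ trees on $\{x_i\}\cup Y_i\cup Z_i$ (with $Z_i$ the vertices assigned to part $i$) form a monochromatic $r$-cover. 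Both the structure-building step (producing $X$ and $Y$ from sparse neighbourhoods) and the good-partition step are probabilistic, and together they pin down the threshold on $p$.

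The probabilistic heart is a random-host version of Lemma~\ref{lem:Ypartition}. For a fixed $z$, the number of partitions of $Y$ that are \emph{bad} for $z$ is $r^{\,d-m}(r-1)^{m}$, where $m$ is the number of neighbours of $z$ in $Y$; since $m$ concentrates around $pd$, a union bound over $z\in Z$ succeeds once $(\tfrac{r-1}{r})^{pd}\cdot n = o(1)$, i.e.\ $pd\gtrsim r\log n$, and choosing $d$ as large as the structure-building step permits turns this into a bound on $p$. For part (ii) I would settle for the coarser conclusion $\tc_r\le r^2$: rather than insisting on a single good partition, I would pass to the closure (Observation~\ref{obs:closure}) and argue as in Observation~\ref{obs:tuza} and Fact~\ref{fact:trivial} that the relevant auxiliary graph has independence number $O(r)$ a.a.s., which only needs the weaker density $p>(C\log n/n)^{1/r}$ (this is the random analogue of the closure argument that yields $\tc_r\le r^2$ in Theorem~\ref{thm:r^2}). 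For part (iii) the stronger conclusion $\tc_r\le r$ requires peeling the colors off one at a time: after securing one color class one conditions on the residual coloring on the surviving vertices and repeats, and the density requirements compound across the $r$ rounds, which is what produces the doubly-exponential exponent $1/2^r$ in the threshold.

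For the lower bound (i) I would exhibit, a.a.s., an $r$-coloring of $G_{n,p}$ admitting no monochromatic $r$-cover. The construction is a blocked/structured coloring: partition $V(G)$ into blocks and color the edges according to a pattern designed so that, as in Example~\ref{ex:r/2} and Corollary~\ref{cor:badmulti}, every block of a suitable type is missed by any family of $r$ monochromatic components. Randomness enters only to verify that $G_{n,p}$ still contains enough edges within and between blocks to realize this pattern while remaining sparse enough that no $r$ monochromatic components can bridge across blocks; a first-moment estimate over all systems of at most $r$ monochromatic components then shows that with probability $1-o(1)$ none of them cover $V(G)$. The unusual exponent $\sqrt{r}/2^{r-2}$ would emerge from optimizing the block sizes against the number of candidate component systems.

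The main obstacle I anticipate is the peeling analysis in part (iii): controlling the residual $r$-coloring after one color has been handled, and showing the good-partition lemma still applies to the smaller surviving instance with sufficiently independent randomness, is where the compounding $2^r$ loss is incurred and where careful conditioning, rather than a naive union bound, is needed. The exact matching of exponents --- in particular closing the gap between the $\sqrt{r}/2^{r-2}$ of (i) and the $1/2^r$ of (iii) --- I would not expect this approach to resolve, which is consistent with the theorem as stated leaving that gap open.
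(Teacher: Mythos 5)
This statement is quoted in the paper's survey section directly from [BKS]; the paper contains no proof of it, so your proposal has to be judged against the actual argument in [BKS], and there it has genuine gaps in all three parts. The most concrete one is in part (ii): the anchoring-plus-good-partition scheme you describe (the Haxell--Kohayakawa strategy behind Theorem \ref{thm:HK}, adapted via Lemma \ref{lem:Ypartition}) forces the core $Y$ to lie inside the common neighbourhood of the $r$ anchors $x_1,\dots,x_r$, so $d=|Y|\lesssim np^r$, while your own union bound requires every $z\in Z$ to have $\Omega(r\log n)$ neighbours in $Y$, i.e.\ $pd\approx np^{r+1}\gtrsim r\log n$. That pins the exponent at $1/(r+1)$, which is exactly the older Bal--DeBiasio threshold already quoted in this paper, not the claimed $1/r$; reaching $1/r$ is precisely the new content of [BKS] and needs a different covering scheme. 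Your fallback for (ii) --- that the closure $\hat{G}$ has independence number $O(r)$ a.a.s.\ ``as in Theorem \ref{thm:r^2}'' --- is unsupported: Theorem \ref{thm:r^2} lives in a dense setting where any two $c$-sets generically share an edge, whereas at $p=(C\log n/n)^{1/r}$ common neighbourhoods of $(r{+}1)$-sets are a.a.s.\ empty and sets of size $O(\log n)$ span no edges, so the short monochromatic connections that argument needs simply are not there, and the adversary colours after seeing the graph.

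The same adversary-versus-randomness confusion undermines (i) and (iii). For (iii), you cannot ``condition on the residual coloring'' and ``repeat'' with ``sufficiently independent randomness'': the only randomness is in the graph, the colouring is adversarial and chosen after the graph is revealed, and an a.a.s.\ bound on $\tc_r$ must hold for all colourings simultaneously. The standard (and actual) remedy is to verify once, a.a.s., a family of deterministic pseudorandomness properties (expansion, sizes of common neighbourhoods of small sets) and then run the colour-elimination induction as a purely deterministic statement against every colouring; those deterministic lemmas, which are where the exponent-doubling $2^r$ genuinely arises, are absent from your plan. For (i), a ``first-moment estimate over all systems of at most $r$ monochromatic components'' is ill-posed: once you, the prover, fix a colouring of the revealed graph, whether some $r$ monochromatic components cover $V(G)$ is a deterministic event, not something to union-bound over. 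Moreover, the complete-host constructions you invoke (Example \ref{ex:r/2}, Corollary \ref{cor:badmulti}) respectively defeat only $(r-1)$-covers or rely on exponentially lopsided bipartitions, and neither transfers to a sparse random host, where within-block edges must also be coloured and expansion tends to merge blocks into few components; the true lower bound is a delicate recursive colouring whose analysis is what produces the peculiar exponent $\sqrt{r}/2^{r-2}$, not an optimization of block sizes. In short, your plan correctly identifies the good-partition machinery, but as executed it would reprove the weaker Bal--DeBiasio-regime results rather than the stated theorem.
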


\section{Acknowledgements}

This research began as part of the NSA funded REU program SUMSRI held at Miami University during summer 2018.  We thank Delaney Aydel, Siya Chauhan, Teresa Flaitz, and Cody Newman for their contributions during the program.  

We also thank Andr\'as Gy\'arf\'as and G\'abor S\'ark\"ozy for their help in obtaining \cite{T1}, Austin Moore for his help with the program resulting in Table \ref{tab:r6}, Bob Krueger and Deepak Bal for their discussions regarding Problem \ref{prob:Zrd}, and Zolt\'an Kir\'aly for pointing out the reference \cite{GK}.

Finally, we sincerely thank the referees for their careful reading of the paper.

\bibliographystyle{amsplain}
\bibliography{bibliography}

\newpage

\section*{Appendix A: General properties of a minimal counterexample}\label{sec:appendix}

In this section we collect a a few more observations about a hypothetical minimal counterexample to Ryser's conjecture.

Recall that to prove $\tc_r(G)\leq (r-1)\alpha(G)$ it suffices to consider $r$-colorings of multigraphs in which every monochromatic component is a clique.  In such an $r$-colored multigraph, we call an edge $e$ of color $i$ and multiplicity 1 an \emph{essential edge of color $i$}.

\begin{theorem}
Suppose there exists positive integers $r$ and $n$, a multigraph $G$ on $n$ vertices with $\alpha:=\alpha(G)$, and an $r$-coloring $c:E(G)\to [r]$ (in which every monochromatic component is a clique) such that $G$ cannot be covered by at most $(r-1)\alpha$ monochromatic components.  Choose such a graph and a coloring which (i) minimizes $r$, (ii) minimizes $\alpha$, (iii) minimizes $n$, (iv) minimizes $e(G)$. Then $G$ has the following properties:
\begin{enumerate}
\item\label{+1} $\tc_r(G)=(r-1)\alpha+1$
\item Every color class contains at least $(r-1)\alpha+1$ components.
\item For all $i\in [r]$, every component of color $i$ contains an essential edge of color $i$.
\item Every vertex is incident with an edge of every color. In particular, every monochromatic component has at least 2 vertices. 
\item Every set of $r$ components intersect in at most one vertex.
\item If $\alpha=1$ and $0\leq t\leq r-1$, then every set of $r-t$ components intersect in at most $t!$ vertices.
\item For all $S\subseteq [r]$ with $s:=|S|\geq 2$, $\alpha(G_S)\geq \frac{(r-1)\alpha+1}{s-1}$.  In particular, for all distinct $i,j\in [r]$, we have $\alpha(G_{i,j})\geq (r-1)\alpha(G)+1$.
\end{enumerate}
\end{theorem}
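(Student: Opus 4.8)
The plan is to derive all seven properties from a single reduction engine. Because every monochromatic component is a clique, for any vertex $v$ the closed neighborhood $N[v]=\{v\}\cup N(v)$ is exactly the union of the (at most $r$) monochromatic components through $v$, one per color. Deleting $N[v]$ leaves a graph $G'$ with $\alpha(G')\le\alpha-1$, since any independent set of $G'$ together with $v$ is independent in $G$. As $G'$ is $r$-colored with strictly smaller independence number, minimality of $(r,\alpha)$ forces $\tc_r(G')\le(r-1)(\alpha-1)$; lifting a cover of $G'$ to monochromatic components of $G$ and adjoining the $\le r$ components through $v$ gives $\tc_r(G)\le(r-1)(\alpha-1)+r=(r-1)\alpha+1$. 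With the hypothesis $\tc_r(G)\ge(r-1)\alpha+1$ this yields (1). Re-running the computation when $v$ misses color $i$ (so only $r-1$ components meet $N[v]$) gives $\tc_r(G)\le(r-1)\alpha$, a contradiction; hence (4), and in particular every monochromatic component is nontrivial.

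Properties (2) and (3) are then short. For (2): by (4) every vertex lies in a nontrivial color-$i$ component, so the color-$i$ components already cover $V(G)$, and if there were at most $(r-1)\alpha$ of them we would contradict (1). For (3): if a color-$i$ component $C$ had no essential color-$i$ edge, then every color-$i$ edge inside $C$ is parallel to an edge of another color, so deleting all color-$i$ edges of $C$ leaves the underlying simple graph (hence $\alpha$) unchanged, keeps every monochromatic component a clique, and strictly lowers $e(G)$; minimality of $e(G)$ makes the smaller graph non-exceptional, and lifting its cover contradicts (1).

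Property (5) is a twin argument: if $r$ distinct components share two vertices $u,v$, then (same-color components through $u$ coincide) they are one component per color, so $u$ and $v$ are true twins with $N[u]=N[v]$. A cover of $G-v$ lifts to $G$ and automatically covers $v$ (the lifted component through $u$ is the color class containing $v$), so $\tc_r(G)\le\tc_r(G-v)\le(r-1)\alpha$ by minimality of $n$, contradicting (1). I would prove (6) by induction on $t$, the base $t=0$ being (5): given $r-t$ components of colors $A$ (with $|A|=r-t$) meeting in $I$ and a further color $b\notin A$, the color-$b$ components partition $I$ into blocks, each of which is the intersection of $r-t+1$ components and so has at most $(t-1)!$ vertices by the inductive hypothesis. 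The goal is then to show that some choice of extra color forces at most $t$ blocks, giving $|I|\le t\cdot(t-1)!=t!$; controlling the number of blocks—equivalently, exploiting that by (5) no two vertices of $I$ may agree in all $r$ colors—is the delicate combinatorial heart of the argument and the step I expect to be the main obstacle.

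Finally, (7) is a clean corollary of (1) and minimality of $r$. For $S\subseteq[r]$ with $2\le s:=|S|\le r-1$, the graph $G_S$ is $s$-colored, and any monochromatic cover of $G_S$ lifts to one of $G$, so $\tc_r(G)\le\tc_s(G_S)$. Since $s<r$, minimality of $r$ means Ryser holds for $G_S$ (Theorem \ref{dualkonig} when $s=2$), giving $\tc_s(G_S)\le(s-1)\alpha(G_S)$. Combining with (1), $(r-1)\alpha+1=\tc_r(G)\le(s-1)\alpha(G_S)$, i.e.\ $\alpha(G_S)\ge\frac{(r-1)\alpha+1}{s-1}$, and $s=2$ is the stated case $\alpha(G_{i,j})\ge(r-1)\alpha+1$. (The value $S=[r]$ is excluded: there $G_S=G$ and $s=r$, so minimality of $r$ no longer applies and the asserted bound would read $\alpha\ge\alpha+\tfrac{1}{r-1}$.)
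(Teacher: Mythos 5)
Your arguments for items (1)--(5) and (7) are correct and essentially the same as the paper's (for (5) you delete one of the two twin vertices rather than contracting $u$ and $v$ to a single vertex as the paper does, but the two moves are interchangeable here), and your observation that $S=[r]$ must be excluded from (7) is a fair catch, since the paper's own proof needs $s<r$ for minimality of $r$ to apply. The problem is item (6): you set up a plausible induction but explicitly leave its key step open, and that step is the entire content of the argument, so as written the proposal does not prove (6).

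The paper's idea is different from your ``count the blocks'' plan and never needs to control how a further color class partitions the intersection. Suppose $H_1,\dots,H_{r-t}$ are components of distinct colors whose common intersection $A$ has at least $t!+1$ vertices, with $t\ge 1$. Since $r-t\le r-1<\tc_r(G)$ by item (1), these $r-t$ components cannot cover $V(G)$, so there is a vertex $v$ lying in none of them. Because $\alpha=1$, $v$ is adjacent to every vertex of $A$; because $v\notin H_j$, no $v,A$-edge can carry the color of any $H_j$ (that would put $v$ into $H_j$, components of a fixed color being vertex-disjoint). Hence all $|A|\ge t!+1$ edges from $v$ to $A$ use only the $t$ remaining colors, and by pigeonhole some color $i$ accounts for at least $\ceiling{(t!+1)/t}=(t-1)!+1$ of them. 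All of these endpoints lie in the single color-$i$ component containing $v$, which together with $H_1,\dots,H_{r-t}$ gives $r-(t-1)$ components meeting in at least $(t-1)!+1$ vertices, contradicting the inductive hypothesis at $t-1$. The idea you were missing is to go \emph{outside} the intersection and use $\alpha=1$ to force an external vertex to see all of $A$ in only $t$ colors, rather than trying to bound the number of parts into which an additional color class splits $A$.
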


\begin{proof}~
\begin{enumerate}
\item 
Let $v$ be any vertex.  Consider the graph $G'$ obtained by removing all of the vertices in monochromatic components containing $v$.  Since $\alpha(G')\leq \alpha(G)-1$, and $G$ is a minimal counterexample, we have $\tc_r(G')\leq (r-1)(\alpha-1)$ and thus $(r-1)\alpha+1\leq \tc_r(G)\leq (r-1)(\alpha-1)+r=(r-1)\alpha+1$.
\item 
This follows since every color class is a monochromatic cover and we are assuming that we are in a minimal counterexample.
\item 
If not, we may remove the edges of color $i$ corresponding to this component and call the resulting graph $G'$.  Note that $e(G')<e(G)$, but $\alpha(G')=\alpha(G)$, so by minimality, we have $\tc_r(G)\leq \tc_r(G')\leq (r-1)\alpha$.
\item 
If there exists $v$ such that $v$ is incident with edges of at most $r-1$ colors, then consider the graph $G'$ obtained by removing all of the vertices in monochromatic components containing $v$.  Since $\alpha(G')\leq \alpha(G)-1$, and $G$ is a minimal counterexample, we have $\tc_r(G')\leq (r-1)(\alpha-1)$ and thus $\tc_r(G)\leq (r-1)(\alpha-1)+(r-1)=(r-1)\alpha$, a contradiction.

\item 
If the components are not of distinct colors, then their intersection is empty; so suppose the colors are distinct.  If there are at least two vertices $u,v$ in the intersection, replace them with a vertex $w$ such that for all $x\notin\{u,v\}$, $wx$ is an edge colored with every color appearing on $ux$ or on $vx$; call this new graph $G'$.  A covering of $G'$ gives a covering of $G$ since any component in the covering of $G'$ which contains $w$, contains $u$ and $v$ in $G$.
\item 
The previous item provides the base case, so suppose $t\geq 1$.  Let $H_1, \dots, H_{r-t}$ be a set of $r-t$ components of distinct colors and suppose for contradiction that there are at least $t!+1$ vertices in the intersection $A$.  Let $v$ be a vertex which is not in any of $H_1, \dots, H_{r-t}$ and note that every edge from $v$ to $A$ has one of $t$ different colors and thus by pigeonhole, there is a fixed color $i$ such that $v$ sends at least $\ceiling{\frac{t!+1}{t}}=(t-1)!+1$ edges of color $i$ to $A$.  However, now we have $r-(t-1)$ components which intersect in at least $(t-1)!+1$ vertices, a contradiction.

\item 
By (vi), we have $(r-1)\alpha(G)+1=\tc_r(G)\leq \tc_s(G_S)\leq (s-1)\alpha(G_S)$ and thus $\alpha(G_S)\geq \frac{(r-1)\alpha(G)+1}{s-1}$.
\end{enumerate}
\end{proof}

\end{document}